\newtheorem{thm}{Theorem}[section]
\newtheorem{cor}[thm]{Corollary}
\newtheorem{lem}[thm]{Lemma}
\newtheorem{prop}[thm]{Proposition}
\theoremstyle{definition}
\newtheorem{defn}[thm]{Definition}%[chapter]
\newtheorem{es}[thm]{Example}%[chapter]
\newtheorem{rmk}[thm]{Remark}
\newcommand{\Id}{\mathrm{Id}}
\newcommand{\Hom}{\mathrm{Hom}}
\newcommand{\cc}{\mathcal{C}}
\newcommand{\dd}{\mathcal{D}}
\newcommand{\e}{\mathcal{E}}
\newcommand{\f}{\mathcal{F}}
\newcommand{\g}{\mathcal{G}}
\newcommand{\p}{\mathcal{P}}
\newcommand{\id}{\mathrm{Id}}
\newcommand{\Cat}{{\sf Cat}}
\newcommand{\Set}{{\sf Set}}
\newenvironment{invisible}{{\noindent\sc \colorbox{yellow}{Invisible:}\;}\color{gray}}{\medskip}
\begin{document}
\title{On (naturally) semifull and (semi)separable semifunctors}

\author{Lucrezia Bottegoni}
\address{%
\parbox[b]{\linewidth}{University of Turin, Department of Mathematics ``G. Peano'', via
Carlo Alberto 10, I-10123 Torino, Italy}}
\email{lucrezia.bottegoni@unito.it}

\subjclass[2020]{Primary 18A22; Secondary 18A20, 18A40} 

\begin{abstract}
The notion of semifunctor between categories, due to S. Hayashi (1985), is defined as a functor that does not necessarily preserve identities. In this paper we study how several properties of functors, such as fullness, full faithfulness, separability, natural fullness, can be formulated for semifunctors. Since a full semifunctor is actually a functor, we are led to introduce a notion of semifullness (and then semifull faithfulness) for semifunctors. In order to show that these conditions can be derived from requirements on the hom-set components associated with a semifunctor, we look at ``semisplitting properties'' for seminatural transformations and we investigate the corresponding properties for morphisms whose source or target is the image of a semifunctor.  We define the notion of naturally semifull semifunctor and we characterize natural semifullness for semifunctors that are part of a semiadjunction in terms of semisplitting conditions for the unit and counit attached to the semiadjunction. We study the behavior of semifunctors with respect to (semi)separability and we prove Rafael-type Theorems for (semi)separable semifunctors and a Maschke-type Theorem for separable semifunctors. We provide examples of semifunctors on which we test the properties considered so far.
\end{abstract}
\keywords{Semifunctor, Semiadjunction, Natural semi-isomorphism, Separable functor, (Naturally) full functor, Idempotent completion}
\maketitle
\tableofcontents

\section*{Introduction}
The notion of \emph{semifunctor} between categories, that appeared in \cite{EZ76} under the name of \emph{weak functor}, was investigated by S. Hayashi in \cite{Ha85}, in order to develop a categorical semantics for non-extensional typed lambda calculus. A semifunctor $F:\cc\to\dd$ is defined as a functor, but it is not required to preserve identities. In the same paper Hayashi introduced the notion of \emph{semiadjunction} between semifunctors. Afterwards, in \cite{Ho93} R. Hoofman defined an appropriate notion of morphism between semifunctors, calling it \emph{seminatural transformation}. Explicitly, a natural transformation $\alpha: F\to F^{\prime }$ between semifunctors $F,F^\prime:\cc\to\dd$ is defined as a natural transformation between functors; if in addition $\alpha _{X}\circ F \mathrm{Id}_{X}=\alpha _{X}$ holds true for every object $X$ in $\cc$, then $\alpha$ is a seminatural transformation. Moreover, if there exists a natural transformation $\beta: F^\prime\to F$ such that  $\beta\circ F^\prime\id =\beta$, $\alpha\circ\beta = F^\prime\id$ and $\beta\circ\alpha = F\id$, then $\alpha$ is said to be a \emph{natural semi-isomorphism}.\par 
%%%%%%%
In this paper we study how some notable properties of functors (e.g. fullness, full faithfulness, separability, natural fullness) can be formulated for semifunctors. Given a semifunctor $F: \cc \rightarrow \dd$, we consider the associated natural transformation $\f : \mathrm{Hom}_{\cc}(-,-)\rightarrow \Hom_{\dd}(F-, F-)$, defined by $\f_{X,Y}(f)= F(f)$, for any morphism $f:X\rightarrow Y$ in $\cc$. By requiring that $\f_{X,Y}$ is injective (resp. surjective, bijective) for every pair of objects $X,Y\in\cc$, then $F$ is a \emph{faithful} (resp. \emph{full}, \emph{fully faithful}) semifunctor. %The first functorial property we look at is the notion of fullness. 
Since a full semifunctor is actually a functor, motivated by the behavior of an endosemifunctor on $\Set$ (Example \ref{es:Set-semifull-faithful}), we introduce a weaker notion of fullness for semifunctors that we call \emph{semifullness}. We say that a semifunctor $F:\cc\to\dd$ is \emph{semifull} if for any morphism $f:FX\to FY$ in $\dd$ there exists a morphism $g:X\to Y$ in $\cc$ such that $F(g)=F\id_Y\circ f\circ F\id_X$. We define a semifunctor to be \emph{semifully faithful} if it is faithful and semifull. In order to show that the semifull and semifully faithful conditions can be derived from requirements on the natural transformation $\f$ associated with a semifunctor, we look at particular ``semisplitting'' properties for seminatural transformations. We call a seminatural transformation $\alpha:F\to F'$ \emph{natural semisplit-mono} (resp. \emph{natural semisplit-epi}) if there exists a seminatural transformation $\beta:F'\to F$ such that $\beta\circ\alpha= F\id$ (resp. $\alpha\circ\beta= F'\id$). We investigate the corresponding semisplitting properties for morphisms whose source or target is the image of a semifunctor. Explicitly, given semifunctors $F:\cc\to\dd$, $F':\cc'\to\dd$, we say that\vskip0.1cm 
\noindent $\bullet$ a morphism $f:FC\to D$ in $\dd$ is an \emph{$F_C$-semisplit-mono} if there exists a morphism $g:D\to FC$ in $\dd$ such that $g\circ f=F\id_C$;\\
$\bullet$ a morphism $f:FC\to F'C'$ in $\dd$ is an \emph{$(F_C,F'_{C'})$-semisplit-mono} if $f\circ F\id_C=f$ and there exists a morphism $g:F'C'\to FC$ in $\dd$ such that $g\circ f=F\id_C$ and $g\circ F'\id_{C'}=g$.
\vskip0.1cm
\noindent Analogously, one can introduce the notions of \emph{$F_C$-semisplit-epi} and \emph{$(F_C,F'_{C'})$-semisplit-epi}. We call a morphism $f:FC\to F'C'$ in $\dd$ an \emph{$(F_C,F'_{C'})$-semi-isomorphism} if $f\circ F\id_C=f$ and there exists a morphism $g: F'C'\to FC$ in $\dd$ such that $g\circ f=F\id_C$ and $f\circ g=F'\id_{C'}$. In Proposition \ref{prop:semiiso-semisplit} we prove that a morphism is an $(F_C,F'_{C'})$-semi-isomorphism if and only if it is both an $(F_C,F'_{C'})$-semisplit-mono and an $(F_C,F'_{C'})$-semisplit-epi.  When the semifunctors $F, F'$ are understood, we usually omit them in the notation. Given a seminatural transformation $\alpha:F\to F'$ of semifunctors, if $\alpha$ is a natural semisplit-mono (resp. natural semisplit-epi, natural semi-isomorphism), then every component morphism $\alpha_C:FC\to F'C$ is an $(F_C,F'_C)$-semisplit-mono (resp. $(F_C,F'_C)$-semisplit-epi, $(F_C,F'_C)$-semi-isomorphism) in $\dd$.
\par
In Proposition \ref{prop:trasfnat-semifull} we show that, if for every $X, Y\in\cc$, $\f_{X,Y}$ is a $\Hom_{\dd}(F-, F-)_{(X,Y)}$-semisplit-epi (resp. $((X,Y),(X,Y))$-semi-isomorphism), then $F$ is semifull (resp. semifully faithful). Further, semifully faithful semifunctors reflect $(F_C,F'_{C'})$-semi-isomorphisms (Proposition \ref{prop:refl-semiiso}). In Proposition \ref{prop:char-faithful-semifull} we provide a characterization of faithfulness and semifullness for semifunctors that are part of a semiadjunction. Recall that a \emph{semiadjunction} $F\dashv_\mathrm{s} G$ between semifunctors is the datum of semifunctors $F:\cc\rightarrow \dd$ and $G:\dd\rightarrow \cc$ equipped with (semi)natural transformations $\eta :\mathrm{Id}_{\cc}\rightarrow GF$ (unit) and $\epsilon :FG\rightarrow \mathrm{Id}_{\dd}$ (counit) such that $G\epsilon \circ \eta G=G\mathrm{Id}$ and $\epsilon F\circ F\eta =F\mathrm{Id}$ hold. Explicitly, given a semiadjunction $F\dashv_\mathrm{s} G:\dd\to\cc$ with unit $\eta$ and counit $\epsilon$, we prove that $F$ (resp. $G$) is semifull if and only if $\eta_C$ is a $C$-semisplit-epi in $\cc$ for every $C\in \cc$ (resp. $\epsilon_D$ is a $D$-semisplit-mono in $\dd$ for every $D\in \dd$).\par 
Then, we explore the behavior of semifunctors with respect to separability. Separable functors were introduced in \cite{NVV89} in order to interpret the notion of separable ring extension from a categorical point of view. Several results and applications of separable functors are illustrated e.g. in \cite{CMZ02}. A functor $F:\mathcal{C}\to\mathcal{D}$ is said to be \emph{separable} if the associated natural transformation $\mathcal{F}$ has a left inverse, i.e. there is a natural transformation $\mathcal{P} : \mathrm{Hom}_{\dd}(F-, F-)\rightarrow \mathrm{Hom}_{\cc}(-,-)$ such that $\mathcal{P}_{X,Y}\circ\mathcal{F}_{X,Y} = \mathrm{Id}_{\mathrm{Hom}_{\cc}(X,Y)}$ for all $X$ and $Y$ in $\cc$. We define a semifunctor $F:\cc\to\dd$ to be separable by requiring the same condition on the associated natural transformation $\f$, and we discuss general properties. The first difference with the functorial case is in the so-called Maschke Theorem \cite[Proposition 1.2]{NVV89}, see also \cite[Corollary 5]{CMZ02}, which states that, given a separable functor $F:\cc\to\dd$ and a morphism $f:C\to C'$ in $\cc$, if $F(f)$ is a split-mono (resp. split-epi) in $\dd$, then so is $f$. In Theorem \ref{thm:maschke} we show that if $F$ is a separable semifunctor and $F(f):FC\to FC'$ is an $F_C$-semisplit-mono (resp. $F_{C'}$-semisplit-epi), then $f$ is a split-mono (resp. split-epi), obtaining a Maschke-type Theorem for  semifunctors. A key result for separable functors is given by Rafael Theorem \cite{Raf90} which characterizes the separability of functors that have an adjoint in terms of splitting properties for the unit and the counit. In Theorem \ref{Th.Rafael} we prove an analogue Rafael-type Theorem for separable semifunctors.\par 
As separable functors are naturally faithful, in a somehow dual way \emph{naturally full} functors have been introduced in \cite{AMCM06} by requiring that the natural transformation $\f$ associated with a functor has a right inverse. It is clear that the natural fullness condition (which implies fullness) on a semifunctor reduces to the naturally full functor case, as a full semifunctor is actually a functor. Thus, we investigate whether a notion of natural semifullness is possible for semifunctors. We call a semifunctor $F:\cc\to\dd$ \emph{naturally semifull} if there is a natural transformation $\p : \Hom_{\dd}(F-, F-)\rightarrow \Hom_{\cc}(-,-)$ such that $(\f_{X,Y}\circ\p_{X,Y})(f) = F\id_Y \circ f \circ F\id_X$, for every morphism $f:FX\rightarrow FY$ in $\dd$. A naturally semifull semifunctor is obviously semifull. In Proposition \ref{prop:char-sff} we show that a semifunctor is semifully faithful if and only if it is separable and naturally semifull. Then, we obtain a Rafael-type Theorem for naturally semifull semifunctors which are part of a semiadjunction in terms of semisplitting properties for the unit and the counit (Theorem \ref{thm:Raf-nat.full}). Explicitly, given a semiadjunction $F\dashv_\mathrm{s} G:\dd\to \cc$ with unit $\eta$ and counit $\epsilon$, we prove that $F$ is naturally semifull if and only if $\eta$ is a natural semisplit-epi and that $G$ is naturally semifull if and only if $\epsilon$ is a natural semisplit-mono.
\par
Recently, in \cite{AB22} semiseparable functors have been introduced in order to treat separability and natural fullness in a unified way.
The same notion fits well for semifunctors. In fact, a semifunctor results to be separable (resp. naturally semifull) if and only if it is semiseparable and faithful (resp. semiseparable and semifull) (Proposition \ref{prop:sep-nat.full}). In Proposition \ref{prop:completion-ffs} and Corollary \ref{cor:sep-natsful-semiff-compl} through the idempotent completion we show that a semifunctor $F:\cc\to\dd$ is semiseparable (resp. semifull, naturally semifull, separable, semifully faithful) if and only if its completion $F^\natural:\cc^\natural\to\dd^\natural$ is a semiseparable (resp. full, naturally full, separable, fully faithful) functor. As a consequence, if we consider a semiadjoint triple $F\dashv_\mathrm{s} G\dashv_\mathrm{s} H:\cc\to\dd$ of semifunctors, these properties pass from $F$ to $H$ and viceversa (Proposition \ref{prop:semiadj-triple}). 
\par
Finally, we provide examples of semifunctors on which we test the properties studied so far. The first one we consider is the so-called \emph{forgetful semifunctor} (Example \ref{es:iotaequtr}). Given a category $\cc$ and its idempotent completion $\cc^\natural$, the forgetful semifunctor $\upsilon_\cc : \cc^\natural \to\cc$, which maps an object $\left( X,e\right) $ in $\cc^\natural$ to the underlying object $X$ and a morphism $f:(X,e)\to (X',e') $ in $\cc^\natural$ to the underlying morphism $\upsilon_\cc f:X\rightarrow X'$ in $\cc$ such that $e'\circ\upsilon_\cc f\circ e=\upsilon_\cc f$, results to be semifully faithful. Then, we show that there are semifunctors which are neither faithful, nor semifull in general, e.g. the \emph{semi-product semifunctor} (Example \ref{es:semiproduct}) and the \emph{constant semifunctor} (Example \ref{es:cost}). To any idempotent seminatural transformation $e=(e_X)_{X\in\cc}:\id_\cc\to\id_\cc$ on a category $\cc$ it is possible to attach a canonical semifunctor $E^e$ that is self-semiadjoint (Proposition \ref{prop:E}). In Example \ref{es:LH} we show that it reveals to be naturally semifull, while it is separable if and only if $E^e=\id_\cc$. Then, given a semiadjunction $F\dashv_\mathrm{s} G:\dd\to\cc$ of semifunctors, we get the semiadjunction $FE^e\dashv_\mathrm{s} E^eG:\dd\to\cc$ (Corollary \ref{cor:F'G'}). In Example \ref{es:morph-ring} we see an instance of a semiadjunction constructed in this way. Explicitly, given a morphism of rings $\varphi :R\to S$, we consider the restriction of scalars functor $\varphi_* : S\text{-}\mathrm{Mod}\rightarrow R\text{-}\mathrm{Mod}$ and the extension of scalars functor $\varphi^*:= S\otimes_{R}(-):R\text{-}\mathrm{Mod}\rightarrow S\text{-}\mathrm{Mod}$, which form an adjunction $\varphi^*\dashv\varphi_*$. If $e=(e_X)_{X\in R\text{-}\mathrm{Mod}}:\id_{R\text{-}\mathrm{Mod}}\to\id_{R\text{-}\mathrm{Mod}}$ is an idempotent seminatural transformation, then we obtain the semiadjunction $\varphi^*_e\dashv_\mathrm{s}\varphi_{*}^e:S\text{-}\mathrm{Mod}\rightarrow R\text{-}\mathrm{Mod}$, where $\varphi^*_e:=\varphi^*\circ E^e$ and $\varphi_{*}^e:=E^e\circ\varphi_*$. In \cite[Proposition 3.1]{AMCM06} it is shown that the functor $\varphi_*$ is naturally full if and only if it is full, while $\varphi^*$ is naturally full if and only if $\varphi$ is a split-epi as an $R$-bimodule map, i.e. if there is $\pi\in {}_{R}\Hom_{R}(S,R)$ such that $ \varphi\circ \pi=\id$. In Proposition \ref{prop:ringmorph-natsemifull} we give conditions under which $\varphi_*^e$ and $\varphi^*_e$ are naturally semifull. We prove that, if $\varphi^*_e$ is naturally semifull, then there is $\psi\in {}_{R}\Hom_{R}(S,R)$ such that $r_S^{-1}\varphi:R\to \varphi_*^e\varphi^*_eR$ is an $R$-semisplit-epi as an $R$-bimodule map through $\psi r_S$, where $r_S:S\otimes_R R\to S$ is the canonical isomorphism; if in addition $1_S=e_{\varphi_*(S)}(\varphi(e_R(1_R)))$ holds true, then $r_S^{-1}\varphi$ is an $(R,R)$-semisplit-epi.\par It is known that a monoid can be seen as a category with a single object and arrows the elements of the monoid. Any semigroup homomorphism between monoids defines a semifunctor. In Example \ref{monoid} we exhibit a semifunctor between monoids that is separable, but not semifull in general, hence not even naturally semifull. Similarly, in Example \ref{es:ring}, we see an example of a semifunctor between unital rings (viewed as categories with a single object) which is naturally semifull but not separable in general. In particular, we show that the non-unital homomorphism of rings $f:R\to \mathrm{M}_n(R)$, $m\mapsto m\mathrm{E}_{ii}$, where $\mathrm{M}_n(R)$ is the ring of square matrices of order $n\in \mathbb{N}$ with coefficients in the unital ring $R$ and $\mathrm{E}_{ii}=(\delta_{ia}\delta_{ib})_{ab}$ is the matrix unit, defines a semifully faithful semifunctor.\\ 
\par
The paper is organized as follows. In Section \ref{sect:semifunct} we recall mainly from \cite{Ho93} the notions of semifunctor, seminatural transformation and semiadjunction between semifunctors. We remind the idempotent completion construction. We attach to any idempotent seminatural transformation $e=(e_X)_{X\in\cc}:\id_\cc\to\id_\cc$ on a category $\cc$ a canonical semifunctor $E^e$ that results to be self-semiadjoint. Section \ref{sect:semisplit-morph} deals with the announced semisplitting properties for morphisms whose source or target is the image of a semifunctor. %$F_C$-semi-monomorphism, $F_C$-semi-epimorphism
In particular, given semifunctors $F:\cc\to\dd$, $F':\cc'\to\dd$, we define the notions of $F_C$-semisplit-mono, $F_C$-semisplit-epi, $(F_C,F'_{C'})$-semisplit-mono, $(F_C,F'_{C'})$-semisplit-epi, $(F_C,F'_{C'})$-semi-isomorphism, and we discuss their properties. In Section \ref{sect:faithful-semifull} we introduce and investigate the notions of semifull and semifully faithful semifunctor. In Section \ref{sect:separ-natsemifull} we focus on the separability property for semifunctors. We prove a Maschke-type Theorem and a Rafael-type Theorem for separable semifunctors. Section \ref{sect:natsemifull} treats the notion of natural semifullness for semifunctors and provides a Rafael-type Theorem for naturally semifull semifunctors. In Section \ref{sect:semisep} we study semiseparable semifunctors. Section \ref{sect:examples} collects examples of (naturally) semifull, (semi)separable and semifully faithful semifunctors. \\
\par
\noindent\emph{Notations.} Given an object $X$ in a category $\cc$, the identity morphism on $X$ will be denoted either by $\id_X$ or $X$ for short. For categories $\cc$ and $\dd$, a functor $F:\cc\to \dd$ just means a covariant functor. By $\id_{\cc}$ we denote the identity functor on $\cc$. For any (semi)functor $F:\cc\to \dd$, we denote $\id_{F}:F\to F$ the natural transformation defined by $(\id_{F})_X:=\id_{FX}$, for any $X$ in $\cc$. The symbol $\circ$ is used for the composition of composable morphisms, composable functors, composable natural transformations, and sometimes it is omitted for short.

\section{Semifunctors and semiadjunctions}\label{sect:semifunct}
In this section we recall mainly from \cite{Ha85} and \cite{Ho93} the notions of semifunctor, seminatural transformation and natural semi-isomorphism. We introduce the notions of \emph{natural semisplit-mono} and \emph{natural semisplit-epi} for seminatural transformations. We remind the idempotent completion construction  which provides a canonical way to turn semifunctors into functors, and we review the notion of semiadjunction \cite{Ho93} between semifunctors and its main properties. %We show that to any idempotent seminatural transformation $e=(e_X)_{X\in\cc}:\id_\cc\to\id_\cc$ on a category $\cc$ one can attach a canonical semifunctor that results to be self-semiadjoint.
\subsection{Semifunctors and seminatural transformations}
\begin{defn}\cite[Definition 1.1]{Ha85}
Let $\cc$ and $\dd$ be categories. A \emph{semifunctor} $F:\cc\to\dd$ is the datum of an object map $\mathrm{Obj}(\cc )\to\mathrm{Obj}(\dd )$, $X\mapsto F(X)$, between the classes of objects of $\cc$ and $\dd$, and of a morphism map $\f_{X,Y}:\Hom_\cc(X,Y)\to\Hom_\dd(F(X),F(Y))$, $f\mapsto F(f)$, for every pair of objects $X, Y$ in $\cc$, preserving compositions, i.e. $F(g\circ f)=F(g)\circ F(f)$, for every pair of composable morphisms $f:X\to Y$, $g:Y\to Z$ in $\cc$.
\end{defn}
The image of an object $X\in\cc$ through a semifunctor $F:\cc\to\dd$ is written $F(X)$ or simply $FX$; the image of a morphism $f:X\to Y$ in $\cc$ is written $F(f)$ or just $Ff$. A semifunctor is defined as a functor but it is not required to preserve identities. Then, any functor is a semifunctor. %, but the converse does not hold. 
Note that the image of an identity morphism $\id_X$ through a semifunctor $F:\cc\to\dd$ is an idempotent morphism in $\dd$ as $F(\id_X)=F(\id_X\circ\id_X)=F(\id_X)\circ F(\id_X)$.
The notion of semifunctor appeared in \cite[Definition 4.1]{EZ76} under the name of \emph{weak functor}. In \cite[1.284]{FrSc90} a semifunctor is what is called \emph{prefunctor}. There is a related notion of morphism between semifunctors. As in the functorial case, a \emph{natural transformation} $\alpha: F\to F^{\prime }$ between semifunctors $F,F^{\prime}:\cc\rightarrow \dd$ is defined as a family $(\alpha_X:FX\to F^{\prime }X)_{X\in \cc}$ of morphisms in $\dd$ such that $\alpha _{X'}\circ Ff=F^{\prime }f\circ \alpha _{X}$ for any morphism $f:X\rightarrow X'$ in $\cc$. Given a semifunctor $F:\cc\to\dd$, there is a natural transformation $F\id: F\to F$ with components $F\id_X:FX\to FX$ and a natural transformation $\id_F: F\to F$ with components $\id_{FX}:FX\to FX$. Note that $F\id\neq \id_F$ in general, unless $F$ is a functor.
Moreover, see \cite[Definition 2.4]{Ho93}, a \emph{seminatural transformation} $\alpha: F\to F^{\prime }$ between semifunctors $F,F^{\prime}:\cc\rightarrow \dd$ is a natural transformation with the additional property that $\alpha _{X}\circ F \mathrm{Id}_{X}
=\alpha _{X}$, for every $X$ in $\cc$.
%
%For semifunctors $F,F^{\prime
%}:\cc\rightarrow \dd$, a \emph{seminatural transformation} $\alpha
%:F\rightarrow F^{\prime }$ is a family $\left( \alpha _{C}:FC\rightarrow
%F^{\prime }C\right) _{C\in \cc}$ of morphisms in $\dd$ such
%that $\alpha _{D}\circ Ff=F^{\prime }f\circ \alpha _{C}$ for every morphism $%
%f:C\rightarrow D,$ and $\alpha _{C}\circ F\left( \mathrm{Id}_{C}\right)
%=\alpha _{C}$. 
If (at least) one of the semifunctors $F,F^\prime$ is a functor, then the notions of natural transformation and seminatural transformation coincide \cite[Theorem 2.5]{Ho93}. Semifunctors $F, F^\prime:\cc\to\dd$ are said to be \emph{naturally semi-isomorphic} (denoted by $F\cong_\mathrm{s} F^\prime$) if and only if there are natural transformations $\alpha: F\to F^\prime$ and $\beta: F^\prime\to F$ such that \begin{itemize}
\item[(i)] $\alpha\circ F\id =\alpha$;
\item[(ii)] $\beta\circ F^\prime\id =\beta$;
\item[(iii)] $\alpha\circ\beta = F^\prime\id$;
\item[(iv)] $\beta\circ\alpha = F\id$.
\end{itemize}
In this case $\alpha$ is said to be a \emph{natural semi-isomorphism} \cite[Subsection 2.2]{Ho93}. Since its semi-inverse $\beta$ is uniquely determined by $\alpha$, it will be usually denoted by $\alpha^{-1}$. If $F, F^\prime$ are functors, then any natural semi-isomorphism $\alpha: F\to F^\prime$ is actually a natural isomorphism.\par
We introduce the following terminology for ``semisplitting'' properties of a seminatural transformation $\alpha:F\to F'$ between semifunctors $F, F^\prime:\cc\to\dd$. We say that $\alpha$ is a
\begin{itemize}
\item \textbf{natural semisplit-mono} if there exists a seminatural transformation $\beta:F'\to F$ such that $\beta\circ\alpha= F\id$;
\item \textbf{natural semisplit-epi} if there exists a seminatural transformation $\beta:F'\to F$ such that $\alpha\circ\beta= F'\id$.
\end{itemize}
Moreover, $\alpha$ is a \emph{natural split-mono} (resp. \emph{natural split-epi}), if there exists a seminatural transformation $\beta:F'\to F$ such that $\beta\circ\alpha=\id_F$ (resp. $\alpha\circ \beta=\id_{F'}$). Note that if $F$ is a functor, then $\alpha$ is a natural semisplit-mono if and only if $\alpha$ is a natural split-mono; if $F'$ is a functor, then $\alpha$ is a natural semisplit-epi if and only if $\alpha$ is a natural split-epi.
\begin{lem}\label{lem:seminat-semisplit}
A seminatural transformation $\alpha:F\to F'$ between semifunctors $F,F'$ is a natural semi-isomorphism if and only if $\alpha$ is both a natural semisplit-mono and a natural semisplit-epi. 
\end{lem}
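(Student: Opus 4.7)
The plan is to read off the forward direction immediately from the definition of natural semi-isomorphism, and to concentrate the work on the reverse direction, whose substance is showing that a semisplit-mono witness and a semisplit-epi witness of $\alpha$ must coincide.

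For $(\Rightarrow)$, let $\beta:F'\to F$ be a semi-inverse of $\alpha$. Clause (ii) of the definition of natural semi-isomorphism asserts $\beta\circ F'\id = \beta$, so $\beta$ is automatically seminatural; clauses (iv) and (iii) then read $\beta\circ\alpha = F\id$ and $\alpha\circ\beta = F'\id$, exhibiting the single transformation $\beta$ as a witness for both semisplitting properties at once.

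For $(\Leftarrow)$, suppose there are seminatural $\beta,\gamma:F'\to F$ with $\beta\circ\alpha = F\id$ and $\alpha\circ\gamma = F'\id$. The crucial step is to prove $\beta = \gamma$. Using seminaturality of $\beta$ together with the two hypotheses,
\[
\beta \;=\; \beta\circ F'\id \;=\; \beta\circ(\alpha\circ\gamma) \;=\; (\beta\circ\alpha)\circ\gamma \;=\; F\id\circ\gamma.
\]
It then suffices to note that $F\id\circ\gamma = \gamma$; this follows from naturality of $\gamma:F'\to F$ applied to $\id_X$, which yields $\gamma_X\circ F'\id_X = F\id_X\circ\gamma_X$, combined with the seminaturality condition $\gamma_X\circ F'\id_X = \gamma_X$. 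Hence $\beta = \gamma$, and this common transformation serves as the sought semi-inverse of $\alpha$: clauses (i) and (ii) are precisely the seminaturality of $\alpha$ and $\beta$, while clauses (iv) and (iii) are the two hypotheses.

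The argument is essentially the categorification of the classical fact that, in a monoid, a two-sided inverse exists whenever both a left and a right inverse do; the only mildly delicate point is the observation that a seminatural transformation $\gamma:F'\to F$ satisfies simultaneously $\gamma\circ F'\id=\gamma$ and $F\id\circ\gamma=\gamma$, which is exactly what makes the left and right semi-inverses collapse to the same morphism.
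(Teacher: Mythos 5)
Your proof is correct and follows essentially the same route as the paper's: both reduce the converse to the standard ``left inverse equals right inverse'' computation $\beta=\beta\circ F'\id=\beta\circ\alpha\circ\gamma=F\id\circ\gamma=\gamma$. Your explicit justification of the step $F\id\circ\gamma=\gamma$ via naturality of $\gamma$ at identities is a small point the paper leaves implicit, but it does not change the argument.
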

\begin{proof}
	If $\alpha$ is a natural semi-isomorphism, then it is trivially both a natural semisplit-mono and a natural semisplit-epi. Conversely, if $\alpha$ is a natural semisplit-mono and a natural semisplit-epi, then there exists a seminatural transformation $\beta:F'\to F$ such that $\beta\circ\alpha= F\id$ and there is a seminatural transformation $\beta':F'\to F$ such that $\alpha\circ\beta'= F'\id$. Note that $\beta=\beta\circ F^\prime\id =\beta\circ\alpha\circ\beta'=F\id\circ\beta'=\beta'$, thus $\alpha$ is a natural semi-isomorphism.
\end{proof}
In Section \ref{sect:semisplit-morph} we will study the corresponding semisplitting properties for the component morphisms of a seminatural transformation.
\subsection{Idempotent completion} \label{subsect:idempcompl}
Recall that an endomorphism $e:X\to X$ in a category $\cc$ is \emph{idempotent} if $e^2:=e\circ e=e$. An idempotent morphism $e:X\to X$ in $\cc$ \emph{splits} if there exist two morphisms $h:X\to Y$ and $k:Y\to X$ in $\cc$ such that $e=k\circ h$ and $h\circ k=\id _Y$; the category $\cc$ is said to be \emph{idempotent complete} or \emph{Cauchy complete} if all idempotents split. As an instance, any category equipped with (co)equalizers is idempotent complete \cite[Theorem 2.15]{Ho93}, see also \cite[Proposition 6.5.4]{Bor94}.  The \emph{idempotent completion} $\cc^\natural$ (also known under the names of \emph{Cauchy completion} \cite{Law73} or \emph{Karoubi envelope} \cite{Kar78}) of a category $\cc$ is the category whose objects are pairs $(X,e)$, where $X$ is an object in $\cc$ and $e:X\to X$ is an idempotent morphism in $\cc$, and a morphism $f:(X,e)\to (X',e')$ in $\cc^\natural$ is a morphism $f:X\to X'$ in $\cc$ such that $f=e'\circ f\circ e$ (or equivalently, such that $e'\circ f=f=f\circ e$). Note that $\id_{(X,e)}\neq\id_X$ but $\id_{(X,e)}=e:(X,e)\to (X,e)$. The category $\cc^\natural$ is idempotent complete. There is a canonical functor $\iota_\cc : \cc\to\cc^\natural$, $X\mapsto (X,\id_X)$, $[f:X\to Y]\mapsto [f:(X,\id_X)\to (Y,\id_Y)]$, which is fully faithful; $\iota_\cc$ is an equivalence if and only if $\cc$ is idempotent complete. Given any functor $F:\cc\to\dd$, it can be extended to a functor $F^\natural : \cc^\natural\to\dd^\natural$ by $F^\natural (X,e)= (FX,Fe)$ and $F^\natural (f)=F(f)$, so that $\iota_\dd\circ F =F^\natural\circ\iota_\cc$. Any semifunctor $F:\cc\rightarrow \dd$ induces a functor $%
F^{\natural }:\cc^{\natural }\rightarrow \dd^{\natural }$
such that $F^{\natural }\left( X,e\right) =\left( FX,Fe\right) $ and $%
F^{\natural }f=Ff.$ In fact $F^{\natural }\mathrm{Id}_{\left( X,e\right)
}=Fe=\mathrm{Id}_{\left( FX,Fe\right) }=\mathrm{Id}_{F^{\natural }\left(
X,e\right) },$ as observed in \cite[Definition 1.3]{Ha85}. Note that in general $\iota_\dd\circ F\neq F^\natural\circ\iota_\cc$ unless F is a functor. On the other hand, if $G:\cc^{\natural }\to \dd^{\natural }$ is a functor, then there is a semifunctor $F:\cc\to\dd$ given as in the proof of \cite[Theorem 1]{HoM95} such that $F^\natural = G$, cf. \cite[Proposition 1.4]{Ha85}. Moreover, given semifunctors $F,F':\cc\to\dd$, $F\cong_\mathrm{s} F'$ is a natural semi-isomorphism if and only if $F^\natural\cong (F')^\natural$ is a natural isomorphism of functors \cite[Theorem 2.12]{Ho93}.
% Moreover any
%functor $\cc^{\natural }\rightarrow \dd^{\natural }$ is of
%the form $F^{\natural }$ for some semifunctor $F:\cc\rightarrow
%\dd$, see \cite[Proposition 1.4]{Ha85}.
A (semi)natural transformation $\alpha :F\rightarrow F^{\prime }$ of semifunctors $F,F':\cc\to\dd$ induces the natural transformation $\alpha ^{\natural }:F^{\natural }\rightarrow \left( F^{\prime}\right) ^{\natural }$ with components $\alpha _{\left( X,e\right)
}^{\natural }:=\alpha _{X}\circ Fe=F^{\prime }e\circ \alpha _{X}$, cf. \cite[Theorem 7.3]{Ho93}. \begin{invisible} Indeed, for any morphism $f:(X,e)\to (X',e')$ in $\cc^\natural$ we have that ${F'}^{\natural}f\circ \alpha^\natural_{(X,e)}=F'f\circ F'e\circ\alpha_X=F'(fe)\circ\alpha_X=F'(e'f)\circ\alpha_X=F'e'\circ F'f\circ\alpha_X=F'e'\circ\alpha_{X'}\circ Ff=\alpha_{X'}\circ Fe'\circ Ff=\alpha^\natural_{(X',e')}\circ F^\natural f$.
 \end{invisible}
The category $\Cat_\mathrm{s}$ with categories as objects, semifunctors as arrows, and seminatural transformations as $2$-cells is a $2$-category \cite[Theorem 7.2]{Ho93}. Since any functor is in particular a semifunctor, there is an inclusion of the $2$-category $\Cat$ of categories, functors and natural transformations, in $\Cat_\mathrm{s}$. Conversely, the idempotent completion construction provides a canonical way to transform semifunctors into functors. In fact, the \emph{Karoubi envelope functor} $\kappa:\Cat_\mathrm{s}\to\Cat$, defined by $\kappa (\cc)=\cc^\natural$, $\kappa (F)=F^\natural$, for any category $\cc$ and any semifunctor $F:\cc\to\dd$, is the right adjoint of the inclusion functor $i:\Cat\to\Cat_\mathrm{s}$ (see \cite[Theorem 2.10]{Ho93}). Moreover, $\kappa$ defines an equivalence of 2-categories between $\Cat_\mathrm{s}$ and the full 2-subcategory of $\Cat$ having idempotent complete categories as objects \cite[Theorem 1]{HoM95}. Thus, many standard properties for functors can be extended to semifunctors, as for instance the notion of adjunction.

\subsection{Semiadjunctions}
Let $F:\cc\to\dd$ be a semifunctor, let $\cc^\mathrm{op}$ be the opposite category of $\cc$, and consider the semifunctor $$\Hom_\dd(F-,-):\cc^{\mathrm{op}}\times \dd\to \Set,$$
$$(C,D)\mapsto \Hom_\dd(FC,D),\quad (f:C'\to C, g:D\to D')\mapsto \Hom_\dd(Ff,g)(-)=g\circ -\circ Ff$$
Since in general, for any morphism $h:FC\to D$ in $\dd$, $\Hom_\dd(F\id_C,\id_D)(h)=\id_D\circ h\circ F\id_C=h\circ F\id_C\neq h$, then $\Hom_\dd(F-,-)$ is really a semifunctor. Analogously, for a semifunctor $G:\dd\to\cc$ one can consider the semifunctor $\Hom_\cc(-,G-):\cc^{\mathrm{op}}\times\dd\to\Set$. %As in the functorial case \cite[Definition 7.8.1]{Bor94}, we can call \emph{semidistributor} (or \emph{prosemifunctor}, \emph{semibimodule}) a semifunctor $\cc^{\mathrm{op}}\times\dd\to\Set$. %We write $\xymatrix{\cc\ar[r]|\circ &\dd}$ to indicate that $\phi$ is a semidistributor from $\cc\to\dd$. 
\begin{defn}\cite[Definition 3.1]{Ho93}
A \emph{semiadjunction} is a triple $(F:\cc\to\dd,G:\dd\to\cc,\tau)$, where $F$, $G$ are semifunctors and $\tau$ is a natural semi-isomorphism given, for any $C\in\cc$ and $D\in \dd$, by
\begin{equation}\label{tau}
\tau_{C,D}: \Hom_{\dd}(FC,D)\cong_\mathrm{s} \Hom_{\cc}(C,GD)
\end{equation}
\end{defn}
\begin{invisible}
\begin{rmk}
For each semifunctor $F:\cc\to\dd$ there exists a functor denoted by $\Hom_\dd(F-,-)_\mathrm{s}$ that is semi-isomorphic to $\Hom_\dd(F-,-)$ \cite[Theorem 2.21]{Ho93}, hence a semiadjunction can be also defined as a triple $(F:\cc\to\dd,G:\dd\to\cc,\psi)$, where $F$, $G$ are semifunctors and $\psi$ is a natural isomorphism \begin{equation}\label{psi}
\psi_{C,D}: \Hom_{\cc}(C,GD)_\mathrm{s}\cong\Hom_{\dd}(FC,D)_\mathrm{s}.
\end{equation}
\end{rmk}
\end{invisible}
Equivalently, by \cite[Theorem 3.10]{Ho93} a semiadjunction $\left( F,G,\eta
,\epsilon \right) $ is the datum of semifunctors $F:\cc\rightarrow \dd$ and $G:%
\dd\rightarrow \cc$ equipped with natural transformations $\eta :\mathrm{Id}_{\cc}\rightarrow GF$ (unit) and $\epsilon :FG\rightarrow \mathrm{Id}_{\dd}$ (counit) such that the ``semitriangular'' identities
\begin{equation}\label{eq:semitr-id}
G\epsilon \circ \eta G=G\mathrm{Id}\text{ and }\epsilon F\circ F\eta =F\mathrm{Id}
\end{equation}
hold true, see also \cite[Definition 22]{Ho90}. In particular, $\eta$ and $\epsilon$ are indeed seminatural transformations. We usually denote a semiadjunction $\left( F,G, \eta
,\epsilon  \right)$ by $F\dashv_\mathrm{s} G$. Note that the natural semi-isomorphism $\tau$ in \eqref{tau} is given as in the functorial case by 
\begin{equation}\label{eq:tau}
\tau_{C,D}(h)=G(h)\circ\eta_C,
\end{equation}
for any morphism $h:FC\to D$ in $\dd$. Its semi-inverse $\sigma$ is given by 
\begin{equation}\label{eq:sigma}
\sigma_{C,D}(g)=\epsilon_D\circ F(g),
\end{equation}
for any $g:C\to GD$ in $\cc$. 
\begin{invisible} Indeed, for any $C\in \cc$, $D\in\dd$, we have that $(\tau_{C,D}\circ\Hom_\dd(F\id_C,\id_D))(h)=\tau_{C,D}(\id_D\circ h\circ F\id_C)=\tau_{C,D}(h\circ F\id_C)=Gh\circ GF\id_C\circ\eta_C= Gh\circ \eta_C=\tau_{C,D}(h)$, for every $h:FC\to D$ in $\dd$. Analogously, we have $(\sigma_{C,D}\circ\Hom_\cc(\id_C,G\id_D))(g)=\sigma_{C,D}(G\id_D\circ g\circ\id_C)=\epsilon_D\circ FG\id_D\circ Fg=\epsilon_D\circ Fg=\sigma_{C,D}(g)$, for every $g:C\to GD$ in $\cc$. Moreover, for every $g:C\to GD$ in $\cc$ we have that $\tau_{C,D}\sigma_{C,D}(g)=\tau_{C,D}(\epsilon_D\circ Fg)=G(\epsilon_D\circ Fg)\circ\eta_C= G\epsilon_D\circ GFg\circ\eta_C=G\epsilon_D\circ\eta_{GD}\circ g=G\id_D\circ g=G\id_D\circ g\circ\id_C=\Hom_\cc(\id_C,G\id_D)(g)$, and for every morphism $h:FC\to D$ in $\dd$ we have that $\sigma_{C,D} \tau_{C,D}(h)=\sigma_{C,D}(Gh\circ\eta_C)=\epsilon_D\circ FGh\circ F\eta_C=h\circ\epsilon_{FC}\circ F\eta_C=h\circ F\id_C=\id_D\circ h\circ F\id_C=\Hom_\dd(F\id_C,\id_D)(h)$. It is easy to see that $\tau$ and $\sigma$ are natural.
%\begin{invisible}
Indeed, for any morphism $f:C'\to C$ in $\cc^\mathrm{op}$ and $g:D\to D'$ in $\dd$, we have that $(\tau_{C,D'}\circ \Hom_\dd(Ff,g))(h)=\tau_{C,D'}(g\circ h\circ Ff)=Gg\circ Gh\circ GFf\circ\eta_{C}=Gg\circ Gh\circ\eta_{C'}\circ f=Gg\circ\tau_{C',D}(h)\circ f$ and $(\sigma_{C,D'}\circ\Hom_\cc(f, Gg))(k)=\sigma_{C,D'}(Gg\circ k\circ f)=\epsilon_{D'}\circ FGg\circ Fk\circ Ff=g\circ\epsilon_D\circ Fk\circ Ff=g\circ\sigma_{C'D}(k)\circ Ff$.
\end{invisible}
Any adjunction of functors is trivially a semiadjunction, and if $(F,G,\eta,\epsilon)$ is a semiadjunction, then $(F^\natural,G^\natural,\eta^\natural,\epsilon^\natural)$ is an adjunction of functors \cite[Theorem 1.9]{Ha85}, %(Generalized Scott embedding),
with unit and counit given on components by $\eta _{\left( C,c\right) }^{\natural
}=\eta _{C}\circ c:\left( C,c\right) \rightarrow \left( GFC,GFc\right) $ and $\epsilon _{\left( D,d\right) }^{\natural }=d\circ \epsilon _{D}:\left(FGD,FGd\right) \rightarrow \left( D,d\right)$, respectively. 
\begin{invisible}
Indeed, $\eta _{\left( C,c\right) }^{\natural}=\eta_C\circ c$ is a morphism in $\cc^\natural$ as $\eta_C\circ c=\eta_C\circ c\circ c=GFc\circ\eta_C\circ c=GFc\circ(\eta_C\circ c)\circ c$; $\epsilon _{\left( D,d\right) }^{\natural }=d\circ \epsilon _{D}$ is a morphism in $\dd^\natural$ as $d\circ \epsilon_D=d\circ d\circ \epsilon_D=d\circ \epsilon_D\circ FGd=d\circ (d\circ \epsilon_D)\circ FGd$; $\eta^\natural$ is natural as for any morphism $f:(C,c)\to (C',c')$ in $\cc^\natural$ we have $\eta^\natural_{(C',c')}\circ f=\eta_{C'}\circ c'\circ f=\eta_{C'}\circ f\circ c= GFf\circ\eta_C\circ c=G^\natural F^\natural f\circ\eta^\natural_{(C,c)}$; $\epsilon^\natural$ is natural as for any morphism $g:(D,d)\to (D',d')$ in $\dd^\natural$ we have $\epsilon^\natural_{(D',d')}\circ FGg=d'\circ \epsilon_{D'}\circ FGg=d'\circ g\circ \epsilon_{D}=g\circ d\circ\epsilon_D=g\circ\epsilon^\natural_{(D,d)}$. Moreover, $G^\natural\epsilon^\natural\circ\eta^\natural G^\natural=G^\natural\id$ as for every $(D,d)\in\dd^\natural$ we have $G^\natural\epsilon^\natural_{(D,d)}\circ\eta^\natural_{(GD,Gd)}=G^\natural(d\circ\epsilon_D)\circ\eta_{GD}\circ Gd=Gd\circ G\epsilon_D\circ\eta_{GD}\circ Gd=Gd\circ G\id_D\circ Gd=Gd=\id_{(GD,Gd)}=G^\natural\id_{(D,d)}$, and $\epsilon^\natural F^\natural\circ F^\natural\eta^\natural=F^\natural\id$ as $\epsilon^\natural_{ F^\natural(C,c)}\circ F^\natural\eta^\natural_{(C,c)}=\epsilon^\natural_{(FC,Fc)}\circ F(\eta_C\circ c)=Fc\circ\epsilon_{FC}\circ F\eta_C\circ Fc=Fc\circ F\id_C\circ Fc=Fc=\id_{(FC,Fc)}=F^\natural\id_{(C,c)}$.
\end{invisible}
Moreover, as shown in \cite[Theorem 3.5]{Ho93}, $F\dashv_\mathrm{s} G$ if and only if $F^\natural\dashv G^\natural$.
It is known that semiadjoint semifunctors are not unique up to isomorphism, but they are unique up to natural semi-isomorphism, cf. \cite[Theorem 3.6]{Ho93}. We include a proof for completeness sake. Cf. \cite[Proof of Proposition 9]{CMZ02} for the case of functors.
\begin{prop}\label{prop:uniqueness-semiiso}
Let $F\dashv_\mathrm{s}G$, $F\dashv_\mathrm{s} G'$ be semiadjunctions of semifunctors. Then, $G$ and $G'$ are naturally semi-isomorphic.
\end{prop}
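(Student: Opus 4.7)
The plan is to reduce the problem to the known uniqueness result for adjoint functors by passing through the idempotent completion. Given semiadjunctions $F\dashv_\mathrm{s} G$ and $F\dashv_\mathrm{s} G'$, invoking \cite[Theorem 3.5]{Ho93} (recalled in the paragraph following \eqref{eq:sigma}) I obtain honest adjunctions of functors $F^\natural \dashv G^\natural$ and $F^\natural \dashv (G')^\natural$ between the idempotent completions $\cc^\natural$ and $\dd^\natural$. By the classical uniqueness of right adjoints up to natural isomorphism (the functorial version of the statement, explicitly recalled from \cite[Proof of Proposition 9]{CMZ02}), there is a natural isomorphism $G^\natural \cong (G')^\natural$ of functors $\dd^\natural \to \cc^\natural$.

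The final step is to descend this natural isomorphism to a natural semi-isomorphism $G \cong_\mathrm{s} G'$ of the original semifunctors. This is exactly the content of \cite[Theorem 2.12]{Ho93} cited in Subsection \ref{subsect:idempcompl}: two semifunctors $G, G' : \dd \to \cc$ are naturally semi-isomorphic if and only if their completions $G^\natural, (G')^\natural$ are naturally isomorphic as functors. Applying this equivalence in the ``if'' direction concludes the proof.

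If one prefers an explicit construction without invoking the Karoubi envelope, one can mimic the functorial proof directly: letting $(\eta,\epsilon)$ and $(\eta',\epsilon')$ be the (semi)units and (semi)counits of the two semiadjunctions, define
\[
\alpha_D := G'\epsilon_D \circ \eta'_{GD} : GD \to G'D, \qquad \beta_D := G\epsilon'_D \circ \eta_{G'D} : G'D \to GD,
\]
which are the images of $\epsilon_D$ and $\epsilon'_D$ under the natural semi-isomorphisms $\tau'$ and $\tau$ of \eqref{eq:tau}. One then checks naturality of $\alpha$ and $\beta$ and verifies the four conditions (i)--(iv) for a natural semi-isomorphism, using the semitriangular identities \eqref{eq:semitr-id} for both semiadjunctions together with seminaturality of $\eta, \eta', \epsilon, \epsilon'$; for instance $\beta\circ \alpha = G\id$ follows by expanding $G\epsilon'_D \circ \eta_{G'D}\circ G'\epsilon_D \circ \eta'_{GD}$, sliding $\eta_{G'D}$ across via naturality and applying $G\epsilon'\circ \eta G' = G\id$ and $G'\epsilon\circ \eta'G = G'\id$.

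The main (and only real) obstacle is bookkeeping: since $\eta, \epsilon$ fail to be identities after composition with $F$ or $G$, all equalities hold only up to insertion of $G\id$ or $G'\id$ factors, which is precisely why the conclusion is a natural semi-isomorphism rather than a natural isomorphism. The Karoubi envelope route bypasses this bookkeeping entirely and is therefore the route I would adopt in the written proof.
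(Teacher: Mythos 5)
Your proposal is correct, and your primary route is genuinely different from the paper's. The paper proves the statement by the direct computation you sketch as a fallback: it sets $\gamma:=G'\epsilon\circ\eta'G$ and $\gamma':=G\epsilon'\circ\eta G'$ (componentwise exactly your $\alpha_D$ and $\beta_D$, i.e.\ the images of $\epsilon_D$, $\epsilon'_D$ under $\tau'$, $\tau$ as in \eqref{eq:tau}), checks the seminaturality conditions $\gamma\circ G\id=\gamma$ and $\gamma'\circ G'\id=\gamma'$, and then verifies $\gamma'\circ\gamma=G\id$ and $\gamma\circ\gamma'=G'\id$ by sliding morphisms across naturality squares and applying both semitriangular identities \eqref{eq:semitr-id} — precisely the bookkeeping you describe. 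Your preferred route instead passes to the Karoubi envelope: $F\dashv_\mathrm{s}G$ iff $F^\natural\dashv G^\natural$ (\cite[Theorem 3.5]{Ho93}), classical uniqueness of right adjoints gives $G^\natural\cong (G')^\natural$, and \cite[Theorem 2.12]{Ho93} descends this to $G\cong_\mathrm{s}G'$; each citation is used in a direction the paper itself records, so the argument is sound. What each approach buys: the completion route is shorter and delegates all identity-insertion bookkeeping to the quoted theorems, but it is non-constructive at the level of $\dd$ and leans on two external results; the paper's explicit computation is self-contained (the paper says it includes the proof ``for completeness sake'') and produces concrete formulas for the semi-inverse pair, which is in the spirit of the later results (e.g.\ Proposition \ref{prop:semiadj} invokes this proposition for uniqueness, and explicit units/counits are manipulated throughout). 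Either write-up would be acceptable; if you adopt the Karoubi route, state explicitly that you are using the existence-level ``if'' direction of the equivalence $G\cong_\mathrm{s}G'\iff G^\natural\cong (G')^\natural$.
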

\begin{proof}
Let $F\dashv_\mathrm{s}G$, $F\dashv_\mathrm{s} G'$ be semiadjunctions with units $\eta$, $\eta'$, and counits $\epsilon$, $\epsilon'$, respectively. Consider $\gamma := G'\epsilon \circ\eta' G:G\to G'$ and $\gamma' := G\epsilon' \circ\eta G':G'\to G$. Note that $\gamma\circ G\id=G'\epsilon \circ\eta' G\circ G\id=G'\epsilon\circ G'FG\id\circ\eta' G=G'(\epsilon\circ FG\id)\circ\eta' G=G'\epsilon\circ\eta' G=\gamma$, and $\gamma'\circ G'\id=G\epsilon'\circ \eta G'\circ G'\id= G\epsilon'\circ GFG'\id\circ\eta G'=G(\epsilon'\circ FG'\id)\circ\eta G'=G\epsilon'\circ\eta G'=\gamma'$. Moreover, $\gamma$ and $\gamma'$ are natural as they are composition of natural transformations,
\begin{invisible} for any arrow $f:D\to D'$ in $\dd$ we have $\gamma_{D'}\circ Gf= G'\epsilon_{D'}\circ\eta'_{GD'}\circ Gf=G'\epsilon_{D'}\circ G'FG f\circ\eta'_{GD}=G'(\epsilon_{D'}\circ FGf)\circ\eta'_{GD}=G'(f\circ\epsilon_D)\circ \eta'_{GD}=G'f\circ G'\epsilon_D\circ\eta'_{GD}=G'f\circ\gamma_D$, and analogously $\gamma'_{D'}\circ G'f=Gf\circ\gamma'_D$,\end{invisible} 
so they are seminatural transformations. From naturality of $\eta$ it follows that $\eta G'\circ G'\epsilon =GFG'\epsilon\circ\eta G'FG$ and $\eta G'FG\circ\eta'G=GF\eta'G\circ\eta G$, and from naturality of $\epsilon'$ it follows that $G\epsilon\circ G\epsilon'FG=G\epsilon'\circ GFG'\epsilon$. Then, we obtain 
\[
\begin{split}
\gamma'\circ\gamma&= G\epsilon'\circ \eta G'\circ G'\epsilon\circ\eta' G= G\epsilon'\circ GFG'\epsilon\circ\eta G'FG\circ\eta' G\\
&= G\epsilon\circ G\epsilon' FG\circ GF\eta' G\circ\eta G= G\epsilon\circ GF\Id_G \circ\eta G=G\epsilon\circ \eta G=G\Id . 
\end{split}
\]
Similarly, from naturality of $\eta'$ and $\epsilon$, we have \[
\begin{split}
\gamma\circ\gamma'&= G'\epsilon\circ \eta' G\circ G\epsilon'\circ\eta G'= G'\epsilon\circ G'FG\epsilon'\circ\eta' GFG'\circ\eta G'\\
&= G'\epsilon'\circ G'\epsilon FG'\circ G'F\eta G' \circ\eta' G'= G'\epsilon'\circ G'F\Id_{G'} \circ\eta' G'=G'\epsilon'\circ \eta' G'=G'\Id .\qedhere
\end{split}
\]
\end{proof}
In \cite{Ho93} the terminology of \emph{right} (resp. \emph{left}) \emph{semiadjoint} is used to denote a semifunctor $G$ (resp. $F$) that is part of a semiadjunction $F\dashv_\mathrm{s} G$, that is, both semitriangular identities \eqref{eq:semitr-id} hold true. In the following definition we adopt the same terminology with a weaker meaning, inspired by \cite[Definition 1.3]{MW13} for functors. %Then, in Proposition \ref{prop:semiadj} we show that a right (resp. left) semiadjoint is actually part of a semiadjunction.
\begin{defn}\label{defn:rl-semiadj}
We say that: 
\begin{itemize}	
\item[(i)] a semifunctor $G:\dd\to\cc$ is a \emph{right semiadjoint} if there exist a semifunctor $F:\mathcal{C}\rightarrow \mathcal{D}$ and seminatural transformations $\eta :\mathrm{Id}_{\mathcal{C}}\rightarrow GF$ and $\epsilon :FG\rightarrow \mathrm{Id}_{\mathcal{D}}$, such that $G\epsilon \circ \eta G=G\mathrm{Id}$;
\item[(ii)] a semifunctor $F:\cc\to\dd$ is a \emph{left semiadjoint} if there exists a semifunctor $G:\mathcal{D}\rightarrow \mathcal{C}$ and seminatural transformations $\eta :\mathrm{Id}_{\mathcal{C}}\rightarrow GF$ and $\epsilon :FG\rightarrow \mathrm{Id}_{\mathcal{D}}$, such that $\epsilon F\circ F\eta=F\id$.
\end{itemize}
\end{defn}
\begin{rmk}\label{rmk:semiadj}
In a semiadjunction $F\dashv_\mathrm{s} G$, $F$ is a left semiadjoint and $G$ is a right semiadjoint.
\end{rmk}
Now we show that a right (resp. left) semiadjoint is actually part of a semiadjunction (cf. \cite[Lemma 2.16]{AB22-II}). 
In particular, we have the following characterization of left and right semiadjoints.
\begin{prop}\label{prop:semiadj}
\begin{enumerate}
\item[$(1)$] A semifunctor $G:\dd\to\cc$ is a right semiadjoint if and only if there is a
semifunctor $F':\mathcal{C}\rightarrow \mathcal{D}$ (unique up to natural semi-isomorphism), such that $F'\dashv_\mathrm{s}G $ is a semiadjunction.
\item[$(2)$] A semifunctor $F:\cc\to\dd$ is a left semiadjoint if and only if there is a
semifunctor $G':\mathcal{D}\rightarrow \mathcal{C}$ (unique up to natural semi-isomorphism), such that $ F\dashv_\mathrm{s} G' $ is a semiadjunction.
\end{enumerate}
\end{prop}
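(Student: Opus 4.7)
The $(\Leftarrow)$ direction of both (1) and (2) is immediate from Remark \ref{rmk:semiadj}, so the work lies in the converse. I will treat (1) in detail; (2) is obtained by a formally dual argument, swapping the roles of unit and counit and of the two semi-triangular identities.

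Suppose $G:\dd\to\cc$ is a right semiadjoint, with witness data $F:\cc\to\dd$, $\eta:\id_\cc\to GF$, $\epsilon:FG\to\id_\dd$ satisfying $G\epsilon\circ\eta G=G\id$. The strategy is to perturb only $F$, keeping $\eta$ and $\epsilon$, so that also the second semi-triangular identity holds. Introduce the natural endomorphism $\mu:=\epsilon F\circ F\eta: F\to F$. The first (and main) technical step is to show that $\mu$ is \emph{idempotent}: a naturality chase, using naturality of $\eta$ at $\eta_C$, naturality of $\epsilon$ at $\epsilon_{FC}$, and the single assumed identity $G\epsilon\circ\eta G=G\id$, yields $\mu\circ\mu=F\id\circ\mu=\mu$. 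As by-products of the same chase I obtain two auxiliary identities that I will need later, namely $G\mu_C\circ \eta_C=\eta_C$ and $\epsilon_D\circ \mu_{GD}=\epsilon_D$.

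With $\mu$ in hand, define $F':\cc\to\dd$ on objects by $F'X:=FX$ and on morphisms by $F'f:=Ff\circ \mu_C$ for every $f:C\to C'$. Naturality of $\mu$ gives $F'g\circ F'f = Fg\circ F f\circ \mu_C\circ \mu_C = F(gf)\circ \mu_C = F'(gf)$, so $F'$ is a semifunctor with $F'\id_C=\mu_C$. The families $\eta$ and $\epsilon$ are reinterpreted as seminatural transformations $\eta:\id_\cc\to GF'$ and $\epsilon:F'G\to\id_\dd$; naturality and the seminaturality conditions $\eta\circ \id=\eta$, $\epsilon\circ F'G\id=\epsilon$ follow from the two auxiliary identities above. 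The first semi-triangular identity $G\epsilon\circ \eta G=G\id$ is the one we already have, while the second one becomes
\[
\epsilon_{F'C}\circ F'\eta_C \;=\; \epsilon_{FC}\circ F\eta_C\circ \mu_C \;=\; \mu_C\circ\mu_C \;=\; \mu_C \;=\; F'\id_C ,
\]
so $(F',G,\eta,\epsilon)$ is a semiadjunction as required.

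For uniqueness up to natural semi-isomorphism, suppose $F',F'':\cc\to\dd$ are both left semiadjoints to $G$. By \cite[Theorem 3.5]{Ho93} the induced functors $(F')^\natural,(F'')^\natural$ are both left adjoints of $G^\natural:\dd^\natural\to\cc^\natural$, hence naturally isomorphic by uniqueness of adjoints in $\Cat$; by \cite[Theorem 2.12]{Ho93} this natural isomorphism descends to a natural semi-isomorphism $F'\cong_\mathrm{s} F''$. (Alternatively, one may mimic the proof of Proposition \ref{prop:uniqueness-semiiso} symmetrically.) The hardest point of the whole argument is the idempotence of $\mu$: with only one semi-triangular identity at disposal, one must interleave naturality of $\eta$, of $\epsilon$, and the hypothesis in the right order; every subsequent verification reduces to this fact together with routine naturality book-keeping.
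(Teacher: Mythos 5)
Your proof is correct and follows essentially the same route as the paper: both define the idempotent seminatural endomorphism $e=\epsilon F\circ F\eta$ from the single assumed semitriangular identity, replace $F$ by $F'$ with $F'f=Ff\circ e_X$ while keeping $\eta$ and $\epsilon$, and verify the second semitriangular identity via $\epsilon_{F'C}\circ F'\eta_C=e_C\circ e_C=e_C=F'\id_C$. The only cosmetic difference is that you route uniqueness through the idempotent completion and uniqueness of adjoints in $\Cat$, whereas the paper invokes the left analogue of Proposition \ref{prop:uniqueness-semiiso} directly — an alternative you also mention.
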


\begin{proof}
$(1)$. If $ F^{\prime }\dashv_\mathrm{s} G $ is a semiadjunction, then by Remark \ref{rmk:semiadj} the semifunctor $G$ is a right semiadjoint and $F':\cc\to\dd$ is a left semiadjoint. Conversely, if $G$ is a right semiadjoint, then there exist a semifunctor $F:\mathcal{C}\rightarrow \mathcal{D}$ and seminatural transformations $\eta :\mathrm{Id}_{\mathcal{C}}\rightarrow GF$ and $\epsilon :FG\rightarrow \mathrm{Id}_{\mathcal{D}}$, such that $G\epsilon \circ \eta G=G\mathrm{Id}$. Set $e:=\epsilon F\circ F\eta :F\rightarrow F$, which is an idempotent seminatural transformation. Indeed, it is natural as it is composition of natural transformations; for any $X\in \cc$ we have $e_X\circ F\id_X=\epsilon_{FX}\circ F\eta_X\circ F\id_X=\epsilon_{FX}\circ F(\eta_X\circ\id_X)=\epsilon_{FX}\circ F\eta_X=e_X$ and, cf. e.g. \cite[Lemma 1.4(2)]{MW13}, $e\circ e=\epsilon F\circ F\eta \circ \epsilon F\circ F\eta =\epsilon F\circ\epsilon FGF\circ FGF\eta \circ F\eta =\epsilon F\circ FG\epsilon F\circ F\eta GF\circ F\eta =\epsilon F\circ FG\Id_{F}\circ F\eta=\epsilon F\circ F\eta =e.$ %Moreover, for any $f:X\to Y$ in $\cc$ we have that $Ff\circ e_X=Ff\circ\epsilon_{FX}\circ F\eta_X=\epsilon_{FY}\circ FGFf\circ F\eta_X=\epsilon_{FY}\circ F\eta_Y\circ Ff=e_Y\circ Ff$.
Then, there is a semifunctor $F^{\prime }:\mathcal{C}%
\rightarrow \mathcal{D}$ that acts as $F$ on objects and sends a morphism $%
f:X\rightarrow Y$ in $\cc$ to $Ff\circ e_{X}$. Indeed, given $f:X\rightarrow Y$ and $g:Y\rightarrow Z$ in $\mathcal{C}$ we have %
%\begin{equation*}
$F^{\prime }g\circ F^{\prime }f=Fg\circ e_{Y}\circ Ff\circ e_{X}=Fg\circ
Ff\circ e_{X}\circ e_{X}=F\left( g\circ f\right) \circ e_{X}=F^{\prime
}\left( g\circ f\right)$, 
%\end{equation*}%
so that $F^{\prime }$ is a semifunctor. Now we show that $\left( F^{\prime
},G,\eta ^{\prime },\epsilon ^{\prime }\right) $ is a semiadjunction where $%
\eta _{C}^{\prime }:=\eta _{C}$ and $\epsilon _{D}^{\prime }:=\epsilon _{D}$, for every object $C\in\cc$ and $D\in\dd$.
Note that by the assumption $G\epsilon \circ \eta G=G\mathrm{Id}$, we get $\epsilon _{D}\circ e_{GD} =\epsilon _{D}\circ \epsilon _{FGD}\circ F\eta
_{GD}=\epsilon _{D}\circ FG\epsilon _{D}\circ F\eta _{GD}=\epsilon _{D}\circ
F\left( G\epsilon _{D}\circ \eta _{GD}\right) =\epsilon _{D}\circ FG \mathrm{Id}_{D} =\epsilon _{D},$ for every $D\in\mathcal{D}$, where the last equality follows from the seminaturality of $\epsilon$, so
\begin{equation}  \label{e1}
\epsilon \circ eG =\epsilon.
\end{equation}  
For every $D\in\mathcal{D}$, we have $\epsilon _{D}^{\prime }\circ
F^{\prime }G\mathrm{Id}_{D}=\epsilon_D\circ FG\id_D\circ e_{GD}\overset{\eqref{e1}}{=}\epsilon_D\circ e_{GD}\circ FG\id_D\circ e_{GD}=\epsilon_D\circ e_{GD}\circ e_{GD}\circ FG\id_D=\epsilon_D\circ e_{GD}\circ FG\id_D\overset{\eqref{e1}}{=}\epsilon _{D}\circ FG\mathrm{Id}_{D}=\epsilon
_{D}=\epsilon _{D}^{\prime }$, and for every morphism $f:D\rightarrow D'$ in $%
\mathcal{D}$ we have
%\begin{equation*}
$\epsilon _{D'}^{\prime }\circ F^{\prime }Gf=\epsilon _{D'}\circ FGf\circ
e_{GD}=f\circ \epsilon _{D}\circ e_{GD}\overset{(\ref{e1})}{=}f\circ
\epsilon _{D}=f\circ \epsilon _{D}^{\prime }$, 
%\end{equation*}%
so that $\epsilon ^{\prime
}:=\left( \epsilon _{D}\right) _{D\in \mathcal{D}}:F^{\prime }G\rightarrow
\mathrm{Id}_{\mathcal{D}}$ is indeed a seminatural transformation. For every object $C$ in $\mathcal{C}$, it holds $\eta _{C}^{\prime }\circ
\mathrm{Id}_{\mathcal{C}}\left( \mathrm{Id}_{C}\right) =\eta
_{C}^{\prime }\circ \mathrm{Id}_{C}=\eta _{C}^{\prime }$, and for every
morphism $f:X\rightarrow Y$ in $\mathcal{C}$ we have
\begin{equation*}
\begin{split}
GF^{\prime }f\circ \eta _{X}^{\prime }&=G\left( Ff\circ e_{X}\right) \circ
\eta _{X}=G\left( e_{Y}\circ Ff\right) \circ \eta _{X}=Ge_{Y}\circ GFf\circ
\eta _{X}\\
&=G(\epsilon_{FY}\circ F\eta_Y)\circ GFf\circ\eta _{X}=G\epsilon_{FY}\circ (GF\eta_Y\circ \eta_Y)\circ f=G\epsilon_{FY}\circ \eta_{GFY}\circ \eta_Y\circ f\\&=G\id_{FY}\circ GFf\circ\eta_X=G(\id_{FY}\circ Ff)\circ\eta_X=GFf\circ\eta_X=\eta_Y\circ f=\eta^{\prime}_Y\circ f
\end{split}
\end{equation*}
so that $\eta ^{\prime
}:=\left( \eta _{C}\right) _{C\in \mathcal{C}}:\mathrm{Id}_{\mathcal{C}%
}\rightarrow GF^{\prime }$ is indeed a seminatural transformation. Thus, from $G\epsilon _{D}^{\prime }\circ \eta _{GD}^{\prime }=G\epsilon _{D}\circ \eta
_{GD}=G\mathrm{Id}_D$ and $\epsilon _{F^{\prime }C}^{\prime }\circ F^{\prime }\eta _{C}^{\prime
}=\epsilon _{FC}\circ F^{\prime }\eta _{C}=\epsilon _{FC}\circ F\eta
_{C}\circ e_{C}=e_{C}\circ
e_{C}=e_{C}=F\id_C\circ e_C=F^{\prime }\mathrm{Id}_{C}$
it follows that $\left( F^{\prime },G,\eta ^{\prime },\epsilon ^{\prime }\right)$ is a semiadjunction. By the left analogue of Proposition \ref{prop:uniqueness-semiiso}, $F'$ is unique up to natural semi-isomorphism.
\par
$(2)$. It is dual to $(1)$.
\end{proof}
%\begin{cor}\label{cor:rl-semiadj-semiiso}
%In a semiadjunction right (left) semiadjoint semifunctors are unique up to semi-isomorphism.
%\end{cor}
%\begin{proof}
%It follows from Proposition \ref{prop:semiadj} and Proposition \ref{prop:uniqueness-semiiso}.
%\end{proof}
In the following proposition we show that the notion of right (resp. left) semiadjoint is stable under composition. As a consequence, as pointed out in \cite[page 4]{HoM95}, semiadjunctions remain stable under composition, similarly to the case of adjunctions of functors, cf. \cite[IV.8, Theorem 1]{Mac98}. 
\begin{prop}\label{prop:compsemiadj}
	\begin{itemize}
\item[(1)] Given two right semiadjoints $G:\dd\to\cc$ and $G':\e\to\dd$, then the composite semifunctor $G\circ G':\e\to\cc$ is a right semiadjoint. 
\item[$(2)$] Given two left semiadjoints $F:\cc\to\dd$ and $F':\dd\to\e$, then the composite semifunctor $F'\circ F:\cc\to\e$ is a left semiadjoint.
\end{itemize}
\end{prop}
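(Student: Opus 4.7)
The plan is to prove part (1) directly and derive part (2) by a formal dual argument. By Proposition~\ref{prop:semiadj}(1), the hypothesis that $G:\dd\to\cc$ and $G':\e\to\dd$ are right semiadjoints yields full semiadjunctions $F\dashv_\mathrm{s} G$ and $F'\dashv_\mathrm{s} G'$, with units $\eta,\eta'$ and counits $\epsilon,\epsilon'$. To show that $GG'$ is again a right semiadjoint, Proposition~\ref{prop:semiadj}(1) reduces the problem to exhibiting a semifunctor $H:\cc\to\e$ together with seminatural transformations $\tilde\eta:\Id_\cc\to GG'H$ and $\tilde\epsilon:HGG'\to\Id_\e$ satisfying the right semitriangular identity $GG'\tilde\epsilon\circ\tilde\eta GG'=(GG')\Id$.

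Following the classical recipe for composing adjunctions (cf.\ \cite[IV.8, Theorem 1]{Mac98}), I would take $H:=F'F$ and set
\[
\tilde\eta := G\eta' F\circ \eta\colon \Id_\cc\to GG'F'F,\qquad \tilde\epsilon := \epsilon'\circ F'\epsilon G'\colon F'FGG'\to \Id_\e.
\]
Both are natural as pastings of natural transformations. The seminaturality of $\tilde\eta$ is automatic, since its source is $\Id_\cc$; that of $\tilde\epsilon$ is a short diagram chase, using naturality of $\epsilon$ applied to the idempotent $G'\id_X$ to rewrite $\epsilon_{G'X}\circ FGG'\id_X=G'\id_X\circ\epsilon_{G'X}$, after which the seminaturality of $\epsilon'$ (which gives $\epsilon'_X\circ F'G'\id_X=\epsilon'_X$) collapses the result back to $\tilde\epsilon_X$.

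The core computation is then the semitriangular identity. The key move is to insert the naturality of $\eta'$ against the morphism $\epsilon G':FGG'\to G'$, yielding $GG'F'\epsilon G'\circ G\eta'FGG'=G\eta' G'\circ G\epsilon G'$, and hence the factorization
\[
GG'\tilde\epsilon\circ\tilde\eta GG' \;=\; \bigl(GG'\epsilon'\circ G\eta' G'\bigr)\circ\bigl(G\epsilon G'\circ\eta GG'\bigr).
\]
The second factor equals $G\Id G'$ by the semitriangular identity for $F\dashv_\mathrm{s} G$, whiskered by $G'$; the first equals $GG'\Id$ by the semitriangular identity for $F'\dashv_\mathrm{s} G'$, whiskered by $G$. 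The only departure from the functorial argument — and the point I expect to require most care — is recognizing that the pasting of these two ``$\Id$''-transformations coincides componentwise with $(GG')\Id$, which comes down to the trivial identity $G'\id_E\circ\id_{G'E}=G'\id_E$ in $\dd$ for every $E\in\e$. Part (2) then follows by a dual argument, applying the same construction and invoking Proposition~\ref{prop:semiadj}(2); equivalently, one passes to opposite categories and reuses part (1).
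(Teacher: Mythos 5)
Your proof is correct and follows essentially the same route as the paper's: the same composite unit $G\eta'F\circ\eta$ and counit $\epsilon'\circ F'\epsilon G'$, the same key step of sliding $\eta'$ past $\epsilon G'$ by naturality, and the same use of the two one-sided semitriangular identities. The only (harmless) detour is your appeal to Proposition~\ref{prop:semiadj} to upgrade the hypotheses to full semiadjunctions; the paper works directly from Definition~\ref{defn:rl-semiadj}, which already supplies exactly the one-sided identities the computation needs.
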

\proof
(1). If $G:\dd\to\cc$ and $G':\e\to\dd$ are right semiadjoints, then by definition there exist a semifunctor $F:\mathcal{C}\rightarrow \mathcal{D}$ and seminatural transformations $\eta :\mathrm{Id}_{\mathcal{C}}\rightarrow GF$ and $\epsilon :FG\rightarrow \mathrm{Id}_{\mathcal{D}}$, such that $G\epsilon \circ \eta G=G\mathrm{Id}$, and there exist a semifunctor $F':\mathcal{D}\rightarrow \mathcal{E}$ and seminatural transformations $\eta' :\mathrm{Id}_{\mathcal{D}}\rightarrow G'F'$ and $\epsilon' :F'G'\rightarrow \mathrm{Id}_{\mathcal{E}}$, such that $G'\epsilon' \circ \eta' G'=G'\mathrm{Id}$, respectively. Set $\bar{\eta}:=G\eta' F\circ\eta$ and $\bar{\epsilon}:=\epsilon'\circ F'\epsilon G'$. We now show that the composite $G\circ G':\e\to\cc$ is a right semiadjoint through the semifunctor $F'\circ F:\cc\to\e$ and the seminatural transformations $\bar{\eta}: \id_\cc\to GG'F'F$ and $\bar{\epsilon}:F'FGG'\to\id_\e$. Indeed, we have
\begin{displaymath}
\begin{split}
GG'\bar{\epsilon}\circ\bar{\eta}GG'&=GG'\epsilon'\circ GG'F'\epsilon G'\circ G\eta' FGG'\circ\eta GG'=G(G'\epsilon'\circ G'F'\epsilon G'\circ\eta' FGG')\circ\eta GG'\\
&=G(G'\epsilon'\circ\eta' G'\circ\epsilon G')\circ\eta GG'=G(G'\id\circ\epsilon G')\circ\eta GG'= GG'\id\circ G\epsilon G'\circ\eta GG'\\&= GG'\id\circ (G\epsilon\circ\eta G)G'=GG'\id\circ G\id_{G'}=G(G'\id\circ \id_{G'})=GG'\id .
\end{split}
\end{displaymath}
(2). At the same way, if $F:\cc\to\dd$ and $F':\dd\to\e$ are left semiadjoints, then by definition there exist a semifunctor $G:\mathcal{D}\rightarrow \mathcal{C}$ and seminatural transformations $\eta :\mathrm{Id}_{\mathcal{C}}\rightarrow GF$ and $\epsilon :FG\rightarrow \mathrm{Id}_{\mathcal{D}}$, such that $\epsilon F\circ F\eta=F\id$, and there exist a semifunctor $G':\mathcal{E}\rightarrow \mathcal{D}$ and seminatural transformations $\eta' :\mathrm{Id}_{\mathcal{D}}\rightarrow G'F'$ and $\epsilon' :F'G'\rightarrow \mathrm{Id}_{\mathcal{E}}$, such that $\epsilon' F'\circ F'\eta'=F'\id$, respectively. By setting again $\bar{\eta}:=G\eta' F\circ\eta$ and $\bar{\epsilon}:=\epsilon'\circ F'\epsilon G'$, it holds that $\bar{\epsilon} F'F\circ F'F\bar{\eta}=F'F\id$, hence $F'\circ F:\cc\to\e$ is a left semiadjoint. In fact, we have that 
\begin{displaymath}
	\begin{split}
		\bar{\epsilon}F'F\circ &F'F\bar{\eta}=\epsilon'F'F\circ F'\epsilon G'F'F\circ F'FG\eta'F\circ F'F\eta =(\epsilon' F'\circ F'\epsilon G'F'\circ F'FG\eta' )F\circ F'F\eta\\
		&=(\epsilon' F'\circ F'\eta' \circ F'\epsilon )F\circ F'F\eta =(F'\id\circ F'\epsilon )F\circ F'F\eta = F'\epsilon F\circ F'F\eta=F'F\id . \qedhere
	\end{split}
\end{displaymath}
\endproof
\begin{cor}\label{cor:semiadj-compos}
Given semiadjunctions $\left( F\dashv_\mathrm{s} G:\dd\to\cc,\eta ,\epsilon \right) $ and $\left( F'\dashv_\mathrm{s} G':\e\to\dd,\eta',\epsilon' \right)$, then also $\left( F'F\dashv_\mathrm{s} GG':\e\to\cc,G\eta' F\circ\eta ,\epsilon'\circ F'\epsilon G' \right)$ is a semiadjunction. 
\end{cor}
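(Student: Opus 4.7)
The plan is to observe that this corollary is an almost immediate consequence of Proposition \ref{prop:compsemiadj}, since the two parts of that proposition produce the two semitriangular identities needed for the full semiadjunction, and they produce them with the \emph{same} choice of data. So the strategy is not to reprove anything, but to assemble the two halves.

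First I would unpack what each given semiadjunction contributes. By Remark \ref{rmk:semiadj}, the semiadjunction $F\dashv_\mathrm{s} G$ makes $F$ a left semiadjoint and $G$ a right semiadjoint; likewise $F'\dashv_\mathrm{s} G'$ makes $F'$ a left semiadjoint and $G'$ a right semiadjoint. Set $\bar{\eta}:=G\eta' F\circ\eta :\id_\cc\to GG'F'F$ and $\bar{\epsilon}:=\epsilon'\circ F'\epsilon G':F'FGG'\to\id_\e$, which are exactly the seminatural transformations used in both proofs of Proposition \ref{prop:compsemiadj}. The (semi)naturality of $\bar{\eta}$ and $\bar{\epsilon}$ follows because they are composites of (semi)natural transformations (whiskered by semifunctors), and this was already tacitly used in the proof of Proposition \ref{prop:compsemiadj}.

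The key step is then simply to invoke the proposition twice on the same data. Proposition \ref{prop:compsemiadj}(1), applied to the right semiadjoints $G$ and $G'$, yields the first semitriangular identity
\[
GG'\bar{\epsilon}\circ\bar{\eta}GG'=GG'\id .
\]
Proposition \ref{prop:compsemiadj}(2), applied to the left semiadjoints $F$ and $F'$ with the very same $\bar{\eta}$ and $\bar{\epsilon}$, yields the second semitriangular identity
\[
\bar{\epsilon}F'F\circ F'F\bar{\eta}=F'F\id .
\]
Together these are precisely the conditions \eqref{eq:semitr-id} for $(F'F,GG',\bar{\eta},\bar{\epsilon})$ to be a semiadjunction in the sense of \cite[Theorem 3.10]{Ho93}, completing the proof.

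The only point requiring a tiny bit of care is the compatibility between the two applications of Proposition \ref{prop:compsemiadj}: one needs to check that the proof of part (1) and the proof of part (2) use exactly the same $\bar{\eta}$ and $\bar{\epsilon}$, rather than two different choices (as would typically occur when invoking Proposition \ref{prop:semiadj} abstractly to turn a one-sided semiadjoint into a genuine semiadjunction). Here, however, $\bar{\eta}$ and $\bar{\epsilon}$ are built directly from the given units and counits, so no modification via the idempotent $e=\epsilon F\circ F\eta$ (as in the proof of Proposition \ref{prop:semiadj}) is required, and the two identities can be combined verbatim. Hence no further obstacle arises.
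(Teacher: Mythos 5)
Your proposal is correct and follows essentially the same route as the paper: the paper's proof likewise invokes Remark \ref{rmk:semiadj} and then appeals to the \emph{proofs} of both parts of Proposition \ref{prop:compsemiadj}, which establish the two semitriangular identities for the very same $\bar{\eta}=G\eta'F\circ\eta$ and $\bar{\epsilon}=\epsilon'\circ F'\epsilon G'$. Your closing remark about why the two halves use compatible data (no replacement by the idempotent $e=\epsilon F\circ F\eta$ is needed here) is exactly the point that makes the corollary immediate.
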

\begin{proof}
By Remark \ref{rmk:semiadj} $G$ and $G'$ are right semiadjoints through $F, \eta,\epsilon$ and $F',\eta',\epsilon'$, respectively, and $F$, $F'$ are left semiadjoints through $G, \eta,\epsilon$ and $G', \eta',\epsilon'$, respectively. Then, by the proof of Proposition \ref{prop:compsemiadj} we know that $GG'\bar{\epsilon}\circ\bar{\eta}GG'=GG'\id$ and $\bar{\epsilon}F'F\circ F'F\bar{\eta}=F'F\id$, where $\bar{\eta}=G\eta' F\circ\eta : \id_\cc\to GG'F'F$ and $\bar{\epsilon}=\epsilon'\circ F'\epsilon G':F'FGG'\to\id_\e$. 
%the composite $GG':\e\to\cc$ is a right semiadjoint through the semifunctor $F'F:\cc\to\e$ and the seminatural transformations $\bar{\eta}=G\eta' F\circ\eta : \id_\cc\to GG'F'F$ and $\bar{\epsilon}=\epsilon'\circ F'\epsilon G':F'FGG'\to\id_\e$. We know that $GG'\bar{\epsilon}\circ\bar{\eta}GG'=GG'\id$. It remains to check that $\bar{\epsilon}F'F\circ F'F\bar{\eta}=F'F\id$. In fact, we have that 
%\begin{displaymath}
%	\begin{split}
%		\bar{\epsilon}F'F\circ F'F\bar{\eta}&=\epsilon'F'F\circ F'\epsilon G'F'F\circ FF'G\eta'F\circ FF'\eta =(\epsilon' F'\circ F'\epsilon G'F'\circ F'FG\eta' )F\circ F'F\eta\\
%		&=(\epsilon' F'\circ F'\eta' \circ F'\epsilon )F\circ F'F\eta =(F'\id\circ F'\epsilon )F\circ F'F\eta \\
%		&= F'\epsilon F\circ F'F\eta=F'F\id . \qedhere
%	\end{split}
%\end{displaymath}
\end{proof}
%\proof
%It follows by Proposition \ref{prop:semiadj} and Lemma \ref{prop:compsemiadj}.
%\endproof
%\begin{invisible} 
Now we show how an idempotent (semi)natural transformation on the identity functor of a category allows to obtain a canonical semiadjunction of semifunctors.
\begin{prop}\label{prop:E}
	Given a category $\cc$, any idempotent (semi)natural transformation $e=(e_X)_{X\in\cc}:\id_\cc\to\id_\cc$ defines an endosemifunctor $E^e:\cc\to\cc$, which is self-semiadjoint, i.e. $E^e\dashv_\mathrm{s} E^e$. Conversely, any semifunctor which is self-semiadjoint defines an idempotent seminatural transformation.
\end{prop}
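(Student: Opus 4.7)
The proof has two directions. I would treat them in turn.

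For the forward direction, given the idempotent seminatural transformation $e:\id_\cc\to\id_\cc$, I would define $E^e$ as the identity on objects and $E^e(f):=f\circ e_X$ on a morphism $f:X\to Y$. Naturality of $e$ yields $f\circ e_X=e_Y\circ f$, and compositionality
$$E^e(g)\circ E^e(f)=g\circ e_Y\circ f\circ e_X=g\circ f\circ e_X\circ e_X=g\circ f\circ e_X=E^e(g\circ f)$$
follows from the idempotence $e\circ e=e$. Since $E^e\id_X=e_X$, which is not $\id_X$ in general, $E^e$ is a genuine semifunctor. A short computation shows $E^eE^e=E^e$ on the nose, so I can choose $\eta_X:=e_X$ and $\epsilon_X:=e_X$ as unit $\id_\cc\to E^eE^e$ and counit $E^eE^e\to\id_\cc$. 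Naturality and seminaturality of these follow from those of $e$, and the two semitriangular identities collapse on components to $e_X\circ e_X\circ e_X=e_X$, which is immediate from idempotence. Hence $E^e\dashv_\mathrm{s} E^e$.

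For the converse, suppose $F:\cc\to\cc$ is self-semiadjoint with unit $\eta:\id_\cc\to FF$ and counit $\epsilon:FF\to\id_\cc$. I would set $e_X:=\epsilon_X\circ\eta_X:X\to X$. The family $e=(e_X)_{X\in\cc}$ is a (semi)natural transformation $\id_\cc\to\id_\cc$ since $\eta$ and $\epsilon$ are seminatural, and the seminaturality condition $e_X\circ\id_\cc\id_X=e_X$ holds trivially because $\id_\cc\id_X=\id_X$. The content of the statement is then the idempotence $e\circ e=e$. My plan would be to expand
$$e_X\circ e_X=\epsilon_X\circ\eta_X\circ\epsilon_X\circ\eta_X,$$
rewrite the middle composite $\eta_X\circ\epsilon_X$ via naturality of $\epsilon$ (getting $\epsilon_{FFX}\circ FF\eta_X$), and then invoke the semitriangular identities $F\epsilon\circ\eta F=F\id$ and $\epsilon F\circ F\eta=F\id$, together with the seminaturality relation $\epsilon\circ FF\id=\epsilon$, to collapse the result back to $\epsilon_X\circ\eta_X=e_X$.

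The main obstacle is exactly the idempotence verification in the converse. Pure naturality yields only tautologies: after one rewriting one reaches $e_X\circ e_X=\epsilon_X\circ FF(e_X)\circ\eta_X$, and a further naturality of $\eta$ on $e_X$ gives $FF(e_X)\circ\eta_X=\eta_X\circ e_X$, which merely returns to $e_X\circ e_X$. To break this loop one must insert the semitriangular identities at the right step, exploiting the fact that $F\id=F\epsilon\circ\eta F=\epsilon F\circ F\eta$ provides two distinct decompositions of the idempotent $F\id$ that interact non-trivially with $\eta$ and $\epsilon$. This is where the technical weight of the argument lies; the remaining bookkeeping is routine.
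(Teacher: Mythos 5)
Your forward direction is correct and is essentially the paper's own argument: the same definition of $E^e$ on objects and morphisms, the same unit and counit $\eta^e=\epsilon^e=e$, and the semitriangular identities collapsing to $e_X\circ e_X\circ e_X=e_X$.

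The converse, however, is not a gap you can close by finding the right rewriting: the construction you propose is false. The transformation $e_X:=\epsilon_X\circ\eta_X\colon X\to X$ is indeed natural and trivially seminatural, but it need not be idempotent. Take $\cc=\Ab$ and the self-adjoint functor $F(A)=A\oplus A$ (an adjunction of functors is in particular a semiadjunction). Here $\eta_A(a)=\bigl((a,0),(0,a)\bigr)$, $\epsilon_A\bigl((b_1,b_2),(b_3,b_4)\bigr)=b_1+b_4$, the triangular identities hold, and yet $\epsilon_A\circ\eta_A=2\cdot\id_A$, which is not idempotent. The structural reason your plan cannot succeed is visible in your own computation: after applying naturality you arrive at $\epsilon_X\circ\epsilon_{FFX}\circ FF\eta_X\circ\eta_X$, whose middle block is $\epsilon FF\circ FF\eta$; the semitriangular identities only control the one-sided whiskerings $\epsilon F\circ F\eta$ and $F\epsilon\circ\eta F$ (and their images $\epsilon FF\circ F\eta F$ and $F\epsilon F\circ FF\eta$), none of which matches $\epsilon FF\circ FF\eta$.

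The paper avoids this by producing the idempotent as a seminatural transformation on $E$ itself rather than on $\id_\cc$ (the statement does not require the latter): it sets $e:=\epsilon E\circ E\eta\colon E\to E$. By the semitriangular identity this is exactly $E\id$, whose components $E\id_X$ are idempotent morphisms for any semifunctor, so $e\circ e=E\id\circ E\id=E\id=e$ is immediate; seminaturality $e_X\circ E\id_X=e_X$ follows from $E(\eta_X\circ\id_X)=E\eta_X$. Note that in your example this gives $e=\id_F$, consistent with the fact that for a genuine functor the recovered idempotent is necessarily trivial. You should replace your candidate with this one.
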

\begin{proof}
	Given the idempotent seminatural transformation $e:\id_\cc\to\id_\cc$, consider the assignment $$X\mapsto X,\quad [f:X\to Y]\mapsto f\circ e_X=e_Y\circ f,$$ for any object $X\in \cc$ and for any morphism $f$ in $\cc$. It defines a semifunctor $E^e:\cc\to\cc$. In fact, given morphisms $f:X\to Y$, $g:Y\to Z$ in $\cc$, we have that $E^e(g\circ f)=g\circ f\circ e_X=g\circ (f\circ e_X)\circ e_X=g\circ e_Y\circ f\circ e_X=E^e(g)\circ E^e(f)$ but $E^e(\id_X)=\id_X\circ e_X=e_X$, which is not necessarily $\id_X$. We show that $E^e\dashv_\mathrm{s} E^e$ is a semiadjunction with unit $\eta^e:\id_\cc\to E^eE^e$, $\eta_X^e=e_X$, and counit $\epsilon^e:E^eE^e\to\id_\cc$, $\epsilon^e_X=e_X$. Indeed, we have $E^e\epsilon^e_X\circ\eta^e_{E^eX}=E^e(e_X)\circ\eta^e_{X}=e_X\circ e_X\circ e_X=e_X=E^e\id_X$, and $\epsilon^{e}_{E^e X}\circ E^e\eta^{e}_X=e_X\circ \eta^{e}_X\circ e_X=e_X\circ e_X\circ e_X=e_X=E^e\id_X$. Conversely, if $E:\cc\to\cc$ is a self-semiadjoint semifunctor, then there exist seminatural transformations $\eta :\mathrm{Id}_{\mathcal{C}}\rightarrow EE$ and $\epsilon :EE\rightarrow \mathrm{Id}_{\mathcal{C}}$, such that $E\epsilon \circ \eta E=E\mathrm{Id}$ and $\epsilon E\circ E\eta=E\id$. As in the proof of Proposition \ref{prop:semiadj}, set $e:=\epsilon E\circ E\eta :E\rightarrow E$, which is an idempotent seminatural transformation. Indeed, it is natural as it is composition of natural transformations; for any $X\in \cc$ we have $e_X\circ E(\id_X)=\epsilon_{EX}\circ E\eta_X\circ E(\id_X)=\epsilon_{EX}\circ E(\eta_X\circ\id_X)=\epsilon_{EX}\circ E\eta_X=e_X$ and $e\circ e=E\mathrm{Id}\circ E\mathrm{Id}=E\mathrm{Id}=e$. %Moreover, for any $f:X\to Y$ in $\cc$ we have that $Ef\circ e_X=Ef\circ\epsilon_{EX}\circ E\eta_X=\epsilon_{EY}\circ EEEf\circ E\eta_X=\epsilon_{EY}\circ E\eta_Y\circ Ef=e_Y\circ Ef$. 
	%$\epsilon E\circ E\eta \circ \epsilon E\circ E\eta =\epsilon E\circ\epsilon EEE\circ EEE\eta \circ E\eta =\epsilon E\circ EE\epsilon E\circ E\eta EE\circ E\eta =\epsilon E\circ EE\Id_{E}\circ E\eta=\epsilon E\circ E\eta =e.$
\end{proof}
\begin{defn}\label{defn:E}	
We call the semifunctor $E^e$ given as in Proposition \ref{prop:E} the \textbf{canonical semifunctor} attached to an idempotent seminatural transformation $e=(e_X)_{X\in\cc}:\id_\cc\to\id_\cc$ on a category $\cc$.
\end{defn}
\begin{rmk}\label{rmk:E-id} 
	Given a category $\cc$, let $e=(e_X)_{X\in\cc}:\id_\cc\to\id_\cc$ be an idempotent (semi)natural transformation. Then, $e=\id_{\id_\cc}:\id_\cc\to\id_\cc$ if and only if $E^e:\cc\to\cc$ is the identity functor on $\cc$. 	
\end{rmk}
\begin{invisible}
\proof
If $e=\id_{\id_\cc}:\id_\cc\to\id_\cc$, we have that the semifunctor $E^e:\cc\to\cc$, given by $X\mapsto X$, $(f:X\to Y)\mapsto f\circ e_X=f\circ\id_X=f$ is actually the identity functor on $\cc$. Conversely, if $E^e$ is the identity functor on $\cc$, then $e$ results to be $\id_{\id_\cc}$. 
\endproof
\end{invisible}
As a consequence of Corollary \ref{cor:semiadj-compos} and Proposition \ref{prop:E}, given a (semi)adjunction of (semi)functors and an idempotent seminatural transformation, we can obtain another semiadjunction of semifunctors as follows.
\begin{cor}\label{cor:F'G'}
Let $F\dashv_\mathrm{s} G:\dd\to\cc$ be a semiadjunction with unit $\eta$ and counit $\epsilon$, and let $e:\id_\cc\to\id_\cc$ be an idempotent seminatural transformation. Consider the canonical semifunctor $E^e:\cc\to\cc$. Then, $F':=FE^e:\cc\to\dd$ and $G':=E^eG:\dd\to\cc$ form a semiadjunction $F'\dashv_\mathrm{s} G'$.
%Then, there exist a semifunctor $F':\cc\to\dd$ and a semifunctor $G':\dd\to\cc$, defined as $F$ and $G$ on objects, respectively, such that $F'\dashv_\mathrm{s} G'$.
\end{cor}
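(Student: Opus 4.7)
The plan is to realize this as a direct application of Corollary \ref{cor:semiadj-compos} once we recognize the canonical semifunctor $E^e$ as packaging a self-semiadjunction. Specifically, by Proposition \ref{prop:E}, the idempotent seminatural transformation $e:\id_\cc\to\id_\cc$ gives rise to a semiadjunction $E^e\dashv_\mathrm{s} E^e:\cc\to\cc$ whose unit $\eta^e$ and counit $\epsilon^e$ both have components $e_X$. I would then feed this into the composition corollary.

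First, I would set up the two semiadjunctions to be composed. Viewing $E^e\dashv_\mathrm{s} E^e:\cc\to\cc$ as the first semiadjunction (playing the role of $F\dashv_\mathrm{s} G:\dd\to\cc$ in Corollary \ref{cor:semiadj-compos}, with $\dd$ replaced by $\cc$) and $F\dashv_\mathrm{s} G:\dd\to\cc$ as the second semiadjunction (playing the role of $F'\dashv_\mathrm{s} G':\e\to\dd$ in the corollary, with $\dd$ replaced by $\cc$ and $\e$ replaced by $\dd$), the categories line up: $E^e:\cc\to\cc$ composes with $F:\cc\to\dd$ to give $F\circ E^e=F'$ and $G:\dd\to\cc$ composes with $E^e:\cc\to\cc$ to give $E^e\circ G=G'$.

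Second, I would invoke Corollary \ref{cor:semiadj-compos} directly to conclude that $F'=FE^e\dashv_\mathrm{s} E^e G=G'$ is a semiadjunction, with unit $E^e\eta E^e\circ\eta^e:\id_\cc\to E^e GFE^e$ and counit $\epsilon\circ F\epsilon^e G:FE^eE^e G\to\id_\dd$. Since the components of $\eta^e$ and $\epsilon^e$ are given by $e$, these formulas reduce to $\eta'_C=E^e\eta_C\circ e_C$ and $\epsilon'_D=\epsilon_D\circ Fe_{GD}$, respectively, though for the statement of the corollary it suffices to know that some unit and counit exist.

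There is essentially no obstacle here beyond bookkeeping: the semitriangular identities are automatic by Corollary \ref{cor:semiadj-compos} once one checks that the composition of the two semiadjunctions is well-typed. The only subtle point is verifying that the categorical roles in the corollary are correctly assigned (which semiadjunction is ``inner'' and which is ``outer''), but this is forced by the requirement that $F'=FE^e$ and $G'=E^e G$.
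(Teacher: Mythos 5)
Your proposal is correct and takes exactly the paper's route: the paper states this corollary with no separate proof, presenting it precisely as ``a consequence of Corollary \ref{cor:semiadj-compos} and Proposition \ref{prop:E}''. Your assignment of roles (with $E^e\dashv_\mathrm{s}E^e$ as the inner semiadjunction and $F\dashv_\mathrm{s}G$ as the outer one) and the resulting unit $E^e\eta E^e\circ\eta^e$ and counit $\epsilon\circ F\epsilon^e G$ match what Corollary \ref{cor:semiadj-compos} produces.
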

In Example \ref{es:morph-ring} we will see an instance of a semiadjunction as in the previous corollary, constructed out of a morphism of rings.
\section{Semisplitting properties for morphisms}\label{sect:semisplit-morph}
In this section we study semisplitting properties for morphisms whose source or target is the image of a semifunctor. For semifunctors $F:\cc\to\dd$, $F':\cc'\to\dd$ and objects $C\in\cc$, $C'\in\cc'$, we introduce the notions of $F_C$-semisplit-mono, $F_C$-semisplit-epi, $(F_C,F'_{C'})$-semisplit-mono, $(F_C,F'_{C'})$-semisplit-epi, $(F_C,F'_{C'})$-semi-isomorphism. Thus, given a seminatural transformation $\alpha:F\to F'$ of semifunctors, if $\alpha$ is a natural semisplit-mono (resp. natural semisplit-epi, natural semi-isomorphism), then every component morphism $\alpha_C:FC\to F'C$ is an $(F_C,F'_C)$-semisplit-mono (resp. $(F_C,F'_C)$-semisplit-epi, $(F_C,F'_C)$-semi-isomorphism) in $\dd$.\par
\medskip
In order to provide a complete picture of these properties, we start by giving the definitions of $F_C$-semi-monomorphism and $F_C$-semi-epimorphism.
\begin{defn}
	Given a semifunctor $F:\cc\to\dd$, we say that 
	\begin{itemize}
		\item a morphism $f:FC\to D$ in $\dd$ is an \textbf{$F_C$-semi-monomorphism} if, for every parallel pair of morphisms $h,k:D'\to FC$ in $\dd$, the equality $f\circ h=f\circ k$ implies $F\id_C\circ h=F\id_C\circ k$;%if $f\circ F\id_C=f$ and
		\item a morphism $f:D\to FC$ in $\dd$ is an \textbf{$F_C$-semi-epimorphism} if, for every parallel pair of morphisms $h,k:FC\to D'$ in $\dd$, the equality $h\circ f=k\circ f$ implies $h\circ F\id_C=k\circ F\id_C$.%if $F\id_C\circ f=f$ and
	\end{itemize}
\end{defn}	
 
Note that if $f:FC\to D$ is a monomorphism in $\dd$, i.e. for every $h,k:D'\to FC$ in $\dd$ the equality $f\circ h=f\circ k$ implies $h=k$, then it is an $F_C$-semi-monomorphism. Analogously, if $f:D\to FC$ is an epimorphism in $\dd$, i.e. for every $h,k:FC\to D'$ in $\dd$, the equality $h\circ f=k\circ f$ implies $h=k$, then it is an $F_C$-semi-epimorphism. In case $F$ is a functor, then $f:FC\to D$ is an $F_C$-semi-monomorphism if and only if it is a monomorphism,  and $f:D\to FC$ is an $F_C$-semi-epimorphism if and only if it is an epimorphism.\medskip

We have the following properties for $F_C$-semi-monomorphisms. 

\begin{prop}\label{prop:Csemimono}
	Given semifunctors $F:\cc\to\dd$, $F':\cc'\to\dd$, we have that:
	\begin{itemize}
		\item[(1)] for every object $C$ in $\cc$, the morphism $F\id_C:FC\to FC$ in $\dd$ is an $F_C$-semi-monomorphism;
		\item[(2)] given an $F_C$-semi-monomorphism $f:FC\to D$ and an $F'_{C'}$-semi-monomorphism $g:F'C'\to FC$ such that $F\id_C\circ g=g$, then the composite $f\circ g:F'C'\to D$ is an $F'_{C'}$-semi-monomorphism;
		\item[(3)] if the composite $g\circ f$ of two morphisms $f:FC\to D$ and $g:D\to D'$ in $\dd$ is an $F_C$-semi-monomorphism, then $f$ is an $F_C$-semi-monomorphism.
	\end{itemize} 
\end{prop}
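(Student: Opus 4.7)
The plan is to unwind the definition of $F_C$-semi-monomorphism in each case; all three parts are direct verifications, so the ``main obstacle'' is only bookkeeping, namely ensuring the compatibility condition in part (2) is used at the right moment.

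For part (1), given a parallel pair $h,k:D'\to FC$ in $\dd$ with $F\id_C\circ h=F\id_C\circ k$, I just observe that the conclusion required of an $F_C$-semi-monomorphism, namely $F\id_C\circ h=F\id_C\circ k$, is exactly this hypothesis. So $F\id_C$ tautologically satisfies the defining implication.

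For part (2), I take a parallel pair $h,k:D''\to F'C'$ with $(f\circ g)\circ h=(f\circ g)\circ k$. Regrouping gives $f\circ(g\circ h)=f\circ(g\circ k)$. Since $f$ is an $F_C$-semi-monomorphism, this yields $F\id_C\circ g\circ h=F\id_C\circ g\circ k$. Here is where the standing hypothesis $F\id_C\circ g=g$ is invoked: it collapses both sides to give $g\circ h=g\circ k$. Then, since $g$ is an $F'_{C'}$-semi-monomorphism, we conclude $F'\id_{C'}\circ h=F'\id_{C'}\circ k$, which is what is needed.

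For part (3), if $h,k:D''\to FC$ satisfy $f\circ h=f\circ k$, then composing with $g$ on the left gives $(g\circ f)\circ h=(g\circ f)\circ k$, and applying the $F_C$-semi-monomorphism property of $g\circ f$ yields $F\id_C\circ h=F\id_C\circ k$, as desired. The only place where care is needed is in part (2), to notice that without the hypothesis $F\id_C\circ g=g$ one could not pass from the conclusion delivered by $f$ to an equality of the form $g\circ h=g\circ k$ on which the semi-mono property of $g$ can be triggered; this also explains why the natural analogue of ``composition of monomorphisms'' requires the extra seminaturality-type condition in the semifunctorial setting.
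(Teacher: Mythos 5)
Your proof is correct and follows essentially the same route as the paper's: part (1) is the tautology you describe, and parts (2) and (3) are exactly the chases the paper gives, with the hypothesis $F\id_C\circ g=g$ used at the same step to pass from $F\id_C\circ g\circ h=F\id_C\circ g\circ k$ to $g\circ h=g\circ k$. No gaps.
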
 
\begin{proof}
	(1). It is obvious.\\
	(2). Let $h,k:D'\to F'C'$ be parallel arrows in $\dd$ such that $f\circ g\circ h=f\circ g\circ k$. Since $f$ is an $F_C$-semi-monomorphism, we have that $F\id_C\circ g\circ h=F\id_C\circ g\circ k$, and then from $F\id_C\circ g=g$ it follows that $g\circ h=g\circ k$. Since $g$ is an $F'_{C'}$-semi-monomorphism, we get that $F'\id_{C'}\circ h=F'\id_{C'}\circ k$, thus $f\circ g$ is an $F'_{C'}$-semi-monomorphism.\\
	(3). Let $h,k:D''\to FC$ be parallel arrows in $\dd$ such that $f\circ h= f\circ k$. Then, $g\circ f\circ h= g\circ f\circ k$. Since $g\circ f$ is an $F_C$-semi-monomorphism, we have that $F\id_{C}\circ h=F\id_{C}\circ k$, thus $f$ is an $F_C$-semi-monomorphism. 
\end{proof}
We report the analogous properties for $F_C$-semi-epimorphisms, whose proof is similar.
\begin{prop}\label{prop:Csemiepi}
	Given semifunctors $F:\cc\to\dd$, $F':\cc'\to\dd$, we have that:
	\begin{itemize}
		\item[(1)] for every object $C$ in $\cc$, the morphism $F\id_C:FC\to FC$ is an $F_C$-semi-epimorphism;
		\item[(2)] given an $F_C$-semi-epimorphism $f:D\to FC$ and an $F'_{C'}$-semi-epimorphism $g:FC\to F'C'$ such that $g\circ F\id_C=g$, then the composite $g\circ f:D\to F'C'$ is an $F'_{C'}$-semi-epimorphism;
		\item[(3)] if the composite $f\circ g$ of two morphisms $g:D'\to D$ and $f:D\to FC$ in $\dd$ is an $F_C$-semi-epimorphism, then $f$ is an $F_C$-semi-epimorphism.
	\end{itemize} 
\end{prop}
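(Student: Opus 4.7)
The proof is a straightforward dualization of Proposition \ref{prop:Csemimono}, so the plan is to mirror the three arguments given there, reversing the direction of the morphisms and composing on the other side. No part of the argument is delicate; the only point that requires a bit of care is lining up the identities $g\circ F\id_C=g$ at the right moment in part (2).

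For (1), I would simply observe that the condition to be checked for $F\id_C:FC\to FC$ reduces to a tautology: given parallel $h,k:FC\to D'$ with $h\circ F\id_C=k\circ F\id_C$, the required conclusion $h\circ F\id_C=k\circ F\id_C$ is the very same identity. For (2), I would start from a pair $h,k:F'C'\to D''$ with $h\circ(g\circ f)=k\circ(g\circ f)$, regroup to $(h\circ g)\circ f=(k\circ g)\circ f$, then apply the $F_C$-semi-epi property of $f$ to obtain $(h\circ g)\circ F\id_C=(k\circ g)\circ F\id_C$. Using the hypothesis $g\circ F\id_C=g$, this collapses to $h\circ g=k\circ g$, at which point the $F'_{C'}$-semi-epi property of $g$ gives $h\circ F'\id_{C'}=k\circ F'\id_{C'}$, as required.

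For (3), given $h,k:FC\to D''$ with $h\circ f=k\circ f$, I would post-compose with $g$ on the right to get $h\circ(f\circ g)=k\circ(f\circ g)$, and then invoke that $f\circ g$ is an $F_C$-semi-epi to conclude $h\circ F\id_C=k\circ F\id_C$. I expect no obstacle: each item is a formal consequence of the definition, entirely parallel to the monomorphism case. I would write the proof out in one short paragraph per item, probably remarking that it is dual to Proposition \ref{prop:Csemimono}.
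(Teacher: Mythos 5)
Your proof is correct and is exactly what the paper intends: the paper omits the argument, stating only that the proof is "similar" to that of Proposition \ref{prop:Csemimono}, and your dualization (reversing arrows and composing on the other side, using $g\circ F\id_C=g$ at the right moment in part (2)) fills it in faithfully.
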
 
Now, given a semifunctor $F:\cc\to\dd$, we say that
%\begin{defn}\label{defn:C-semisplit-mono}
\begin{itemize}
	\item a morphism $f:FC\to D$ in $\dd$ is an \textbf{$F_C$-semisplit-mono} if there exists a morphism $g:D\to FC$ in $\dd$ such that $g\circ f=F\id_C$; %$f\circ F\id_C=f$ and
	\item a morphism $f:D\to FC$ in $\dd$ is an \textbf{$F_C$-semisplit-epi} if there exists a morphism $g:FC\to D$ in $\dd$ such that $f\circ g=F\id_{C}$. %$F\id_C\circ f=f$ and 
\end{itemize}
%\end{defn}
%La condizione $f\circ F\id_C=f$ (e analoga per epi) serve per avere l'unicità della semiinversa di un $(C,C')$-semi-iso in Lemma \ref{lem:unique-semiinverse}.
If $f:FC\to D$ is an $F_C$-semisplit-mono, then the morphism $g:D\to FC$ in $\dd$ such that $g\circ f=F\id_C$ is an $F_C$-semisplit-epi. On the other hand, if $f:D\to FC$ is an $F_C$-semisplit-epi through $g:FC\to D$, then $g$ is an $F_C$-semisplit-mono. Note that in case $F$ is a functor, then $f:FC\to D$ is an $F_C$-semisplit-mono if and only if it is a split-mono, i.e. there exists a morphism $g:D\to FC$ in $\dd$ such that $g\circ f=\id_{FC}$; analogously, in case $F$ is a functor, $f$ is an $F_C$-semisplit-epi if and only if it is a split-epi, i.e. there exists a morphism $g:FC\to D$ in $\dd$ such that $f\circ g=\id_{FC}$.  
\begin{prop}\label{prop:semisect-retract}
	Given a semifunctor $F:\cc\to\dd$, we have that:
	\begin{itemize}
		\item[(1)] for every object $C$ in $\cc$, the morphism $F\id_C:FC\to FC$ is both an $F_C$-semisplit-mono and an $F_C$-semisplit-epi;
		\item[(2)] any $F_C$-semisplit-mono is an $F_C$-semi-monomorphism;
		\item[(3)] any $F_C$-semisplit-epi is an $F_C$-semi-epimorphism.	
	\end{itemize}		
\end{prop}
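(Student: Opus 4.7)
The plan is to verify each of the three items directly, essentially by unpacking the definitions. All three statements are short routine checks once one realizes that the idempotent $F\id_C$ plays the role played by the identity in the classical (functorial) situation.

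For (1), I would simply observe that, since $F$ preserves composition, $F\id_C\circ F\id_C=F(\id_C\circ\id_C)=F\id_C$. Hence the very morphism $F\id_C:FC\to FC$ serves as its own witness: taking $g:=F\id_C$ in both definitions gives $g\circ F\id_C=F\id_C$ and $F\id_C\circ g=F\id_C$, so $F\id_C$ is simultaneously an $F_C$-semisplit-mono and an $F_C$-semisplit-epi.

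For (2), let $f:FC\to D$ be an $F_C$-semisplit-mono, and let $g:D\to FC$ be a morphism in $\dd$ with $g\circ f=F\id_C$. To check the semi-monomorphism property, take any parallel pair $h,k:D'\to FC$ in $\dd$ with $f\circ h=f\circ k$. Composing on the left with $g$ yields $g\circ f\circ h=g\circ f\circ k$, which is precisely $F\id_C\circ h=F\id_C\circ k$. Hence $f$ is an $F_C$-semi-monomorphism.

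For (3), the argument is formally dual. Given an $F_C$-semisplit-epi $f:D\to FC$ and a morphism $g:FC\to D$ in $\dd$ with $f\circ g=F\id_C$, take any parallel pair $h,k:FC\to D'$ in $\dd$ with $h\circ f=k\circ f$. Composing on the right with $g$ yields $h\circ f\circ g=k\circ f\circ g$, i.e. $h\circ F\id_C=k\circ F\id_C$, so $f$ is an $F_C$-semi-epimorphism. There is no genuine obstacle here; the only subtlety is to remember that the role of the identity in the classical argument is consistently taken by the idempotent $F\id_C$, which is precisely what the "semi-" notions are designed to accommodate.
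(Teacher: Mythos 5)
Your proof is correct and is essentially the paper's argument: part (1) is the same observation that $F\id_C$ is idempotent and serves as its own semi-inverse, and your direct verification of (2) and (3) is exactly what the paper's proof unfolds to, the only difference being that the paper routes the composition trick through Proposition \ref{prop:Csemimono} (items (1) and (3)) instead of writing out the chase with $h,k$ inline. No gap.
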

\begin{proof}
	(1). For every object $C$ in $\cc$ we have that $F\id_C\circ F\id_C=F\id_C$, so $F\id_C:FC\to FC$ is both an $F_C$-semisplit-mono and an $F_C$-semisplit-epi.\\
	(2). Let $f:FC\to D$ be an $F_C$-semisplit-mono in $\dd$. Then there exists a morphism $g:D\to FC$ in $\dd$ such that $g\circ f=F\id_C$. By Proposition \ref{prop:Csemimono} (1) $F\id_C$ is an $F_C$-semi-monomorphism, then so is $g\circ f$, hence by Proposition \ref{prop:Csemimono} (3) $f$ is an $F_C$-semi-monomorphism.\\
	(3). The proof is dual to (2).
	%vecchia dim 
	\begin{invisible}
		Consider $h,k:D'\to FC$ parallel arrows in $\dd$ such that $f\circ h=f\circ k$. Then, we have that $g\circ f\circ h=g\circ f\circ k$, hence $F\id_C\circ h=F\id_C\circ k$, thus $f$ is an $F_C$-semi-monomorphism. \\	
		(2). Let $f:D\to FC$ be an $F_C$-semisplit-epi in $\dd$. Then there exists a morphism $g:FC\to D$ in $\dd$ such that $f\circ g=F\id_C$. Consider $h,k:FC\to D'$ parallel arrows in $\dd$ such that $h\circ f=k\circ f$. Then, we have that $h\circ f\circ g=k\circ f\circ g$, hence $h\circ F\id_C=k\circ F\id_C$, thus $f$ is an $F_C$-semi-epimorphism.
	\end{invisible}
\end{proof}
In the next proposition we show the behavior of $F_C$-semisplit-monos (resp. $F_C$-semisplit-epis) with respect to composition.
\begin{prop}\label{prop:comp-semisplit}
	Given semifunctors $F:\cc\to\dd$, $F':\cc'\to\dd$, we have that:
	\begin{itemize}
	\item[(1)] if $f:FC\to D$ is an $F_C$-semisplit-mono and $f':F'C'\to FC$ is an $F'_{C'}$-semisplit-mono such that $F\id_C\circ f'=f'$, then the composite $f\circ f':F'C'\to D$ is an $F'_{C'}$-semisplit-mono;
	\item[(2)] if the composite $g\circ f$ of two morphisms $f:FC\to D$ and $g:D\to D'$ in $\dd$ is an $F_C$-semisplit-mono, then $f$ is an $F_C$-semisplit-mono;
	\item[(3)] if $f:D\to FC$ is an $F_C$-semisplit-epi and $f':FC\to F'C'$ is an $F'_{C'}$-semisplit-epi such that $f'\circ F\id_{C}=f'$, then the composite $f'\circ f:D\to F'C'$ is an $F'_{C'}$-semisplit-epi;
	\item[(4)] if the composite $f\circ g$ of two morphisms $f:D\to FC$ and $g:D'\to D$ in $\dd$ is an $F_C$-semisplit-epi, then $f$ is an $F_C$-semisplit-epi.
	\end{itemize} 		
\end{prop}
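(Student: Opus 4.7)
The plan is to construct explicit semisplittings for the composites (for (1) and (3)) and to pre/post-compose given semisplittings (for (2) and (4)). All four statements are purely diagrammatic, so the proof amounts to bookkeeping with the defining equations of $F_C$-semisplit-monos and $F_C$-semisplit-epis; the hypothesis $F\id_C\circ f'=f'$ (respectively $f'\circ F\id_C=f'$) in (1) and (3) is exactly what is needed to collapse the $F\id_C$ that arises in the middle of the composite.

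For part (1), let $g:D\to FC$ witness that $f$ is an $F_C$-semisplit-mono, so $g\circ f=F\id_C$, and let $g':FC\to F'C'$ witness that $f'$ is an $F'_{C'}$-semisplit-mono, so $g'\circ f'=F'\id_{C'}$. I would propose $g'\circ g:D\to F'C'$ as a semisplit for $f\circ f'$, and verify
\[
(g'\circ g)\circ(f\circ f')=g'\circ(g\circ f)\circ f'=g'\circ F\id_C\circ f'=g'\circ f'=F'\id_{C'},
\]
where the third equality uses the hypothesis $F\id_C\circ f'=f'$. For part (2), if $h:D'\to FC$ satisfies $h\circ g\circ f=F\id_C$, then $h\circ g:D\to FC$ immediately witnesses that $f$ is an $F_C$-semisplit-mono.

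Parts (3) and (4) are dual and proceed by the mirror-image argument: for (3) one takes the composite $g\circ g'$ of the given semisplittings (where $f\circ g=F\id_C$ and $f'\circ g'=F'\id_{C'}$) and computes $(f'\circ f)\circ(g\circ g')=f'\circ F\id_C\circ g'=f'\circ g'=F'\id_{C'}$, now invoking $f'\circ F\id_C=f'$; for (4), if $(f\circ g)\circ h=F\id_C$, then $g\circ h:FC\to D$ semisplits $f$ on the right.

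There is no real obstacle here: the only subtle point is recognizing why the compatibility hypothesis in (1) and (3) is indispensable. Without it, the middle factor $F\id_C$ appearing after applying the first semisplitting cannot be absorbed, so the composite of semisplittings would fail to give $F'\id_{C'}$. This is the semifunctor analogue of the classical fact that a composite of split monos is split mono, but here the identity has been replaced by the idempotent $F\id_C$, which forces us to demand that $f'$ already factors through (equivalently, is annihilated on the appropriate side by) this idempotent.
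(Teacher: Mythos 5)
Your proof is correct and follows essentially the same route as the paper: composing the two given semisplittings in the reversed order for (1) and (3), using the hypothesis $F\id_C\circ f'=f'$ (resp. $f'\circ F\id_C=f'$) to absorb the idempotent $F\id_C$ in the middle, and pre-composing the given semisplitting with the other factor for (2) and (4). Nothing to add.
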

\begin{proof}
We prove only (1) and (2), as (3) and (4) follow similarly.\\
(1). If $f:FC\to D$ is an $F_C$-semisplit-mono, then there exists a morphism $g:D\to FC$ in $\dd$ such that $g\circ f=F\id_C$. Assume that $f':F'C'\to FC$ is an $F'_{C'}$-semisplit-mono, i.e. there exists a morphism $g':FC\to F'C'$ in $\dd$ such that $g'\circ f'=F'\id_{C'}$, and assume that $F\id_C\circ f'=f'$. Consider the composite $g'\circ g:D\to F'C'$. We have $g'\circ g\circ f\circ f'=g'\circ F\id_{C}\circ f'=g'\circ f'=F'\id_{C'}$, thus $f\circ f'$ is an $F'_{C'}$-semisplit-mono.\\
(2). If the composite $g\circ f:FC\to D'$ is an $F_C$-semisplit-mono, then there exists a morphism $h:D'\to FC$ in $\dd$ such that $h\circ g\circ f=F\id_C$, thus $f$ is an $F_C$-semisplit-mono.
\begin{invisible}
(3). If $f:D\to FC$ is an $F_C$-semisplit-epi, then there exists a morphism $g:FC\to D$ in $\dd$ such that $f\circ g=F\id_C$. Assume that $f':FC\to F'C'$ is an $F'_{C'}$-semisplit-epi such that $f'\circ F\id_{C}=f'$. Then, there exists a morphism $g':F'C'\to FC$ in $\dd$ such that $f'\circ g'=F'\id_{C'}$. We have $f'\circ f\circ g\circ g'=f'\circ F\id_{C}\circ g'=f'\circ g'=F'\id_{C'}$, thus $f'\circ f$ is an $F'_{C'}$-semisplit-epi.\\
(4). If the composite $f\circ g$ of two morphisms $f:D\to FC$ and $g:D'\to D$ in $\dd$ is an $F_C$-semisplit-epi, then there exists a morphism $h:FC\to D'$ in $\dd$ such that $f\circ g\circ h=F\id_C$, thus $f$ is an $F_C$-semisplit-epi.
\end{invisible}
\end{proof}
\begin{defn}\label{defn:CC'semisplit} 
	Given semifunctors $F:\cc\to\dd$, $F':\cc'\to\dd$, we say that a morphism $f:FC\to F'C'$ in $\dd$ is an 
	\begin{itemize}
		\item \textbf{$(F_C,F'_{C'})$-semisplit-mono} if $f\circ F\id_C=f$ and there exists a morphism $g:F'C'\to FC$ in $\dd$ such that $$g\circ f=F\id_C\quad \text{and}\quad g\circ F'\id_{C'}=g;$$
		\item \textbf{$(F_C,F'_{C'})$-semisplit-epi} if $F'\id_{C'}\circ f=f$ and there exists a morphism $g:F'C'\to FC$ in $\dd$ such that 
		$$f\circ g=F'\id_{C'}\quad \text{and} \quad F\id_C \circ g=g.$$
	\end{itemize}
\end{defn}
\begin{rmk}\label{rmk:CC'semisplit}
	Any $(F_C,F'_{C'})$-semisplit-mono is an $F_C$-semisplit-mono and any $(F_C,F'_{C'})$-semisplit-epi is an $F'_{C'}$-semisplit-epi.
\end{rmk}
The following properties hold true. %for $(F_C,F'_{C'})$-semisplit-monos (resp. $(F_C,F'_{C'})$-semisplit-epis). %for $(C,C')$-semi-isomorphisms.
\begin{prop}\label{prop:CC'semisplitmono}
	Given semifunctors $F:\cc\to\dd$, $F':\cc'\to\dd$, $F'':\cc''\to\dd$, we have that:
	\begin{itemize}
		\item[(1)] for every object $C$ in $\cc$, the morphism $F\id_C:FC\to FC$ is both an $(F_C,F_C)$-semisplit-mono and an $(F_C,F_C)$-semisplit-epi;
		\item[(2)] given an $(F_C,F'_{C'})$-semisplit-mono $f:FC\to F'C'$ and an $(F'_{C'}, F''_{C''})$-semisplit-mono $f':F'C'\to F''C''$, then the composite $f'\circ f:FC\to F''C''$ is an $(F_C,F''_{C''})$-semisplit-mono;
		\item[(3)] given an $(F_C,F'_{C'})$-semisplit-epi $f:FC\to F'C'$ and an $(F'_{C'}, F''_{C''})$-semisplit-epi $f':F'C'\to F''C''$, then the composite $f'\circ f:FC\to F''C''$ is an $(F_C,F''_{C''})$-semisplit-epi.
		%\item[(4)]	[if the composite $g\circ f$ of two morphisms $f:FC\to F'C'$ and $g:F'C'\to F''C''$ is a $(C,C'')$-semisplit-mono, then $f$ is a $(C,C')$-semisplit-mono. (non funziona, solo $C$-semisplit-mono)].
	\end{itemize} 
\end{prop}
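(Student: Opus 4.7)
The plan is to verify the three claims by direct unwinding of Definition \ref{defn:CC'semisplit}, producing in each case an explicit candidate for the witnessing morphism and checking the required compatibility equations. No categorical subtlety is involved beyond careful bookkeeping of the ``idempotent absorption'' identities $F\id_C\circ (-)=(-)$ and $(-)\circ F\id_C=(-)$.

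For part $(1)$, I would simply take $g:=F\id_C$ itself as the witness on both sides. Idempotency $F\id_C\circ F\id_C=F\id_C$ gives at once $f\circ F\id_C=f$, $g\circ f=F\id_C$, $g\circ F\id_C=g$, proving the $(F_C,F_C)$-semisplit-mono property; the $(F_C,F_C)$-semisplit-epi property is verified symmetrically.

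For part $(2)$, given the data $g:F'C'\to FC$ witnessing that $f$ is an $(F_C,F'_{C'})$-semisplit-mono, and $g':F''C''\to F'C'$ witnessing that $f'$ is an $(F'_{C'},F''_{C''})$-semisplit-mono, I would propose $g\circ g':F''C''\to FC$ as the witness for $f'\circ f$. The condition $(f'\circ f)\circ F\id_C=f'\circ f$ follows from $f\circ F\id_C=f$. The retraction identity $(g\circ g')\circ(f'\circ f)=F\id_C$ is obtained by inserting $g'\circ f'=F'\id_{C'}$ in the middle and then absorbing via $g\circ F'\id_{C'}=g$ and $g\circ f=F\id_C$. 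The closing condition $(g\circ g')\circ F''\id_{C''}=g\circ g'$ is immediate from $g'\circ F''\id_{C''}=g'$.

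Part $(3)$ is entirely dual: the witnessing morphism is again $g\circ g'$, and the three required equalities $F''\id_{C''}\circ(f'\circ f)=f'\circ f$, $(f'\circ f)\circ(g\circ g')=F''\id_{C''}$, $F\id_C\circ(g\circ g')=g\circ g'$ follow respectively from $F''\id_{C''}\circ f'=f'$, from the chain $f\circ g=F'\id_{C'}$ together with $F'\id_{C'}\circ g'=g'$ and $f'\circ g'=F''\id_{C''}$, and from $F\id_C\circ g=g$. There is no real obstacle here; the only point requiring mild care is to apply the ``boundary'' idempotent conditions in the right order so that the middle $F'\id_{C'}$ that appears after composing can be absorbed, which is exactly what the hypotheses in Definition \ref{defn:CC'semisplit} are designed to guarantee.
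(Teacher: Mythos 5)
Your proof is correct and follows exactly the paper's route: part (1) via idempotency of $F\id_C$, and parts (2)–(3) by taking $g\circ g'$ as the witness and absorbing the middle $F'\id_{C'}$ using the boundary conditions from Definition \ref{defn:CC'semisplit}. The only cosmetic difference is that you write out the dual computation in (3), which the paper leaves as "dual to (2)".
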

\begin{proof}
	(1). It is clear.\\ 
	(2). If $f:FC\to F'C'$ is an $(F_C,F'_{C'})$-semisplit-mono, then $f\circ F\id_C=f$ and there exists a morphism $g:F'C'\to FC$ in $\dd$ such that $g\circ f=F\id_C$ and $g\circ F'\id_{C'}=g$. If $f':F'C'\to F''C''$ is an $(F'_{C'},F''_{C''})$-semisplit-mono, then $f'\circ F'\id_{C'}=f'$ and there exists a morphism $g':F''C''\to F'C'$ in $\dd$ such that $g'\circ f'=F'\id_{C'}$ and $g'\circ F''\id_{C''}=g'$. Consider the composite $g\circ g':F''C''\to FC$. We have $g\circ g'\circ f'\circ f=g\circ F'\id_{C'}\circ f=g\circ f=F\id_C$. Moreover, $f'\circ f\circ F\id_C=f'\circ f$ and $g\circ g'\circ F''\id_{C''}=g\circ g'$, thus $f'\circ f$ is an $(F_C,F''_{C''})$-semisplit-mono.\\
	(3). It is dual to $(2)$.
\end{proof}
\begin{defn}\label{defn:CC'semiiiso}
	Given semifunctors $F:\cc\to\dd$, $F':\cc'\to\dd$, we say that a morphism $f:FC\to F'C'$ in $\dd$ is an \textbf{$(F_C,F'_{C'})$-semi-isomorphism} if $f\circ F\id_C=f$ and there exists a morphism $g: F'C'\to FC$ in $\dd$ such that
	\begin{itemize}
		\item[(i)] $g\circ f=F\id_C$;
		\item[(ii)] $f\circ g=F'\id_{C'}$.
	\end{itemize} 
We call a morphism $g:F'C'\to FC$ in $\dd$ which satisfies (i) and (ii) the \textbf{$(F_C,F'_{C'})$-semi-inverse} of $f$ if $F\id_{C}\circ g=g$ holds true in addition. 
\end{defn}
In case both $F$ and $F'$ are functors, then $f:FC\to F'C'$ is an $(F_C,F'_{C'})$-semi-isomorphism if and only if it is an isomorphism in $\dd$, i.e. there exists a morphism $g: F'C'\to FC$ in $\dd$ such that $g\circ f=\id_{FC}$ and $f\circ g=\id_{F'C'}$.
\begin{lem}
	Let $F:\cc\to\dd$, $F':\cc'\to\dd$ be semifunctors and let $f: FC\to F'C'$ be an $(F_C,F'_{C'})$-semi-isomorphism in $\dd$. Then, $f=f\circ F\id_C$ is equivalent to $F'\id_{C'}\circ f=f$. Moreover, if a morphism $g:F'C'\to FC$ in $\dd$ satisfies (i) and (ii) as in Definition \ref{defn:CC'semiiiso}, then $F\id_{C}\circ g=g$ is equivalent to $g\circ F'\id_{C'}=g$.
\end{lem}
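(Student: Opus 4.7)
The plan is to exploit the two defining identities of an $(F_C,F'_{C'})$-semi-isomorphism, namely $g\circ f=F\id_C$ and $f\circ g=F'\id_{C'}$, by substituting one for the other in each composition and using associativity. No deeper structural argument is needed: both equivalences reduce to a single substitution in each direction.

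For the first equivalence, suppose $f\circ F\id_C=f$. Substituting $F\id_C=g\circ f$ gives
\[
f=f\circ F\id_C=f\circ g\circ f=(f\circ g)\circ f=F'\id_{C'}\circ f .
\]
Conversely, suppose $F'\id_{C'}\circ f=f$. Substituting $F'\id_{C'}=f\circ g$ gives
\[
f=F'\id_{C'}\circ f=f\circ g\circ f=f\circ (g\circ f)=f\circ F\id_C .
\]
For the second equivalence, the argument is entirely symmetric with $g$ playing the role of $f$: assuming $F\id_C\circ g=g$, one writes $F\id_C=g\circ f$ to get $g=g\circ f\circ g=g\circ F'\id_{C'}$, and conversely one uses $F'\id_{C'}=f\circ g$ to deduce $F\id_C\circ g=g$.

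There is no serious obstacle here; the only thing to be careful about is making sure one does \emph{not} use the extra condition $F\id_C\circ g=g$ (which is part of the stronger notion of $(F_C,F'_{C'})$-semi-inverse) when proving the first equivalence, since that equivalence should only rely on properties (i) and (ii) of Definition \ref{defn:CC'semiiiso}, and likewise one should not use $f\circ F\id_C=f$ when proving the second. Both chains of equalities above only invoke (i) and (ii) together with associativity, so the proof goes through cleanly.
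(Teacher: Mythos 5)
Your proof is correct and follows essentially the same route as the paper: both arguments rest on the unconditional identity $f\circ F\id_C = f\circ g\circ f = F'\id_{C'}\circ f$ (and its counterpart for $g$), which the paper states once to conclude both directions at a stroke, while you unfold it into two explicit substitutions. Your cautionary remark about using only (i) and (ii) is exactly the right point of care, and it is respected throughout.
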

\begin{proof}
Let $f:FC\to F'C'$ be an $(F_C,F'_{C'})$-semi-isomorphism. Then, $f\circ F\id_C=f\circ g\circ f= F'\id_{C'}\circ f$, so $f=f\circ F\id_C$ is equivalent to $f= F'\id_{C'}\circ f$. Analogously, by interchanging the roles of $f$ and $g$, $F\id_{C}\circ g=g$ is equivalent to $g\circ F'\id_{C'}=g$.
%Moreover, if $g$ satisfies $g\circ F'\id_{C'}=g$, then $F\id_C\circ g= g\circ f\circ g= g\circ F'\id_{C'}=g$. Conversely, if $g$ satisfies $F\id_{C}\circ g=g$, then $g\circ F'\id_{C'}=g\circ f\circ g=F\id_C\circ g=g$.
\end{proof}
\begin{lem}\label{lem:semi-inverse}
Let $F:\cc\to\dd$, $F':\cc'\to\dd$ be semifunctors and let $f: FC\to F'C'$ be an $(F_C,F'_{C'})$-semi-isomorphism in $\dd$. Then, $f$ admits a unique $(F_C,F'_{C'})$-semi-inverse.
\end{lem}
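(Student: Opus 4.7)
The plan is a direct uniqueness argument in the spirit of the classical proof for inverses in a category, but with one extra ingredient: the absorption conditions built into the definition of semi-inverse. Let $g_{1},g_{2}:F'C'\to FC$ be two $(F_{C},F'_{C'})$-semi-inverses of $f$. Each satisfies $g_{i}\circ f=F\id_{C}$, $f\circ g_{i}=F'\id_{C'}$, and, by definition, $F\id_{C}\circ g_{i}=g_{i}$, which by the preceding lemma is equivalent to $g_{i}\circ F'\id_{C'}=g_{i}$.

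I would then produce the single chain of equalities
\[
g_{1}\;=\;F\id_{C}\circ g_{1}\;=\;(g_{2}\circ f)\circ g_{1}\;=\;g_{2}\circ(f\circ g_{1})\;=\;g_{2}\circ F'\id_{C'}\;=\;g_{2},
\]
where the first equality uses the absorption condition for $g_{1}$, the second uses property (i) for $g_{2}$, the third is associativity, the fourth uses property (ii) for $g_{1}$, and the last uses the equivalent absorption condition $g_{2}\circ F'\id_{C'}=g_{2}$ supplied by the previous lemma.

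There is essentially no obstacle: the only subtlety is that the naive computation $g_{1}=g_{1}\circ f\circ g_{2}=g_{2}$ (which works for ordinary inverses) now leaves stray $F\id_{C}$ or $F'\id_{C'}$ factors, and the role of the extra axiom $F\id_{C}\circ g_{i}=g_{i}$ in Definition \ref{defn:CC'semiiiso} is precisely to absorb these factors at both ends of the chain. Thus the uniqueness statement really does require the stronger notion of semi-inverse (not merely a morphism satisfying (i) and (ii)), and the preceding lemma is what guarantees the two absorption conditions can be used interchangeably at the two ends of the computation.
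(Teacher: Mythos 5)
Your uniqueness argument is correct and is essentially the paper's own: the paper likewise shows that any two morphisms satisfying (i), (ii) and the absorption condition must coincide, via the same chain $h=F\id_C\circ h=g'\circ f\circ h=g'\circ F'\id_{C'}=g'$. Your observation that the preceding lemma lets you pass between $F\id_C\circ g=g$ and $g\circ F'\id_{C'}=g$ at the two ends of the computation is exactly the point.

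However, the lemma asserts that $f$ \emph{admits} a unique $(F_C,F'_{C'})$-semi-inverse, and you have proved only the uniqueness half. Existence is not automatic: Definition \ref{defn:CC'semiiiso} of an $(F_C,F'_{C'})$-semi-isomorphism only provides some $g$ with $g\circ f=F\id_C$ and $f\circ g=F'\id_{C'}$, and such a $g$ need not satisfy the extra absorption condition $F\id_C\circ g=g$ — from (i) and (ii) one only gets $F\id_C\circ g=g\circ f\circ g=g\circ F'\id_{C'}$, with no reason for either side to equal $g$. The paper closes this gap by replacing $g$ with $g':=g\circ f\circ g$ and checking that $g'$ still satisfies (i) and (ii) and moreover $F\id_C\circ g'=g'$ (using $F'\id_{C'}\circ f=f$, which follows from $f\circ F\id_C=f$ by the preceding lemma). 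You should add this construction; with it, your uniqueness chain completes the proof.
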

\begin{proof}
If $f: FC\to F'C'$ is an $(F_C,F'_{C'})$-semi-isomorphism, then $f\circ F\id_C=f$ (which is equivalent to $F'\id_{C'}\circ f=f$) and there exists a morphism $g: F'C'\to FC$ in $\dd$ such that $g\circ f=F\id_C$ and $f\circ g=F'\id_{C'}$. Consider the composite $g':=g\circ f\circ g:F'C'\to FC$. %Note that $F\id_C\circ g=g\circ f\circ g=g\circ F'\id_{C'}$.
Then, $g'\circ f=g\circ f\circ g\circ f=F\id_C\circ F\id_C=F\id_C$ and $f\circ g'=f\circ g\circ f\circ g=F'\id_{C'}\circ F'\id_{C'}=F'\id_{C'}$. Moreover, we have that $F\id_C\circ g'=F\id_C\circ g\circ f\circ g=g\circ f\circ g\circ f\circ g=g\circ F'\id_{C'}\circ f\circ g=g\circ f\circ g=g'$, so $g'$ is an $(F_C,F'_{C'})$-semi-inverse of $f$. Assume that there exists another $(F_C,F'_{C'})$-semi-inverse $h: F'C'\to FC$ in $\dd$ that satisfies conditions (i)-(ii) and such that $h=F\id_C\circ h$. Then, we have $h=F\id_{C}\circ h=g'\circ f\circ h=g'\circ F'\id_{C'}=g'$, thus the $(F_C,F'_{C'})$-semi-inverse of $f$ is unique.
\end{proof}
\begin{comment}
Note that if $g:F'C'\to FC$ is an $(F_C,F'_{C'})$-semi-inverse of $f:FC\to F'C'$, then $g':=g\circ F'\id_{C'}$ is also an $(F_C,F'_{C'})$-semi-inverse of $f$ such that $F\id_C\circ g'=g'$ and $g'\circ F'\id_{C'}=g'$. Indeed, $f\circ (g\circ F\id_C)=F'\id_{C'}\circ F'\id_{C'}=F'\id_{C'}$, $(g\circ F'\id_{C'})\circ f=g\circ f=F\id_C$, $(g\circ F'\id_{C'})\circ F'\id_{C'}=g\circ F'\id_{C'}$ and $F\id_C\circ (g\circ F'\id_{C'})=g\circ f\circ g\circ F'\id_{C'}=g\circ F'\id_{C'}\circ F'\id_{C'}=g\circ F'\id_{C'}$. Moreover, we have $g'=g\circ F'\id_{C'}=g\circ f\circ g=F\id_C\circ g$.
\end{comment}
%\sout{Note that in (iii) we have that $F\id_{C}\circ g=g$ is equivalent to $g\circ F'\id_{C'}=g$. In fact, if $g$ satisfies $g\circ F'\id_{C'}=g$, then we have $F\id_C\circ g= g\circ f\circ g= g\circ F'\id_{C'}=g$. Conversely, if $g$ satisfies $F\id_{C}\circ g=g$, then $g\circ F'\id_{C'}=g\circ f\circ g=F\id_C\circ g=g$.}
%
%\begin{lem}\label{lem:unique-semiinverse}
%	Let $F, F':\cc\to\dd$ be semifunctors and let $f:FC\to F'C'$ be a $(C,C')$-semi-isomorphism in $\dd$. Then, the $(C,C')$-semi-inverse $g:F'C'\to FC$ of $f$ is unique. 
%\end{lem}
%\begin{proof}
%	Assume that there exist two $(C,C')$-semi-inverses $g, g': F'C'\to FC$ in $\dd$ that satisfy conditions (i)-(iii). Then, we have $g=g\circ F'\id_{C'}=g\circ f\circ g'=F\id_C\circ g'=g'$, thus the $(C,C')$-semi-inverse of $f$ is unique.\bl{]}
%\end{proof}
%
\begin{prop}\label{prop:semiiso-semisplit}
	Let $F:\cc\to\dd$, $F':\cc'\to\dd$ be semifunctors. A morphism $f:FC\to F'C'$ in $\dd$ is an $(F_C,F'_{C'})$-semi-isomorphism if and only if $f$ is both an $(F_C,F'_{C'})$-semisplit-mono and an $(F_C,F'_{C'})$-semisplit-epi.
\end{prop}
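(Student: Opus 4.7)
The plan is to mimic the strategy used in Lemma \ref{lem:seminat-semisplit} for seminatural transformations, transporting it down to the level of single morphisms via the explicit compatibility conditions imposed in Definitions \ref{defn:CC'semisplit} and \ref{defn:CC'semiiiso}.

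For the forward implication, assume $f:FC\to F'C'$ is an $(F_C,F'_{C'})$-semi-isomorphism. By Lemma \ref{lem:semi-inverse} I can pick the (unique) $(F_C,F'_{C'})$-semi-inverse $g:F'C'\to FC$, which by definition satisfies $g\circ f=F\id_C$, $f\circ g=F'\id_{C'}$, $f\circ F\id_C=f$ and $F\id_C\circ g=g$. The lemma preceding it shows $F\id_C\circ g=g$ is equivalent to $g\circ F'\id_{C'}=g$, and similarly $f\circ F\id_C=f$ is equivalent to $F'\id_{C'}\circ f=f$. Reading off Definition \ref{defn:CC'semisplit}, these are exactly the conditions that make $f$ both an $(F_C,F'_{C'})$-semisplit-mono (witnessed by $g$) and an $(F_C,F'_{C'})$-semisplit-epi (also witnessed by $g$).

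For the converse, assume $f$ is both an $(F_C,F'_{C'})$-semisplit-mono and an $(F_C,F'_{C'})$-semisplit-epi. Unpacking Definition \ref{defn:CC'semisplit}, there exist morphisms $g_1,g_2:F'C'\to FC$ such that
\[
g_1\circ f=F\id_C,\qquad g_1\circ F'\id_{C'}=g_1,\qquad f\circ F\id_C=f,
\]
and
\[
f\circ g_2=F'\id_{C'},\qquad F\id_C\circ g_2=g_2,\qquad F'\id_{C'}\circ f=f.
\]
The key step, parallel to the proof of Lemma \ref{lem:seminat-semisplit}, is the chain
\[
g_1=g_1\circ F'\id_{C'}=g_1\circ f\circ g_2=F\id_C\circ g_2=g_2,
\]
which shows the two candidate semi-inverses actually coincide. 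Setting $g:=g_1=g_2$ gives simultaneously $g\circ f=F\id_C$ and $f\circ g=F'\id_{C'}$, while $f\circ F\id_C=f$ holds by hypothesis, so $f$ is an $(F_C,F'_{C'})$-semi-isomorphism by Definition \ref{defn:CC'semiiiso}.

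I do not foresee a serious obstacle: the only subtlety is making sure one exploits the extra compatibility conditions $g_1\circ F'\id_{C'}=g_1$ and $F\id_C\circ g_2=g_2$ built into Definition \ref{defn:CC'semisplit}, since without them the identification $g_1=g_2$ would fail, and the statement would only give a semi-inverse-like element that need not be a genuine $(F_C,F'_{C'})$-semi-inverse. That is precisely why these absorption conditions were added in the definition of $(F_C,F'_{C'})$-semisplit-mono and $(F_C,F'_{C'})$-semisplit-epi.
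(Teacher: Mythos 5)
Your proof is correct and follows essentially the same route as the paper's: the forward direction reads the semi-inverse's properties off against Definition \ref{defn:CC'semisplit}, and the converse identifies the two witnesses via the same chain $g_1=g_1\circ F'\id_{C'}=g_1\circ f\circ g_2=F\id_C\circ g_2=g_2$ that the paper uses. The only cosmetic difference is that you invoke Lemma \ref{lem:semi-inverse} explicitly in the forward direction, where the paper simply calls it trivial.
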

\begin{proof}
	If $f:FC\to F'C'$ is an $(F_C,F'_{C'})$-semi-isomorphism in $\dd$ with $(F_C,F'_{C'})$-semi-inverse $g'$, then it is trivially an $(F_C,F'_{C'})$-semisplit-mono and an $(F_C,F'_{C'})$-semisplit-epi. On the other hand, if $f$ is an $(F_C,F'_{C'})$-semisplit-mono, then $f\circ F\id_C=f$ and there exists a morphism $g:F'C'\to FC$ in $\dd$ such that $g\circ f=F\id_C$ and $g\circ F'\id_{C'}=g$. If $f$ is an $(F_C,F'_{C'})$-semisplit-epi, then $F'\id_{C'}\circ f=f$ and there exists a morphism $g':F'C'\to FC$ in $\dd$ such that $f\circ g'=F'\id_{C'}$ and $F\id_C \circ g'=g'$. Since $g=g\circ F'\id_{C'}=g\circ f\circ g'=F\id_C\circ g'=g'$, we have that $f$ is an $(F_C,F'_{C'})$-semi-isomorphism in $\dd$ with $(F_C,F'_{C'})$-semi-inverse $g=g'$.
\end{proof}
The following properties hold true for $(F_C,F'_{C'})$-semi-isomorphisms.
\begin{prop}\label{prop:iso}
	Given semifunctors $F:\cc\to\dd$, $F':\cc'\to\dd$, we have that:
	\begin{itemize}
		\item[(1)] for every object $C$ in $\cc$, the morphism $F\id_C:FC\to FC$ is an $(F_C,F_C)$-semi-isomorphism;
		\item[(2)] any $(F_C,F'_{C'})$-semi-isomorphism is both an $F_C$-semi-monomorphism and an $F'_{C'}$-semi-epimorphism;
		\item[(3)] if $f:FC\to F'C'$ is an $(F_C, F'_{C'})$-semisplit-mono and an $F'_{C'}$-semi-epimorphism in $\dd$, then it is an $(F_C,F'_{C'})$-semi-isomorphism;
		\item[(4)] if $f:FC\to F'C'$ is an $(F_C, F'_{C'})$-semisplit-epi and an $F_{C}$-semi-monomorphism in $\dd$, then it is an $(F_C,F'_{C'})$-semi-isomorphism.
	\end{itemize} 
\end{prop}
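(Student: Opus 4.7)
The plan is to prove the four parts by direct verification, in several cases reusing the hypothesised semi-inverse as a candidate for the semi-inverse appearing in the conclusion.

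For (1), I would exhibit $F\id_C$ itself as an $(F_C,F_C)$-semi-inverse of $F\id_C$: the idempotence $F\id_C\circ F\id_C=F\id_C$ simultaneously yields the ``source condition'' $f\circ F\id_C=f$ of Definition \ref{defn:CC'semiiiso} and the two equalities (i), (ii). For (2), let $f:FC\to F'C'$ be an $(F_C,F'_{C'})$-semi-isomorphism and let $g:F'C'\to FC$ be its semi-inverse, whose existence is guaranteed by Lemma \ref{lem:semi-inverse}. Given parallel morphisms $h,k:D'\to FC$ with $f\circ h=f\circ k$, I would precompose with $g$ and invoke (i) to obtain $F\id_C\circ h=F\id_C\circ k$; this establishes the $F_C$-semi-monomorphism property of $f$. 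Postcomposing parallel arrows with $g$ and using (ii) gives the dual $F'_{C'}$-semi-epimorphism property.

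For (3), the $(F_C,F'_{C'})$-semisplit-mono hypothesis provides $g:F'C'\to FC$ with $f\circ F\id_C=f$, $g\circ f=F\id_C$ and $g\circ F'\id_{C'}=g$; the only equality missing from the definition of $(F_C,F'_{C'})$-semi-isomorphism is $f\circ g=F'\id_{C'}$. The key computation is
\[
(f\circ g)\circ f=f\circ(g\circ f)=f\circ F\id_C=f=\id_{F'C'}\circ f.
\]
Applying the $F'_{C'}$-semi-epimorphism hypothesis on $f$ to the parallel pair $h:=f\circ g$ and $k:=\id_{F'C'}$ then forces $(f\circ g)\circ F'\id_{C'}=\id_{F'C'}\circ F'\id_{C'}=F'\id_{C'}$, and absorbing $F'\id_{C'}$ into $g$ via $g\circ F'\id_{C'}=g$ gives $f\circ g=F'\id_{C'}$, as required. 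Part (4) is strictly dual: starting from the $(F_C,F'_{C'})$-semisplit-epi data $f\circ g=F'\id_{C'}$, $F'\id_{C'}\circ f=f$, $F\id_C\circ g=g$, the test pair $(g\circ f,\id_{FC})$ and the $F_C$-semi-monomorphism hypothesis produce $g\circ f=F\id_C$.

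The only subtlety I anticipate is the choice of test pair in (3) (and its dual in (4)): one must compare $f\circ g$ with $\id_{F'C'}$, not with $F'\id_{C'}$, in order to trigger the semi-epimorphism property in its stated form, and then dispose of the resulting $F'\id_{C'}$ using the compatibility condition $g\circ F'\id_{C'}=g$ built into the definition of $(F_C,F'_{C'})$-semisplit-mono. Once this is noticed, no further manoeuvre is needed, and the whole proposition reduces to straightforward composition arguments.
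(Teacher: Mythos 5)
Your proof is correct and follows essentially the same route as the paper: parts (1), (3) and (4) are verified by the very same computations (including the choice of test pair $(f\circ g,\id_{F'C'})$, which the paper uses implicitly before absorbing $F'\id_{C'}$ via $g\circ F'\id_{C'}=g$), and your direct check of (2) simply unfolds the chain of lemmas the paper cites (Proposition \ref{prop:semiiso-semisplit}, Remark \ref{rmk:CC'semisplit}, Proposition \ref{prop:semisect-retract}). The only blemish is terminological: in (2) you say ``precompose with $g$'' where the argument requires composing with $g$ on the left (and conversely for the epimorphism half), but the intended computation is unambiguous and correct.
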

\begin{proof}
	(1). It is clear.\\ 
	(2). It follows from Proposition \ref{prop:semiiso-semisplit}, Remark \ref{rmk:CC'semisplit} and Proposition \ref{prop:semisect-retract}.\\
	(3). If $f:FC\to F'C'$ is an $(F_C,F'_{C'})$-semisplit-mono, then $f\circ F\id_C=f$ and there exists a morphism $g:F'C'\to FC$ in $\dd$ such that $g\circ f=F\id_C$ and $g\circ F'\id_{C'}=g$. Thus, we have $f\circ g\circ f=f\circ F\id_C=f$, hence, if $f$ is an $F'_{C'}$-semi-epimorphism, we get $f\circ g\circ F'\id_{C'}=F'\id_{C'}$, and then $f\circ g=F'\id_{C'}$, so $f$ is an $(F_C,F'_{C'})$-semi-isomorphism.\\
	(4). It is dual to (3).
\end{proof}

Moreover, we have the following. %Any functor $F:\cc\to\dd$ preserves isomorphisms, i.e. if $f:C\to C'$ is an isomorphism in $\cc$ then $F(f)$ is an isomorphism in $\dd$. This does not happen in general for semifunctors.
\begin{lem}
	Let $F:\cc\to\dd$, $F':\cc'\to\dd$ be semifunctors. Any semifunctor $H:\dd\to\e$ preserves $(F_C,F'_{C'})$-semisplit-monos, $(F_C,F'_{C'})$-semisplit-epis, $(F_C,F'_{C'})$-semi-isomorphisms.
\end{lem}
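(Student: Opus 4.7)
The plan is to reduce everything to the composition-preservation property of the semifunctor $H$. For an $(F_C,F'_{C'})$-semisplit-mono $f:FC\to F'C'$, unwinding Definition \ref{defn:CC'semisplit} gives a morphism $g:F'C'\to FC$ in $\dd$ such that
\[
f\circ F\id_C=f,\qquad g\circ f=F\id_C,\qquad g\circ F'\id_{C'}=g.
\]
I would apply $H$ to each of these three equations and use that $H$ preserves composition. Under the identifications $H(F\id_C)=(HF)\id_C$ and $H(F'\id_{C'})=(HF')\id_{C'}$ (which are immediate from how the composite semifunctors $HF:\cc\to\e$ and $HF':\cc'\to\e$ act on identities), the three relations become exactly the defining relations showing that $H(f):HFC\to HF'C'$ is an $((HF)_C,(HF')_{C'})$-semisplit-mono, with companion $H(g)$.

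The case of an $(F_C,F'_{C'})$-semisplit-epi is entirely symmetric: the three equations $F'\id_{C'}\circ f=f$, $f\circ g=F'\id_{C'}$, $F\id_C\circ g=g$ transport through $H$ in the same fashion to witness that $H(f)$ is an $((HF)_C,(HF')_{C'})$-semisplit-epi in $\e$.

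For the preservation of $(F_C,F'_{C'})$-semi-isomorphisms, one could again transport the defining equations through $H$ directly, but the cleanest argument is to invoke Proposition \ref{prop:semiiso-semisplit}: $f$ being an $(F_C,F'_{C'})$-semi-isomorphism is equivalent to it being simultaneously an $(F_C,F'_{C'})$-semisplit-mono and an $(F_C,F'_{C'})$-semisplit-epi, both of which have just been shown to be preserved by $H$. Applying Proposition \ref{prop:semiiso-semisplit} once more in $\e$ then yields that $H(f)$ is an $((HF)_C,(HF')_{C'})$-semi-isomorphism.

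There is no genuine obstacle here; the statement is purely formal and hinges on the single fact that a semifunctor respects composition (identity-preservation is never used). The only point requiring a moment of care is recognizing that, although $H$ need not send identities to identities, it does send the idempotent $F\id_C$ associated with $F$ to the idempotent $(HF)\id_C$ associated with the composite semifunctor $HF$, so the ``$F\id$'' factors appearing in the semisplitting conditions are exactly the ones demanded by the target definitions.
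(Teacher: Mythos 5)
Your proposal is correct and follows essentially the same route as the paper: apply $H$ to the defining equations and use preservation of composition, noting $H(F\id_C)=(HF)\id_C$. The only cosmetic difference is that for the semi-isomorphism case the paper verifies the defining equations directly ("as in the previous cases") while you invoke Proposition \ref{prop:semiiso-semisplit}; both are immediate and equally valid.
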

\begin{proof}
	Let $f:FC\to F'C'$ be an $(F_C,F'_{C'})$-semisplit-mono in $\dd$. Then, $f\circ F\id_C=f$ and there exists a morphism $g:F'C'\to FC$ in $\dd$ such that $g\circ f=F\id_C$ and $g\circ F'\id_{C'}=g$. We have that $Hf\circ HF\id_C=H(f\circ F\id_C)=Hf$, $Hg\circ Hf=H(g\circ f)=HF\id_{C}$ and $Hg\circ HF'\id_{C'}=H(g\circ F'\id_{C'})=Hg$, thus $Hf$ is an $(HF_C,HF'_{C'})$-semisplit-mono. If $f:FC\to F'C'$ is an $(F_C,F'_{C'})$-semisplit-epi in $\dd$, then $F'\id_{C'}\circ f=f$ and there exists a morphism $g:F'C'\to FC$ in $\dd$ such that $f\circ g=F'\id_{C'}$ and $F\id_C \circ g=g$. We have that $HF'\id_{C'}\circ Hf=Hf$, $Hf\circ Hg=H(f\circ g)=HF'\id_{C'}$ and $HF\id_C \circ Hg=Hg$, thus $Hf$ is an $(HF_C,HF'_{C'})$-semisplit-epi. If $f:FC\to F'C'$ is an $(F_C,F'_{C'})$-semi-isomorphism in $\dd$, then as in the previous cases $Hf$ is an $(HF_C,HF'_{C'})$-semi-isomorphism.
\end{proof}
%Old proof.
%	Let $f:FC\to D$ be a $C$-semisplit-mono in $\dd$. Then, $f\circ F\id_C=f$ and there exists a morphism $g:D\to FC$ in $\dd$ such that $g\circ f=F\id_C$. We have that $H(f)\circ HF\id_C=H(f)$ and $H(g)\circ H(f)=H(g\circ f)=HF\id_{C}$, thus $H(f)$ is a $C$-semisplit-mono. If $f:D\to FC$ be a $C$-semisplit-epi in $\dd$, then $F\id_C\circ f=f$ and there exists a morphism $g:FC\to D$ in $\dd$ such that $f\circ g=F\id_C$. We have that $HF\id_C\circ H(f)=H(f)$ and $H(f)\circ H(g)=H(f\circ g)=HF\id_{C}$, thus $H(f)$ is a $C$-semisplit-epi.	If $f:FC\to F'C'$ is a $(C,C')$-semi-isomorphism in $\dd$, then from the previous cases by Remark \ref{rmk:semiiso-semisplit} $H(f)$ is a $(HF,HF')$-semi-isomorphism.
%\begin{proof}
%	If $f:FC\to FC'$ is a semi-isomorphism, then there exists a morphism $g:FC'\to FC$ in $\dd$ such that $g\circ f=F\id_C$ and $f\circ g=F\id_{C'}$. Let $F':\dd\to\e$ be a semifunctor. Then, $F'(g)\circ F'(f)=F'(g\circ f)=F'F\id_{C'}$ and $F'(f)\circ F'(g)=F'(f\circ g)=F'F(\id_{C'})$, thus $F'(f)$ is a semi-isomorphism.
%\end{proof}
Hereafter, in order to simplify the notation, when the semifunctors $F:\cc\to\dd$, $F':\cc'\to\dd$ are clear from the context, we will write $C$-semi-monomorphism (resp. $C$-semisplit-mono, $(C,C')$-semisplit-mono, $C$-semi-epimorphism, $C$-semisplit-epi, $(C,C')$-semisplit-epi, $(C,C')$-semi-isomorphism) instead of $F_C$-semi-monomorphism (resp. $F_C$-semisplit-mono, $(F_C,F'_{C'})$-semisplit-mono, $F_C$-semi-epimorphism, $F_C$-semisplit-epi, $(F_C,F'_{C'})$-semisplit-epi, $(F_C,F'_{C'})$-semi-isomorphism).
\begin{rmk}\label{rmk:seminat-split}
	Let $\alpha:F\to F'$  be a seminatural transformation of semifunctors $F, F':\cc\to\dd$. If $\alpha$ is a natural semisplit-mono (resp. natural semisplit-epi, natural semi-isomorphism), then every component morphism $\alpha_C:FC\to F'C$ is a $(C,C)$-semisplit-mono (resp. $(C,C)$-semisplit-epi, $(C,C)$-semi-isomorphism) in $\dd$.
\end{rmk} 
%The next is an example of a natural semi-isomorphism.
\begin{es}\label{es:semi-iso}
	\cite[See Section 2.4]{Ho93} Let $F:\cc\to\Set$ be a semifunctor and let $\overline{FX}=\{x \in F(X) | F(\id_X)(x)=x \}$ denote the subset of $FX$ of fixpoints of $F(\id_X)$. For any morphism $f:X\to Y$ in $\cc$, if $x\in \overline{FX}$, then $F(f)(x)\in \overline{FY}$, hence the function $F(f):FX\to FY$ restricts to a function $\overline{F(f)}:\overline{FX}\to\overline{FY}$. In fact, $F(\id_{Y})(F(f)(x))=F(\id_{Y}\circ f)(x)=F(f)(x)$, for every $x\in X$. Thus, we have a functor $\overline{F}:\cc\to\Set$, $X\mapsto\overline{FX}$, $f\mapsto \overline{F(f)}$, which is naturally semi-isomorphic to $F$. Indeed, let $\alpha:F\to \overline{F}$,  $\alpha=(\alpha_X:FX\to \overline{F}X)_{X\in \cc}$, be given by $\alpha_X(p)=F(\id_X)(p)$, for every $p\in FX$. Note that $F(\id_X)(p)\in \overline{FX}$ as $F(\id_X)(F(\id_X)(p))=F(\id_X\circ\id_X)(p)=F(\id_X)(p)$. We have that, for any morphism $f:X\to Y$ in $\cc$ and any $p\in FX$, $(\alpha_Y\circ Ff)(p)=(F\id_Y\circ Ff)(p)=Ff(p)=(Ff\circ F\id_X)(p)=(\overline{Ff}\circ\alpha_X)(p)$, thus $\alpha$ is a natural transformation. Since $\overline{F}$ is a functor, we have that $\alpha$ is seminatural. % Moreover, since for every $X\in\cc$ and $p\in FX$, we have $\alpha_X F(\id_X)(p)=F(\id_X)(F(\id_X)(p))=F(\id_X)(p) =\alpha_X(p)$, then $\alpha\circ F\id =\alpha$. 
	Let $\beta:\overline{F}\to F$, $(\beta_X:\overline{F}X\to FX)_{X\in \cc}$, be given by the canonical inclusion $\beta_X(q)=q$, for every $q\in\overline{F}X=\overline{FX}$. We have that, for any $f:X\to Y$ in $\cc$, $(Ff\circ\beta_X)(q)=Ff(q)=(\beta_Y\circ\overline{F}f)(q)$ for every $q\in\overline{F}X$ and, since $\overline{F}$ is a functor, $\beta\circ \overline{F}\id =\beta$ holds true. Finally, $\alpha\circ\beta = \overline{F}\id$ and $\beta\circ\alpha = F\id$, as for every $X\in\cc$, $p\in FX$ and $q\in \overline{F}X$, we have $\alpha_X\beta_X(q)=\alpha_X(q)=F(\id_X)(q)=q=\overline{F}\id_X(q)$ and $\beta_X\alpha_X(p)=\beta_X (F(\id_X)(p))=F\id_X(p)$, respectively. Hence any component morphism $\alpha_X$ is an $(F_X,\overline{F}_X)$-semi-isomorphism in $\Set$, and any $\beta_X$ is an $(\overline{F}_X, F_X)$-semi-isomorphism in $\Set$. 
\end{es}

\section{The notion of semifull semifunctor}\label{sect:faithful-semifull}
Let $F: \cc \rightarrow \dd$ be a semifunctor and consider the associated natural transformation
\begin{equation}\label{nat_transf}
\f : \mathrm{Hom}_{\cc}(-,-)\rightarrow \Hom_{\dd}(F-, F-),
\end{equation}
given by $\f_{X,Y} : \mathrm{Hom}_{\cc}(X,Y)\rightarrow \Hom_{\dd}(FX, FY)$, $\f_{X,Y}(f)= F(f)$, for any morphism $f:X\rightarrow Y$ in $\cc$. Note that the codomain $\Hom_{\dd}(F-, F-)$ is a semifunctor, as $\Hom_{\dd}(F\id_X, F\id_Y)(h)=F\id_Y\circ h \circ F\id_X\neq h$ in general, while the domain $\Hom_{\cc}(-,-)$ is a functor, so $\f$ is actually a seminatural transformation. When needed we will denote $\f$ by $\f^F$ to make explicit the semifunctor $F$ we are considering.\medskip
\par 
As in the functorial case (see e.g. \cite[Definition 1.5.1]{Bor94}), if $\f_{X,Y}$ is injective, surjective, bijective for every pair of objects $X,Y\in\cc$, then $F$ is a \emph{faithful}, \emph{full}, \emph{fully faithful} semifunctor, respectively. In Proposition \ref{prop:full-semifull} we will show that a full (and hence a fully faithful) semifunctor is actually a functor. Here we investigate a weaker notion of fullness for semifunctors that we call \emph{semifullness}. We start with an example.
\begin{es}\label{es:Set-semifull-faithful}\cite[See Example 2.1]{Ho93}
	Let $\Set$ be the category of sets and functions, and consider the semifunctor $F:\Set\to\Set$, defined on objects $A$ by $F(A)=A\times A$, where $A\times A$ is the cartesian product of $A$ by itself, and on morphisms $f:A\to B$ by $F(f):A\times A\to B\times B$, $F(f)(\left( a,a'\right))=\left( f(a), f(a)\right) $, for every $a,a'\in A$. %\begin{displaymath}
	%F(f):A\times A\to B\times B ,
	%\end{displaymath}
	%\begin{displaymath}
	%F(f)(\langle a,a'\rangle)=\langle f(a), f(a)\rangle .
	%\end{displaymath}
	In particular, if $a,a'\in A$ and $a\neq a'$, then $F(\id_A)(\left( a,a'\right))=\left( a,a\right)$, whereas $\id_{FA}(\left( a,a'\right))=\left( a,a'\right)$, hence $F$ is really a semifunctor. Note that $F$ is faithful as if $F(f)=F(f')$ for morphisms $f,f':A\to B$ in $\Set$, then for every $a\in A$ we get $\left( f(a), f(a)\right) =F(f)(\left( a,a'\right))=F(f')(\left( a,a'\right))=\left( f'(a), f'(a)\right)$, thus $f(a)=f'(a)$ for every $a\in A$, hence $f=f'$. Moreover, $F$ is not full, as there is no $f:A\to A$ in $\Set$ such that $F(f)=\id_{FA}$. Indeed, if such $f$ exists, then for all $a,a'\in A$ we have $F(f)(\left( a,a'\right))=\id_{FA}(\left( a,a'\right))=\left( a,a'\right)$, but this cannot happen if $a\neq a'$, as $F(f)(\left( a,a'\right))=(f(a),f(a))\neq (a,a')$. A deeper look at the semifunctor $F$ allows us to highlight the following property. Let $\psi_B:B\times B\to B$ be the canonical projection on the first factor of the cartesian product $B\times B$, and let $\Delta_A :A\to A\times A$ be the diagonal arrow of $A$, given by $\Delta_A(a)=\left( a,a\right)$, for every $a\in A$. For any morphism $f=\langle f_1,f_2\rangle :A\times A\to B\times B$ in $\Set$, where $f_1,f_2:A\times A\to B$, consider the morphism $$g:=\psi_B\circ f\circ\Delta_A=f_1\circ\Delta_A:A\to B$$ in $\Set$. We note that $F(g)=F\id_B\circ f\circ F\id_A$. Indeed, for all $a,a'\in A$ we have $F(g)\left( (a,a') \right)=(g(a), g(a))=(f_1((a,a)), f_1((a,a)))=F\id_B(f((a,a)))=(F\id_B\circ f\circ F\id_A)((a,a'))$. %and $(F\id_B\circ f\circ F\id_A)((a,a'))=F\id_B(f((a,a)))=(f_1((a,a)), f_1((a,a))$.
\end{es}
Motivated by this example, we introduce the notion of semifull semifunctor.
\begin{defn}\label{def:semifull}
	We say that a semifunctor $F:\cc\to\dd$ is \textbf{semifull} if for every morphism $f:FX\to FY$ in $\dd$ there exists a morphism $g:X\to Y$ in $\cc$ such that $F(g)=F\id_Y\circ f\circ F\id_X$.
\end{defn}
It is now clear that the semifunctor $F$ in Example \ref{es:Set-semifull-faithful} is semifull. The following result shows how semifullness is related to the traditional notion of full functor.
\begin{prop}\label{prop:full-semifull}
	Let $F:\cc\to\dd$ be a semifunctor. Then, $F$ is full if and only if it is semifull and $\id_F=F\id$.
	\end{prop}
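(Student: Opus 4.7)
The plan is to prove each implication separately, with the non-trivial direction being that fullness forces $\id_F = F\id$.

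For the forward direction, assume $F$ is full. I would \emph{first} establish $\id_F = F\id$, and then derive semifullness as an immediate consequence. To show $F\id_X = \id_{FX}$ for an arbitrary object $X \in \cc$, apply fullness to the morphism $\id_{FX}: FX \to FX$ in $\dd$ to obtain some $h: X \to X$ in $\cc$ with $F(h) = \id_{FX}$. The key trick is then the computation
\[
F\id_X = F\id_X \circ \id_{FX} = F\id_X \circ F(h) = F(\id_X \circ h) = F(h) = \id_{FX},
\]
where the crucial step uses the semifunctoriality of $F$ (preservation of composition) together with $\id_X \circ h = h$ in $\cc$. Once $\id_F = F\id$ is known, semifullness is trivial: for any $f: FX \to FY$, fullness gives $g: X \to Y$ with $F(g) = f = \id_{FY} \circ f \circ \id_{FX} = F\id_Y \circ f \circ F\id_X$.

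For the backward direction, assume $F$ is semifull and $\id_F = F\id$. Given any $f: FX \to FY$ in $\dd$, semifullness provides $g: X \to Y$ in $\cc$ with $F(g) = F\id_Y \circ f \circ F\id_X$. Substituting the hypothesis $F\id_X = \id_{FX}$ and $F\id_Y = \id_{FY}$ immediately gives $F(g) = f$, so $\f_{X,Y}$ is surjective, i.e. $F$ is full.

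The only step requiring any care is the derivation of $\id_F = F\id$ from fullness alone; everything else is a direct unwinding of definitions. The trick is to recognize that once you have a preimage $h$ of the identity $\id_{FX}$ under $\f_{X,X}$, composing with $F\id_X$ on the left and using that $F$ preserves composition collapses $F\id_X$ to $\id_{FX}$. I expect no real obstacles beyond spotting this small manipulation.
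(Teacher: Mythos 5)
Your proposal is correct and follows essentially the same argument as the paper: both directions hinge on the same two observations, namely that a preimage $h$ of $\id_{FX}$ under $\f_{X,X}$ forces $F\id_X=\id_{FX}$ via preservation of composition, and that semifullness plus $\id_F=F\id$ immediately yields fullness. The only cosmetic difference is that the paper deduces semifullness of a full semifunctor directly from $F(g)=F(\id_Y\circ g\circ\id_X)$ without first invoking $\id_F=F\id$, whereas you establish $\id_F=F\id$ first; this changes nothing of substance.
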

\begin{rmk}\label{rmk:Fid}
	From $\id_F=F\id$ it follows that a full semifunctor is indeed a functor.
\end{rmk}	
\proof
If $F$ is full, then it is trivially semifull. Indeed, if for any $f:FX\to FY$ in $\dd$ there exists a morphism $g:X\to Y$ in $\cc$ such that $f=F(g)$, then $F(g)=F(\id_Y\circ g\circ \id_X)=F\id_Y\circ F(g)\circ F\id_X=F\id_Y\circ f\circ F\id_X$. In particular, for every $X\in\cc$, $\id_{FX}=F(h)$ for some $h:X\to X$ in $\cc$, hence $\id_{FX}=F(\id_X\circ h)=F\id_X\circ F(h)=F\id_X\circ \id_{FX}=F\id_X$. On the other hand, assume that $\id_F=F\id$. If $F$ is semifull, then for any morphism $f:FX\to FY$ in $\dd$ there exists $g:X\to Y$ in $\cc$ such that $F(g)=F\id_Y\circ f\circ F\id_X=\id_{FY}\circ f\circ \id_{FX}=f$, thus $F$ is full.
\endproof
Moreover, semifull semifunctors are stable under composition.
\begin{prop}\label{prop:semifull-comp}
	Let $F:\cc\to\dd$ and $G:\dd\to\e$ be semifunctors.
	\begin{itemize}
		\item[(i)] If $F$ and $G$ are semifull, then the semifunctor $G\circ F$ is semifull.
		\item[(ii)] If $G\circ F$ is semifull and $G$ is faithful, then $F$ is semifull.
	\end{itemize}
\end{prop}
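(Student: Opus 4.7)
The plan is to prove both parts by direct diagram chasing, using the definition of semifullness together with the observation that images of identities are idempotent and that for composable semifunctors $F, G$ one has the simplifying identities $GF\id_Y\circ G\id_{FY}=GF\id_Y$ and $G\id_{FX}\circ GF\id_X=GF\id_X$ (since $G$ preserves the composition $F\id_Y\circ \id_{FY}=F\id_Y$ and $\id_{FX}\circ F\id_X=F\id_X$).

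For part (i), I would start with an arbitrary morphism $h:GFX\to GFY$ in $\e$. Semifullness of $G$ applied to $h$ yields some $f:FX\to FY$ in $\dd$ with $G(f)=G\id_{FY}\circ h\circ G\id_{FX}$. Semifullness of $F$ applied to this $f$ yields some $g:X\to Y$ in $\cc$ with $F(g)=F\id_Y\circ f\circ F\id_X$. Applying $G$ and using that $G$ preserves composition gives $GF(g)=GF\id_Y\circ G(f)\circ GF\id_X=GF\id_Y\circ G\id_{FY}\circ h\circ G\id_{FX}\circ GF\id_X$. Using the two simplifying identities just mentioned collapses this to $GF(g)=GF\id_Y\circ h\circ GF\id_X$, which is exactly the semifullness condition for $GF$.

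For part (ii), the idea is to transport a morphism in $\dd$ to one in $\e$, apply semifullness of $GF$ there, and use faithfulness of $G$ to pull the equation back. Given $f:FX\to FY$ in $\dd$, apply semifullness of $GF$ to the morphism $G(f):GFX\to GFY$ to get $g:X\to Y$ in $\cc$ with $GF(g)=GF\id_Y\circ G(f)\circ GF\id_X$. Since $G$ preserves composition, the right-hand side equals $G(F\id_Y\circ f\circ F\id_X)$, so $G(F(g))=G(F\id_Y\circ f\circ F\id_X)$ as morphisms $GFX\to GFY$. Faithfulness of $G$ (applied to the hom-set $\Hom_\dd(FX,FY)$) then gives $F(g)=F\id_Y\circ f\circ F\id_X$, proving $F$ is semifull.

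There is no real obstacle: the only subtle step is remembering to invoke the correct idempotent/absorption identities in part (i) so that the superfluous factors $G\id_{FX}$ and $G\id_{FY}$ disappear; everything else is a one-line application of the hypotheses. This mirrors the standard argument for full functors, with the identities $\id_{FX}$ systematically replaced by $F\id_X$.
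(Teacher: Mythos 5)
Your proof is correct and follows essentially the same route as the paper's: for (i) you compose the two semifullness witnesses and absorb the extra factors via $GF\id_Y\circ G\id_{FY}=GF\id_Y$ and $G\id_{FX}\circ GF\id_X=GF\id_X$, and for (ii) you apply semifullness of $G\circ F$ to $G(f)$ and cancel with faithfulness. No gaps.
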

\proof
	(i). Let $F$ and $G$ be semifull semifunctors. Then, for any morphism $f:GFX\rightarrow GFY$ in $\e$, since $G$ is semifull, there exists a morphism $g:FX\to FY$ in $\dd$ such that $G(g)=G\id_{FY}\circ f\circ G\id_{FX}$. Now, since $F$ is semifull, there exists a morphism $h:X\to Y$ in $\cc$ such that $F(h)=F\id_Y\circ g\circ F\id_X$. Then, we have that $GF(h)=GF\id_Y\circ G(g)\circ GF\id_X=GF\id_Y\circ G\id_{FY}\circ f\circ G\id_{FX}\circ GF\id_X=GF\id_Y\circ f\circ GF\id_X$, so $G\circ F$ is semifull.\\
	(ii). Assume that $G\circ F$ is semifull. Then, for any morphism $f:FX\to FY$ in $\dd$, there exists a morphism $h:X\to Y$ in $\cc$ such that $GF(h)=GF\id_Y\circ G(f)\circ GF\id_X$, so $G(F(h))=G(F\id_Y\circ f\circ F\id_X)$. If $G$ is faithful, we have that $F(h)=F\id_Y\circ f\circ F\id_X$, hence $F$ is semifull.\qedhere
\endproof 

We say that a semifunctor $F:\cc\to\dd$ is \textbf{semifully faithful} if $F$ is a faithful and semifull semifunctor. A fully faithful semifunctor, which is actually a functor by Remark \ref{rmk:Fid}, is in particular semifully faithful. As an instance, the semifunctor $F$ in Example \ref{es:Set-semifull-faithful} is faithful and semifull, thus semifully faithful, but not fully faithful. From Proposition \ref{prop:full-semifull} it follows that a semifunctor $F:\cc\to\dd$ is fully faithful if and only if it is semifully faithful and $\id_F=F\id$.
\medskip 

Now we see how the semifull and semifully faithful conditions can be derived from requirements on the natural transformation \eqref{nat_transf} associated with a semifunctor.
\begin{prop}\label{prop:trasfnat-semifull}
Let $F:\cc\to\dd$ be a semifunctor and consider the associated natural transformation $\f : \mathrm{Hom}_{\cc}(-,-)\rightarrow \Hom_{\dd}(F-, F-)$. 
\begin{itemize}
\item[(i)] If, for every $X, Y\in\cc$, $\f_{X,Y}$ is a $\Hom_{\dd}(F-, F-)_{(X,Y)}$-semisplit-epi (or $(X,Y)$-semisplit-epi for short), then $F$ is semifull.
\item[(ii)] If, for every $X, Y\in\cc$, $\f_{X,Y}$ is an $((X,Y), (X,Y))$-semi-isomorphism, then $F$ is semifully faithful.
\end{itemize}
\end{prop}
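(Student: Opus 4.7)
The plan is to unpack the definitions of $(X,Y)$-semisplit-epi and $((X,Y),(X,Y))$-semi-isomorphism in the setting where the ambient semifunctor is $\Hom_{\dd}(F-,F-):\cc^{\mathrm{op}}\times\cc\to\Set$, and simply observe that the resulting conditions translate exactly into semifullness and semifull-faithfulness. The key observation to keep in mind throughout is that $\Hom_{\cc}(-,-)$ is a \emph{functor}, so $\Hom_{\cc}(-,-)\id_{(X,Y)}=\id_{\Hom_{\cc}(X,Y)}$, whereas $\Hom_{\dd}(F-,F-)\id_{(X,Y)}$ is the non-identity map $h\mapsto F\id_Y\circ h\circ F\id_X$ on $\Hom_{\dd}(FX,FY)$.

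For (i), the assumption that $\f_{X,Y}$ is a $\Hom_{\dd}(F-,F-)_{(X,Y)}$-semisplit-epi yields, by definition, a map $\p_{X,Y}:\Hom_{\dd}(FX,FY)\to\Hom_{\cc}(X,Y)$ such that $\f_{X,Y}\circ\p_{X,Y}=\Hom_{\dd}(F-,F-)\id_{(X,Y)}$. Evaluating both sides at an arbitrary morphism $f:FX\to FY$ and setting $g:=\p_{X,Y}(f)\in\Hom_{\cc}(X,Y)$ gives $F(g)=\f_{X,Y}(g)=F\id_Y\circ f\circ F\id_X$, which is exactly Definition \ref{def:semifull}. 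Hence $F$ is semifull.

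For (ii), I would apply Proposition \ref{prop:semiiso-semisplit} to decompose the hypothesis on $\f_{X,Y}$ into being simultaneously an $((X,Y),(X,Y))$-semisplit-mono and an $((X,Y),(X,Y))$-semisplit-epi. By Remark \ref{rmk:CC'semisplit}, the latter implies that $\f_{X,Y}$ is an $(X,Y)$-semisplit-epi, so part (i) already delivers semifullness. For faithfulness, unpacking the semisplit-mono condition (relative to $F=\Hom_{\cc}(-,-)$ and $F'=\Hom_{\dd}(F-,F-)$) produces a map $g:\Hom_{\dd}(FX,FY)\to\Hom_{\cc}(X,Y)$ with $g\circ\f_{X,Y}=\Hom_{\cc}(-,-)\id_{(X,Y)}=\id_{\Hom_{\cc}(X,Y)}$, since $\Hom_{\cc}(-,-)$ is a functor. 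This forces $\f_{X,Y}$ to be injective for every pair $X,Y$, i.e. $F$ is faithful. Combining the two, $F$ is semifully faithful.

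No serious obstacle is anticipated: the entire argument is a careful dictionary translation between the generic semisplitting definitions of Section \ref{sect:semisplit-morph} and the explicit formulas for the identities of the semifunctors $\Hom_{\cc}(-,-)$ and $\Hom_{\dd}(F-,F-)$. The only subtlety worth flagging explicitly in the write-up is the asymmetric treatment of the two Hom-semifunctors, reflecting the fact that one is a genuine functor and the other is not.
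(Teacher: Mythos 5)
Your proof is correct and follows essentially the same route as the paper: both parts amount to unpacking the semisplitting definitions for the two Hom-(semi)functors and reading off semifullness from $\f_{X,Y}\circ\g_{X,Y}=\Hom_{\dd}(F\id_X,F\id_Y)$ and faithfulness from $\g_{X,Y}\circ\f_{X,Y}=\id_{\Hom_{\cc}(X,Y)}$. The only cosmetic difference is that in (ii) the paper extracts both identities directly from the definition of $((X,Y),(X,Y))$-semi-isomorphism, whereas you route through Proposition \ref{prop:semiiso-semisplit} and Remark \ref{rmk:CC'semisplit}; the content is identical.
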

\begin{proof}
(i). If $\f_{X,Y}$ is an $(X,Y)$-semisplit-epi for every $X, Y\in\cc$, then there exists a map $\g_{X,Y}:\Hom_{\dd}(FX, FY)\to \Hom_{\cc}(X, Y)$ such that $\f_{X,Y}\circ \g_{X,Y}=\Hom_\dd(F\id_X, F\id_Y)$, i.e. for any morphism $g:FX\to FY$ in $\dd$, $(\f_{X,Y}\circ \g_{X,Y})(g)=F\id_Y\circ g\circ F\id_X$. Thus, for any morphism $g:FX\to FY$ in $\dd$ we have that $F(\g_{X,Y}(g))=F\id_Y\circ g\circ F\id_X$, where $\g_{X,Y}(g):X\to Y$ is a morphism in $\cc$, hence $F$ is semifull.\\
(ii). If $\f_{X,Y}$ is an $((X,Y),(X,Y))$-semi-isomorphism for every $X, Y\in\cc$, then there exists a map $\g_{X,Y}:\Hom_{\dd}(FX, FY)\to \Hom_{\cc}(X, Y)$ such that $\f_{X,Y}\circ \g_{X,Y}=\Hom_\dd(F\id_X, F\id_Y)$ and $\g_{X,Y}\circ \f_{X,Y}=\Hom_\cc(\id_X, \id_Y)$. By (i) $F$ is semifull. Moreover, for any morphism $h, k:X\to Y$ in $\cc$ such that $F(h)=F(k)$, we have that $\g_{X,Y}(F(h))=\g_{X,Y}(F(k))$, hence from $\g_{X,Y}\circ \f_{X,Y}=\Hom_\cc(\id_X, \id_Y)$ it follows that $h=k$. Thus, $F$ is semifully faithful.
\end{proof}
It is known that faithful functors reflect monomorphisms, epimorphisms and that fully faithful functors reflect isomorphisms. In the next propositions we show a similar behavior for faithful and semifully faithful semifunctors.
\begin{prop}\label{prop:refl-semimono-semiepi}
	Let $F:\cc\to\dd$ be a semifunctor and let $H:\dd\to\e$ be a faithful semifunctor. Then, $H$ reflects $C$-semi-monomorphisms and $C$-semi-epimorphisms.
\end{prop}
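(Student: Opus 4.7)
The plan is to unfold the definition of $C$-semi-monomorphism and pull back the required equality via the faithfulness of $H$. Given $f:FC\to D$ such that $Hf:HFC\to HD$ is an $(HF)_C$-semi-monomorphism in $\e$, I want to show that $f$ is an $F_C$-semi-monomorphism in $\dd$. So take a parallel pair $h,k:D'\to FC$ in $\dd$ satisfying $f\circ h=f\circ k$. Applying $H$ and using the fact that $H$ preserves composition yields $Hf\circ Hh=Hf\circ Hk$ in $\e$. The hypothesis on $Hf$ then gives $HF\id_C\circ Hh=HF\id_C\circ Hk$, which rewrites as $H(F\id_C\circ h)=H(F\id_C\circ k)$. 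Faithfulness of $H$ cancels the outer $H$ and produces $F\id_C\circ h=F\id_C\circ k$, which is exactly the conclusion required for $f$ to be an $F_C$-semi-monomorphism.

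The argument for $F_C$-semi-epimorphisms is perfectly dual. Suppose $f:D\to FC$ has the property that $Hf$ is an $(HF)_C$-semi-epimorphism, and take parallel arrows $h,k:FC\to D'$ in $\dd$ with $h\circ f=k\circ f$. Applying $H$ gives $Hh\circ Hf=Hk\circ Hf$, so the semi-epi hypothesis on $Hf$ yields $Hh\circ HF\id_C=Hk\circ HF\id_C$, i.e. $H(h\circ F\id_C)=H(k\circ F\id_C)$, and faithfulness of $H$ concludes that $h\circ F\id_C=k\circ F\id_C$.

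I do not anticipate any genuine obstacle: the only thing to be slightly careful about is the bookkeeping that $(HF)\id_C = H(F\id_C)$ so that the testing element of the $(HF)_C$-semi-mono/epi hypothesis lines up correctly with $F\id_C$ after $H$ is removed. Apart from this, the proof is a one-line diagram chase plus faithful cancellation in each direction.
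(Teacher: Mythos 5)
Your proof is correct and follows essentially the same argument as the paper: apply $H$ to the test equation, invoke the semi-mono/epi hypothesis on $Hf$, rewrite $(HF)\id_C$ as $H(F\id_C)$, and cancel $H$ by faithfulness. The dual case is handled identically in both versions.
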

\begin{proof}
	Let $f:FC\to D$ be a morphism in $\dd$ and assume that $H(f)$ is a $C$-semi-monomorphism in $\e$. Suppose that $f\circ h=f\circ k$ for some pair of parallel morphisms $h,k:D'\to FC$ in $\dd$. Then, $H(f)\circ H(h)=H(f)\circ H(k)$, hence $HF\id_{C}\circ H(h)=HF\id_{C}\circ H(k)$. Since $H$ is faithful, we have that $F\id_C\circ h=F\id_C\circ k$, so $f$ is a $C$-semi-monomorphism. The case for $C$-semi-epimorphisms is similar.
\end{proof}	
\begin{prop}\label{prop:refl-semiiso}
	Let $F:\cc\to\dd$, $F':\cc'\to\dd$ be semifunctors and let $H:\dd\to\e$ be a semifully faithful semifunctor. Then, $H$ reflects $(F_C,F'_{C'})$-semisplit-monos, $(F_C,F'_{C'})$-semisplit-epis, $(F_C,F'_{C'})$-semi-isomorphisms.
\end{prop}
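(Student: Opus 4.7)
The plan is to handle the three reflection properties in turn. In each case, faithfulness of $H$ is used to descend equalities in $\e$ to equalities in $\dd$, while semifullness of $H$ is used to produce a candidate inverse morphism in $\dd$; the only non-routine point is that the candidate coming from semifullness must be massaged so that the idempotency side-conditions built into Definitions \ref{defn:CC'semisplit}--\ref{defn:CC'semiiiso} hold on the nose.

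First, suppose $H(f)$ is an $(HF_C,HF'_{C'})$-semisplit-mono, witnessed by $k:HF'C'\to HFC$ with $k\circ H(f)=HF\id_C$ and $k\circ HF'\id_{C'}=k$. Faithfulness of $H$ applied to $H(f\circ F\id_C)=H(f)\circ HF\id_C=H(f)$ yields $f\circ F\id_C=f$. By semifullness of $H$ applied to $k:HF'C'\to HFC$, there is $h:F'C'\to FC$ in $\dd$ with $H(h)=H\id_{FC}\circ k\circ H\id_{F'C'}$. Since $h$ need not satisfy $h\circ F'\id_{C'}=h$, set $g:=h\circ F'\id_{C'}$; idempotency of $F'\id_{C'}$ gives $g\circ F'\id_{C'}=g$. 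A short computation in $\e$, using $H\id_{F'C'}\circ HF'\id_{C'}=HF'\id_{C'}$, the relations $k\circ HF'\id_{C'}=k$ and $k\circ H(f)=HF\id_C$, shows $H(g\circ f)=HF\id_C=H(F\id_C)$, so faithfulness yields $g\circ f=F\id_C$. Hence $f$ is an $(F_C,F'_{C'})$-semisplit-mono.

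The semisplit-epi case is entirely dual: from $HF'\id_{C'}\circ H(f)=H(f)$ faithfulness gives $F'\id_{C'}\circ f=f$, semifullness again supplies $h$ with $H(h)=H\id_{FC}\circ k\circ H\id_{F'C'}$, and one sets $g:=F\id_C\circ h$ so that $F\id_C\circ g=g$ holds by construction; the analogous computation, now using $HF\id_C\circ k=k$ and $H(f)\circ k=HF'\id_{C'}$, gives $H(f\circ g)=HF'\id_{C'}$, and faithfulness concludes. Finally, the case of $(F_C,F'_{C'})$-semi-isomorphisms follows immediately from Proposition \ref{prop:semiiso-semisplit} applied to both $H(f)$ and $f$, combined with the two cases just established. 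The main (mild) obstacle throughout is the sandwich $h\mapsto h\circ F'\id_{C'}$ (or $F\id_C\circ h$): without it the idempotency clauses of the definitions would be left unverified, while with it the required equality after $H$ still goes through verbatim.
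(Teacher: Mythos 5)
Your proof is correct and follows essentially the same strategy as the paper: faithfulness of $H$ to descend equalities from $\e$ to $\dd$, and semifullness to manufacture the candidate splitting, with the semi-isomorphism case reduced to the two splitting cases via Proposition \ref{prop:semiiso-semisplit}. The only (harmless) divergence is your post-composition $h\mapsto h\circ F'\id_{C'}$ (resp.\ $F\id_C\circ h$): the paper shows this is unnecessary, since the morphism $g$ supplied directly by semifullness already satisfies $g\circ F'\id_{C'}=g$, as one checks by computing $H(g\circ F'\id_{C'})=H(g)$ and invoking faithfulness.
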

\begin{proof}
	Let $f:FC\to F'C'$ be a morphism in $\dd$ and assume that $H(f):HFC\to HF'C'$ is a $(HF_C,HF'_{C'})$-semisplit-mono in $\e$. Then, $H(f)\circ HF\id_C=H(f)$ and there exists a morphism $h:HF'C'\to HFC$ in $\e$ such that $h\circ H(f)=HF\id_{C}$ and $h\circ HF'\id_{C'}=h$. From $H(f\circ F\id_C)=H(f)$ we obtain $f\circ F\id_C=f$, as $H$ is faithful. Since $H$ is semifull, there is a morphism $g:F'C'\to FC$ in $\dd$ such that $H(g)=H\id_{FC}\circ h\circ H\id_{F'C'}$. Thus, from $h\circ H(f)=HF\id_{C}$ we have $H(g\circ f)=H(g)\circ H(f)=H\id_{FC} \circ h\circ H\id_{F'C'}\circ H(f)=H\id_{FC} \circ h\circ H(f)=H\id_{FC}\circ HF\id_{C}=HF\id_{C}$, and then, since $H$ is faithful, we get $g\circ f= F\id_C$. Moreover, we have $H(g\circ F'\id_{C'})=H(g)\circ HF'\id_{C'}=H\id_{FC}\circ h\circ H\id_{F'C'}\circ HF'\id_{C'}=H\id_{FC}\circ h\circ HF'\id_{C'} \circ H\id_{F'C'}=H\id_{FC}\circ h\circ H\id_{F'C'}=H(g)$, hence $g\circ F'\id_{C'}=g$ as $H$ is faithful. Then, $f$ is an $(F_C,F'_{C'})$-semisplit-mono in $\dd$. For $(F_C,F'_{C'})$-semisplit-epis and $(F_C,F'_{C'})$-semi-isomorphisms the proof is similar. \qedhere	
\end{proof}%$H(g\circ f)=H(g)\circ H(f)=H\id_{FC} \circ h\circ H\id_{F'C'}\circ H(f)=H\id_{FC} \circ h\circ HF'\id_{C'}\circ H\id_{F'C'}\circ H(f)=H\id_{FC}\circ h\circ HF'\id_{C'} \circ H(f)=H\id_{FC} \circ h\circ H(f)=H\id_{FC}\circ HF\id_{C}=HF\id_{C}$,

Inspired by \cite[Proposition 2.5]{AMCM06}, we provide a characterization of faithfulness and semifullness for semifunctors that are part of a semiadjunction.
\begin{prop}\label{prop:char-faithful-semifull}
	Let $F\dashv_\mathrm{s} G:\dd\to \cc$ be a semiadjunction with unit $\eta$ and counit $\epsilon$. Then,
	\begin{itemize}
		\item[(1)] $F$ is faithful if and only if $\eta_C$ is a monomorphism in $\cc$ for every $C\in \cc$;
		\item[(2)] $F$ is semifull if and only if $\eta_C$ is a $C$-semisplit-epi in $\cc$ for every $C\in \cc$;
		\item[(3)] $G$ is faithful if and only if $\epsilon_D$ is an epimorphism in $\dd$ for every $D\in \dd$;
		\item[(4)] $G$ is semifull if and only if $\epsilon_D$ is a $D$-semisplit-mono in $\dd$ for every $D\in \dd$.
	\end{itemize}
\end{prop}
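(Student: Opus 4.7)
My approach is to exploit the natural semi-isomorphism $\tau_{C,D}(h)=G(h)\circ\eta_C$ of the semiadjunction together with the semitriangular identities $G\epsilon\circ\eta G=G\id$ and $\epsilon F\circ F\eta=F\id$, which replace the usual triangular identities here. Parts (1) and (3) are dual, as are (2) and (4), so I would present (1) and (2) in detail and indicate that (3) and (4) follow by interchanging the roles of $F, G, \eta, \epsilon$ and of domain and codomain.

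For (1), the $(\Leftarrow)$ direction is direct: from $F(f)=F(f')$ apply $G$ and post-compose $\eta_C$, then use naturality of $\eta$ to rewrite the equality as $\eta_{C'}\circ f=\eta_{C'}\circ f'$, which the monomorphism hypothesis cancels. For $(\Rightarrow)$, starting from $\eta_{C'}\circ f=\eta_{C'}\circ f'$, I would apply $F$ and then pre-compose with $\epsilon_{FC'}$; the semitriangular identity $\epsilon F\circ F\eta=F\id$ collapses the left factor to $F\id_{C'}$, yielding $F(f)=F\id_{C'}\circ F(f)=F\id_{C'}\circ F(f')=F(f')$, after which faithfulness of $F$ finishes the argument. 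Part (3) is the mirror image, using naturality of $\epsilon$ and the other semitriangular identity $G\epsilon\circ\eta G=G\id$ to collapse the middle factor.

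For (2), the more delicate direction is $(\Leftarrow)$. Given a semisplitting $\bar g_C:GFC\to C$ with $\eta_C\circ\bar g_C=GF\id_C$ for each $C$, and a morphism $g:FX\to FY$, I would set $f:=\bar g_Y\circ G(g)\circ\eta_X$. The key auxiliary identity is $F\bar g_Y=F\id_Y\circ\epsilon_{FY}$, obtained by applying $F$ to $\eta_Y\circ\bar g_Y=GF\id_Y$, post-composing with $\epsilon_{FY}$, and simplifying the left-hand side via $\epsilon F\circ F\eta=F\id$ and the right-hand side via naturality of $\epsilon$. Substituting gives $F(f)=F\id_Y\circ\epsilon_{FY}\circ FG(g)\circ F\eta_X$, and naturality of $\epsilon$ together with $\epsilon F\circ F\eta=F\id$ reduce this to $F\id_Y\circ g\circ F\id_X$, as required. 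For $(\Rightarrow)$, I would apply semifullness of $F$ to the morphism $\epsilon_{FC}:FGFC\to FC$ to obtain $h:GFC\to C$ with $F(h)=F\id_C\circ\epsilon_{FC}\circ F\id_{GFC}$; then $\eta_C\circ h=GF(h)\circ\eta_{GFC}$ by naturality of $\eta$, and after applying $G$ to the formula for $F(h)$, the semitriangular identity $G\epsilon\circ\eta G=G\id$ collapses the middle and forces $\eta_C\circ h=GF\id_C$.

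Part (4) follows by the dual argument: given $\bar g_D:D\to FGD$ with $\bar g_D\circ\epsilon_D=FG\id_D$, one defines $f:=\epsilon_Y\circ F(g)\circ\bar g_X$ and uses the dual identity $G\bar g_D=\eta_{GD}\circ G\id_D$; conversely one applies semifullness of $G$ to $\eta_{GD}:GD\to GFGD$. I expect the main bookkeeping obstacle to be tracking carefully the distinction between $F\id$ (or $G\id$) and the genuine identity natural transformations $\id_F$, $\id_G$, which is transparent in the ordinary functorial case but essential here: one has to invoke seminaturality rather than naturality at the right moments, and keep in mind that the correct semisplitting targets $GF\id_C$ (resp.\ $FG\id_D$) rather than $\id_C$ (resp.\ $\id_D$).
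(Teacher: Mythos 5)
Your proposal is correct and follows essentially the same route as the paper: both directions of (1) use the semitriangular identity $\epsilon F\circ F\eta=F\id$ (the paper phrases this via the semi-inverse pair $\tau,\sigma$) and naturality of $\eta$, and in (2) you apply semifullness to $\epsilon_{FC}$ for the forward direction and define the preimage as $\bar g_{Y}\circ G(g)\circ\eta_X$ for the converse, exactly as in the paper's proof, with (3) and (4) obtained by duality. Your auxiliary identity $F\bar g_Y=F\id_Y\circ\epsilon_{FY}$ is a harmless repackaging of the paper's in-line substitution $F\id_{C'}=\epsilon_{FC'}\circ F\eta_{C'}$, and you correctly target $GF\id_C$ rather than $\id_{GFC}$ in the semisplitting condition.
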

\begin{proof}
	We prove only $(1)$ and $(2)$, as $(3)$ and $(4)$ follow by duality. For any $C,C'$ in $\cc$ consider the composition
	$$\tau_{C,FC'}\circ\f^F_{C,C'}:\Hom_\cc(C,C')\to\Hom_\cc(C,GFC')$$
	where $\tau$ is the natural semi-isomorphism defined on components as in \eqref{eq:tau}, thus $\tau_{C,FC'}(Ff)=GFf\circ\eta_C=\eta_{C'}\circ f$, for any morphism $f:C\to C'$ in $\cc$.\\% Note that $\Omega_{C,C'}$ is injective if and only if $\eta_{C'}$ is a monomorphism.\\
	$(1)$. Assume that $F$ is faithful. Let $f,f':C\to C'$ be morphisms in $\cc$ such that $\eta_{C'}\circ f=\eta_{C'}\circ f'$, i.e. $\tau_{C,FC'}(Ff)=\tau_{C,FC'}(Ff')$. Then, by composing the latter equality with $\sigma_{C,FC'}$ defined as in \eqref{eq:sigma}, we get $\sigma_{C,FC'}\tau_{C,FC'}(Ff)=\sigma_{C,FC'}\tau_{C,FC'}(Ff')$, i.e. $Ff\circ F\id_C =Ff'\circ F\id_C$, so that $Ff=Ff'$. Since $F$ is faithful we have $f=f'$, thus $\eta_{C'}$ is a monomorphism. Conversely, suppose that $\eta_C$ is a monomorphism for every $C\in \cc$. Let $f,f': C\to C'$ be morphisms in $\cc$ such that $Ff=Ff'$. Then, $\eta_{C'}\circ f=GFf\circ\eta_C=GFf'\circ\eta_C=\eta_{C'}\circ f'$, thus $f=f'$ as $\eta_{C'}$ is a monomorphism. Hence $F$ is faithful.\\
	$(2)$. Assume that $F$ is semifull. Then, for any $f:FC\to FC'$ in $\dd$ there exists $g:C\to C'$ in $\cc$ such that $F(g)=F\id_{C'}\circ f\circ F\id_{C}$. In particular, for $\epsilon_{FC}:FGFC\to FC$, there exists $\nu_C: GFC\to C$ such that $F(\nu_C)=F\id_C\circ \epsilon_{FC}\circ F\id_{GFC}$. Then, for every $C\in \cc$, we have $\eta_C\circ\nu_C=GF\nu_C\circ\eta_{GFC}=\tau_{GFC, FC}(F\nu_C)=\tau_{GFC,FC}(F\id_C\circ \epsilon_{FC}\circ F\id_{GFC})=GF\id_C\circ G\epsilon_{FC}\circ GF\id_{GFC}\circ \eta_{GFC}=GF\id_C\circ G\epsilon_{FC}\circ\eta_{GFC}=GF\id_C\circ G\id_{FC}=GF\id_C$, thus $\eta_C$ is a $C$-semisplit-epi, for every $C$ in $\cc$. %Moreover, we have that $GF\id_C\circ\eta_C=\eta_C$ and $\id_C\circ\nu_C=\nu_C$, thus $\eta_C$ is a $(C,C)$-semisplit-epi, for every $C$ in $\cc$. 
	Conversely, suppose that for every $C\in \cc$, $\eta_C$ is a $C$-semisplit-epi in $\cc$, i.e. there exists a morphism $\nu_C:GFC \to C$ in $\cc$ such that $\eta_C\circ \nu_C=GF\id_{C}$. Let $f:FC\to FC'$ be a morphism in $\dd$. Consider the composite morphism $\nu_{C'}\circ Gf\circ\eta_C:C\to C'$ in $\cc$. Then, we have $F(\nu_{C'}\circ Gf\circ\eta_C)=F(\id_{C'}\circ\nu_{C'}\circ Gf\circ\eta_C)=F\id_{C'}\circ F\nu_{C'}\circ FGf\circ F\eta_C=\epsilon_{FC'}\circ (F\eta_{C'}\circ F\nu_{C'})\circ FGf\circ F\eta_C=\epsilon_{FC'}\circ FGF\id_{C'}\circ FGf\circ F\eta_C=\epsilon_{FC'}\circ FG(F\id_{C'}\circ f)\circ F\eta_C=F\id_{C'}\circ f\circ \epsilon_{FC}\circ F\eta_C=F\id_{C'}\circ f\circ F\id_C$, thus $F$ is semifull.
\end{proof}	
\begin{rmk}\label{rmk:seminat-semisplit}
	Let $F:\cc\to\dd$, $G:\dd\to\cc$ be semifunctors. We observe that for any natural transformation $\alpha:\mathrm{Id}_\cc\to GF$ with domain the identity functor, which is indeed a seminatural transformation, any component morphism $\alpha_X:X\to GFX$ in $\cc$ is an $X$-semisplit-epi if and only if it is an $(X,X)$-semisplit-epi. Indeed, by Remark \ref{rmk:CC'semisplit} any $(X,X)$-semisplit-epi is an $X$-semisplit-epi. If $\alpha_X$ is an $X$-semisplit-epi, then there is $\beta_X:GFX\to X$ in $\cc$ such that $\alpha_X\circ\beta_X=GF\id_X$. Moreover,  $\id_X\circ\beta_X=\beta_X$ and from seminaturality of $\alpha$ it follows that $GF\id_X\circ\alpha_X=\alpha_X$. Analogously, for any seminatural transformation $\alpha:GF\to\mathrm{Id}_\cc$ with codomain the identity functor, any component morphism $\alpha_X:GFX\to X$ is an $X$-semisplit-mono if and only if it is an $(X,X)$-semisplit-mono. Thus, in Proposition \ref{prop:char-faithful-semifull} in the statement (2) $\eta_C$ is actually a $(C, C)$-semisplit-epi, and in the statement (4) $\epsilon_D$ is a $(D,D)$-semisplit-mono.
\end{rmk}

As a consequence, we have the following result for semifully faithful semifunctors, which is an analogue of \cite[Proposition 3.4.1]{Bor94} for semifunctors.
\begin{cor}\label{cor:semi-iso-ff}
	Let $F\dashv_\mathrm{s} G:\dd\to\cc$ be a semiadjunction with unit $\eta$ and counit $\epsilon$. Then,
	\begin{itemize}
		\item[(1)] $F$ is semifully faithful if and only if $\eta_C$ is a $(C,C)$-semi-isomorphism in $\cc$ for every $C\in \cc$;
		\item[(2)] $G$ is semifully faithful if and only if $\epsilon_D$ is a $(D,D)$-semi-isomorphism in $\dd$ for every $D\in \dd$.
	\end{itemize}
	%Under these conditions, $\eta G$ and $F\eta$ are semi-isomorphisms. 
	%Under these conditions, $\epsilon F$ and $G\epsilon$ are semi-isomorphisms.
\end{cor}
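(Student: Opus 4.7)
The plan is to derive Corollary \ref{cor:semi-iso-ff} directly by combining the characterizations of faithfulness and semifullness in Proposition \ref{prop:char-faithful-semifull} with the interplay between (semi)splitting properties established in Proposition \ref{prop:semiiso-semisplit} and Proposition \ref{prop:iso}, upgrading the one-sided data using Remark \ref{rmk:seminat-semisplit}. Statement (2) will then follow by the evident duality argument applied to $\epsilon$ in place of $\eta$, so I focus on statement (1).

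For the forward implication in (1), assume $F$ is semifully faithful. By Proposition \ref{prop:char-faithful-semifull}(1)--(2), for every $C\in\cc$ the component $\eta_C:C\to GFC$ is simultaneously a monomorphism and a $C$-semisplit-epi. Viewing $\eta_C$ as a morphism from $\id_\cc(C)$ to $(GF)(C)$, Remark \ref{rmk:seminat-semisplit} (applied to the seminatural transformation $\eta:\id_\cc\to GF$, whose domain is a functor) allows us to promote the $C$-semisplit-epi condition to a $(C,C)$-semisplit-epi. Moreover, since $\id_\cc$ is a functor, the monomorphism condition on $\eta_C$ coincides with the $\id_\cc$-semi-monomorphism condition. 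Hence Proposition \ref{prop:iso}(4) applies and yields that $\eta_C$ is a $(C,C)$-semi-isomorphism.

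For the converse direction, suppose $\eta_C$ is a $(C,C)$-semi-isomorphism for every $C$. By Proposition \ref{prop:semiiso-semisplit} it is in particular a $(C,C)$-semisplit-epi, hence (by forgetting the extra conditions) a $C$-semisplit-epi, so $F$ is semifull by Proposition \ref{prop:char-faithful-semifull}(2). On the other hand, the $(C,C)$-semisplit-mono part provides a retraction $g:GFC\to C$ with $g\circ\eta_C=\id_C$ (here the identity appears because $\id_\cc\id_C=\id_C$), so $\eta_C$ is a split monomorphism and in particular a monomorphism, which by Proposition \ref{prop:char-faithful-semifull}(1) gives faithfulness of $F$. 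Combining these, $F$ is semifully faithful.

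Finally, statement (2) is obtained by running the same argument on the opposite side: Proposition \ref{prop:char-faithful-semifull}(3)--(4) characterize faithfulness and semifullness of $G$ in terms of $\epsilon_D$ being an epimorphism and a $D$-semisplit-mono, the analogue of Remark \ref{rmk:seminat-semisplit} for seminatural transformations with \emph{codomain} the identity functor upgrades the $D$-semisplit-mono to a $(D,D)$-semisplit-mono, and Proposition \ref{prop:iso}(3) (together with Proposition \ref{prop:semiiso-semisplit} for the converse) yields the equivalence with $\epsilon_D$ being a $(D,D)$-semi-isomorphism. No real obstacle is expected: the proof is purely a bookkeeping combination of already proved equivalences, the only point worth stressing being the translation between the one-sided notion ($C$-semisplit-epi, $D$-semisplit-mono) and the two-sided one ($(C,C)$- and $(D,D)$-versions) which is precisely what Remark \ref{rmk:seminat-semisplit} provides.
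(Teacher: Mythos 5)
Your argument is correct and follows essentially the same route as the paper: both directions combine Proposition \ref{prop:char-faithful-semifull} with the upgrade from $C$-semisplit-epi to $(C,C)$-semisplit-epi given by Remark \ref{rmk:seminat-semisplit}, then invoke Proposition \ref{prop:iso}(4) for the forward implication and Proposition \ref{prop:semiiso-semisplit} for the converse, with (2) by duality. The only cosmetic difference is that in the converse you exhibit the retraction explicitly to see that $\eta_C$ is a (split) monomorphism, where the paper phrases this as $\eta_C$ being an $\id_\cc$-semi-monomorphism; these are the same observation.
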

\begin{proof}
	We show only $(1)$ as $(2)$ follows dually. If $F$ is semifully faithful, then by Proposition \ref{prop:char-faithful-semifull} $\eta_C$ is a monomorphism (hence a $C$-semi-monomorphism) and a $(C,C)$-semisplit-epi in $\cc$ (see Remark \ref{rmk:seminat-semisplit}) for every $C\in \cc$. Thus, by Proposition \ref{prop:iso} (4), $\eta_C$ is a $(C,C)$-semi-isomorphism. Conversely, if $\eta_C$ is a $(C,C)$-semi-isomorphism in $\cc$ for every $C\in \cc$, then by Proposition \ref{prop:semiiso-semisplit} $\eta_C$ is a $(C,C)$-semisplit-epi (hence a $C$-semisplit-epi) and a $(C,C)$-semiplit-mono (hence an ${\id_\cc}_{C}$-semi-monomorphism, i.e. a monomorphism) for every $C$ in $\cc$, thus again by Proposition \ref{prop:char-faithful-semifull} $F$ is semifull and faithful.
	\begin{comment}
	we have that there is $\nu_C:GFC\to C$ in $\cc$ such that $\eta_C\circ\nu_C=GF\id_C$ and $\nu_C\circ\eta_C=\id_C$ (we already know that $GF\id_C\circ\eta_C=\eta_C$ and $\eta_C\circ\id_C=\eta_C$). Moreover, we have that $\id_C\circ \nu_C=\nu_C$, hence $\eta_C$ is a $(C,C)$-semisplit-epi and a split-mono (hence a monomorphism) for every $C$ in $\cc$, thus again by Proposition \ref{prop:char-faithful-semifull} $F$ is semifull and faithful.
	\end{comment}
	%old proof.	there exists $\nu_C:GFC \to C$ in $\cc$ such that $\eta_C\circ \nu_C=GF\id_{C}$ and $\id_C\circ \nu_C=\nu_C$. We have that $\eta_C\circ\nu_C\circ\eta_C=GF\id_C\circ\eta_C=\eta_C$, hence, since $\eta_C$ is a monomorphism we get $\nu_C\circ\eta_C=\id_C$. %\sout{Moreover, we have $\nu_C\circ GF\id_C=\nu_C\circ\eta_C\circ\nu_C=\nu_C$,} Thus $\eta_C$ is a $(C,C)$-semi-isomorphism. 
\end{proof}

\section{Separable semifunctors}\label{sect:separ-natsemifull}
Separable functors were introduced by C. N\v{a}st\v{a}sescu et al. in \cite{NVV89} and widely studied e.g. in \cite{CMZ02}. As a special case, which inspired the terminology, the restriction of scalars functor associated with a morphism of rings is a separable functor if and only if the corresponding ring extension is separable. In this section we study the property of separability for semifunctors. %(in Example \ref{es:morph-ring} we will return to it). %, i.e. the multiplication $m_S:S\otimes_R S\to S$, $s\otimes_R s'\mapsto ss'$ splits as an $S$-bimodule map. 
We recall that a functor $F: \cc \rightarrow \dd$, with associated natural transformation $\f$, is said to be \emph{separable} if there is a natural transformation $\p : \Hom_{\dd}(F-, F-)\rightarrow \Hom_{\cc}(-,-)$ such that $\p\circ\f = \id $. We define a semifunctor $F:\cc\to\dd$ to be separable by requiring the same condition on the associated natural transformation $\f$ given as in \eqref{nat_transf}. We discuss general properties, such as a Maschke-type Theorem and a Rafael-type Theorem for separable semifunctors.\\
\par
We say that a semifunctor $F:\cc\to\dd$ is \textbf{separable} if there is a natural transformation $\p : \Hom_{\dd}(F-, F-)\rightarrow \Hom_{\cc}(-,-)$ such that
\begin{equation}\label{def_P}
\p\circ\f = \Id_{\Hom_{\cc}(-,-)},
\end{equation}
i.e., for any morphism $f:X\rightarrow Y$ in $\cc$, one has $(\p_{X,Y}\circ\f_{X,Y})(f) = f$.\\
Note that $\p$ is actually a seminatural transformation. When needed we will denote $\p$ by $\p^F$.
%the domain $\Hom_{\dd}(F-, F-)$ is a semifunctor, as $\Hom_{\dd}(F\id_X, F\id_Y)(h)=F\id_Y\circ h \circ F\id_X\neq h$ in general, while the codomain $\Hom_{\cc}(-,-)$ is a functor.
\begin{rmk}\label{remark-separ}
\begin{itemize}
\item[(i)] A separable functor is a separable semifunctor.% as $\p_{X,Y}(g\circ F\id_X)=\p_{X,Y}(g\circ\id_{FX})=\p_{X,Y}(g)$ for every $g:FX\to FY$ in $\dd$.
\item[(ii)] A separable semifunctor is faithful.
%\begin{invisible}
%It follows by definition. If $F$ is a separable semifunctor, then there is a natural transformation $\p : \Hom_{\dd}(F-, F-)\rightarrow \Hom_{\cc}(-,-)$ such that $\p\circ\f = \Id_{\Hom_{\cc}(-,-)}$, hence since $\f$ has a right inverse, then it is injective and so $F$ is faithful.
%\end{invisible}
\end{itemize}
\end{rmk}
\begin{es}\label{es:Set}
	Let $F:\Set\to\Set$ be the semifunctor considered in Example \ref{es:Set-semifull-faithful}. We define 
	\begin{equation}\label{eq:P-set}
	\begin{split}
	\p_{A,B}:\Hom_{\Set}(FA, FB)&=\Hom_{\Set}(A\times A, B\times B)\to \Hom_{\Set}(A, B)\\
	\p_{A,B}(g)&:=\psi_B\circ g\circ \Delta_A ,
	\end{split}	
\end{equation}
	for every map $g: A\times A\to B\times B$, where $\psi_B:B\times B\to B$ is the canonical projection on the first factor of the cartesian product $B\times B$, and $\Delta_A :A\to A\times A$, $\Delta_A(a)=\left( a,a\right)$, is the diagonal arrow of $A$. For any map $h:A\to B$, $g:B\times B\to C\times C$, $g(x)=\langle g_1(x),g_2(x)\rangle$, with $g_1, g_2:B\times B\to C$, and $k:C\to D$, by definition of $\p_{A,D}$, we have:\begin{displaymath}
	\begin{split}
	(\p_{A,D}(Fk\circ g\circ Fh))&(a)=(\psi_D\circ (Fk\circ g\circ Fh)\circ\Delta_A) (a)=(\psi_D\circ Fk\circ g\circ Fh) (\left( a, a\right) )\\
	&=(\psi_D\circ Fk\circ g) (\left( h(a), h(a)\right) )=(\psi_D\circ Fk) ( (g_1 ((h(a), h(a))), g_2 ((h(a),h(a)))) )\\
	&= \psi_D( (k(g_1 ((h(a), h(a)))), k(g_1 ((h(a),h(a))))) )= k( g_1 ((h(a), h(a))) )\\
	&=k( \psi_C ( (g_1 ((h(a), h(a))), g_2 ((h(a),h(a)))) ))=(k\circ \psi_C \circ g )(\left( h(a), h(a)\right) )\\
	&=( k\circ \psi_C\circ g\circ\Delta_B\circ h)(a)= (k\circ \p_{B,C} (g)\circ h )(a), 
	\end{split}
	\end{displaymath}
	for every $a\in A$, thus $\p:\Hom_{\Set}(F-, F-)\to \Hom_{\Set}(-,-)$ is a natural transformation. Moreover, for any morphism $f:A\to B$ in $\Set$ and for every $a\in A$, we have $(\p_{A,B} ( \f_{A,B} (f) ))(a)= (\p_{A,B} (Ff))(a) =(\psi_B\circ Ff\circ\Delta_A ) (a)=(\psi_B\circ Ff) (\left( a,a\right) )=\psi_B (\left( f(a),f(a)\right) )=f(a)$, hence $F$ results to be a separable semifunctor. 
\end{es}
Now we study some elementary properties of separable semifunctors. Their behavior with respect to composition is the same as in the functorial case, see e.g. \cite[Lemma 1.1]{NVV89}. 
\begin{lem}\label{lem:comp-sep}
Let $F:\cc\to\dd$ and $G:\dd\to\e$ be semifunctors.
\begin{itemize}
\item[(i)] If $F:\cc\to\dd$ and $G:\dd\to\e$ are separable, then so is the composite $G\circ F:\cc\to\e$.
\item[(ii)] If $G\circ F:\cc\to\e$ is separable, then so is $F$.
\end{itemize}
\end{lem}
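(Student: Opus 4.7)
The plan is to mimic the standard functorial argument from \cite[Lemma 1.1]{NVV89}, which carries over almost verbatim since the definition of separability for a semifunctor $F$ only involves the natural transformation $\f^F$ and its left inverse $\p^F$, exactly as in the functorial case.

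For part $(i)$, given separable semifunctors $F$ and $G$ with associated left inverses $\p^F:\Hom_\dd(F-,F-)\to\Hom_\cc(-,-)$ and $\p^G:\Hom_\e(G-,G-)\to\Hom_\dd(-,-)$, I would define the candidate
\[
\p^{GF}_{X,Y}:\Hom_\e(GFX,GFY)\to\Hom_\cc(X,Y),\qquad \p^{GF}_{X,Y}:=\p^F_{X,Y}\circ\p^G_{FX,FY}.
\]
The equality $\p^{GF}\circ\f^{GF}=\id$ is immediate: for $f:X\to Y$ in $\cc$,
\[
\p^{GF}_{X,Y}(GFf)=\p^F_{X,Y}(\p^G_{FX,FY}(\f^G_{FX,FY}(Ff)))=\p^F_{X,Y}(Ff)=f.
\]
Naturality of $\p^{GF}$ follows by applying naturality of $\p^G$ first (to push morphisms of the form $GFu, GFv$ through to $Fu, Fv$) and then naturality of $\p^F$ (to push them through to $u, v$ in $\cc$); this is a routine two-step diagram chase.

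For part $(ii)$, given $\p^{GF}:\Hom_\e(GF-,GF-)\to\Hom_\cc(-,-)$ with $\p^{GF}\circ\f^{GF}=\id$, I would define
\[
\p^F_{X,Y}:\Hom_\dd(FX,FY)\to\Hom_\cc(X,Y),\qquad \p^F_{X,Y}(g):=\p^{GF}_{X,Y}(Gg)=\p^{GF}_{X,Y}(\f^G_{FX,FY}(g)).
\]
The splitting identity is immediate: $\p^F_{X,Y}(Ff)=\p^{GF}_{X,Y}(GFf)=\p^{GF}_{X,Y}(\f^{GF}_{X,Y}(f))=f$. Naturality of $\p^F$ follows from naturality of $\p^{GF}$: for $u:X'\to X$, $v:Y\to Y'$ in $\cc$ and $g:FX\to FY$ in $\dd$, one has $G(Fv\circ g\circ Fu)=GFv\circ Gg\circ GFu$, and naturality of $\p^{GF}$ turns this outer sandwich by $GFv, GFu$ into the sandwich by $v, u$ in $\cc$.

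There is no real obstacle; the only thing to be mildly careful about is distinguishing the hom-set components and verifying naturality in both arguments, which is just a transcription of the functorial proof. The reason the semifunctorial setting poses no extra difficulty here is that the separability axiom \eqref{def_P} is a strict identity on $\Hom_\cc(-,-)$ (not something involving $F\id$), so no idempotent-correction terms appear when composing or restricting the $\p$'s.
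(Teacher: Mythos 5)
Your proposal is correct and coincides with the paper's own proof: the paper defines exactly $\p^{GF}_{X,Y}:=\p^F_{X,Y}\circ\p^G_{FX,FY}$ for (i) and $\p^F_{X,Y}(g):=\p^{GF}_{X,Y}(Gg)$ for (ii), leaving the naturality and splitting checks to the reader as you have sketched them. Your closing remark about why no idempotent-correction terms appear is a fair observation but adds nothing beyond the paper's one-line argument.
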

 %\begin{invisible}
 \begin{proof}
For (i) define $\p^{GF}_{X,Y}(g)=\p^F_{X,Y}\p^G_{FX,FY}(g)$, for any morphism $g$ in $\Hom_\e(GFX,GFY)$. For (ii) define $\p^F_{X,Y}(f)=\p^{GF}_{X,Y}(Gf)$, for every $f\in\Hom_\dd(FX,FY)$.	
\end{proof}
%\end{invisible}
\begin{prop}\label{prop:sep-semiiso}
	A semifunctor naturally semi-isomorphic to a separable semifunctor is separable.
\end{prop}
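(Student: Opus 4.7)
The strategy is to transport the retraction $\p^F$ for the separable semifunctor $F$ to a candidate retraction $\p^{F'}$ for $F'$ by conjugating with the given natural semi-isomorphism. Assume $\alpha:F\to F'$ is a natural semi-isomorphism with semi-inverse $\beta:F'\to F$, so that $\beta\circ\alpha=F\id$, $\alpha\circ\beta=F'\id$, $\alpha\circ F\id=\alpha$ and $\beta\circ F'\id=\beta$, and both $\alpha$ and $\beta$ are natural. I would define
\[
\p^{F'}_{X,Y}(g)\;:=\;\p^F_{X,Y}\bigl(\beta_Y\circ g\circ \alpha_X\bigr),
\]
for every morphism $g:F'X\to F'Y$ in $\dd$.

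The first step is to verify the separability identity $\p^{F'}\circ\f^{F'}=\Id_{\Hom_\cc(-,-)}$. For a morphism $f:X\to Y$ in $\cc$, naturality of $\alpha$ yields $\alpha_Y\circ Ff=F'f\circ\alpha_X$, so
\[
\beta_Y\circ F'f\circ\alpha_X \;=\;\beta_Y\circ\alpha_Y\circ Ff\;=\;F\id_Y\circ Ff\;=\;F(f),
\]
whence $\p^{F'}_{X,Y}(F'f)=\p^F_{X,Y}(Ff)=f$ by separability of $F$.

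The second step is to check that $\p^{F'}$ is indeed a natural transformation from $\Hom_\dd(F'-,F'-)$ to $\Hom_\cc(-,-)$. Given $u:X'\to X$ and $v:Y\to Y'$ in $\cc$ and $g:F'X\to F'Y$ in $\dd$, one uses the naturality squares for $\alpha$ and $\beta$ to rewrite $\beta_{Y'}\circ F'v\circ g\circ F'u\circ\alpha_{X'}=Fv\circ\beta_Y\circ g\circ\alpha_X\circ Fu$, and then naturality of $\p^F$ collapses this to $v\circ \p^{F'}_{X,Y}(g)\circ u$, which is exactly $\Hom_\cc(u,v)(\p^{F'}_{X,Y}(g))$.

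There is no real obstacle: the whole argument is transport of structure along $\alpha$ and $\beta$. The only point that deserves a little care is that the retraction identity relies on $\beta\circ\alpha=F\id$ (rather than $=\id_F$) together with the semifunctoriality identity $F\id_Y\circ Ff=F(\id_Y\circ f)=Ff$, which is why the construction works in the semi setting even though $F'$ generally fails to preserve identities.
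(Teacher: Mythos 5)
Your proposal is correct and follows essentially the same route as the paper: the paper also defines the candidate retraction by conjugating with the semi-isomorphism, setting $\p^{F}_{X,Y}:=\p^{G}_{X,Y}(\alpha_Y\circ -\circ\alpha_X^{-1})$ for $G$ separable, and verifies the retraction identity via naturality of $\alpha$ and the relation $G\id_X=\alpha_X\circ\alpha_X^{-1}$, exactly as you do (with the roles of $\alpha$ and its semi-inverse swapped by your choice of direction). Your explicit check that the transported $\p$ is natural is a detail the paper leaves implicit, but it is correct and welcome.
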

\begin{proof}
Let $\alpha: F\to G$ be a natural semi-isomorphism of semifunctors, where $G:\cc\to\dd$ is separable with respect to $\p^G$. Consider $\varsigma_{X,Y}:\Hom_\dd (FX,FY)\to \Hom_\dd (GX,GY)$ defined by $\varsigma_{X,Y}(f)=\alpha_Y\circ f\circ \alpha_X^{-1}$. Then, $F$ results to be separable with respect to $\p^F_{X,Y}:=\p^G_{X,Y}\circ\varsigma_{X,Y}$. In fact, for any morphism $f:X\to Y$ in $\cc$ we have \begin{gather*}
(\p^F_{X,Y}\circ\f^F_{X,Y})(f)=\p^F_{X,Y}(Ff)=\p^G_{X,Y}(\varsigma_{X,Y}(Ff))=\p^G_{X,Y}(\alpha_Y\circ Ff\circ \alpha_X^{-1})\\ =\p^G_{X,Y}(Gf\circ\alpha_X \circ \alpha_X^{-1})=\p^G_{X,Y}(Gf\circ G\id_X)=\p^G_{X,Y}(G(f\circ\id_X))=\p^G_{X,Y}(Gf)= f. \qedhere
\end{gather*}
%hence $F$ is separable. 
\end{proof}
Since separable functors satisfy a functorial version of Maschke Theorem \cite[Proposition 1.2]{NVV89}, see also \cite[Proposition 47 and Corollary 5]{CMZ02}, we wonder if a similar behavior holds for separable semifunctors. The Maschke Theorem for separable functors asserts that, given a separable functor $F:\cc\to\dd$ and a morphism $f:C\to C'$ in $\cc$, if $F(f)$ is a split-mono (resp. split-epi) in $\dd$, then so is $f$. In the next we prove a similar result for separable semifunctors. 
\begin{thm}(Maschke-type Theorem)\label{thm:maschke}
Let $F:\cc\to\dd$ be a separable semifunctor. For any morphism $f:C\to C'$ in $\cc$, consider the morphism $F(f):FC\to FC'$ in $\dd$.
\begin{itemize}
\item[(1)] If $F(f)$ is a $C$-semisplit-mono, then $f$ is a split-mono;
\item[(2)] if $F(f)$ is a $C'$-semisplit-epi, then $f$ is a split-epi;	
\item[(3)] if $F(f)$ is a $(C,C')$-semi-isomorphism, then $f$ is an isomorphism.	
\end{itemize}	
\end{thm}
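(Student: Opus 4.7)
The plan is to use the naturality of $\p$ against the single morphism $f$ in both the contravariant and covariant slot, so that the key identity $\p\circ\f=\id$ produces the desired splittings. The candidate for the splitting in every case is the morphism $h:=\p_{C',C}(g)\colon C'\to C$ in $\cc$, where $g\colon FC'\to FC$ is the morphism witnessing the semisplitting property of $F(f)$. Recall that $\p$ is a natural transformation $\Hom_\dd(F-,F-)\to\Hom_\cc(-,-)$ of (semi)functors $\cc^{\mathrm{op}}\times\cc\to\Set$, so for any $a\colon C_2\to C_1$, $b\colon D_1\to D_2$ in $\cc$ and any $\phi\colon FC_1\to FD_1$ in $\dd$ one has
\begin{equation}\label{eq:nat-P}
\p_{C_2,D_2}\bigl(F(b)\circ\phi\circ F(a)\bigr)=b\circ\p_{C_1,D_1}(\phi)\circ a.
\end{equation}

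For $(1)$, assume $F(f)$ is a $C$-semisplit-mono through $g\colon FC'\to FC$, i.e.\ $g\circ F(f)=F\id_C$. Apply \eqref{eq:nat-P} with $a:=f$, $b:=\id_C$ and $\phi:=g$, obtaining
\[
\p_{C,C}\bigl(F\id_C\circ g\circ F(f)\bigr)=\id_C\circ\p_{C',C}(g)\circ f=h\circ f.
\]
Using $g\circ F(f)=F\id_C$ and the idempotency of $F\id_C$, the argument on the left reduces to $F\id_C\circ F\id_C=F\id_C=F(\id_C)$, hence by separability $\p_{C,C}(F(\id_C))=\id_C$. Thus $h\circ f=\id_C$, so $f$ is a split-mono with left inverse $h$.

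For $(2)$, dually, assume $F(f)$ is a $C'$-semisplit-epi through $g\colon FC'\to FC$, so $F(f)\circ g=F\id_{C'}$. Apply \eqref{eq:nat-P} with $a:=\id_{C'}$, $b:=f$ and $\phi:=g$ to get
\[
\p_{C',C'}\bigl(F(f)\circ g\circ F\id_{C'}\bigr)=f\circ\p_{C',C}(g)\circ\id_{C'}=f\circ h.
\]
Again $F(f)\circ g\circ F\id_{C'}=F\id_{C'}\circ F\id_{C'}=F(\id_{C'})$, so the left-hand side equals $\id_{C'}$ by separability, showing $f\circ h=\id_{C'}$.

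For $(3)$, if $F(f)$ is a $(C,C')$-semi-isomorphism, then by Definition \ref{defn:CC'semiiiso} there is a single $g\colon FC'\to FC$ satisfying simultaneously $g\circ F(f)=F\id_C$ and $F(f)\circ g=F\id_{C'}$; in particular $F(f)$ is both a $C$-semisplit-mono and a $C'$-semisplit-epi through the same $g$. Applying $(1)$ and $(2)$ with this common $g$ yields a single $h=\p_{C',C}(g)$ which is simultaneously a left and right inverse of $f$, hence a two-sided inverse, so $f$ is an isomorphism. No real obstacle arises: the only non-trivial point is arranging the two naturality substitutions so that the idempotent $F\id_C$ (respectively $F\id_{C'}$) produced by the semisplitting collapses to $F(\id_C)$ (respectively $F(\id_{C'})$) inside $\p$, at which point separability delivers the ordinary identity.
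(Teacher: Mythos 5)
Your proposal is correct and follows essentially the same route as the paper: apply the naturality of $\p$ to transport the semisplitting witness $g$ to $h=\p_{C',C}(g)$, reduce the argument of $\p$ to $F(\id_C)$ (resp.\ $F(\id_{C'})$) using idempotency of $F\id$, and invoke separability; part (3) is likewise deduced from (1) and (2). The only difference is cosmetic: you track the extra $F\id$ coming from the covariant slot of the Hom-semifunctor explicitly, whereas the paper absorbs it silently since $F\id_C\circ g\circ F(f)=g\circ F(f)$.
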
 
\begin{proof}
We show only (1), as (2) is analogous and (3) follows from $(1)+(2)$. Assume that $F(f)$ is a $C$-semisplit-mono, i.e. there exists a morphism $g: FC'\to FC$ such that $g\circ F(f)= F\id_{C}$, and that $F$ is separable through a natural transformation $\p$. Then, by naturality of $\p$, we get that $\p_{C',C}(g)\circ f=\p_{C,C}(g\circ F(f))=\p_{C,C}(F\id_{C})=\p_{C,C}\f_{C,C}(\id_C)=\id_C$, hence $f$ is a split-mono.\begin{invisible}
(2): Assume that $F(f)$ is a $F$-semisplit-epi, i.e. there exists a morphism $g: FC'\to FC$ such that $F(f)\circ g= F\id_{C'}$, and that $F$ is separable through the natural transformation $\p$. Then, by naturality of $\p$, we get that $f\circ \p_{C',C}(g)=\p_{C',C'}(F(f)\circ g)=\p_{C',C'}(F\id_{C'})=\p_{C',C'}\f_{C',C'}(\id_{C'})=\id_{C'}$, hence $f$ is a split-mono. (3) follows from $(1)+(2)$ as any $(F,F)$-semi-isomorphism is a $F$-semisplit-mono and a $F$-semisplit-epi morphism. The inverse of $f$ is $\p_{C',C}(g)$.
\end{invisible}
\end{proof}
\begin{invisible}
The following property for separable \cite[Proposition 48, page 93]{CMZ02} functors holds also for separable semifunctors. Recall that an object $P$ in a category $\cc$ is \emph{projective} if for any epimorphism $q: Q\to X$ and any morphism $f:P\to X$, there is a morphism $u:P\to Q$ such that $q\circ u=f$. \emph{Injective} objects can be defined dually.
\begin{prop}
Let $F:\cc\to\dd$ be a separable semifunctor. 
\begin{itemize}
	\item[(1)] If $F$ preserves epimorphisms, then $F$ reflects projective objects.
	\item[(2)] If $F$ preserves monomorphisms, then $F$ reflects injective objects.	
\end{itemize}
\end{prop}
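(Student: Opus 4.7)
The strategy for both parts is to transport the lifting (resp.\ extension) problem along $F$, solve it in $\dd$ using projectivity of $FP$ (resp.\ injectivity of $FI$), and then push the solution back along the separability splitting $\p$. Naturality of $\p$ converts the commutative triangle in $\dd$ into the one needed in $\cc$, and the identity $\p\circ\f=\id$ applied to the given morphism $f$ returns $f$ itself, closing the argument.

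For part (1), suppose $FP$ is projective in $\dd$. Given an epimorphism $q:Q\to X$ in $\cc$ and a morphism $f:P\to X$, I would first use the assumption that $F$ preserves epimorphisms to conclude that $F(q):FQ\to FX$ is epi in $\dd$. Projectivity of $FP$ then yields a lift $\tilde u:FP\to FQ$ with $F(q)\circ\tilde u=F(f)$. Setting $u:=\p_{P,Q}(\tilde u):P\to Q$, naturality of $\p$ in the second variable (evaluated at $q$) gives
\[
q\circ u \;=\; q\circ\p_{P,Q}(\tilde u)\;=\;\p_{P,X}(F(q)\circ\tilde u)\;=\;\p_{P,X}(F(f))\;=\;(\p\circ\f)_{P,X}(f)\;=\;f,
\]
using separability in the last step. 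Thus $P$ is projective in $\cc$.

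Part (2) is dual: assume $FI$ is injective in $\dd$, take a monomorphism $m:X\to Y$ and a morphism $f:X\to I$ in $\cc$; $F$ preserves monomorphisms, so $F(m)$ is mono in $\dd$ and injectivity of $FI$ produces $\tilde v:FY\to FI$ with $\tilde v\circ F(m)=F(f)$. Defining $v:=\p_{Y,I}(\tilde v)$ and invoking naturality of $\p$ in the first variable at $m$, together with $\p\circ\f=\id$, yields $v\circ m=f$.

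There is no serious obstacle here, only one subtlety worth flagging. For a generic morphism $g:FX\to FY$ in $\dd$ the composite $\f_{X,Y}\p_{X,Y}(g)$ need not be $g$ but merely factors through $F\id_Y\circ g\circ F\id_X$; what saves the argument is that we only ever apply $\p$ in the direction $\p\circ\f=\id$, whose source is the honest hom-functor $\Hom_\cc(-,-)$ and hence returns $f$ on the nose. So the proof rests squarely on the two required ingredients: preservation of epis (resp.\ monos) by $F$ and naturality of the separability splitting $\p$.
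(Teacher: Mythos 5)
Your proof is correct and takes essentially the same route as the paper's own argument: transport the lifting (resp.\ extension) problem along $F$, solve it using projectivity of $FP$ (resp.\ injectivity of $FI$), and push the solution back via $\p$, using naturality together with $\p\circ\f=\id$ applied to $F(f)$. The paper additionally observes that the hypothesis already forces $F(\id_X)=\id_{FX}$ (an idempotent epimorphism, resp.\ monomorphism, must be the identity), so $F$ is in fact a functor in this situation, though that remark is not needed for the proof itself.
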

Obeserve that if $F$ preserves epimorphisms (or monomorphisms), then $F(\id)$ is an epimorphism (monomorphism) and since $F(\id)\circ F(\id)=F(\id\circ\id)=F\id=\id_F\circ F\id$ ($F(\id)\circ F(\id)=F(\id\circ\id)=F\id=F\id\circ \id_F$), we get that $F(\id)=\id_F$, thus $F$ is a functor.
%\begin{invisible}
\begin{proof}
(1) Assume that $M$ is an object in $\cc$ such that $F(M)$ is projective. Let $g:C\to C'$ and $f:M\to C'$ be an epimorphism and a morphism in $\cc$, respectively. Then, by assumption $F(g):FC\to FC'$ is an epimorphism, hence there exists a morphism $u:FM\to FC$ such that $F(f)=F(g)\circ u$. Thus, if $F$ is separable through $\p$, then we have $f=\p (F(f))=\p(F(g)\circ u)=g\circ \p(u)$, so $M$ is projective. (2) follows similarly.
\end{proof}
%\end{invisible}
\end{invisible}
\subsection{Separable semiadjoints}
In \cite[Theorem 1.2]{Raf90} a characterization of separability for functors which have an adjoint is provided and it is known as Rafael Theorem. Namely, given an adjunction $F\dashv G:\dd\to\cc$ with unit $\eta$ and counit $\epsilon$, then $F$ is separable if and only if there exists a natural transformation $\nu:GF\to \id _\cc$ such that $\nu\circ\eta =\id_{\id_\cc}$, while $G$ is separable if and only if there exists a natural transformation $\gamma:\id_\dd\to FG$ such that $\epsilon\circ\gamma=\id_{\id_\dd}$. Here we extend Rafael Theorem to semifunctors, in order to obtain a characterization of separability for semifunctors which are part of a semiadjunction.  
\begin{rmk}\label{rmk:seminat-alpha}
	Let $F:\cc\to\dd$, $G:\dd\to\cc$ be semifunctors. We observe that for any natural transformation $\alpha:GF\to\mathrm{Id}_\cc$ with codomain the identity functor (it is indeed a seminatural transformation), %from the naturality of $\alpha$ 
	we have $\alpha\circ G\id_F=\alpha\circ GF\id\circ G\id_F=\alpha\circ G(F\id\circ \id_F)=\alpha\circ GF\id=\alpha$. Analogously, for any (semi)natural transformation $\alpha:\mathrm{Id}_\cc\to GF$ with domain the identity functor, we have $G\id_F\circ\alpha=G\id_F\circ GF\id\circ \alpha=G(\id_F\circ F\id)\circ \alpha= GF\id\circ\alpha=\alpha$.
	%\begin{displaymath}
	%	\nu_X\circ G\id_{FX}=\p_{GFX,X}(\epsilon_{FX})\circ G\Id_{FX}=\p_{GFX,X}(\epsilon_{FX}\circ FG\Id_{FX})=\p_{GFX,X}(\epsilon_{FX})=\nu_X.
	%\end{displaymath}
\end{rmk}	
\begin{thm}(Rafael-type Theorem for separable semifunctors)\label{Th.Rafael}
	Let $F\dashv_\mathrm{s} G:\dd\to \cc$ be a semiadjunction, with unit $\eta$ and counit $\epsilon$. Then,
	\begin{itemize}
		\item[(1)] $F$ is separable if and only if $\eta$ is a natural split-mono, i.e. there exists a seminatural transformation $\nu : GF\rightarrow \Id_{\cc}$ such that
		$\nu\circ\eta = \Id_{\id_{\cc}};$
		\item[(2)] $G$ is separable if and only if $\epsilon$ is a natural split-epi, i.e. there exists a seminatural transformation $\gamma : \Id_{\dd}\rightarrow FG$ such that $\epsilon\circ\gamma = \Id_{\id_{\dd}}.$
	\end{itemize}
\end{thm}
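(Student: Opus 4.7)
The plan is to mimic the classical Rafael argument (see e.g.\ \cite{CMZ02}), defining the would-be retraction of $\eta$ (resp.\ section of $\epsilon$) as an instance of the separability natural transformation applied to the counit (resp.\ unit), and using the semitriangular identities \eqref{eq:semitr-id} to absorb the missing strict triangle equalities. I will spell out (1); statement (2) follows by duality.

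For the direction ($\Rightarrow$) of (1), assume $F$ is separable via $\p:\Hom_\dd(F-,F-)\rightarrow \Hom_\cc(-,-)$. I will set
\[
\nu_X := \p_{GFX,X}(\epsilon_{FX}) \;\in\; \Hom_\cc(GFX,X).
\]
First, to verify $\nu\circ \eta=\Id_{\id_\cc}$, I apply naturality of $\p$ with the pair of $\cc$-morphisms $\eta_X:X\to GFX$ and $\id_X:X\to X$, obtaining
$\nu_X\circ\eta_X=\p_{X,X}(F\id_X\circ\epsilon_{FX}\circ F\eta_X)$; the semitriangular identity $\epsilon F\circ F\eta=F\id$ collapses the argument to $F\id_X\circ F\id_X=F\id_X=\f_{X,X}(\id_X)$, and then $\p\circ\f=\id$ yields $\id_X$. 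Next, for naturality of $\nu$ along $f:X\to X'$, two applications of the naturality of $\p$ reduce both $f\circ\nu_X$ and $\nu_{X'}\circ GFf$ to $\p_{GFX,X'}(Ff\circ\epsilon_{FX}\circ F\id_{GFX})$ and $\p_{GFX,X'}(Ff\circ\epsilon_{FX})$ respectively (using naturality of $\epsilon$ for the second). The crucial step is then the identity
\[
\epsilon_{FX}\circ F\id_{GFX}=\epsilon_{FX},
\]
which I will derive by using $F\id_{GFX}=\epsilon_{FGFX}\circ F\eta_{GFX}$ (from $\epsilon F\circ F\eta=F\id$ at $GFX$), naturality of $\epsilon$ to rewrite $\epsilon_{FX}\circ\epsilon_{FGFX}=\epsilon_{FX}\circ FG\epsilon_{FX}$, then the other semitriangular identity $G\epsilon\circ\eta G=G\id$ to produce $FG\id_{FX}$, and finally the seminaturality $\epsilon_{FX}\circ FG\id_{FX}=\epsilon_{FX}$. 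Seminaturality of $\nu$, i.e.\ $\nu\circ GF\id=\nu$, is automatic: specializing the naturality square of $\nu$ to $f=\id_X$ gives $\nu_X=\id_X\circ\nu_X=\nu_X\circ GF\id_X$ (compare Remark \ref{rmk:seminat-alpha}).

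For the direction ($\Leftarrow$) of (1), given seminatural $\nu:GF\to \Id_\cc$ with $\nu\circ\eta=\Id_{\id_\cc}$, I will define
\[
\p_{X,Y}(g):=\nu_Y\circ Gg\circ \eta_X,\qquad g\in\Hom_\dd(FX,FY).
\]
The equation $\p\circ\f=\id$ is a one-line computation: $\p_{X,Y}(Ff)=\nu_Y\circ GFf\circ\eta_X=\nu_Y\circ\eta_Y\circ f=f$, using naturality of $\eta$ and the hypothesis on $\nu$. Naturality of $\p$ in both arguments follows from naturality of $G$, $\eta$ and $\nu$, as in the standard Rafael computation; no semi-specific subtleties arise here because $\f$ already encodes composition with $F$-images.

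Statement (2) is proved dually: in the forward direction, set $\gamma_D:=\p^G_{D,FGD}(\eta_{GD})$ and verify $\epsilon\circ\gamma=\Id_{\id_\dd}$ by naturality of $\p^G$ together with the identity $G\epsilon\circ\eta G=G\id$ and the dual of the absorption lemma $\eta_{GD}\circ G\id_D$ reducing to $\eta_{GD}$ modulo a seminaturality step on $\eta$; naturality of $\gamma$ relies dually on the identity $\eta_{GD}\circ G\id_D=\eta_{GD}$ (mod the appropriate absorption) and seminaturality of $\gamma$ is automatic. In the backward direction one sets $\p^G_{D,D'}(g):=\epsilon_{D'}\circ Fg\circ \gamma_D$ and checks $\p^G\circ\f^G=\id$ directly.

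The main obstacle will be the verification of the absorption identity $\epsilon_{FX}\circ F\id_{GFX}=\epsilon_{FX}$ (and its dual $\eta_{GD}\circ G\id_D=\eta_{GD}$), since this is exactly the point where the functorial proof uses strict triangular identities that are unavailable in the semi-setting; the resolution is the delicate interplay of both semitriangular identities \eqref{eq:semitr-id} with the seminaturality of the counit and unit, as sketched above.
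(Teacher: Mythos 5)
Your proposal is correct and follows essentially the same route as the paper's proof: $\nu_X:=\p_{GFX,X}(\epsilon_{FX})$ in the forward direction, $\p_{X,Y}(g):=\nu_Y\circ Gg\circ\eta_X$ in the converse, with the semitriangular identities and the naturality of $\p$, $\eta$, $\epsilon$ doing all the work. The only (harmless) difference is that you make explicit the absorption identity $\epsilon_{FX}\circ F\id_{GFX}=\epsilon_{FX}$ needed in the naturality check for $\nu$ — which indeed holds, e.g. from $FG\id_{FX}=FG\id_{FX}\circ F\id_{GFX}$ together with the seminaturality of $\epsilon$ — whereas the paper passes over this point silently.
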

\proof
We prove only (1) as (2) follows by duality. Assume that $F$ is a separable semifunctor and let $\p$ be the associated natural transformation such that $\p\circ\f =\Id_{\Hom_{\cc}(-,-)}$. We define a seminatural transformation $\nu : GF\rightarrow \Id_{\cc}$ given for any object $X$ in $\cc$ by
\begin{equation}\label{def_nu}
\nu_{X}:=\p_{GFX,X}(\epsilon_{FX}): GFX\rightarrow X.
\end{equation}
It is indeed natural as for any morphism $f:X\to Y$ in $\cc$, by naturality of $\p$, we have $f\circ\nu_X=f\circ\p_{GFX,X}(\epsilon_{FX})=\p_{GFX,Y}(Ff\circ\epsilon_{FX})=\p_{GFX,Y}(\epsilon_{FY}\circ FGFf)=\p_{GFY,Y}(\epsilon_{FY})\circ GFf=\nu_Y\circ GFf$. It holds
\begin{equation*}
\begin{split}
\nu_X\circ\eta_X &= \p_{GFX,X}(\epsilon_{FX})\circ\eta_{X}=\p_{X,X}(\epsilon_{FX}\circ F\eta_{X})\\
&=\p_{X,X}(F\Id_{X})=\p_{X,X}\f_{X,X}(\Id_{X})=\id_X,
\end{split}
\end{equation*}
where the last equality follows from the separability of $F$. Moreover, for every $g:FX\to FY$ in $\dd$, we have %by naturality of $\p$ in $(*)$
\begin{equation*}
\begin{split}
\nu_Y\circ Gg\circ\eta_X &=\p_{GFY,Y}(\epsilon_{FY})\circ Gg\circ\eta_X=\p_{X,Y}(\epsilon_{FY}\circ FGg\circ F\eta_X)\\&=\p_{X,Y}(g\circ\epsilon_{FX}\circ F\eta_X)=\p_{X,Y}(g\circ F\id_X)=\p_{X,Y}(g)\circ\id_X=\p_{X,Y}(g).
\end{split}
\end{equation*} 
Conversely, suppose that there exists a seminatural transformation $\nu :GF\rightarrow \Id_{\cc}$ such that $\nu\circ\eta =\Id_{\id_{\cc}}$. Define for every $g:FX\to FY$ in $\dd$
\begin{equation}
\p_{X,Y}(g):=\nu_Y\circ Gg\circ\eta_X .
\end{equation}
From the naturality of $\eta$ and $\nu$, for any $h:X\to Y$ in $\cc$, $k:FY\to FZ$ in $\dd$, $l:Z\to T$ in $\cc$, we have $\p_{X,T}(Fl\circ k\circ Fh)=\nu_T\circ G(Fl\circ k\circ Fh)\circ\eta_X=(\nu_T\circ GFl )\circ Gk\circ (GFh\circ\eta_X)=l\circ (\nu_Z\circ Gk\circ \eta_{Y})\circ h=l\circ\p_{Y,Z}(k)\circ h$, thus $\p : \Hom_{\dd}(F-, F-)\rightarrow \Hom_{\cc}(-,-)$ is a natural transformation. %and moreover we have $\p_{X,Y}(f)=\nu_Y\circ Gf\circ\eta_X=\p_{GFY,Y}(\epsilon_{FY})\circ Gg\circ\eta_X=\p_{X,Y}(\epsilon_{FY}\circ FGg\circ F\eta_X)=\p_{X,Y}(g\circ\epsilon_{FY}\circ F\eta_X)=\p_{X,Y}(f\circ F\id_X)$.
It holds $\p_{GFX,X}(\epsilon_{FX})=\nu_X\circ G\epsilon_{FX}\circ\eta_{GFX}=\nu_X\circ G\id_{FX}=\nu_X$, where the last equality follows from Remark \ref{rmk:seminat-alpha}, hence the correspondence between $\p$ and $\nu$ is bijective. Moreover, for every $f\in \Hom_{\cc}(X,Y)$ we have
\begin{equation*}
\begin{split}
(\p_{X,Y}\circ\f_{X,Y}) (f)&=\p_{X,Y}(F(f))=\nu_Y\circ GF(f)\circ\eta_X=\nu_Y\circ\eta_{Y}\circ f=\id_Y\circ f=f,
\end{split}
\end{equation*}
that is, $F$ is separable.
\endproof
Given an idempotent (semi)natural transformation $e=(e_X)_{X\in\cc}:\id_\cc\to\id_\cc$ on a category $\cc$, consider the canonical semifunctor $E^e:\cc\to\cc$.
%given by $X\mapsto X$, $[f:X\to Y]\mapsto f\circ e_X=e_Y\circ f$, for any object $X\in \cc$ and for any morphism $f$ in $\cc$. 
As a consequence of Theorem \ref{Th.Rafael} we have the following.
\begin{prop}\label{prop:E^e}
	Let $e=(e_X)_{X\in\cc}:\id_\cc\to\id_\cc$ be an idempotent (semi)natural transformation. Then, the canonical semifunctor $E^e:\cc\to\cc$ is separable if and only if $e_X=\id_X$, for every $X\in\cc$.
\end{prop}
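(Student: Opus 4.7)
The plan is to apply the Rafael-type Theorem (Theorem \ref{Th.Rafael}) to the self-semiadjunction $E^e\dashv_\mathrm{s} E^e$ from Proposition \ref{prop:E}, whose unit and counit both have components $\eta^e_X=\epsilon^e_X=e_X$.

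For the easy direction ($\Leftarrow$), if $e_X=\id_X$ for every $X\in\cc$, then by Remark \ref{rmk:E-id} the semifunctor $E^e$ coincides with the identity functor $\id_\cc$, which is trivially separable (take $\p_{X,Y}=\id_{\Hom_\cc(X,Y)}$).

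For the nontrivial direction ($\Rightarrow$), I would first observe that $E^e\circ E^e=E^e$ as semifunctors: on objects both are the identity, and on a morphism $f:X\to Y$ one computes $E^eE^e(f)=E^e(f\circ e_X)=f\circ e_X\circ e_X=f\circ e_X=E^e(f)$. Now assume $E^e$ is separable. By Theorem \ref{Th.Rafael}(1) applied to the semiadjunction $E^e\dashv_\mathrm{s} E^e$, there exists a seminatural transformation $\nu:E^eE^e\to\id_\cc$, i.e.\ $\nu:E^e\to\id_\cc$, such that $\nu\circ\eta^e=\id_{\id_\cc}$. On components this gives
\begin{equation*}
\nu_X\circ e_X=\id_X,\qquad\text{for every } X\in\cc.
\end{equation*}
The seminaturality of $\nu$ forces $\nu_X\circ E^e\id_X=\nu_X$, that is, $\nu_X\circ e_X=\nu_X$. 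Combining the two identities yields $\nu_X=\id_X$, and feeding this back into $\nu_X\circ e_X=\id_X$ gives $e_X=\id_X$ for every $X$, which is the desired conclusion.

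The only delicate point is the identification $E^eE^e=E^e$ together with making sure the seminaturality clause of $\nu$ is read correctly against this composite; once that is in place the argument is a two-line diagram chase. No essential obstacle is expected, since the Rafael-type characterization reduces separability to a single equation that, combined with seminaturality on an identity-type domain, becomes very rigid.
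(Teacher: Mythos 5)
Your proposal is correct and follows essentially the same route as the paper: both apply the Rafael-type Theorem to the self-semiadjunction $E^e\dashv_\mathrm{s}E^e$ to obtain $\nu_X\circ e_X=\id_X$ and then conclude $e_X=\id_X$. The only cosmetic difference is that you extract $\nu_X\circ e_X=\nu_X$ from the seminaturality of $\nu$, whereas the paper gets the same conclusion directly from the idempotency of $e_X$ via $e_X=\nu_X\circ e_X\circ e_X=\nu_X\circ e_X=\id_X$.
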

\begin{proof}
From the proof of Proposition \ref{prop:E} $E^e$ is self-semiadjoint with unit and counit given on components by $e_X:X\to X$, for any $X\in\cc$. By Theorem \ref{Th.Rafael} $E^e$ is separable if and only if there exists a seminatural transformation $\nu=(\nu_X : X\rightarrow X)_{X\in\cc}$ such that $\nu_X\circ e_X= \Id_X$ for any $X\in\cc$. %there exists a seminatural transformation $\nu=(\nu_X : X\rightarrow X)_{X\in\cc}$ such that $\nu_X\circ e_X= \Id_X$ (or $e_X\circ \nu_X= \Id_X$) for any $X\in\cc$.
Then, $e_X=\id_X\circ e_X=\nu_X\circ e_X\circ e_X=\nu_X\circ e_X=\id_X$.
\end{proof}
%\begin{rmk}
%	Note that if $\cc$ is idempotent complete, then any idempotent morphism $e_X:X\to X$ in $\cc$ splits, i.e. there exist two morphisms $h:X\to Y$ and $k:Y\to X$ in $\cc$ such that $e_X=k\circ h$ and $h\circ k=\id _Y$; thus $h\circ e_X=h\circ k\circ h$
%\end{rmk}
\begin{rmk}\label{rmk:F'G'sep}
		By Corollary \ref{cor:F'G'} we know that, given a semiadjunction $F\dashv_\mathrm{s} G:\dd\to\cc$ and the canonical semifunctor $E^e:\cc\to\cc$, then $F':=FE^e:\cc\to\dd$ and $G':=E^eG:\dd\to\cc$ form a semiadjunction $F'\dashv_\mathrm{s} G'$. If $F'$ is separable, then by Lemma \ref{lem:comp-sep} (ii) $E^e$ is separable, hence by Proposition \ref{prop:E^e} $F'=F$, so $F$ is separable. If $G'$ is separable, then so is $G$ again by Lemma \ref{lem:comp-sep} (ii).
\end{rmk}

\section{The notion of natural semifullness}\label{sect:natsemifull}
As separable functors are naturally faithful, in a somehow dual way \emph{naturally full} functors have been defined in \cite{AMCM06}. Explicitly, a functor $F: \cc \rightarrow \dd$ with associated natural transformation $\f: \Hom_{\cc}(-, -)\rightarrow \Hom_{\dd}(F-,F-)$, $f\mapsto F(f)$, is called \emph{naturally full} \cite[Definition 2.1]{AMCM06} if there exists a natural transformation $\p : \Hom_{\dd}(F-, F-)\rightarrow \Hom_{\cc}(-,-)$ such that $\f\circ\p = \id$. Since a full semifunctor is actually a functor, if we require the same natural fullness condition (which implies fullness) on the natural transformation \eqref{nat_transf} associated with a semifunctor, that we then call \emph{naturally full}, we retrieve the notion of naturally full functor. By slightly modifying the previous condition, in this section we study a proper notion of natural semifullness.\\
\par
Let $F:\cc\to\dd$ be a semifunctor and consider its associated natural transformation $\f$.\\ We say that $F$ is a \textbf{naturally semifull} semifunctor if there is a natural transformation $\p : \Hom_{\dd}(F-, F-)\rightarrow \Hom_{\cc}(-,-)$ such that for every object $X$, $Y$ in $\cc$,
\begin{equation}
\f_{X,Y}\circ\p_{X,Y}=\Hom_\dd(F\id_X,F\id_Y)
\end{equation}
i.e., for any morphism $f:FX\rightarrow FY$ in $\dd$, one has $(\f_{X,Y}\circ\p_{X,Y})(f) = F\id_Y \circ f \circ F\id_X$.
\begin{rmk}
If $F:\cc\to\dd$ is a functor, then we recover the definition of naturally full functor.
\end{rmk}
%As expected, we have the following fact.
\begin{lem}\label{lem:semifull}
	Let $F:\cc\to\dd$ be a semifunctor. If $F$ is naturally semifull, then it is semifull. 
	\end{lem}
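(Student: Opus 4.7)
The plan is to produce the required witness morphism directly from the natural transformation $\p$ supplied by the naturally semifull hypothesis. Fix objects $X,Y\in\cc$ and an arbitrary morphism $f:FX\to FY$ in $\dd$; I will show that $g:=\p_{X,Y}(f)$ is a morphism $X\to Y$ in $\cc$ satisfying $F(g)=F\id_Y\circ f\circ F\id_X$, which is exactly the definition of semifullness.

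To verify this, I will simply evaluate $F(g)$ using the defining identity of $\f$: since $\f_{X,Y}(g)=F(g)$ by the construction of $\f$ from a semifunctor, I have
\begin{equation*}
F(g)=\f_{X,Y}(\p_{X,Y}(f))=(\f_{X,Y}\circ\p_{X,Y})(f)=F\id_Y\circ f\circ F\id_X,
\end{equation*}
where the last equality is the naturally semifull condition. Since $f$ was arbitrary, this establishes the semifullness of $F$.

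There is essentially no obstacle here: the argument is a one-line application of the definitions. The main point worth emphasizing is conceptual, namely that natural semifullness is strictly the ``object-wise hom-set'' upgrade of semifullness, so once one has a natural choice of preimages $\p_{X,Y}$, the existence of a pointwise preimage $g$ is automatic. Naturality of $\p$ plays no role in the proof of this implication; it will instead become relevant in the converse direction or in further characterizations (such as the Rafael-type theorem for naturally semifull semifunctors announced in the introduction).
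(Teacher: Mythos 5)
Your proof is correct and is essentially identical to the paper's: both take $g:=\p_{X,Y}(f)$ and observe that $F(g)=(\f_{X,Y}\circ\p_{X,Y})(f)=F\id_Y\circ f\circ F\id_X$ by the naturally semifull condition, which is precisely the semifullness requirement. Your closing remark that naturality of $\p$ is not used in this direction is also accurate.
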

\begin{proof}
	If $F$ is naturally semifull, then for any morphism $f:FX\to FY$ in $\dd$ there exists a morphism $\p_{X,Y}(f):X\to Y$ in $\cc$ such that $F(\p_{X,Y}(f))=F\id_Y\circ f\circ F\id_X$, hence $F$ is semifull.
\end{proof}
Similarly to Proposition \ref{prop:full-semifull}, the next result shows how the notions of naturally semifull semifunctor and naturally full functor are related.
\begin{prop}\label{prop:full-natfull} Let $F:\cc\to\dd$ be a semifunctor. Then, $F$ is naturally full if and only if $F$ is naturally semifull and $\id_F=F\id$.
	%From $\id_F=F\id$ it follows that a naturally full semifunctor is indeed a functor.
\end{prop}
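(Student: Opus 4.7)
The plan is to mirror the argument of Proposition \ref{prop:full-semifull}. The key observation is that when $\id_F=F\id$ holds, the ``correction'' map $\Hom_\dd(F\id_X,F\id_Y)\colon\Hom_\dd(FX,FY)\to\Hom_\dd(FX,FY)$, which sends $f$ to $F\id_Y\circ f\circ F\id_X$, is the identity, so that the defining conditions for naturally full and naturally semifull coincide.

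For the forward implication, I would assume that $F$ is naturally full via a natural transformation $\p$ satisfying $\f_{X,Y}\circ\p_{X,Y}=\id_{\Hom_\dd(FX,FY)}$ for every $X,Y\in\cc$. Since $\f_{X,Y}$ then admits a section, it is surjective, so $F$ is full. By Proposition \ref{prop:full-semifull} and Remark \ref{rmk:Fid}, this forces $F$ to be a functor, in particular $\id_F=F\id$. But then for every $f\colon FX\to FY$ one has $F\id_Y\circ f\circ F\id_X=\id_{FY}\circ f\circ\id_{FX}=f$, so the naturally full condition $\f_{X,Y}\circ\p_{X,Y}(f)=f$ rewrites as $\f_{X,Y}\circ\p_{X,Y}(f)=F\id_Y\circ f\circ F\id_X$, which is precisely natural semifullness.

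For the backward implication, I would assume that $F$ is naturally semifull via some $\p$ and moreover that $\id_F=F\id$. Then for every $f\colon FX\to FY$ the right-hand side of the naturally semifull identity simplifies: $F\id_Y\circ f\circ F\id_X=\id_{FY}\circ f\circ\id_{FX}=f$. Hence $(\f_{X,Y}\circ\p_{X,Y})(f)=f$, so $\p$ witnesses that $F$ is naturally full.

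There is no real obstacle; the argument is essentially bookkeeping on the two defining identities. The only substantive input is the invocation of Proposition \ref{prop:full-semifull} in the forward direction to deduce $\id_F=F\id$ from the natural fullness of $F$, and after that both implications reduce to a direct unpacking of the definitions.
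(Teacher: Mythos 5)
Your proof is correct and follows essentially the same route as the paper: the forward direction uses Proposition \ref{prop:full-semifull} to extract $\id_F=F\id$ from fullness, and both directions then reduce to the observation that $\Hom_\dd(F\id_X,F\id_Y)$ becomes the identity map. If anything, your converse is slightly more streamlined than the paper's, which detours through Lemma \ref{lem:semifull} and fullness to write $f=Fg$ before simplifying, whereas you substitute $\id_F=F\id$ directly into the naturally semifull identity.
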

\proof
If $F$ is naturally full, then it is trivially full and by Proposition \ref{prop:full-semifull} it holds $\id_F=F\id$. Since $F\id_Y\circ f\circ F\id_X=\id_{FY}\circ f\circ\id_{FX}=f=\f_{X,Y}\p_{X,Y}(f)$,
	%$F\id_Y\circ f\circ F\id_X=F\id_Y\circ F(\p_{X,Y}(f))\circ F\id_X=F(\id_Y\circ\p_{X,Y}(f)\circ \id_X)=\f_{X,Y}(\p_{X,Y}(f))$, 
	$F$ is also naturally semifull. Conversely, if $F$ is naturally semifull, then it is semifull by Lemma \ref{lem:semifull}. Thus, if $\id_F=F\id$, by Proposition \ref{prop:full-semifull} $F$ is full, so for any $f:FX\to FY$ in $\dd$ there exists a morphism $g:X\to Y$ in $\cc$ such that $f=F(g)$. Since $F$ is naturally semifull, we have $\f_{X,Y}(\p_{X,Y}(f))=F\id_Y\circ f\circ F\id_X=F\id_Y\circ Fg\circ F\id_X= Fg=f$, hence $F$ is naturally full.
\endproof
\begin{es}\label{es:Set-nat.full}
We come back to Example \ref{es:Set}, where the semifunctor $F:\Set\to\Set$, $A\mapsto F(A)=A\times A$, $[f:A\to B]\mapsto F(f):A\times A\to B\times B$, $F(f)(\left( a,a'\right))=\left( f(a), f(a)\right)$ is shown to be separable with respect to $\p_{A,B}:\Hom_{\Set}(FA, FB)=\Hom_{\Set}(A\times A, B\times B)\to \Hom_{\Set}(A, B)$, $\p_{A,B}(g)=\psi_B\circ g\circ \Delta_A $. Note that for any map $f=\langle f_1,f_2\rangle :A\times A\to B\times B$, with $f_1,f_2:A\times A\to B$, and for every $(a,a')\in A\times A$, we have \begin{displaymath}
\begin{split}
(\f_{A,B}(\p_{A,B}(f)))((a,a'))&=F(\p_{A,B} (f))((a,a'))=F(\psi_B\circ f\circ\Delta_A)((a,a'))\\
&=(F(\psi_B)\circ F(f)\circ F(\Delta_A))((a,a'))=F(\psi_B)( F(f)( (a,a),(a,a)))\\
&=F(\psi_B)(f((a,a)),f((a,a)))=(\psi_B f((a,a)),\psi_Bf((a,a)))\\
&=(f_1((a,a)), f_1((a,a)))=F(\id_B)((f_1((a,a)), f_2((a,a))))\\
&=F(\id_B)(f((a,a)))=(F\id_B\circ f)((a,a))=(F\id_B\circ f\circ F\id_{A})((a,a')),
\end{split}
\end{displaymath}
hence $F$ is also naturally semifull with respect to such $\p$.
\end{es}

In the next proposition we describe the behavior of naturally semifull semifunctors with respect to composition, cf. \cite[Proposition 2.3]{AMCM06} for the naturally full functor case. 
\begin{prop}\label{prop:comp}
	Let $F:\cc\to\dd$ and $G:\dd\to\e$ be semifunctors.
	\begin{itemize}
		\item[(i)] If $F$ and $G$ are naturally semifull, then the semifunctor $G\circ F$ is naturally semifull.
		\item[(ii)] If $G\circ F$ is naturally semifull and $G$ is faithful, then $F$ is naturally semifull.
	\end{itemize}
\end{prop}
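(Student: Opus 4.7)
The plan is to mimic the compositional arguments already used for separable semifunctors (Lemma \ref{lem:comp-sep}) and for semifull semifunctors (Proposition \ref{prop:semifull-comp}), adapted to encode the natural semifullness condition $\f\circ\p = \mathrm{Hom}_\dd(F\id,F\id)$.

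For part (i), assume $F$ and $G$ are naturally semifull with witnesses $\p^F$ and $\p^G$. I would set
\[
\p^{GF}_{X,Y}(g)\;:=\;\p^{F}_{X,Y}\bigl(\p^{G}_{FX,FY}(g)\bigr)
\]
for every $g:GFX\to GFY$ in $\e$. Naturality of $\p^{GF}$ is inherited from naturality of $\p^{F}$ and $\p^{G}$. To verify the natural semifullness identity, I would first apply $F$ and use natural semifullness of $F$ to obtain $F(\p^{F}_{X,Y}(\p^{G}_{FX,FY}(g)))= F\id_Y\circ \p^{G}_{FX,FY}(g)\circ F\id_X$, then apply $G$ and use natural semifullness of $G$ on the middle factor, arriving at
\[
GF(\p^{GF}_{X,Y}(g))\;=\;GF\id_Y\circ G\id_{FY}\circ g\circ G\id_{FX}\circ GF\id_X.
\]
The collapse to $GF\id_Y\circ g\circ GF\id_X$ uses the telescoping identity $GF\id_Y\circ G\id_{FY}=G(F\id_Y\circ\id_{FY})=GF\id_Y$, and symmetrically $G\id_{FX}\circ GF\id_X=GF\id_X$, which is just the definitional fact $F\id\circ\id_F=F\id$.

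For part (ii), given that $GF$ is naturally semifull with witness $\p^{GF}$ and $G$ is faithful, the natural candidate is
\[
\p^{F}_{X,Y}(f)\;:=\;\p^{GF}_{X,Y}(G(f))
\]
for $f:FX\to FY$ in $\dd$. Naturality follows from naturality of $\p^{GF}$ together with the morphism-map functoriality of $G$. To verify the natural semifullness identity for $F$, I would first compute, using natural semifullness of $GF$,
\[
G\bigl(F(\p^{F}_{X,Y}(f))\bigr)\;=\;GF(\p^{GF}_{X,Y}(G(f)))\;=\;GF\id_Y\circ G(f)\circ GF\id_X\;=\;G\bigl(F\id_Y\circ f\circ F\id_X\bigr),
\]
and then invoke faithfulness of $G$ to cancel the outer $G$ and conclude $F(\p^{F}_{X,Y}(f))=F\id_Y\circ f\circ F\id_X$. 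I do not anticipate any significant obstacle; the only mild subtlety is bookkeeping in (i) the various $F\id$ and $\id_F$ terms, which is handled uniformly by the identity $F\id\circ\id_F=F\id=\id_F\circ F\id$.
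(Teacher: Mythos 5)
Your proof is correct and follows essentially the same route as the paper: in (i) the witness is $\p^{GF}_{X,Y}=\p^F_{X,Y}\circ\p^G_{FX,FY}$ with the same telescoping of $GF\id\circ G\id_F$ into $GF\id$, and in (ii) the witness is $\p^F=\p^{GF}\circ\f^G$ followed by cancellation via faithfulness of $G$. No gaps.
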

\proof
(i). Let $F$ and $G$ be naturally semifull semifunctors with respect to $\p^F$ and $\p^G$, respectively. Then, $G\circ F$ is naturally semifull with respect to $\p^{G F}_{X,Y}:=\p^F_{X,Y}\circ\p^G_{FX,FY}$. Indeed, for any $g:GFX\rightarrow GFY$ in $\e$, we have
	\begin{displaymath}
	\begin{split}
	(\f_{X,Y}^{GF}\circ\p^{G F}_{X,Y})(g)&=(\f_{FX,FY}^G\circ\f_{X,Y}^F\circ\p_{X,Y}^F\circ\p_{FX,FY}^G)(g) 
	= \f_{FX,FY}^G(\f_{X,Y}^F(\p_{X,Y}^F(\p_{FX,FY}^G(g))))\\
	&=\f_{FX,FY}^G(F\id_Y\circ \p^G_{FX,FY}(g)\circ F\id_X)
	=GF\id_Y\circ G\p^G_{FX,FY}(g)\circ GF\id_X \\
	&=GF\id_Y\circ (G\id_{FY}\circ g\circ G\id_{FX})\circ GF\id_X \\&
	=G(F\id_Y\circ \id_{FY})\circ g\circ G(\id_{FX}\circ F\id_X)
	=GF\id_Y\circ g\circ GF\id_X . 
	\end{split}
	\end{displaymath}
(ii). Assume that $G\circ F$ is naturally semifull with respect to $\p^{G F}$. Then, for any $f:FX\to FY$ in $\dd$, we have $\f_{FX,FY}^G(\f_{X,Y}^F(\p^{G F}_{X,Y}(\f_{FX,FY}^G(f))))= GF\p^{G F}_{X,Y}(Gf)= GF\id_Y\circ Gf\circ GF\id_X=G(F\id_Y\circ f\circ F\id_X)=\f_{FX,FY}^G(F\id_Y\circ f\circ F\id_X)$, and if $G$ is faithful, it follows that $(\f_{X,Y}^F\circ\p^{G F}_{X,Y}\circ\f_{FX,FY}^G) (f)=F\id_Y\circ f\circ F\id_X$, thus $F$ is naturally semifull with respect to $\p^F:=\p^{G F}\circ \f^G$.\qedhere
\endproof
\begin{rmk}
	If $F:\cc\to\dd$ is a full functor which is not naturally full (see e.g. \cite[Example 3.3]{AMCM06}) and $G:\dd\to\e$ is a semifully faithful semifunctor, then the composite $G\circ F:\cc\to \e$ is a semifull semifunctor which is not naturally semifull. In fact, by Proposition \ref{prop:semifull-comp} (i) $G\circ F$ is semifull. If $G\circ F$ were naturally semifull, then by Proposition \ref{prop:comp} (ii) it would follow that $F$ is a naturally semifull functor, i.e. a naturally full functor, and this contradicts our assumption.
\end{rmk}
Similarly to Proposition \ref{prop:sep-semiiso}, we have the following.
\begin{prop}\label{prop:natsemif-semiiso}
	A semifunctor naturally semi-isomorphic to a naturally semifull semifunctor is naturally semifull.
\end{prop}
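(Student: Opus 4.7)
The plan is to adapt the construction used in Proposition \ref{prop:sep-semiiso}: if $\alpha:F\to G$ is a natural semi-isomorphism of semifunctors $F,G:\cc\to\dd$ with semi-inverse $\alpha^{-1}:G\to F$, and $G$ is naturally semifull with respect to a natural transformation $\p^G$, then one defines the conjugation map
\[
\varsigma_{X,Y}:\Hom_{\dd}(FX,FY)\to\Hom_{\dd}(GX,GY),\qquad \varsigma_{X,Y}(f)=\alpha_Y\circ f\circ \alpha_X^{-1},
\]
and sets $\p^F_{X,Y}:=\p^G_{X,Y}\circ \varsigma_{X,Y}$. I would first check that $\varsigma$ is natural in $(X,Y)$: for arrows $h:X'\to X$ and $k:Y\to Y'$ in $\cc$, using the naturality of $\alpha$ (applied to $k$) and of $\alpha^{-1}$ (applied to $h$), one obtains $\varsigma_{X',Y'}(Fk\circ f\circ Fh)=Gk\circ \varsigma_{X,Y}(f)\circ Gh$, so that $\p^F=\p^G\circ \varsigma$ is a natural transformation $\Hom_\dd(F-,F-)\to \Hom_\cc(-,-)$.

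Next I would verify the naturally semifull condition $\f^F_{X,Y}\circ \p^F_{X,Y}=\Hom_{\dd}(F\id_X,F\id_Y)$. Fix $f:FX\to FY$ in $\dd$ and set $g:=\p^G_{X,Y}(\alpha_Y\circ f\circ\alpha_X^{-1}):X\to Y$. The naturally semifull condition on $G$ gives
\[
G(g)=G\id_Y\circ \alpha_Y\circ f\circ \alpha_X^{-1}\circ G\id_X,
\]
and the seminaturality relations $\alpha\circ F\id=\alpha$ and $\alpha^{-1}\circ G\id=\alpha^{-1}$ (which, by naturality of $\alpha$ and $\alpha^{-1}$, imply $G\id_Y\circ\alpha_Y=\alpha_Y$ and $\alpha_X^{-1}\circ G\id_X=\alpha_X^{-1}$) collapse this to $G(g)=\alpha_Y\circ f\circ \alpha_X^{-1}$. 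Applying the naturality of $\alpha$ to $g$ yields $\alpha_Y\circ F(g)=G(g)\circ \alpha_X$, hence
\[
F(g)=F\id_Y\circ F(g)=\alpha_Y^{-1}\circ \alpha_Y\circ F(g)=\alpha_Y^{-1}\circ G(g)\circ \alpha_X=\alpha_Y^{-1}\circ \alpha_Y\circ f\circ \alpha_X^{-1}\circ \alpha_X=F\id_Y\circ f\circ F\id_X,
\]
which is exactly $\f^F_{X,Y}(\p^F_{X,Y}(f))=F\id_Y\circ f\circ F\id_X$, as required.

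The only potential obstacle is the correct bookkeeping of identities: the equations $\alpha\circ F\id=\alpha$, $\alpha^{-1}\circ G\id=\alpha^{-1}$, $\alpha\circ\alpha^{-1}=G\id$, and $\alpha^{-1}\circ \alpha=F\id$ from the definition of natural semi-isomorphism must be used at the precise points where one would otherwise want genuine invertibility, and the ordinary naturality of $\alpha$ and $\alpha^{-1}$ must be invoked to transport identities across $\alpha$. Everything else is a direct calculation analogous to that in Proposition \ref{prop:sep-semiiso}.
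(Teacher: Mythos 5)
Your proposal is correct and follows essentially the same route as the paper: the same conjugation map $\varsigma_{X,Y}(f)=\alpha_Y\circ f\circ\alpha_X^{-1}$, the same definition $\p^F=\p^G\circ\varsigma$, and the same identities $\alpha^{-1}\circ\alpha=F\id$, $\alpha\circ F\id=\alpha$ together with naturality to collapse the computation to $F\id_Y\circ f\circ F\id_X$. The only cosmetic difference is that you transport $F(g)$ across $\alpha$ by prepending $\alpha_Y^{-1}\circ\alpha_Y$, whereas the paper appends $\alpha_X^{-1}\circ\alpha_X$ and uses naturality of $\alpha^{-1}$; these are mirror-image manipulations of the same argument.
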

\begin{proof}
	Consider a natural semi-isomorphism $\alpha: F\to G$ of semifunctors, where the semifunctor $G:\cc\to\dd$ is naturally semifull with respect to $\p^G$, and $\varsigma_{X,Y}:\Hom_\dd (FX,FY)\to \Hom_\dd (GX,GY)$ defined by $\varsigma_{X,Y}(f)=\alpha_Y\circ f\circ \alpha_X^{-1}$. We show that $F$ results to be naturally semifull with respect to $\p^F_{X,Y}:=\p^G_{X,Y}\circ\varsigma_{X,Y}$. Recall that $\alpha^{-1}_X\circ\alpha_X=F\id_X$. Then, for any $f:FX\to FY$ in $\dd$, we have 
	\begin{displaymath}
	\begin{split}
	(\f^F_{X,Y}\circ\p^F_{X,Y})(f)&=\f^F_{X,Y}(\p^G_{X,Y}(\varsigma_{X,Y}(f)))=F(\p^G_{X,Y}(\alpha_Y\circ f\circ \alpha_X^{-1}))\\&=F(\p^G_{X,Y}(\alpha_Y\circ f\circ \alpha_X^{-1})\circ\id_X)=F(\p^G_{X,Y}(\alpha_Y\circ f\circ \alpha_X^{-1}))\circ F\id_X \\&=F(\p^G_{X,Y}(\alpha_Y\circ f\circ \alpha_X^{-1}))\circ\alpha_X^{-1}\circ\alpha_X=\alpha^{-1}_Y\circ G(\p^G_{X,Y}(\alpha_Y\circ f\circ \alpha_X^{-1}))\circ\alpha_X\\&=\alpha^{-1}_Y\circ (G\id_Y\circ\alpha_Y\circ f\circ\alpha_X^{-1}\circ G\id_X)\circ\alpha_X=\alpha_Y^{-1}\circ\alpha_Y\circ f\circ\alpha^{-1}_X\circ\alpha_X\\&=F\id_Y\circ f\circ F\id_X. \qedhere
	\end{split}
\end{displaymath}
	%hence $F$ is naturally semifull with respect to $\p^F$.
\end{proof}
It is known that a functor is fully faithful if and only if it is separable and naturally full, see \cite[Remark 2.2 (3)]{AMCM06}. A similar characterization in terms of separable and naturally semifull semifunctors holds for semifully faithful semifunctors.
\begin{prop}\label{prop:char-sff}
	Let $F:\cc\to\dd$ be a semifunctor. Then, $F$ is semifully faithful if and only if it is separable and naturally semifull.
\end{prop}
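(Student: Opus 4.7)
The proof plan is to argue both directions directly from the definitions, using faithfulness to get uniqueness of preimages.

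For the easy direction, if $F$ is separable and naturally semifull, then by Remark \ref{remark-separ}(ii) separability gives faithfulness, and by Lemma \ref{lem:semifull} natural semifullness gives semifullness. So $F$ is semifully faithful.

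For the converse, assume $F$ is semifully faithful. The idea is to build a single natural transformation $\p:\Hom_\dd(F-,F-)\to\Hom_\cc(-,-)$ that simultaneously witnesses separability and natural semifullness. Given a morphism $f:FX\to FY$ in $\dd$, semifullness provides some $g:X\to Y$ in $\cc$ with $F(g)=F\id_Y\circ f\circ F\id_X$, and faithfulness forces such $g$ to be unique. I would therefore define $\p_{X,Y}(f):=g$ as this unique morphism.

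Once $\p$ is defined, the three properties follow almost immediately. The identity $\f_{X,Y}\circ\p_{X,Y}=\Hom_\dd(F\id_X,F\id_Y)$ is the defining equation of $g$, giving natural semifullness. For separability, apply the defining equation to $\f_{X,Y}(f)=F(f)$ for $f:X\to Y$ in $\cc$: then $F(\p_{X,Y}(F f))=F\id_Y\circ F(f)\circ F\id_X=F(f)$, and faithfulness yields $\p_{X,Y}\circ\f_{X,Y}=\Id$. Finally, for naturality, given $h:X'\to X$ and $k:Y\to Y'$ in $\cc$ and $f:FX\to FY$ in $\dd$, compute
\[
F\bigl(k\circ\p_{X,Y}(f)\circ h\bigr)=Fk\circ(F\id_Y\circ f\circ F\id_X)\circ Fh=Fk\circ f\circ Fh,
\]
and similarly $F\bigl(\p_{X',Y'}(Fk\circ f\circ Fh)\bigr)=F\id_{Y'}\circ Fk\circ f\circ Fh\circ F\id_{X'}=Fk\circ f\circ Fh$, so faithfulness again gives $\p_{X',Y'}(Fk\circ f\circ Fh)=k\circ\p_{X,Y}(f)\circ h$.

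No real obstacle is expected: faithfulness together with the equation provided by semifullness does all the work, both for well-defining $\p$ and for collapsing every naturality/splitting verification to a check after applying $F$. The only subtlety is to recognise that the same canonical $\p$ serves both roles; one does not need separate transformations for separability and natural semifullness.
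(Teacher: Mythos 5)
Your proof is correct and follows essentially the same route as the paper's: the easy direction via Remark \ref{remark-separ} and Lemma \ref{lem:semifull}, and the converse by defining $\p_{X,Y}(f)$ as the unique $g$ with $F(g)=F\id_Y\circ f\circ F\id_X$ (uniqueness from faithfulness), then verifying naturality, the semifullness identity, and separability by applying $F$ and cancelling with faithfulness. Nothing further is needed.
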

\proof
If $F$ is separable and naturally semifull, then it is trivially semifully faithful by Remark \ref{remark-separ} (ii) and Lemma \ref{lem:semifull}. Conversely, assume that $F$ is semifully faithful. Since $F$ is semifull, for any morphism $f:FX\to FY$ in $\dd$ there exists a morphism $g:X\to Y$ in $\cc$ such that $F(g)=F\id_Y\circ f\circ F\id_X$, and by faithfulness of $F$, $g$ is unique. This assignment defines a mapping \begin{equation*}
	\p : \Hom_{\dd}(F-, F-)\rightarrow \Hom_{\cc}(-,-)
\end{equation*}
such that for any $f:FX\rightarrow FY$ in $\dd$, with $X$, $Y$ in $\cc$, $\p_{X,Y}(f)=g$, where $g:X\to Y$ in $\cc$ is such that $F(g)=F\id_Y\circ f\circ F\id_X$. We show that such $\p$ is actually a natural transformation. For any $h:X\to Y$ in $\cc$, $k:FY\to FZ$ in $\dd$, $l:Z\to T$ in $\cc$, we have that there is a morphism $g:X\to T$ in $\cc$ such that $F(g)=F\id_T\circ Fl\circ k\circ Fh\circ F\id_X=Fl\circ k\circ Fh$ and $\p_{X,T}(Fl\circ k\circ Fh)=g$. Then, we get $\f_{X,T}(\p_{X,T}(Fl\circ k\circ Fh))=F(g)= Fl\circ k\circ Fh=Fl\circ (F\id_Z\circ k\circ F\id_Y) \circ Fh=Fl\circ F(\p_{Y,Z}(k))\circ Fh=\f_{X,T}(l\circ \p_{Y,Z}(k)\circ h)$, hence since $F$ is faithful it follows that $\p_{X,T}(Fl\circ k\circ Fh)=l\circ \p_{Y,Z}(k)\circ h$ and so $\p$ is a natural transformation. Since for any $f:FX\rightarrow FY$ in $\dd$ there is a morphism $g:X\to Y$ in $\cc$ such that $F(g)=F\id_Y\circ f\circ F\id_X$, we have that $F(\p_{X,Y}(f))=F(g)=F\id_Y\circ f\circ F\id_X$, thus $F$ is naturally semifull. Moreover, for any $f:X\to Y$ in $\cc$, $\p_{X,Y}(F(f))=h$, for some $h:X\to Y$ in $\cc$ such that $F(h)=F\id_Y\circ F(f)\circ F\id_X=F(f)$, but since $F$ is faithful we achieve $h=f$, and hence $F$ is separable as $\p_{X,Y}(F(f))=f$.
\endproof
%\bl{As an instance, in Example \ref{es:Set} and in Example \ref{es:Set-nat.full} we have shown that the semifunctor $F:\Set\to\Set$ is separable and naturally semifull, hence semifully faithful.}

\subsection{Naturally semifull semiadjoints} The next result is a Rafael-type Theorem for naturally semifull semifunctors. In the functor case \cite[Theorem 2.6]{AMCM06}, given an adjunction $F\dashv G:\dd\to\cc$ with unit $\eta$ and counit $\epsilon$, $F$ is naturally full if and only if there exists a natural transformation $\nu:GF\to \id _\cc$ such that $\eta\circ\nu =\id_{GF}$, while $G$ is naturally full if and only if there exists a natural transformation $\gamma:\id_\dd\to FG$ such that $\gamma\circ\epsilon=\id_{FG}$. A similar characterization of natural semifullness can be given for semifunctors that are part of a semiadjunction in terms of semisplitting properties for the unit and the counit.

\begin{thm}\label{thm:Raf-nat.full}
Let $F\dashv_\mathrm{s} G:\dd\to \cc$ be a semiadjunction with unit $\eta$ and counit $\epsilon$. Then,
\begin{itemize}
\item[(1)] $F$ is naturally semifull if and only if $\eta$ is a natural semisplit-epi, i.e. there exists a seminatural transformation $\nu : GF\rightarrow \Id_{\cc}$ such that $\eta\circ\nu = GF\Id $; 
\item[(2)] $G$ is naturally semifull if and only if $\epsilon$ is a natural semisplit-mono, i.e. there exists a seminatural transformation $\gamma : \Id_{\dd}\rightarrow FG$ such that $\gamma\circ\epsilon = FG\Id$.
\end{itemize}
\end{thm}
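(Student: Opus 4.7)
The plan is to extend the proof strategy of Theorem \ref{Th.Rafael} to the natural semifullness setting, proving (1) explicitly and deducing (2) by passage to opposite categories: a semiadjunction $F\dashv_\mathrm{s} G$ corresponds to a semiadjunction $G^{\mathrm{op}}\dashv_\mathrm{s} F^{\mathrm{op}}$ with unit $\epsilon^{\mathrm{op}}$ and counit $\eta^{\mathrm{op}}$, under which natural semisplit-epis translate into natural semisplit-monos.

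For the ``only if'' direction of (1), suppose $F$ is naturally semifull with associated natural transformation $\p$. Mirroring Theorem \ref{Th.Rafael}, I would set
\[
\nu_X := \p_{GFX,X}(\epsilon_{FX}) : GFX \to X.
\]
Naturality of $\nu$ follows from naturality of $\p$ and $\epsilon$ exactly as in that theorem, while seminaturality of $\nu$ is automatic by Remark \ref{rmk:seminat-alpha}. To verify $\eta\circ\nu = GF\id$, I would apply naturality of $\eta$ to rewrite $\eta_X\circ\nu_X = GF\nu_X\circ\eta_{GFX}$, expand $F\nu_X = F\id_X\circ\epsilon_{FX}\circ F\id_{GFX}$ via the defining property $\f\circ\p=\Hom_\dd(F\id,F\id)$, collapse $GF\id_{GFX}\circ\eta_{GFX}=\eta_{GFX}$ by naturality of $\eta$ at $\id_{GFX}$, and conclude with the semitriangular identity $G\epsilon\circ\eta G=G\id$.

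For the ``if'' direction, given a seminatural $\nu:GF\to\Id_\cc$ with $\eta\circ\nu=GF\id$, I would define
\[
\p_{X,Y}(g):=\nu_Y\circ Gg\circ\eta_X
\]
for any $g:FX\to FY$ in $\dd$; naturality of $\p$ is immediate from naturality of $\nu$ and $\eta$. The heart of the argument is the identity $F\p_{X,Y}(g)=F\id_Y\circ g\circ F\id_X$, which I plan to reach through the intermediate equality
\[
F\nu_Y = F\id_Y\circ\epsilon_{FY},
\]
obtained by applying $F$ to $\eta_Y\circ\nu_Y=GF\id_Y$, left-composing with $\epsilon_{FY}$, and combining the semitriangular identity $\epsilon F\circ F\eta=F\id$ with naturality of $\epsilon$ at $F\id_Y$. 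Substituting into $F\p_{X,Y}(g)=F\nu_Y\circ FGg\circ F\eta_X$, then applying naturality of $\epsilon$ at $g$ and the other semitriangular identity, closes the computation.

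The main obstacle I anticipate is the asymmetry introduced by the fact that $F\id\neq\id_F$: it forces delicate bookkeeping with the two semitriangular identities and with the seminaturality clauses of $\eta$ and $\epsilon$. In particular, the twisted form $F\nu_Y=F\id_Y\circ\epsilon_{FY}$, which collapses to $F\nu_Y=\epsilon_{FY}$ in the functorial case, is precisely what is needed to recover the two idempotent factors $F\id_Y$ and $F\id_X$ flanking $g$ on the right-hand side of $F\p_{X,Y}(g)=F\id_Y\circ g\circ F\id_X$.
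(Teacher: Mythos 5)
Your proposal is correct and follows essentially the same route as the paper's proof: the same formulas $\nu_X:=\p_{GFX,X}(\epsilon_{FX})$ and $\p_{X,Y}(g):=\nu_Y\circ Gg\circ\eta_X$, the same use of the semitriangular identities and of naturality of $\eta$ and $\epsilon$, and duality for part (2). Your intermediate identity $F\nu_Y=F\id_Y\circ\epsilon_{FY}$ is just a repackaging of the paper's in-line substitution $F\id_Y=\epsilon_{FY}\circ F\eta_Y$ followed by $F\eta_Y\circ F\nu_Y=FGF\id_Y$, so the two computations coincide.
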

\proof
We prove only (1) as (2) follows by duality. Assume that $F$ is a naturally semifull semifunctor and let $\p : \Hom_{\dd}(F-, F-)\rightarrow \Hom_{\cc}(-,-)$ be the associated natural transformation such that for any $f:FX\rightarrow FY$ in $\dd$, $$(\f_{X,Y}\circ\p_{X,Y})(f) = F\id_Y\circ f \circ F\id_X .$$ We define a seminatural transformation $\nu : GF\rightarrow \Id_{\cc}$ given for any object $X$ in $\cc$ by $\nu_{X}:=\p_{GFX,X}(\epsilon_{FX}): GFX\rightarrow X$. It is indeed natural as for any morphism $f:X\to Y$ in $\cc$, we have $f\circ\nu_X=f\circ\p_{GFX,X}(\epsilon_{FX})=\p_{GFX,Y}(Ff\circ\epsilon_{FX})=\p_{GFX,Y}(\epsilon_{FY}\circ FGFf)=\p_{GFY,Y}(\epsilon_{FY})\circ GFf=\nu_Y\circ GFf$. Then, for every $X\in \cc$, by naturality of $\eta$, we get
\begin{equation*}
\begin{split}
\eta_X\circ\nu_X &=GF\nu_X\circ\eta_{GFX}=GF\p_{GFX,X}( \epsilon_{FX})\circ \eta_{GFX}=G(F\id_X\circ\epsilon_{FX}\circ F\id_{GFX})\circ\eta_{GFX}\\
&=GF\id_X\circ G\epsilon_{FX}\circ GF\id_{GFX}\circ\eta_{GFX}=GF\id_X\circ G\epsilon_{FX}\circ \eta_{GFX}\\
&=GF\id_X\circ G\id_{FX}=G(F\id_X\circ\id_{FX})=GF\id_X ,
\end{split}
\end{equation*}
hence $\eta\circ\nu=GF\id$. Moreover, for every $g:FX\to FY$ in $\dd$ we have
\begin{equation*}
\begin{split}
\nu_Y\circ Gg\circ\eta_X &=\p_{GFY,Y}(\epsilon_{FY})\circ Gg\circ\eta_X=\p_{X,Y}(\epsilon_{FY}\circ FGg\circ F\eta_X)\\&=\p_{X,Y}(g\circ\epsilon_{FX}\circ F\eta_X)=\p_{X,Y}(g\circ F\id_X)=\p_{X,Y}(g)\circ\id_X=\p_{X,Y}(g).
\end{split}
\end{equation*} 
Conversely, suppose that there exists a seminatural transformation $\nu :GF\rightarrow \Id_{\cc}$ such that $\eta\circ\nu =GF\Id$. Define for any $f\in \Hom_{\dd}(FX,FY)$, $\p_{X,Y}(f):=\nu_Y\circ Gf\circ\eta_X$. By naturality of $\eta$ and $\nu$, for any $h:X\to Y$ in $\cc$, $k:FY\to FZ$ in $\dd$, $l:Z\to T$ in $\cc$, we have $\p_{X,T}(Fl\circ k\circ Fh)=\nu_T\circ G(Fl\circ k\circ Fh)\circ\eta_X=(\nu_T\circ GFl )\circ Gk\circ (GFh\circ\eta_X)=l\circ (\nu_Z\circ Gk\circ \eta_{Y})\circ h=l\circ\p_{Y,Z}(k)\circ h$, thus $\p : \Hom_{\dd}(F-, F-)\rightarrow \Hom_{\cc}(-,-)$ is a natural transformation. %and moreover we have $\p_{X,Y}(f)=\nu_Y\circ Gf\circ\eta_X=\p_{GFY,Y}(\epsilon_{FY})\circ Gg\circ\eta_X=\p_{X,Y}(\epsilon_{FY}\circ FGg\circ F\eta_X)=\p_{X,Y}(g\circ\epsilon_{FY}\circ F\eta_X)=\p_{X,Y}(f\circ F\id_X)$.
Since $\p_{GFX,X}(\epsilon_{FX})=\nu_X\circ G\epsilon_{FX}\circ\eta_{GFX}=\nu_X\circ G\id_{FX}=\nu_X$ (the last equality follows from Remark \ref{rmk:seminat-alpha}), the correspondence between $\p$ and $\nu$ is bijective. For any $f\in \Hom_{\dd}(FX,FY)$, we have
\begin{equation*}
\begin{split}
(\f_{X,Y}\circ\p_{X,Y}) (f)&=F(\p_{X,Y}(f))=F(\nu_Y\circ Gf\circ\eta_X)=F(\id_Y\circ\nu_Y\circ Gf\circ\eta_X)\\&=F\id_Y\circ F\nu_Y\circ FGf\circ F\eta_{X}=\epsilon_{FY}\circ (F\eta_Y\circ F\nu_Y) \circ FGf\circ F\eta_{X}\\&=\epsilon_{FY}\circ FGF\id_{Y}\circ FGf\circ F\eta_{X}=\epsilon_{FY}\circ FG(F\id_{Y}\circ f)\circ F\eta_{X}\\&=F\id_Y\circ f\circ \epsilon_{FX}\circ F\eta_X=F\id_Y\circ f\circ F\id_X ,
\end{split}
\end{equation*}
so $F$ is a naturally semifull semifunctor.
\endproof

\begin{invisible}
\noindent\textbf{Semiadjoint triples.} We say that $F\dashv_\mathrm{s} G\dashv_\mathrm{s} H:\cc\to\dd$ is a \emph{semiadjoint triple} if it is a triple of semifunctors $F,H:\cc\to\dd$ and $G:\dd\to\cc$ such that $F\dashv_\mathrm{s} G$ and  $G\dashv_\mathrm{s} H$. The following is a semi-analogue of \cite[Proposition 2.19]{AB22} for semiadjoint triples. In particular the semifully faithful case is a semi-analogue of \cite[Proposition 3.4.2]{Bor94}. 
\begin{prop}
Let $G:\dd\to\cc$ be a semifunctor and $F\dashv_\mathrm{s} G\dashv_\mathrm{s} H:\cc\to\dd$ a semiadjoint triple. Then, $F$ is separable (resp. naturally semifull, semifully faithful) if and only if so is $H$.
\end{prop}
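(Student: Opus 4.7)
My plan is to reduce everything to the known functorial result \cite[Proposition 2.19]{AB22} by passing through the idempotent completion, exploiting the equivalences established earlier in the paper.

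First I would observe that if $F\dashv_{\mathrm s}G\dashv_{\mathrm s}H$ is a semiadjoint triple of semifunctors, then taking idempotent completions produces an adjoint triple of genuine functors $F^\natural\dashv G^\natural\dashv H^\natural:\cc^\natural\to\dd^\natural$. Indeed, by \cite[Theorem 3.5]{Ho93}, quoted in Subsection \ref{subsect:idempcompl}, the semiadjunction $F\dashv_{\mathrm s}G$ yields the adjunction $F^\natural\dashv G^\natural$, and similarly $G\dashv_{\mathrm s}H$ yields $G^\natural\dashv H^\natural$. So $(F^\natural,G^\natural,H^\natural)$ is a bona fide adjoint triple of functors between idempotent complete categories.

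Next, I would invoke \cite[Proposition 2.19]{AB22} applied to this adjoint triple: for functors in an adjoint triple $F^\natural\dashv G^\natural\dashv H^\natural$, the functor $F^\natural$ is separable (resp.\ naturally full, fully faithful) if and only if $H^\natural$ is separable (resp.\ naturally full, fully faithful).

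Finally, I would translate back to the original semifunctors using Corollary \ref{cor:sep-natsful-semiff-compl}: a semifunctor is separable (resp.\ naturally semifull, semifully faithful) if and only if its completion is separable (resp.\ naturally full, fully faithful) as a functor. Applying this equivalence both to $F$ (paired with $F^\natural$) and to $H$ (paired with $H^\natural$), the chain
\[
F \text{ is separable/nat.~semifull/semif.~faithful}\iff F^\natural \text{ is separable/nat.~full/fully faithful}
\]
\[
\iff H^\natural\text{ is separable/nat.~full/fully faithful}\iff H\text{ is separable/nat.~semifull/semif.~faithful}
\]
closes the argument. The main subtlety — rather than an obstacle — is making sure the Karoubi envelope functor $\kappa$ sends semiadjunctions to adjunctions compatibly on both sides of $G$ so that we really obtain an adjoint \emph{triple}, but this is immediate from the $2$-functoriality of $\kappa$ together with \cite[Theorem 3.5]{Ho93}, so no genuine difficulty arises once the dictionary between semi-notions and their completions is in place.
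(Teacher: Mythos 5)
Your proposal is correct and is essentially the paper's own proof: the paper likewise passes to the adjoint triple $F^\natural\dashv G^\natural\dashv H^\natural$ via \cite[Theorem 3.5]{Ho93}, applies \cite[Proposition 2.19]{AB22}, and translates back through Proposition \ref{prop:completion-ffs} and Corollary \ref{cor:sep-natsful-semiff-compl}. The only cosmetic difference is that the paper settles the semifully faithful case by combining the separable and naturally semifull cases via Proposition \ref{prop:char-sff} rather than quoting a fully faithful statement for the completions directly, which amounts to the same thing.
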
 
\proof
We denote by $\eta:\id_\cc\to GF$ and $\epsilon:FG\to\id_\dd$ the unit and the counit of the semiadjunction $F\dashv_\mathrm{s}G$, respectively, and by $\beta:\id_\dd\to HG$ and $\alpha:GH\to\id_\cc$ the unit and the counit of the semiadjunction $G\dashv_\mathrm{s}H$, respectively. We only prove that if $H$ is separable (resp. naturally semifull, semifully faithful), so is $F$, as the converse implication follows by duality.\par
Assume that $H$ is separable. Then, by Theorem \ref{Th.Rafael}, there exists a seminatural transformation $\alpha':\id_\cc\to GH$ such that $\alpha\circ\alpha'=\id_{\id_\cc}$. Set $\eta'$ the composite 
$$\xymatrix{GF\ar[r]_-{GF\alpha'}&GFGH\ar[r]_-{G\epsilon H}&GH\ar[r]_-{\alpha}&\id_{\cc}.}$$ We have that 
\begin{gather*}
\eta'\circ\eta=\alpha\circ G\epsilon_H\circ GF\alpha'\circ\eta=\alpha\circ G\epsilon H\circ\eta GH\circ\alpha'=\alpha\circ G\id_H\circ \alpha'=\alpha\circ\alpha'=\id_{\id_\cc} .
%\\=\alpha\circ GH\id\circ G\id_H\circ\alpha'=\alpha\circ GH\id\circ\alpha'
\end{gather*}
Thus, $\eta'$ is the desired seminatural transformation such that $\eta'\circ\eta =\id_{\id_\cc}$. By Theorem \ref{Th.Rafael} $F$ is separable. Now, assume that $H$ is naturally semifull. Then, by Theorem \ref{thm:Raf-nat.full} there is a seminatural transformation $\alpha'':\id_\cc\to GH$ such that $\alpha''\circ\alpha=GH\id$. Set $\eta'':=\alpha\circ G\epsilon H\circ GF\alpha'' :GF\to\id_\cc$. %We have that $\eta''\circ GF\id = \alpha\circ G\epsilon_H\circ GF\alpha''\circ GF\id=\alpha\circ G\epsilon_H\circ GF\alpha''=\eta''$. 
Note that from $\alpha''\circ\alpha=GH\id$ and $\alpha G\circ G\beta =G\id$ it follows that $G\beta =\alpha'' G$. Indeed, $G\beta=G(HG\id\circ\beta)=G(H\id_G\circ HG\id\circ\beta)=G(H\id_G\circ\beta)=GH\id_G\circ G\beta=\alpha''G\circ \alpha G\circ G\beta =\alpha'' G\circ G\id=GHG\id\circ\alpha''G=\alpha''G$. Then, by naturality of $\alpha$, $\epsilon$ and $\alpha''$, we get
\begin{gather*}
\eta\circ\eta''=\eta\circ\alpha\circ G\epsilon H\circ GF\alpha''=\alpha GF\circ GH\eta\circ G\epsilon H\circ GF\alpha''=\alpha GF\circ G\epsilon HGF\circ GFGH\eta\circ GF\alpha''\\=\alpha GF\circ G\epsilon HGF\circ GF(\alpha'' GF\circ \eta) =\alpha GF\circ G\epsilon HGF\circ GFG\beta F\circ GF\eta \\=\alpha GF\circ G\beta F\circ G\epsilon F \circ GF\eta = G\id_F\circ GF\id =GF\id ,
\end{gather*}
hence by Theorem \ref{thm:Raf-nat.full} $F$ is naturally semifull. The semifully faithful case follows from Proposition \ref{prop:char-sff} by combining the previous separable and naturally semifull cases.\qedhere
\endproof
\end{invisible}

\section{Semiseparable semifunctors}\label{sect:semisep}
Recently, in \cite{AB22} semiseparable functors have been introduced in order to treat separability and natural fullness in a unified way. Semiseparability allows to describe separable and naturally full functors in terms of faithful and full functors, respectively. In this section we explore the same notion for semifunctors in order to complete the overview on semisplitting properties for the hom-set components associated with a semifunctor. 
In particular, in Proposition \ref{prop:sep-nat.full} we characterize separable and naturally semifull semifunctors in terms of faithful and semifull semifunctors, respectively.\\
\par
We say that a semifunctor $F: \cc \rightarrow \dd$ is \textbf{semiseparable} if there exists a natural transformation $\p : \Hom_{\dd}(F-, F-)\rightarrow \Hom_{\cc}(-,-)$ such that
\begin{equation}\label{eq_semi-sep}
\f\circ\p\circ\f = \f .
\end{equation}

\begin{prop}\label{prop:sep-nat.full} Let $F: \cc \rightarrow \dd$ be a semifunctor. Then,
	\begin{itemize}
		\item[(i)]$F$ is separable if and only if $F$ is semiseparable and faithful; 
		\item[(ii)]$F$ is naturally semifull if and only if $F$ is semiseparable and semifull. 
		%\item[(iii)] $F$ is semifully faithful if and only if $F$ is semiseparable, faithful and semifull.  
		%See \cite{AB22}. Indeed, if $F$ is a semiseparable then there is a natural transformation $\p^F : \Hom_{\dd}(F-, F-)\rightarrow \Hom_{\cc}(-,-)$ such that $\f^F\circ\p^F\circ\f^F = \f^F$. Thus, if $F$ is full, we have $\f^F\circ\p^F = \id _{\Hom_{\dd}(F-,F-)}$, as $\f^F$ is surjective. For any $f:FX\to FY$ in $\dd$, which is in particular of the form $F(h):F(X)\to F(Y)$ for some $h:X\to Y$ in $\cc$, we get that $(\f^F_{X,Y}\circ \p^F_{X,Y})(f)=\f^F_{X,Y}( \p^F_{X,Y}(Fh))=Fh=F(\id_Y\circ h\circ\id_X)=F\id_Y\circ Fh\circ F\id_X=F\id_Y\circ f\circ F\id_X$, hence $F$ is a naturally full semifunctor.
	\end{itemize}
\end{prop}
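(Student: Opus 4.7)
The plan is to handle both equivalences by exploiting the fact that semiseparability ($\f\circ\p\circ\f=\f$) is a common two-sided weakening of separability ($\p\circ\f=\id$) and natural semifullness ($\f\circ\p=\Hom_\dd(F\id,F\id)$).

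For (i), if $F$ is separable then left-composing $\p\circ\f=\id$ with $\f$ yields $\f\circ\p\circ\f=\f$, while faithfulness follows from Remark \ref{remark-separ}(ii). Conversely, if $F$ is semiseparable via some $\p$ and faithful, injectivity of each $\f_{X,Y}$ lets me cancel the outermost $\f$ on the left of $\f\circ\p\circ\f=\f$ to obtain $\p\circ\f=\id$.

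For the forward direction of (ii), Lemma \ref{lem:semifull} gives semifullness, and the identity $\Hom_\dd(F\id_X,F\id_Y)\circ\f_{X,Y}=\f_{X,Y}$ (since $F(f)=F(\id_Y\circ f\circ\id_X)$) lets me post-compose $\f\circ\p=\Hom_\dd(F\id,F\id)$ with $\f$, producing $\f\circ\p\circ\f=\f$, i.e.\ semiseparability.

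The main step is the converse of (ii): assume $F$ is semiseparable via $\p$ and semifull. I will rescale $\p$ to
\[
\p'_{X,Y}(f):=\p_{X,Y}(F\id_Y\circ f\circ F\id_X),
\]
i.e.\ $\p'=\p\circ\Hom_\dd(F\id,F\id)$. A short diagram chase shows that $\Hom_\dd(F\id,F\id)$ is a natural self-transformation of the semifunctor $\Hom_\dd(F-,F-)$: for $h:X'\to X$ and $k:Y\to Y'$, both $\Hom_\dd(Fh,Fk)\circ\Hom_\dd(F\id_X,F\id_Y)$ and $\Hom_\dd(F\id_{X'},F\id_{Y'})\circ\Hom_\dd(Fh,Fk)$ send $f$ to $Fk\circ f\circ Fh$. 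Hence $\p'$ is natural. Now, given any $f:FX\to FY$ in $\dd$, semifullness provides $g:X\to Y$ in $\cc$ with $F(g)=F\id_Y\circ f\circ F\id_X$, and semiseparability then gives
\[
(\f\circ\p')_{X,Y}(f)=F(\p_{X,Y}(F(g)))=(\f\circ\p\circ\f)_{X,Y}(g)=F(g)=F\id_Y\circ f\circ F\id_X,
\]
so $F$ is naturally semifull via $\p'$. The only nontrivial point in the whole argument is the naturality check for $\Hom_\dd(F\id,F\id)$, and that reduces to the two-line verification above.
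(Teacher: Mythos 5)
Your proposal is correct and follows essentially the same route as the paper: cancel $\f$ using injectivity for (i), and for the converse of (ii) use semifullness to write $F\id_Y\circ f\circ F\id_X=F(g)$ and then apply $\f\circ\p\circ\f=\f$. The only cosmetic difference is that you replace $\p$ by $\p'=\p\circ\Hom_{\dd}(F\id,F\id)$ and check naturality of the rescaled transformation, whereas the paper observes that by naturality of $\p$ one already has $\p_{X,Y}(F\id_Y\circ f\circ F\id_X)=\p_{X,Y}(f)$, so your $\p'$ in fact coincides with $\p$ and no rescaling is needed.
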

\proof
(i). The proof is analogous to that for functors, see \cite[Proposition 1.3]{AB22}.\\
	\begin{invisible}
		Assume that $F$ is separable. From $\p\circ\f = \id $ it follows that $\f\circ \p\circ\f = \f\circ\id=\f$, i.e. $F$ is semiseparable, and that for all $C,C'\in\cc$, the map $\f _{C,C'}$ is injective, i.e. $F$ is faithful. Conversely, if $F$ is semiseparable, we have that there exists a natural transformation $\p $ such that $\f\circ\p\circ\f = \f$, hence, if $F$ is faithful, $\p\circ\f = \id $, as $\f$ is injective.
	\end{invisible}
(ii). If $F$ is naturally semifull, then for any $f:X\to Y$ in $\cc$, we have that $(\f_{X,Y}\circ\p_{X,Y}\circ \f_{X,Y})(f)=\f_{X,Y}(\p_{X,Y}(Ff))=F\id_Y\circ Ff\circ F\id_X=F(\id_Y\circ f\circ \id_X)=Ff=\f_{X,Y}(f)$, hence $F$ is semiseparable. Moreover, a naturally semifull semifunctor is semifull by Lemma \ref{lem:semifull}. Conversely, assume that $F$ is semiseparable. Then there is a natural transformation $\p : \Hom_{\dd}(F-, F-)\rightarrow \Hom_{\cc}(-,-)$ such that $\f\circ\p\circ\f = \f$. If $F$ is semifull, then for any $f:FX\to FY$ in $\dd$ there exists $h:X\to Y$ in $\cc$ such that $F(h)=F\id_Y\circ f\circ F\id_X:FX\to FY$, so by naturality of $\p_{X,Y}$ we get that $\f_{X,Y}( \p_{X,Y}(f))=F(\id_Y\circ \p_{X,Y}(f)\circ\id_X)=F\p_{X,Y}(F\id_Y\circ f\circ F\id_X)=F\p_{X,Y}(F(h))=F(h)=F\id_Y\circ f\circ F\id_X$, hence $F$ is a naturally semifull semifunctor. \qedhere
	%\item[(iii)] It follows from (i) and (ii) and by Proposition \ref{prop:char-sff}.
\endproof
\begin{prop}\label{prop:semisep-semiiso}
	A semifunctor naturally semi-isomorphic to a semiseparable semifunctor is semiseparable.
\end{prop}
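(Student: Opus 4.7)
The plan is to imitate the proofs of Propositions \ref{prop:sep-semiiso} and \ref{prop:natsemif-semiiso}, adapting them to the weaker semiseparability identity $\f\circ\p\circ\f=\f$. Let $\alpha:F\to G$ be a natural semi-isomorphism of semifunctors $F,G:\cc\to\dd$, where $G$ is semiseparable via $\p^G$. Following the earlier pattern, I would introduce the conjugation map
\[
\varsigma_{X,Y}:\Hom_\dd(FX,FY)\to\Hom_\dd(GX,GY),\qquad \varsigma_{X,Y}(f)=\alpha_Y\circ f\circ\alpha_X^{-1},
\]
and set $\p^F_{X,Y}:=\p^G_{X,Y}\circ\varsigma_{X,Y}$.

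First I would verify that $\p^F$ is a natural transformation. For any $h:X'\to X$ and $k:Y\to Y'$ in $\cc$ and any $f:FX\to FY$ in $\dd$, naturality of $\alpha$ and of $\alpha^{-1}$ (both hold since $\alpha$ is a natural semi-isomorphism) yields $\alpha_{Y'}\circ Fk = Gk\circ\alpha_Y$ and $Fh\circ\alpha_{X'}^{-1}=\alpha_X^{-1}\circ Gh$, hence
\[
\varsigma_{X',Y'}(Fk\circ f\circ Fh)=Gk\circ\varsigma_{X,Y}(f)\circ Gh,
\]
and naturality of $\p^G$ then gives $\p^F_{X',Y'}(Fk\circ f\circ Fh)=k\circ\p^F_{X,Y}(f)\circ h$.

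The core computation is the semiseparability identity. The key auxiliary fact is that for every morphism $h:X\to Y$ in $\cc$, the relations $\alpha^{-1}_Y\circ\alpha_Y=F\id_Y$ and $\alpha_X\circ\alpha_X^{-1}=G\id_X$ combined with naturality of $\alpha$ give
\[
F(h)=\alpha_Y^{-1}\circ\alpha_Y\circ F(h)=\alpha_Y^{-1}\circ G(h)\circ\alpha_X.
\]
Using this, for any $f:X\to Y$ in $\cc$ I would first compute $\varsigma_{X,Y}(F(f))=\alpha_Y\circ F(f)\circ\alpha_X^{-1}=G(f)\circ\alpha_X\circ\alpha_X^{-1}=G(f)\circ G\id_X=G(f)$, so $\p^F_{X,Y}(F(f))=\p^G_{X,Y}(G(f))$. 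Applying $F$ and using the displayed identity together with semiseparability of $G$:
\[
F(\p^F_{X,Y}(F(f)))=\alpha_Y^{-1}\circ G(\p^G_{X,Y}(G(f)))\circ\alpha_X=\alpha_Y^{-1}\circ G(f)\circ\alpha_X=F(f),
\]
which is exactly $(\f^F\circ\p^F\circ\f^F)(f)=\f^F(f)$.

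There is no real obstacle here; the proof is a bookkeeping exercise. The only subtle point to be careful about is that $\alpha^{-1}$ must be a natural transformation (not merely seminatural) for the conjugation $\varsigma$ to interact correctly with pre/post-composition by $F$-images---this is automatic from the definition of natural semi-isomorphism recalled in Section~\ref{sect:semifunct}.
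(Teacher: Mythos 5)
Your proposal is correct and follows essentially the same route as the paper: the same conjugation map $\varsigma_{X,Y}(f)=\alpha_Y\circ f\circ\alpha_X^{-1}$ and the same choice $\p^F:=\p^G\circ\varsigma$, with the paper merely packaging the final computation via the identities $\f^F=\varsigma^{-1}\circ\f^G$ and $\f^G=\varsigma\circ\f^F$ where you argue element-wise. No gaps.
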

\begin{proof}%:=\Hom_\dd(\alpha^{-1},\alpha)
	Consider a natural semi-isomorphism $\alpha: F\to G$ of semifunctors, where the semifunctor $G:\cc\to\dd$ is semiseparable with respect to $\p^G$, and
define $\varsigma:\Hom_{\dd}(F-,F-)\to\Hom_\dd(G-,G-)$ by $\varsigma_{X,Y}(f)=\alpha_Y\circ f\circ \alpha_X^{-1}$, for every $f:FX\to FY$ in $\dd$. It is a natural semi-isomorphism with semi-inverse $\varsigma^{-1}:\Hom_{\dd}(G-,G-)\to\Hom_\dd(F-,F-)$ given by $\varsigma^{-1}_{X,Y}(g)=\alpha^{-1}_Y\circ g\circ\alpha_X$, for every $g:GX\to GY$ in $\dd$. \begin{invisible}Indeed, $(\varsigma_{X,Y}\circ\Hom_\dd(F\id_X, F\id_Y))(f)=\varsigma_{X,Y}(F\id_Y\circ f\circ F\id_X)=\alpha_Y\circ F\id_Y\circ f\circ F\id_X\circ\alpha^{-1}_X=\alpha_Y\circ f\circ\alpha^{-1}_X=\varsigma_{X,Y}(f)$ and $(\varsigma^{-1}_{X,Y}\circ\Hom_\dd(G\id_X, G\id_Y))(g)=\varsigma^{-1}_{X,Y}(G\id_Y\circ g\circ G\id_X)=\alpha^{-1}_Y\circ G\id_Y\circ g\circ G\id_X\circ\alpha_X=\alpha^{-1}_Y\circ g\circ\alpha_X=\varsigma^{-1}_{X,Y}(g)$. Moreover, $\varsigma^{-1}_{X,Y}(\varsigma_{X,Y}(f))=\alpha^{-1}_Y\circ\varsigma_{X,Y}(f)\circ\alpha_X=\alpha^{-1}_Y\circ\alpha_Y\circ f\circ\alpha^{-1}_X\circ\alpha_X=F\id_Y\circ f\circ F\id_X=\Hom_\dd(F\id_X,F\id_Y)(f)$ and $\varsigma_{X,Y}(\varsigma^{-1}_{X,Y}(g))=\alpha_Y\circ\varsigma^{-1}_{X,Y}(g)\circ\alpha^{-1}_X=\alpha_Y\circ\alpha^{-1}_Y\circ g\circ\alpha_X\circ\alpha^{-1}_X=G\id_Y\circ g\circ G\id_X=\Hom_\dd(G\id_X,G\id_Y)(g)$.\end{invisible} 
Note that $\f^F_{X,Y}=\varsigma^{-1}_{X,Y}\circ\f^G_{X,Y}$ and $\f^G_{X,Y}=\varsigma_{X,Y}\circ\f^F_{X,Y}$. In fact, $\f^F_{X,Y}(f)=Ff=F\id_Y\circ Ff=\alpha^{-1}_Y\circ\alpha_Y\circ Ff=\alpha^{-1}_Y\circ Gf\circ\alpha_X=\varsigma^{-1}_{X,Y}(Gf)=(\varsigma^{-1}_{X,Y}\circ\f^G_{X,Y})(f)$ and $\f^G_{X,Y}(f)=Gf=Gf\circ G\id_X=Gf\circ\alpha_X\circ\alpha^{-1}_X=\alpha_Y\circ Ff\circ\alpha^{-1}_X =\varsigma_{X,Y}(Ff)=(\varsigma_{X,Y}\circ\f^F_{X,Y})(f)$, for every $f:X\to Y$ in $\cc$. We show that $F$ results to be semiseparable with respect to $\p^F_{X,Y}:=\p^G_{X,Y}\circ\varsigma_{X,Y}$. Indeed, we have $\f^F_{X,Y}\circ\p^F_{X,Y}\circ\f^F_{X,Y}=\varsigma^{-1}_{X,Y}\circ\f^G_{X,Y}\circ\p^G_{X,Y}\circ\varsigma_{X,Y}\circ\f^F_{X,Y}
=\varsigma^{-1}_{X,Y}\circ\f^G_{X,Y}\circ\p^G_{X,Y}\circ\f^G_{X,Y}=\varsigma^{-1}_{X,Y}\circ\f^G_{X,Y}=\f^F_{X,Y}$.
%hence $F$ is semiseparable with respect to $\p^F$.
\end{proof}
Semiseparable semifunctors satisfy the following properties with respect to composition, as in the functor case, cf. \cite[Lemma 1.12, Lemma 1.13]{AB22}.
\begin{lem}\label{lem:comp}
	Let $F: \cc \rightarrow \dd$ and $G:\dd\rightarrow\e$ be semifunctors and consider the composite semifunctor $G\circ F:\cc\rightarrow \e$.
	\begin{itemize}
		\item[(i)] If $F$ is semiseparable and $G$ is separable, then $G\circ F$ is semiseparable.
		\item[(ii)] If $F$ is naturally semifull and $G$ is semiseparable, then $G\circ F$ is semiseparable.
		\item[(iii)] If $G\circ F$ is semiseparable and $G$ is faithful, then $F$ is semiseparable.
	\end{itemize}
\end{lem}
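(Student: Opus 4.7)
For each item I define the candidate natural transformation in the obvious way, then verify the semiseparability equation $\f^{GF}\circ\p^{GF}\circ\f^{GF}=\f^{GF}$. Naturality of $\p^{GF}$ is in each case immediate from naturality of the building blocks and from the equality $\f^{GF}_{X,Y}=\f^G_{FX,FY}\circ\f^F_{X,Y}$, so I focus on the equational check. I would first record the following equivalent reformulations: $F$ is semiseparable with $\p^F$ iff $F\p^F_{X,Y}(Ff)=Ff$ for every $f:X\to Y$ in $\cc$; $G$ is separable with $\p^G$ iff $\p^G_{U,V}(Gh)=h$ for every $h:U\to V$ in $\dd$; $F$ is naturally semifull with $\p^F$ iff $F\p^F_{X,Y}(g)=F\id_Y\circ g\circ F\id_X$ for every $g:FX\to FY$ in $\dd$; and, crucially, $G$ is semiseparable with $\p^G$ iff $G\p^G_{U,V}(Gh)=Gh$ for every $h:U\to V$ in $\dd$.

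For $(i)$, set $\p^{GF}_{X,Y}:=\p^F_{X,Y}\circ\p^G_{FX,FY}$. For any $f:X\to Y$ in $\cc$, separability of $G$ collapses the inner layer via $\p^G_{FX,FY}(GFf)=Ff$, so $\p^{GF}_{X,Y}(\f^{GF}_{X,Y}(f))=\p^F_{X,Y}(Ff)$, and applying $\f^{GF}_{X,Y}=\f^G_{FX,FY}\circ\f^F_{X,Y}$ together with semiseparability of $F$ gives $GF\p^F_{X,Y}(Ff)=G(Ff)=\f^{GF}_{X,Y}(f)$.

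For $(ii)$, which I expect to be the main technical step, take the same $\p^{GF}_{X,Y}:=\p^F_{X,Y}\circ\p^G_{FX,FY}$. The plan is to chain three equalities. First, natural semifullness of $F$ yields
\[
F\p^F_{X,Y}\bigl(\p^G_{FX,FY}(GFf)\bigr)=F\id_Y\circ\p^G_{FX,FY}(GFf)\circ F\id_X.
\]
Second, naturality of $\p^G$ along the arrows $F\id_X$ and $F\id_Y$ rewrites the right-hand side as $\p^G_{FX,FY}(GF\id_Y\circ GFf\circ GF\id_X)=\p^G_{FX,FY}(GFf)$, since $G$ (being a semifunctor applied to $F\id_Y\circ f\circ F\id_X=F(\id_Y\circ f\circ\id_X)=Ff$ composed) respects these compositions. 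Finally, applying $G$ and using the semiseparability reformulation $G\p^G_{U,V}(Gh)=Gh$ with $h=Ff$ delivers $GF\p^F_{X,Y}(\p^G_{FX,FY}(GFf))=GFf=\f^{GF}_{X,Y}(f)$. The delicate point here is that one must not try to use semiseparability of $G$ before exploiting the idempotent absorption $F\id_Y\circ(-)\circ F\id_X$ supplied by $F$; the order of these two reductions is what makes the argument go through.

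For $(iii)$, set $\p^F_{X,Y}(h):=\p^{GF}_{X,Y}(Gh)$ for $h:FX\to FY$ in $\dd$; naturality follows from naturality of $\p^{GF}$ together with $Gf\mapsto Gf$ being functorial on the arrows from $\cc$. Semiseparability of $GF$ gives $GF\p^{GF}_{X,Y}(GFf)=GFf$ for every $f:X\to Y$ in $\cc$, i.e.\ $G(F\p^F_{X,Y}(Ff))=G(Ff)$; faithfulness of $G$ then cancels $G$ to yield $F\p^F_{X,Y}(Ff)=Ff$, which is the semiseparability equation for $F$.
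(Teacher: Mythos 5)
Your proposal is correct and takes essentially the same route as the paper: the same candidate transformations ($\p^{GF}_{X,Y}=\p^F_{X,Y}\circ\p^G_{FX,FY}$ for (i)--(ii) and $\p^F_{X,Y}=\p^{GF}_{X,Y}\circ\f^G_{FX,FY}$ for (iii)) and the same chain of reductions. The only cosmetic difference is in (ii), where you absorb the idempotents $F\id_Y\circ(-)\circ F\id_X$ into the argument of $\p^G$ via its naturality before applying $G$, whereas the paper applies $\f^G$ first and collapses $GF\id_Y\circ GFf\circ GF\id_X=GFf$ at the very end.
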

\proof
(i). If $F$ is semiseparable with respect to $\p^F$ and $G$ is separable with respect to $\p^G$, then $G\circ F$ is semiseparable with respect to $\p^{GF}_{X,Y}:=\p^F_{X,Y}\p^G_{FX,FY}$, for every $X, Y$ in $\cc$, and the proof is the same of that for functors.\\
(ii). If $F$ is naturally semifull with respect to $\p^F$ and $G$ is semiseparable with respect to $\p^G$, then for every $f:X\to Y$ in $\cc$ we have
	\begin{gather*}
		\f^{GF}_{X,Y}\p^F_{X,Y}\p^G_{FX,FY}\f^{GF}_{X,Y}(f) =\f^G_{FX,FY}(\f^F_{X,Y}\p^F_{X,Y}(\p^G_{FX,FY}(GFf)))\\
		=\f^G_{FX,FY}(F\id_Y\circ\p^G_{FX,FY}(GFf)\circ F\id_X)=GF\id_Y\circ G\p^G_{FX,FY}(GFf)\circ GF\id_X\\=GF\id_Y\circ GFf\circ GF\id_X= GF(\id_Y\circ f\circ\id_X)=GFf=\f^{GF}_{X,Y}(f),
	\end{gather*}
	hence $G\circ F$ is semiseparable with respect to $\p^{GF}_{X,Y}:=\p^F_{X,Y}\p^G_{FX,FY}$.\\
(iii). If $G\circ F$ is semiseparable through $\p^{GF}$, then $\f^{GF}_{X,Y}\circ\p^{GF}_{X,Y}\circ\f^{GF}_{X,Y}=\f^{GF}_{X,Y}$, i.e. $\f^G_{FX,FY}\circ \f^F_{X,Y}\circ\p^{GF}_{X,Y}\circ \f^G_{FX,FY}\circ \f^F_{X,Y}=\f^G_{FX,FY}\circ\f^F_{X,Y}$, for every $X,Y\in\cc$. Since $G$ is faithful, we have that $\f^F_{X,Y}\circ \p^{GF}_{X,Y}\circ \f^G_{FX,FY}\circ \f^F_{X,Y}=\f^F_{X,Y}$, for every $X, Y$ in $\cc$, so $F$ is semiseparable through $\p^F_{X,Y}:=\p^{GF}_{X,Y}\circ\f^G_{FX,FY}$.
\qedhere
\endproof
\begin{rmk}
If $F:\cc\to\dd$ is a semiseparable functor which is neither separable, nor naturally full (see e.g. \cite[Example 3.2]{AB22}) and $G:\dd\to\e$ is a separable semifunctor, then the composite $G\circ F:\cc\to \e$ is a semiseparable semifunctor by Lemma \ref{lem:comp} (i), which is not separable, neither naturally semifull. In fact, if $G\circ F$ were separable, then by Lemma \ref{lem:comp-sep} (ii) $F$ would be separable, and if $G\circ F$ were naturally semifull, then by Proposition \ref{prop:comp} (ii) $F$ would be a naturally semifull functor, i.e. a naturally full functor, contradicting our assumptions.
\end{rmk}
Similarly to \cite[Proposition 1.4]{AB22}, we can attach to any semiseparable semifunctor a canonical idempotent (semi)natural transformation, that we call \textit{the associated idempotent}, which controls when the semifunctor is separable.
\begin{prop}\label{prop:idempotent}
	Let $F:\cc\rightarrow \dd$ be a semiseparable semifunctor. Then,
	there is a unique idempotent (semi)natural transformation $e:\id_{\cc%
	}\rightarrow \id_{\cc}$ such that $Fe=F\id$ with the following universal property: if $f,g:X\to Y$ are morphisms in $\cc$, then $Ff=Fg$ if and only if $e_Y\circ f=e_Y\circ g$. Moreover, $e = \id:\id_{\cc%
}\rightarrow \id_{\cc}$ if and only if $F$ is separable.
\end{prop}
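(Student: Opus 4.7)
The plan is to define $e_X := \p_{X,X}(F\id_X)$ for every $X \in \cc$. Since $F\id_X = \f_{X,X}(\id_X)$, the semiseparability relation $\f\circ\p\circ\f=\f$ immediately yields $Fe_X = F\id_X$. Because the codomain of $\p$ is the honest hom-functor $\Hom_\cc(-,-)$, no fixpoint issue arises and the seminaturality $e_X \circ \id_X = e_X$ is automatic once naturality is established.

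The central calculation that does essentially all the work is the identity
\[ \p_{X,Y}(Ff) \;=\; e_Y\circ f \;=\; f\circ e_X \qquad \text{for every } f\colon X\to Y \text{ in } \cc. \]
To obtain the left equality, write $Ff = F\id_Y\circ Ff$ and apply naturality of $\p$ in the second slot to conclude $\p_{X,Y}(Ff) = \p_{Y,Y}(F\id_Y)\circ f = e_Y\circ f$. Symmetrically, $Ff = Ff\circ F\id_X$ together with naturality of $\p$ in the first slot gives $\p_{X,Y}(Ff) = f\circ e_X$. Comparing the two produces the naturality of $e$, and specialising to $f = e_X$ gives the idempotence $e_X\circ e_X = \p_{X,X}(Fe_X) = \p_{X,X}(F\id_X) = e_X$.

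Next I would verify the universal property. If $Ff = Fg$, applying $\p$ and the key identity gives $e_Y\circ f = e_Y\circ g$. Conversely, if $e_Y\circ f = e_Y\circ g$, apply $F$ and use $Fe_Y = F\id_Y$ together with $F\id_Y\circ Fh = Fh$ to conclude $Ff = Fg$. For uniqueness, suppose $e'$ is another idempotent natural transformation with $Fe' = F\id$ enjoying the same universal property. From $Fe'_X = F\id_X$ and the universal property of $e$ we obtain $e_X\circ e'_X = e_X\circ\id_X = e_X$; symmetrically $e'_X\circ e_X = e'_X$. Naturality of $e$ applied to $e'_X\colon X\to X$ yields $e_X\circ e'_X = e'_X\circ e_X$, whence $e_X = e'_X$.

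Finally, the last clause is immediate from the formula $\p_{X,Y}(Ff) = e_Y\circ f$: if $F$ is separable, i.e.\ $\p\circ\f = \id$, then $e_X = \p_{X,X}(F\id_X) = \id_X$; conversely, if $e = \id$ then $\p_{X,Y}(\f_{X,Y}(f)) = e_Y\circ f = f$ for every morphism $f\colon X\to Y$. The main obstacle is mostly bookkeeping: once the identity $\p_{X,Y}(Ff) = e_Y\circ f = f\circ e_X$ is established, every other assertion reduces to a one-line consequence, so the only care needed is to invoke the two one-variable naturalities of $\p$ correctly (and to remember that they transport pre/post-composition with $Fh$, $Fk$ outside of $\p$).
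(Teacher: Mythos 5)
Your proposal is correct and follows essentially the same route as the paper: both define $e_X:=\p_{X,X}(F\id_X)$ and derive everything from the naturality of $\p$ together with $\f\p\f=\f$. Your packaging of the argument around the single identity $\p_{X,Y}(Ff)=e_Y\circ f=f\circ e_X$ is just a tidier organization of the same computations the paper performs separately for naturality, idempotence, the universal property and the separability criterion.
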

\begin{proof}
	Since $F$ is semiseparable, there is a natural transformation $\mathcal{P}$
	such that $\mathcal{F}\circ \mathcal{P}\circ \mathcal{F}=\mathcal{F}$. Set $%
	e_{X}:=\mathcal{P}_{X,X}\left( F\id_{X}\right) $, for every $X\in\cc$. Note that for all $X\in\cc$, $Fe_{X}=F%
	\mathcal{P}_{X,X}\left( F\id_{X}\right) =\mathcal{F}_{X,X}\mathcal{P}%
	_{X,X}\mathcal{F}_{X,X}\left( \id_{X}\right) =\mathcal{F}%
	_{X,X}\left( \id_{X}\right) =F\id_{X}$. Then, by naturality of $\mathcal{P}$, we have $e_{X}\circ
	e_{X}=\mathcal{P}_{X,X}\left( F\id_{X}\right) \circ e_{X}=\mathcal{P}%
	_{X,X}\left( F\id_{X}\circ Fe_{X}\right) =\mathcal{P}_{X,X}\left(
	Fe_{X}\right) =\mathcal{P}%
	_{X,X}\left( F\id_{X}\right)=e_{X}$ and hence $e_{X}$ is idempotent. Moreover,
	for every morphism $f:X\rightarrow Y$ in $\cc$ we have $f\circ e_{X}=f\circ \mathcal{P%
	}_{X,X}\left( F\id_{X}\right) =\mathcal{P}_{X,Y}\left( Ff\circ
	F\id_{X}\right) =\mathcal{P}_{X,Y}\left( F\id_{Y}\circ
	Ff\right) =\mathcal{P}_{Y,Y}\left( F\id_{Y}\right) \circ
	f=e_{Y}\circ f$, so that $f\circ e_{X}=e_{Y}\circ f$. Thus, $e=\left(
	e_{X}\right) _{X\in \cc}:\id_{\cc}\rightarrow
	\id_{\cc}$ is an idempotent (semi)natural transformation such that
	$Fe=F\id$. Now, consider morphisms $f,g:X\to Y$ in $\cc$. If $Ff=Fg$, then $\mathcal{P}%
	_{X,Y}\left( Ff\right) =\mathcal{P}_{X,Y}\left( Fg\right)$, i.e. $\mathcal{P}%
	_{Y,Y}\left( F\id_{Y}\right) \circ f=\mathcal{P}%
	_{Y,Y}\left( F\id_{Y}\circ Ff \right)=\mathcal{P}_{Y,Y}\left( F\id_Y\circ Fg\right)=\mathcal{P}_{Y,Y}\left( F\id_Y\right) \circ g$, i.e. $e_{Y}\circ f=e_{Y}\circ g$. Conversely, from $e_{Y}\circ f=e_{Y}\circ g$ we get $Fe_{Y}\circ Ff=Fe_{Y}\circ Fg$ and hence $Ff=Fg$ as $Fe_Y=F\id_Y$. Finally, let $e':\id_{\cc%
	}\rightarrow \id_{\cc}$ be an idempotent (semi)natural transformation such that, if $f,g:X\to Y$ are morphisms in $\cc$, then $Ff=Fg$ if and only if $e'_Y\circ f=e'_Y\circ g$. From $e'_X\circ e'_X=e'_X\circ \id_X$ we get $Fe'_X=F\id_X$, hence $Fe'=F\id$. From the universal property of $e$ we have $e_X\circ e'_X=e_X\circ \id_X$, i.e. $e_X\circ e'_X=e_X$. By interchanging the roles of $e$ and $e'$, similarly we get $e'_X\circ e_X=e'_X$, and by naturality we have $e_X\circ e'_X=e'_X\circ e_X$, hence $e_X=e'_X$, i.e. $e=e'$.

Now, if $F$ is separable then there is a natural transformation $\mathcal{P}$
such that $\mathcal{P}\circ \mathcal{F}=\id$ and hence $e_X=\mathcal{P}_{X,X}(F\id_{X})=\mathcal{P}_{X,X}\mathcal{F}_{X,X}(\id_{X})=\id_{X}$ for all $X\in\cc$. Conversely, let $F:\cc\to\dd$ be a semiseparable semifunctor through $\p$ and suppose $e=\id$ is the associated idempotent. Then, for every $f:X\to Y$ in $\cc$, we have $\mathcal{P}_{X,Y}(Ff)=\mathcal{P}_{X,Y}(Ff\circ F\id_{X})=f\circ \mathcal{P}_{X,X}(F\id_{X})=f\circ e_X=f\circ \id_X=f$ so that $\mathcal{P}\circ \mathcal{F}=\id$, hence $F$ is separable.
\end{proof}

The following Rafael-type Theorem characterizes semiseparability for semifunctors that are part of a semiadjunction, cf. \cite[Theorem 2.1]{AB22} for the case of functors.
\begin{thm}\label{thm:rafael-semisep}
	Let $F\dashv_\mathrm{s} G:\dd\to \cc$ be a semiadjunction, with unit $\eta$ and counit $\epsilon$. Then,
	\begin{itemize}
		\item[(1)] $F$ is semiseparable if and only if there exists a natural transformation $\nu : GF\rightarrow \id _{\cc}$ which satisfies one of the following equivalent conditions:
		\begin{enumerate}[(1)]
			\item[(i)] $\eta\circ\nu\circ\eta =\eta$;
			\item[(ii)] $F\nu\circ F\eta = F\id$.
		\end{enumerate}
		\item[(2)] $G$ is semiseparable if and only if there exists a natural transformation $\gamma : \id _{\dd}\rightarrow FG$ which satisfies one of the following equivalent conditions:
		\begin{enumerate}[(1)]
			\item[(i)]  $\epsilon\circ\gamma\circ\epsilon = \epsilon$;
			\item[(ii)] $G\epsilon\circ G\gamma = G\id$.
		\end{enumerate}
	\end{itemize}
\end{thm}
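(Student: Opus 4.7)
The plan is to adapt the argument from Theorem \ref{Th.Rafael} (separable case), relaxing the condition $\nu\circ\eta=\id_{\id_\cc}$ to its ``semi'' counterpart. By duality it suffices to handle (1). I would organize the proof in three steps: first show the equivalence of conditions (i) and (ii), then prove the two implications of the characterization.

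For the equivalence (i)$\Leftrightarrow$(ii), the key ingredients are the semitriangular identity $\epsilon F\circ F\eta=F\id$ and the naturality of $\eta$. Assuming (i), apply $F$ to $\eta\circ\nu\circ\eta=\eta$ and compose on the left with $\epsilon F$; using $\epsilon F\circ F\eta=F\id$ and the semifunctoriality trick $F\id\circ F\nu=F(\id\circ\nu)=F\nu$, one obtains $F\nu\circ F\eta=F\id$. Conversely, assuming (ii), apply $G$ to get $GF\nu\circ GF\eta=GF\id$; then $\eta=GF\id\circ\eta=GF\nu\circ GF\eta\circ\eta=GF\nu\circ\eta GF\circ\eta=\eta\circ\nu\circ\eta$, where the third equality is naturality of $\eta$ applied to $\eta$ and the last is naturality of $\eta$ applied to $\nu$.

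For the forward direction ($F$ semiseparable $\Rightarrow$ $\exists\nu$), let $\p$ be the natural transformation with $\f\circ\p\circ\f=\f$ and define
\[
\nu_X:=\p_{GFX,X}(\epsilon_{FX}):GFX\to X,\qquad X\in\cc.
\]
Naturality of $\nu$ follows exactly as in the proof of Theorem \ref{Th.Rafael}, using naturality of $\p$ together with the naturality of $\epsilon$ (the computation $f\circ\nu_X=\p_{GFX,Y}(Ff\circ\epsilon_{FX})=\p_{GFX,Y}(\epsilon_{FY}\circ FGFf)=\nu_Y\circ GFf$ goes through unchanged). To verify (ii), compute
\[
\nu_X\circ\eta_X=\p_{GFX,X}(\epsilon_{FX})\circ\eta_X=\p_{X,X}(\epsilon_{FX}\circ F\eta_X)=\p_{X,X}(F\id_X)=\p_{X,X}\f_{X,X}(\id_X),
\]
using naturality of $\p$ and the semitriangular identity. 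Applying $F$ and invoking the semiseparability equation $\f\p\f=\f$ at $\id_X$ yields $F(\nu_X\circ\eta_X)=F\id_X$, i.e.\ $F\nu\circ F\eta=F\id$.

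For the converse, given $\nu$ satisfying (ii), define $\p_{X,Y}(f):=\nu_Y\circ Gf\circ\eta_X$ for $f:FX\to FY$ in $\dd$. Naturality of $\p$ is the same formal computation as in Theorem \ref{Th.Rafael}. It then suffices to check that for every $f:X\to Y$ in $\cc$,
\[
\f_{X,Y}\p_{X,Y}\f_{X,Y}(f)=F(\nu_Y\circ GFf\circ\eta_X)=F(\nu_Y\circ\eta_Y\circ f)=F\nu_Y\circ F\eta_Y\circ Ff=F\id_Y\circ Ff=Ff,
\]
where the second equality uses naturality of $\eta$, the semifunctoriality gives the third, and condition (ii) together with $F\id_Y\circ Ff=F(\id_Y\circ f)=Ff$ completes the chain. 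Hence $\f\p\f=\f$, so $F$ is semiseparable. Part (2) is proved dually, setting $\gamma_D:=\p_{D,FGD}(\eta_{GD})$ and $\p_{X,Y}(f):=\epsilon_Y\circ f\circ\gamma_X$; the hardest technical point is again just the mutual implication (i)$\Leftrightarrow$(ii), which is where the semi-nature of the statement (as opposed to an identity up to $\id$) really shows up.
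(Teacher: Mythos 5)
Your proposal is correct and follows essentially the same route as the paper: the same $\nu_X:=\p_{GFX,X}(\epsilon_{FX})$ and $\p_{X,Y}(f):=\nu_Y\circ Gf\circ\eta_X$, the same naturality computations, and the same argument for (i)$\Leftrightarrow$(ii); you merely verify condition (ii) where the paper verifies (i), which is immaterial once the equivalence is established. The only blemish is a typo in the dual construction: for $f:GX\to GY$ the formula should read $\p_{X,Y}(f)=\epsilon_Y\circ Ff\circ\gamma_X$ rather than $\epsilon_Y\circ f\circ\gamma_X$.
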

\proof
(1). Assume that $F$ is semiseparable and let $\p$ be the associated natural transformation such that $\f\circ\p\circ\f =\f$. We define $\nu : GF\rightarrow \id _{\cc}$ by $\nu_{X}:=\p_{GFX,X}(\epsilon_{FX}): GFX\rightarrow X$. It is indeed natural as for any morphism $f:X\to Y$ in $\cc$, we have $f\circ\nu_X=f\circ\p_{GFX,X}(\epsilon_{FX})=\p_{GFX,Y}(Ff\circ\epsilon_{FX})=\p_{GFX,Y}(\epsilon_{FY}\circ FGFf)=\p_{GFY,Y}(\epsilon_{FY})\circ GFf=\nu_Y\circ GFf$. For every $g:FX\to FY$ in $\dd$, by naturality of $\p$ we have that
	\begin{equation*}
	\begin{split}
	\nu_Y\circ Gg\circ\eta_X &=\p_{GFY,Y}(\epsilon_{FY})\circ Gg\circ\eta_X=\p_{X,Y}(\epsilon_{FY}\circ FGg\circ F\eta_X)\\
	&=\p_{X,Y}(g\circ\epsilon_{FX}\circ F\eta_X)=\p_{X,Y}(g\circ F\id_X)=\p_{X,Y}(g)\circ\id_X=\p_{X,Y}(g).
	\end{split}
	\end{equation*} 
	Moreover, it holds
	\begin{equation*}
	\begin{split}
	\eta_X\circ\nu_X\circ\eta_X &= \eta_X\circ\p_{GFX,X}(\epsilon_{FX})\circ\eta_{X}
	=\eta_{X}\circ\p_{X,X}(\epsilon_{FX}\circ F\eta_{X})
	=\eta_X\circ \p_{X,X}(F\id _{X})\\
	&=GF\p_{X,X}(F\id _{X})\circ\eta_X
	=G((\f_{X,X}\circ\p_{X,X}\circ\f_{X,X})(\id _X))\circ\eta_X\\
	&\overset{\eqref{eq_semi-sep}}{=}G\f_{X,X}(\id _X)\circ\eta_X=GF(\id _X)\circ\eta_X
	=\eta_X ,
	\end{split}
	\end{equation*}
	hence condition (i) is satisfied. Conversely, assume that there exists a natural transformation $\nu :GF\rightarrow \id _{\cc}$ such that $\eta\circ\nu\circ\eta =\eta$, and for any $f\in \Hom_{\dd}(FX,FY)$ define $\p_{X,Y}(f):=\nu_Y\circ Gf\circ\eta_X$.	From the naturality of $\eta$ and $\nu$, for any $h:X\to Y$ in $\cc$, $k:FY\to FZ$ in $\dd$, $l:Z\to T$ in $\cc$, we have $\p_{X,T}(Fl\circ k\circ Fh)=\nu_T\circ G(Fl\circ k\circ Fh)\circ\eta_X=(\nu_T\circ GFl )\circ Gk\circ (GFh\circ\eta_X)=l\circ (\nu_Z\circ Gk\circ \eta_{Y})\circ h=l\circ\p_{Y,Z}(k)\circ h$, thus $\p : \Hom_{\dd}(F-, F-)\rightarrow \Hom_{\cc}(-,-)$ is a natural transformation. %and moreover we have $\p_{X,Y}(f)=\nu_Y\circ Gf\circ\eta_X=\p_{GFY,Y}(\epsilon_{FY})\circ Gg\circ\eta_X=\p_{X,Y}(\epsilon_{FY}\circ FGg\circ F\eta_X)=\p_{X,Y}(g\circ\epsilon_{FY}\circ F\eta_X)=\p_{X,Y}(f\circ F\id_X)$.
	Since $\p_{GFX,X}(\epsilon_{FX})=\nu_X\circ G\epsilon_{FX}\circ\eta_{GFX}=\nu_X\circ G\id_{FX}=\nu_X$, the correspondence between $\p$ and $\nu$ is bijective. For every $f\in \Hom_{\cc}(X,Y)$ we have
	\begin{equation*}
	\begin{split}
	(\f_{X,Y}\circ\p_{X,Y}&\circ\f_{X,Y}) (f)= F(\p_{X,Y}(F(f)))=F(\nu_Y\circ GF(f)\circ\eta_X)=F(\nu_Y\circ\eta_{Y}\circ f)\\&
	= F(\id_Y\circ\nu_Y\circ\eta_{Y}\circ f)=F\id_Y\circ F(\nu_Y\circ\eta_{Y}\circ f)=\epsilon_{FY}\circ F\eta_Y\circ F(\nu_Y\circ\eta_{Y}\circ f)\\
	&=\epsilon_{FY}\circ F(\eta_Y\circ\nu_Y\circ\eta_{Y}\circ f)=\epsilon_{FY}\circ F(\eta_{Y}\circ f)=\epsilon_{FY}\circ F\eta_Y\circ Ff
	\\&=F\id _{Y}\circ Ff	=Ff=\f_{X,Y}(f),
	\end{split}
	\end{equation*}
	so $F$ is semiseparable. Finally, we prove that (i) and (ii) are equivalent.\\
	(i) $\Rightarrow$ (ii): $F\nu \circ F\eta = F(\id\circ\nu \circ \eta) =F\id\circ F\nu \circ F\eta =\epsilon F\circ F\eta\circ F\nu\circ F\eta \overset{\text{(i)}}{=}\epsilon F\circ F\eta = F\id $.\\
	(ii) $\Rightarrow$ (i): By naturality of $\eta$, we have $\eta\circ\nu\circ\eta =\eta\circ (\nu \circ\eta ) = GF(\nu\circ\eta )\circ\eta = G(F\nu\circ F\eta )\circ \eta = GF\id\circ\eta=\eta$.\\
(2). It follows by duality. \qedhere
\endproof

\noindent\textbf{Idempotent completion and semiadjoint triples.} In Subsection \ref{subsect:idempcompl} we have observed that any semifunctor $F:\cc\rightarrow \dd$ induces a functor $%
F^{\natural }:\cc^{\natural }\rightarrow \dd^{\natural }$
such that $F^{\natural }\left( C,c\right) =\left( FC,Fc\right) $ and $%
F^{\natural }f=Ff$. %Indeed, $F^{\natural }\mathrm{Id}_{\left( C,c\right)}=Fc=\mathrm{Id}_{\left( FC,Fc\right) }=\mathrm{Id}_{F^{\natural }\left(	C,c\right) }$. 
Now, we show how the notions introduced in this paper for a semifunctor $F$ are related to the corresponding functorial notions for its completion $F^\natural$. The following result is a semifunctorial version of \cite[Proposition 2.1 and Corollary 2.2]{AB22-II}.
\begin{prop}\label{prop:completion-ffs}
	Let $F:\cc\to\dd$ be a semifunctor. Then,
	\begin{itemize}
		\item[(i)] $F$ is faithful if and only if $F^\natural$ is a faithful functor;
		\item[(ii)] $F$ is semifull if and only if $F^\natural$ is a full functor; 
		\item[(iii)] $F$ is semiseparable if and only if $F^\natural$ is a semiseparable functor. 
	\end{itemize}
\end{prop}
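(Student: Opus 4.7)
The plan is to exploit the identity $F^\natural f = Ff$ on morphisms together with the canonical embedding $\iota_\cc : \cc \to \cc^\natural$, $X \mapsto (X,\id_X)$, in order to transport the hom-set conditions defining faithfulness, semifullness and semiseparability back and forth between $F$ and $F^\natural$.

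Part (i) is essentially by inspection. In the forward direction, if $f,g:(X,e)\to(X',e')$ in $\cc^\natural$ satisfy $F^\natural f = F^\natural g$, then $Ff = Fg$ forces $f = g$ by faithfulness of $F$; conversely, parallel arrows $f,g:X\to Y$ in $\cc$ with $Ff=Fg$ can be read as arrows $(X,\id_X)\to(Y,\id_Y)$ in $\cc^\natural$, and I would invoke faithfulness of $F^\natural$.

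For (ii), given $h:F^\natural(X,e)\to F^\natural(X',e')$ in $\dd^\natural$, i.e.\ a morphism $h:FX\to FX'$ in $\dd$ with $Fe'\circ h\circ Fe = h$, I would use semifullness of $F$ to produce $g:X\to X'$ with $Fg = F\id_{X'}\circ h\circ F\id_X$, and then frame it as $\bar g := e'\circ g\circ e:(X,e)\to(X',e')$; the absorptions $Fe'\circ F\id_{X'} = Fe'$ and $F\id_X\circ Fe = Fe$ together with the idempotency condition on $h$ yield $F^\natural\bar g = Fe'\circ h\circ Fe = h$. Conversely, given $h:FX\to FY$ in $\dd$, I would promote the twisted morphism $h' := F\id_Y\circ h\circ F\id_X$ to an arrow $F^\natural(X,\id_X)\to F^\natural(Y,\id_Y)$ in $\dd^\natural$ (its double‐sided stability under $F\id_X, F\id_Y$ being immediate from $F\id_X\circ F\id_X = F\id_X$), and apply fullness of $F^\natural$ to obtain $\bar g:X\to Y$ in $\cc$ with $F\bar g = h'$, which is exactly the semifullness condition.

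For (iii), starting from $\p$ witnessing semiseparability of $F$, the key construction will be $\p^\natural_{(X,e),(Y,e')}(h) := e'\circ\p_{X,Y}(h)\circ e$ for $h:(FX,Fe)\to(FY,Fe')$ in $\dd^\natural$: the value lies in $\Hom_{\cc^\natural}((X,e),(Y,e'))$ because $e,e'$ are idempotent; naturality of $\p^\natural$ follows from naturality of $\p$ together with the identities $\beta = e_1\circ\beta = \beta\circ e'$ and $\alpha = e\circ\alpha = \alpha\circ e_0$ valid for any $\cc^\natural$-morphism; and the relation $\f^{F^\natural}\circ\p^\natural\circ\f^{F^\natural} = \f^{F^\natural}$ reduces, for $f:(X,e)\to(Y,e')$, to $F(e'\circ\p_{X,Y}(Ff)\circ e) = Ff$, which follows from $\f\p\f = \f$ combined with $e'\circ f\circ e = f$. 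For the converse, I would set $\p_{X,Y}(h) := \p^\natural_{(X,\id_X),(Y,\id_Y)}(F\id_Y\circ h\circ F\id_X)$ and transfer the three required properties along $\iota_\cc$, noting that $F(\beta\circ\id_Y) = F\beta$ makes the $F\id$-twist collapse after composition with any $\cc$-morphism. The main subtle point, and the one requiring most care, is the bookkeeping in (iii): one must systematically absorb $F\id$'s and idempotents $e,e'$, relying crucially on $F\id\circ Fe = Fe = Fe\circ F\id$, so that the ``twist'' by $F\id$ (arising when passing from $F$ to $F^\natural$) and the ``framing'' by $e,e'$ (arising when passing from $F^\natural$ to $F$) never collide.
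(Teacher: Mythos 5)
Your proposal is correct and follows essentially the same route as the paper: (i) by inspection from $F^\natural f=Ff$, (ii) by framing with $e,e'$ in one direction and working at the objects $(X,\id_X)$ with the $F\id$-twisted morphism in the other, and (iii) by restricting/extending $\p$ along the completion (your $e'\circ\p_{X,Y}(h)\circ e$ equals the paper's $\p_{X,Y}(h)$ by naturality, since $h=Fe'\circ h\circ Fe$). The only difference is cosmetic: in the converse of (iii) you explicitly pre-compose with $F\id_Y\circ(-)\circ F\id_X$ so that the argument lands in $\Hom_{\dd^\natural}(F^\natural(X,\id_X),F^\natural(Y,\id_Y))$, a small point of bookkeeping the paper leaves implicit.
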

\proof
(i). It follows from the fact that $F^{\natural }f=Ff$, for any morphism $f:(C,c)\to (C',c')$ in $\cc^\natural$, i.e. for any morphism $f:C\to C'$ in $\cc$ such that $f=c'\circ f\circ c$. \begin{invisible}
		Assume that the semifunctor $F$ is faithful. Then, let $f,g:(C,c)\to (C',c')$ be morphisms in $\cc^\natural$ such that $F^\natural (f)=F^\natural (g)$. Since for all $C,C'$ in $\cc$ the map $\f^F_{C,C'}: \Hom_{\cc}(C,C')\rightarrow \Hom_{\dd}(FC, FC')$ is injective, from $F(f)=F^\natural (f)=F^\natural (g)=F(g)$ it follows that $f=g$. Therefore, also the map $$\f^{F^\natural} _{(C,c),(C',c')}: \Hom_{\cc^\natural}((C,c),(C',c'))\rightarrow \Hom_{\dd^\natural}((FC,Fc) ,(FC',Fc'))$$ is injective, so $F^\natural$ is faithful. Conversely, assume that $F^\natural$ is faithful. Then, let $f,g:C\to C'$ be morphisms in $\cc$ such that $F (f)=F (g)$. Since for all $(C,c),(C',c')$ in $\cc^\natural$ the map $\f^{F^\natural}_{(C,c),(C',c')}: \Hom_{\cc^\natural}((C,c),(C',c'))\rightarrow \Hom_{\dd^\natural}((FC, Fc), (FC',Fc'))$ is injective, by considering $f$, $g$ as morphisms $f:(C,\id_C)\to(C',\id_{C'})$ in $\cc^\natural$, from $F^\natural (f)=F (f)=F (g)=F^\natural (g)$ it follows that $f=g$. Therefore, also the map $$\f^F_{C,C'}: \Hom_{\cc}(C,C')\rightarrow \Hom_{\dd}(FC, FC')$$ is injective, thus $F$ is faithful.
	\end{invisible}\\ 
(ii). Assume that $F$ is semifull. %Then, for any morphism $f:FC\to FC'$ in $\dd$ there exists a morphism $g:C\to C'$ in $\cc$ such that $F(g)=F\id_{C'}\circ f\circ F\id_{C}$. 
	Let $f:F^{\natural }\left( C,c\right) \to F^{\natural }\left( C',c'\right) $ be a morphism in $\dd^\natural$, i.e. a morphism $f:FC\to FC'$ in $\dd$ such that $f=Fc'\circ f\circ Fc$. Since $F$ is semifull, there exists a morphism $g:C\to C'$ in $\cc$ such that $F(g)=F\id_{C'}\circ f\circ F\id_{C}$. Set $g':=c'\circ g\circ c : C\to C'$. Note that $c'g' c= c'(c'g c)c=c'gc=g'$, hence $g':(C,c)\to (C',c')$ is a morphism in $\cc^\natural$. Then, $F^\natural (g')=F(g')=F(c'gc)=F(c')\circ F(g)\circ F(c)=Fc'\circ F\id_{C'}\circ f\circ F\id_C\circ Fc=Fc'\circ f\circ Fc=f$. Thus, $F^\natural$ is full.	Conversely, assume that $F^\natural$ is full.  Let $f:FC\to FC'$ be a morphism in $\dd$. Consider the morphism $f':(FC, F\id_C)\to (FC',F\id_{C'})$ in $\dd^\natural$ given by $f'=F\id_{C'}\circ f\circ F\id_C$.\begin{invisible}
		Note that $F\id_{C'}\circ f'\circ F\id_C=F\id_{C'}F\id_{C'}\circ f\circ F\id_C F\id_C=F\id_{C'}\circ f\circ F\id_C=f'$.
	\end{invisible}
	Then, there is a morphism $g:(C,\id_C)\to (C',\id_{C'})$ in $\cc^\natural$ (i.e. a morphism $g:C\to C'$ in $\cc$) such that $F^\natural g=f'$. Thus, we have $F(g)=F^\natural(g)=f'=F\id_{C'}\circ f\circ F\id_C$, hence $F$ is semifull.\\
(iii). If $F$ is semiseparable, then the proof of the fact that $F^\natural$ is semiseparable is the same as in \cite[Corollary 2.2]{AB22-II} for the functorial case. \begin{invisible}
		Then, there is a natural transformation $\p^F: \Hom_{\dd}(F-, F-)\rightarrow \Hom_{\cc}(-,-)$ such that $\f^F\p^F\f^F =\f^F$. Define $\p^{F^\natural} : \Hom_{\dd^\natural}(F^\natural -, F^\natural -)\rightarrow \Hom_{\cc^\natural}(-,-)$ by $\p^{F^\natural}_{C,C'} (g)=\p^F_{C,C'}(g)$, for every $g: (FC,Fc)\to (FC',Fc')$ in $\dd^\natural$. Since $g=Fc'\circ g\circ Fc$, by naturality of $\p^F$ it follows that $c'\circ \p^{F^\natural}_{C,C'}(g)\circ c =c'\circ \p^F_{C,C'}(g)\circ c =\p^F_{C,C'}(Fc'\circ g\circ Fc)= \p^F_{C,C'}(g)=\p^{F^\natural}_{C,C'} (g)$, hence $\p^{F^\natural}_{C,C'} (g)$ is a morphism in $\cc^\natural$. Moreover, $\p^{F^\natural}$ is a natural transformation as so is $\p^F$     for any $h:(X,x)\to (Y,y)$, $k:F^\natural (Y,y)\to F^\natural (Z,z)$, and $l:(Z,z)\to (T,t)$ we have $\p^{F^\natural}_{X,T}(F^\natural l\circ k\circ F^\natural h)=\p^F_{X,T}(Fl\circ k\circ Fh)=l\circ\p^F_{Y,Z}(k)\circ h=l\circ\p^{F^\natural}_{Y,Z}(k)\circ h$, and for any $f: (C,c)\to (C',c')$ in $\cc^\natural$ it holds $\f^{F^\natural}_{C,C'}\p^{F^\natural}_{C,C'}\f^{F^\natural}_{C,C'}(f)=\f^F_{C,C'}\p^F_{C,C'}\f^F_{C,C'}(f)=\f^F_{C,C'}(f)=\f^{F^\natural}_{C,C'}(f)$.
	\end{invisible}
Conversely, assume that $F^\natural$ is semiseparable. Then, there is a natural transformation $ \p^{F^\natural} : \Hom_{\dd^\natural}(F^\natural -, F^\natural -)\rightarrow \Hom_{\cc^\natural}(-,-)$ such that $\f^{F^\natural}\p^{F^\natural}\f^{F^\natural} =\f^{F^\natural}$. Define $\p^{F}: \Hom_{\dd}(F-, F-)\rightarrow \Hom_{\cc}(-,-)$ by $\p^F_{C,C'}(g)=\p^{F^\natural}_{C,C'} (g)$, for every $g: FC\to FC'$ in $\dd$, i.e. for every $g: (FC, \id_{FC})\to (FC',\id_{FC'})$ in $\dd^\natural$. Thus, $\p^F_{C,C'}(g)$ is a morphism in $\cc$ and, by naturality of $\p^{F^\natural}$, also $\p^{F}$ is a natural transformation. \begin{invisible}Indeed, for any $h:X\to Y$, $k:FY\to FZ$, and $l:Z\to T$ we have $\p^F_{X,T}(Fl\circ k\circ Fh)=\p^{F^\natural}_{X,T}(F^\natural l\circ k\circ F^\natural h)=l\circ\p^{F^\natural}_{Y,Z}(k)\circ h=l\circ\p^F_{Y,Z}(k)\circ h$.
	\end{invisible}
	Moreover, for any $f: C\to C'$ in $\cc$, we have $\f^F_{C,C'}\p^F_{C,C'}\f^F_{C,C'}(f)=\f^{F^\natural}_{C,C'}\p^{F^\natural}_{C,C'}\f^{F^\natural}_{C,C'}(f)=\f^{F^\natural}_{C,C'}(f)=\f^F_{C,C'}(f)$.\qedhere	
\endproof
\begin{cor}\label{cor:sep-natsful-semiff-compl}
	Let $F:\cc\to\dd$ be a semifunctor. Then,
	\begin{itemize}
		\item[(i)] $F$ is separable if and only if $F^\natural$ is a separable functor;
		\item[(ii)] $F$ is naturally semifull if and only if $F^\natural$ is a naturally full functor; 
		\item[(iii)] $F$ is semifully faithful if and only if $F^\natural$ is a fully faithful functor. 
	\end{itemize}
\end{cor}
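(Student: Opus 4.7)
The plan is to obtain all three equivalences as short chains of equivalences, combining Proposition \ref{prop:completion-ffs} with the characterizations established earlier in the paper (notably Proposition \ref{prop:sep-nat.full}) and their already-known functorial analogues. The key observation is that ``separable'', ``naturally semifull'' and ``semifully faithful'' are each the conjunction of a pair of more primitive properties (``semiseparable + faithful'', ``semiseparable + semifull'', ``faithful + semifull'' respectively), and Proposition \ref{prop:completion-ffs} already controls each of these primitive properties individually under the idempotent completion.

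First I would dispose of (iii), which is purely a definition chase: by definition a semifunctor is semifully faithful if and only if it is both faithful and semifull, and Proposition \ref{prop:completion-ffs}(i)--(ii) translates this immediately into saying that $F^\natural$ is both faithful and full, i.e.\ fully faithful. For (i), I would chain $F$ separable $\Leftrightarrow$ $F$ semiseparable and faithful (Proposition \ref{prop:sep-nat.full}(i)) $\Leftrightarrow$ $F^\natural$ semiseparable and faithful (Proposition \ref{prop:completion-ffs}(i) and (iii)) $\Leftrightarrow$ $F^\natural$ separable, where the last step uses the functorial version of Proposition \ref{prop:sep-nat.full}(i), namely \cite[Proposition 1.3]{AB22}, applied to the functor $F^\natural$. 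For (ii) the argument is entirely analogous, replacing ``faithful/faithful'' by ``semifull/full'' and using parts (ii) and (iii) of Proposition \ref{prop:completion-ffs}, together with Proposition \ref{prop:sep-nat.full}(ii) and the corresponding functorial characterization of naturally full functors from \cite{AB22}.

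There is essentially no obstacle here; the proof is a bookkeeping exercise. The only subtlety worth spelling out is that the ``semiseparable + X'' characterizations at the functor level apply to $F^\natural$ with exactly the same meaning as at the semifunctor level, because any functor is a semifunctor and the natural transformation $\f$ attached to $F^\natural$ coincides with the usual hom-set natural transformation of a functor. Once this is noted, each of the three equivalences closes up without any hidden gap.
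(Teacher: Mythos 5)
Your proposal is correct and follows essentially the same route as the paper, whose proof is precisely ``It follows from Proposition \ref{prop:completion-ffs} and Proposition \ref{prop:sep-nat.full}''; you have merely spelled out the bookkeeping. The only cosmetic difference is that for the functor-level characterizations of $F^\natural$ you could equally well apply the paper's own Proposition \ref{prop:sep-nat.full} to the semifunctor $F^\natural$ (using that for a functor ``semifull'' means ``full'' and ``naturally semifull'' means ``naturally full'') instead of invoking \cite{AB22} directly.
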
	
\proof It follows from Proposition \ref{prop:completion-ffs} and Proposition \ref{prop:sep-nat.full}. % and Proposition \ref{prop:char-sff}.
\endproof
\begin{rmk}
	Proposition \ref{prop:char-sff} can be seen as a consequence of Corollary \ref{cor:sep-natsful-semiff-compl}. In fact, by \cite[Remark 2.2 (3)]{AMCM06} a functor is fully faithful if and only if it is separable and naturally full, so by Corollary \ref{cor:sep-natsful-semiff-compl} a semifunctor is semifully faithful if and only if it is separable and naturally semifull.
\end{rmk}
%\noindent\textbf{Semiadjoint triples.} 
We say that $F\dashv_\mathrm{s} G\dashv_\mathrm{s} H:\cc\to\dd$ is a \emph{semiadjoint triple} if it is a triple of semifunctors $F,H:\cc\to\dd$ and $G:\dd\to\cc$ such that $F\dashv_\mathrm{s} G$ and  $G\dashv_\mathrm{s} H$ are semiadjunctions. The following is a semifunctorial analogue of \cite[Proposition 2.19]{AB22} for semiadjoint triples. In particular the semifully faithful case is a semifunctorial analogue of \cite[Proposition 3.4.2]{Bor94}. 
\begin{prop}\label{prop:semiadj-triple}
	Let $F\dashv_\mathrm{s} G\dashv_\mathrm{s} H:\cc\to\dd$ be a semiadjoint triple of semifunctors. Then, $F$ is semiseparable (resp. separable, naturally semifull, semifully faithful) if and only if so is $H$.
\end{prop}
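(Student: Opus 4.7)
The plan is to reduce the claim to the functorial analogue via idempotent completion. Given the semiadjoint triple $F\dashv_\mathrm{s} G\dashv_\mathrm{s} H:\cc\to\dd$, I would first invoke the fact (recalled after Proposition~\ref{prop:uniqueness-semiiso}, from \cite[Theorem 3.5]{Ho93}) that $F\dashv_\mathrm{s}G$ if and only if $F^\natural\dashv G^\natural$, and similarly $G\dashv_\mathrm{s}H$ if and only if $G^\natural\dashv H^\natural$. This produces an honest adjoint triple $F^\natural\dashv G^\natural\dashv H^\natural:\cc^\natural\to\dd^\natural$ of functors between idempotent complete categories.

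Next, I would apply the functorial analogue \cite[Proposition 2.19]{AB22} (and, for the semifully faithful case, \cite[Proposition 3.4.2]{Bor94}): in any adjoint triple of functors $F^\natural\dashv G^\natural\dashv H^\natural$, the functor $F^\natural$ is semiseparable (resp. separable, naturally full, fully faithful) if and only if $H^\natural$ is.

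Finally, I would translate the conclusion back to the semifunctorial setting by using Proposition~\ref{prop:completion-ffs} and Corollary~\ref{cor:sep-natsful-semiff-compl}, which state that a semifunctor $T$ is semiseparable (resp.\ separable, naturally semifull, semifully faithful) if and only if its completion $T^\natural$ is semiseparable (resp.\ separable, naturally full, fully faithful) as a functor. Applied to both $F$ and $H$, this yields the desired equivalence.

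The routine verifications are essentially packaged inside the cited results, so there is no substantive obstacle; the only point worth double-checking is that the completion construction sends the given semiadjoint triple to an honest adjoint triple (which is immediate since passage to completions preserves each of the two semiadjunctions separately). Hence the proof reduces to a three-line chain: property on $F$ $\Leftrightarrow$ same property on $F^\natural$ $\Leftrightarrow$ same property on $H^\natural$ $\Leftrightarrow$ same property on $H$.
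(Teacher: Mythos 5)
Your proposal is correct and follows essentially the same route as the paper: pass to the idempotent completion via \cite[Theorem 3.5]{Ho93} to get the adjoint triple $F^\natural\dashv G^\natural\dashv H^\natural$, apply \cite[Proposition 2.19]{AB22}, and translate back with Proposition \ref{prop:completion-ffs} and Corollary \ref{cor:sep-natsful-semiff-compl}. The only cosmetic difference is that the paper settles the semifully faithful case by combining the separable and naturally semifull cases via Proposition \ref{prop:char-sff} rather than quoting the functorial fully faithful statement directly, which is an equally valid shortcut.
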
 
\proof
Given a semiadjoint triple $F\dashv_\mathrm{s} G\dashv_\mathrm{s} H:\cc\to\dd$, by \cite[Theorem 3.5]{Ho93} we obtain an adjoint triple $F^\natural\dashv G^\natural\dashv H^\natural:\cc^\natural\to\dd^\natural$ of functors. 
By \cite[Proposition 2.19]{AB22} we know that $F^\natural$ is semiseparable (resp. separable, naturally full) if and only if so is $H^\natural$. Thus, by Proposition \ref{prop:completion-ffs} and Corollary \ref{cor:sep-natsful-semiff-compl} we have that the semifunctor $F$ is semiseparable (resp. separable, naturally semifull) if and only if so is $H$. As a consequence of Proposition \ref{prop:char-sff}, by combining the previous separable and naturally semifull cases, we get that $F$ is semifully faithful if and only if so is $H$.\qedhere
\begin{invisible}
%Old proof for separable e naturally semifull.
We denote by $\eta:\id_\cc\to GF$ and $\epsilon:FG\to\id_\dd$ the unit and the counit of the semiadjunction $F\dashv_\mathrm{s}G$, respectively, and by $\beta:\id_\dd\to HG$ and $\alpha:GH\to\id_\cc$ the unit and the counit of the semiadjunction $G\dashv_\mathrm{s}H$, respectively. We only prove that if $H$ is separable (resp. naturally semifull, semifully faithful), so is $F$, as the converse implication follows by duality.\par
Assume that $H$ is separable. Then, by Theorem \ref{Th.Rafael}, there exists a seminatural transformation $\alpha':\id_\cc\to GH$ such that $\alpha\circ\alpha'=\id_{\id_\cc}$. Set $\eta'$ the composite 
$$\xymatrix{GF\ar[r]_-{GF\alpha'}&GFGH\ar[r]_-{G\epsilon H}&GH\ar[r]_-{\alpha}&\id_{\cc}.}$$ We have that 
\begin{gather*}
	\eta'\circ\eta=\alpha\circ G\epsilon_H\circ GF\alpha'\circ\eta=\alpha\circ G\epsilon H\circ\eta GH\circ\alpha'=\alpha\circ G\id_H\circ \alpha'=\alpha\circ\alpha'=\id_{\id_\cc} .
	%\\=\alpha\circ GH\id\circ G\id_H\circ\alpha'=\alpha\circ GH\id\circ\alpha'
\end{gather*}
Thus, $\eta'$ is the desired seminatural transformation such that $\eta'\circ\eta =\id_{\id_\cc}$. By Theorem \ref{Th.Rafael} $F$ is separable. Now, assume that $H$ is naturally semifull. Then, by Theorem \ref{thm:Raf-nat.full} there is a seminatural transformation $\alpha'':\id_\cc\to GH$ such that $\alpha''\circ\alpha=GH\id$. Set $\eta'':=\alpha\circ G\epsilon H\circ GF\alpha'' :GF\to\id_\cc$. %We have that $\eta''\circ GF\id = \alpha\circ G\epsilon_H\circ GF\alpha''\circ GF\id=\alpha\circ G\epsilon_H\circ GF\alpha''=\eta''$. 
Note that from $\alpha''\circ\alpha=GH\id$ and $\alpha G\circ G\beta =G\id$ it follows that $G\beta =\alpha'' G$. Indeed, $G\beta=G(HG\id\circ\beta)=G(H\id_G\circ HG\id\circ\beta)=G(H\id_G\circ\beta)=GH\id_G\circ G\beta=\alpha''G\circ \alpha G\circ G\beta =\alpha'' G\circ G\id=GHG\id\circ\alpha''G=\alpha''G$. Then, by naturality of $\alpha$, $\epsilon$ and $\alpha''$, we get
\begin{gather*}
	\eta\circ\eta''=\eta\circ\alpha\circ G\epsilon H\circ GF\alpha''=\alpha GF\circ GH\eta\circ G\epsilon H\circ GF\alpha''=\alpha GF\circ G\epsilon HGF\circ GFGH\eta\circ GF\alpha''\\=\alpha GF\circ G\epsilon HGF\circ GF(\alpha'' GF\circ \eta) =\alpha GF\circ G\epsilon HGF\circ GFG\beta F\circ GF\eta \\=\alpha GF\circ G\beta F\circ G\epsilon F \circ GF\eta = G\id_F\circ GF\id =GF\id ,
\end{gather*}
hence by Theorem \ref{thm:Raf-nat.full} $F$ is naturally semifull. The semifully faithful case follows from Proposition \ref{prop:char-sff} by combining the previous separable and naturally semifull cases.\qedhere
\end{invisible}
\endproof

\section{Examples and applications}\label{sect:examples}

In this section we provide examples of semifull, naturally semifull, (semi)separable and semifully faithful semifunctors. 

\begin{es}\label{es:iotaequtr}
\textbf{The forgetful semifunctor.}\\ %counit of $i\dashv \kappa:\Cat_\mathrm{s}\to\Cat$}\\ 
See \cite[Example 2.13]{AB22-II}. Let $\cc$ be a category with idempotent completion $\cc^\natural$. Consider the canonical functor $\iota_{\cc}:\cc\rightarrow\cc^{\natural }$ given by $X\mapsto (X,\id_X)$, $[f:X\to X']\mapsto [f:(X,\id_X)\to (X',\id_{X'})]$ and the forgetful semifunctor $\upsilon_\cc : \cc^\natural \to\cc$ which maps an object $%
	\left( X,e\right)\in\cc^\natural$ to the underlying object $X$ and a morphism $f:(X,e)\to (X',e') $ to the underlying morphism $\upsilon_\cc f:X\rightarrow X'$ in $\cc$ such that $e'\circ\upsilon_\cc f\circ e=\upsilon_\cc f$. It is indeed a semifunctor as $\upsilon_\cc (\id_{(X,e)})=e\neq \id_X $ in general. As mentioned in Subsection \ref{subsect:idempcompl}, by \cite[Theorem 2.10]{Ho93} the Karoubi envelope functor $\kappa:\Cat_\mathrm{s}\to\Cat$, defined by $\kappa (\cc)=\cc^\natural$, $\kappa (F)=F^\natural$, for any category $\cc$ and any semifunctor $F:\cc\to\dd$, is the right adjoint of the inclusion functor $i:\Cat\to\Cat_\mathrm{s}$. Then, $\iota _{\cc}$ and $\upsilon_\cc$ result to be the $\cc$-components of the unit and of the counit for the adjunction $i\dashv \kappa$, respectively. Moreover, $\upsilon_\cc\dashv_\mathrm{s}\iota _{\cc}$ and $\iota _{\cc}\dashv_\mathrm{s}\upsilon_\cc$ are semiadjunctions, cf. \cite[Example 6]{Ho90}. The component $(\eta_\cc)_{\left( C,c\right) }:\left( C,c\right) \rightarrow \iota _{\cc%
	}\upsilon _{\cc}\left( C,c\right) =\left( C,\id_{C}\right) $ of the unit $\eta_\cc$ of $\upsilon_\cc\dashv_\mathrm{s}\iota _{\cc}$ is defined by setting $\upsilon _{\cc}((\eta_\cc) _{\left( C,c\right)
	}):=c:C\rightarrow C$ while the counit is $\epsilon _{\cc} :=\id_{\id_{\cc}}:\upsilon _{\cc}\iota _{\cc}=%
	\id_{\cc}\rightarrow \id_{\cc}$. The unit of $\iota _{\cc}\dashv_\mathrm{s}\upsilon_\cc$ is $\epsilon _{\cc} :=\id_{\id_{\cc}}:\id_\cc\to\id_\cc=\upsilon _{\cc}\iota _{\cc}$ while the component $(\nu_\cc) _{\left( C,c\right) }:\iota _{\cc}\upsilon _{\cc}\left( C,c\right) =\left( C,\id_{C}\right) \rightarrow \left(
	C,c\right) $ of the counit is given by $\upsilon _{\cc}((\nu_\cc) _{\left( C,c\right)
	}):=c:C\rightarrow C$. Note that
	\begin{equation*}
	\upsilon _{\cc}\left( (\nu_{\cc}) _{\left( C,c\right) }\circ (\eta_{\cc}) _{\left(
		C,c\right) }\right) =\upsilon _{\cc}((\nu_{\cc}) _{\left( C,c\right) })\circ
	\upsilon _{\cc}((\eta_{\cc}) _{\left( C,c\right) })=c\circ c=c=\upsilon _{%
		\cc}( \id_{\left( C,c\right) }),
	\end{equation*}%
	hence $\nu_\cc \circ \eta_\cc =\id_{\id_{\cc^{\natural}}}$, thus by Theorem \ref{Th.Rafael} it follows that $\upsilon_\cc$ is a separable semifunctor. Moreover, $$\upsilon _{\cc}\left( \left( \eta _{\cc}\right)
	_{\left( C,c\right) }\circ (\nu_{\cc}) _{\left( C,c\right) }\right) =\upsilon _{%
		\cc}(\left( \eta _{\cc}\right) _{\left( C,c\right) })\circ
	\upsilon _{\cc}((\nu_{\cc}) _{\left( C,c\right) })=c\circ c=c=\upsilon _{%
		\cc}\iota _{\cc}\upsilon _{\cc}\id_{\left(
		C,c\right) },$$ so $\left( \eta _{\cc}\right) _{\left( C,c\right)
	}\circ \left( \nu _{\cc}\right) _{\left( C,c\right) }=\iota _{%
		\cc}\upsilon _{\cc}\id_{\left( C,c\right) }$ and
	hence it holds $\eta _{\cc}\circ \nu _{\cc}=\iota _{\cc%
	}\upsilon _{\cc}\id.$ By Theorem \ref{thm:Raf-nat.full} it follows that $\upsilon_\cc$ is also a naturally semifull semifunctor, hence semifully faithful by Proposition \ref{prop:char-sff}.
\end{es}

\begin{es}\label{es:semiproduct}\textbf{Semi-product semifunctor.}\\
	Recall from \cite[Definition 4.3]{Ho93} that a binary \emph{semi-product} of objects $A, B$ in a category $\cc$ consists of an object $A\times B$ in $\cc$, an arrow $\pi_A:A\times B\to A$, an arrow $\pi_B:A\times B\to B$, an arrow $\langle f,g \rangle :C\to A\times B$, for each $f:C\to A$, $g:C\to B$ in $\cc$, such that
	\begin{itemize}
		\item[(1)] $\pi_A\circ \langle f,g\rangle = f$,
		\item[(2)] $\pi_B\circ \langle f,g\rangle = g$,
		\item[(3)] $\langle f\circ h, g\circ h\rangle = \langle f,g\rangle\circ h$, for any morphism $h: D\to C$ in $\cc$.
	\end{itemize}
	A binary semi-product is a binary product if and only if $\langle \pi_A,\pi_B \rangle=\id_{A\times B}$. If $\cc$ is a category with semi-products for all pairs of objects $A, B$, let $\times_\cc:\cc\times\cc\to\cc$ be the semifunctor given by 
	$$(A,B)\mapsto A\times B\quad \text{and}\quad (f,g)\mapsto f\times g:= \langle f\circ\pi_A, g\circ\pi_B\rangle .$$
	It is indeed a semifunctor as $\id_A\times\id_B=\langle\pi_A,\pi_B\rangle$. %For instance, \cite[see Example 4.5]{Ho93} in $\Set$ consider objects $A$, $B$, $S$ and the cartesian product $A\times B$. If $i:A\times B\to S$ is an injection, then $S$ is a binary semi-product of $A$ and $B$.   
	Moreover, it is a right semiadjoint of the functor $\Delta_\cc:\cc\to\cc\times \cc$, $A\mapsto (A,A)$, $f\mapsto (f,f)$, for any morphism $f:A\to A'$ in $\cc$. They actually form a semiadjunction $\Delta_\cc\dashv_\mathrm{s}\times_\cc$ with unit the seminatural transformation $\eta:\id_{\cc}\to\times_\cc\Delta_\cc$, given on components by $\eta_C=\langle\id_C,\id_C\rangle :C\to C\times C$, so that $\pi_C\circ \langle \id_C,\id_C\rangle =\id_C$, for every $C$ in $\cc$, and counit the seminatural transformation $\epsilon:\Delta_\cc\times_\cc\to\id_{\cc\times\cc}$, given on components by $\epsilon_{(A,B)}=(\pi_A,\pi_B):(A\times B, A\times B)\to (A,B)$. 
	\begin{invisible}	
		They are natural transformations as, for any morphism $f:A\to B$ in $\cc$, 
		\[\begin{split}
			(f\times f)\circ \eta_A&=\langle f\pi_A,f\pi_A\rangle\circ\langle \id_A,\id_A\rangle =\langle f,f\rangle\circ\pi_A\circ\langle\id_A,\id_A\rangle =\langle f,f \rangle\circ\id_A\\&=\langle f,f\rangle =\langle\id_B\circ f,\id_B\circ f\rangle =\langle\id_B,\id_B\rangle \circ f=\eta_B\circ f,
		\end{split}
		\]		
		and for any morphism $(f,g):(A,B)\to (A',B')$ in $\cc\times\cc$, 
		\[\begin{split}
			(f,g)\circ \epsilon_{(A,B)}&=(f,g)\circ (\pi_A,\pi_B)=(f\circ\pi_A,g\circ \pi_B)\\&=(\pi_{A'}\circ\langle f\pi_A,g\pi_B\rangle, \pi_{B'}\circ\langle f\pi_A,g\pi_B\rangle)\\
			&=(\pi_{A'}\circ (f\times g),\pi_{B'}\circ (f\times g))=(\pi_{A'},\pi_{B'})\circ (f\times g,f\times g)\\&=\epsilon_{(A',B')}\circ (f\times g,f\times g).
		\end{split}
		\]
		For any object $(A,B)\in\cc\times\cc$, consider $\eta_{A\times B}:A\times B\to (A\times B)\times (A\times B)$, where $(A\times B)\times (A\times B)$ is the semi-product with arrows $\pi_1,\pi_2:(A\times B)\times (A\times B)\to A\times B$. We have that
		\[\begin{split}
			\times_\cc\epsilon_{(A,B)}\circ\eta_{A\times B}&=\langle \pi_A\circ\pi_{1}, \pi_B\circ \pi_{2}\rangle\circ\eta_{A\times B}
			=\langle \pi_A\circ\pi_{1}\circ\eta_{A\times B}, \pi_B\circ \pi_{2}\circ\eta_{A\times B}\rangle=\\
			&=\langle \pi_A\circ\pi_{1}\circ\langle\id_{A\times B},\id_{A\times B}\rangle, \pi_B\circ \pi_{2}\circ\langle\id_{A\times B},\id_{A\times B}\rangle\rangle \\
			&= \langle \pi_A \circ\id_{A\times B},\pi_B\circ\id_{A\times B}\rangle \\
			&=\langle\pi_A,\pi_B\rangle =\id_A\times\id_B =\times_\cc(\id_A,\id_B),
		\end{split}
		\]
		and for any object $A$ in $\cc$ we have that 
		\[
		\begin{split}
			\epsilon_{(A,A)}\circ \Delta_\cc\eta_A&=\epsilon_{(A,A)}\circ (\eta_A,\eta_A)=(\pi_A,\pi_A)\circ (\eta_A,\eta_A)=(\pi_A\circ\eta_A,\pi_A\circ\eta_A)\\
			&=(\pi_A\circ\langle\id_A,\id_A\rangle ,\pi_A\circ \langle \id_A,\id_A\rangle)=(\id_A,\id_A)=\Delta_\cc(\id_A),
		\end{split}
		\]
		hence $\Delta_\cc\dashv_\mathrm{s}\times_\cc$ is a semiadjunction.
	\end{invisible}
	We now study the properties of the semifunctor $\times_\cc$. Note that in general $\times_\cc$ is not faithful by Proposition \ref{prop:char-faithful-semifull} as the components $\epsilon_{(A,B)}=(\pi_A,\pi_B)$ of the counit are not epimorphisms for every $(A, B)$ in $\cc\times\cc$. For instance, in $\Set$ consider a nonempty set $B$ and the binary product $\emptyset\times B$, which is in particular a binary semi-product. Then, the map $\pi_B:\emptyset\times B=\emptyset\to B$ is the empty function from the emptyset into $B$, but it is not an epimorphism, cf. \cite[page 41]{Bor94}. As a consequence, $\times_\cc$ is not separable in general. By Proposition \ref{prop:char-faithful-semifull} we know that $\times_\cc$ is semifull if and only if the component $\epsilon_{(A,B)}=(\pi_A,\pi_B)$ is an $(A,B)$-semisplit-mono for every $(A, B)$ in $\cc\times\cc$, i.e. %We know that $\epsilon_{(A,B)}\circ \Delta_\cc\times_\cc \id_{(A,B)}=\epsilon_{(A,B)}$ as $\epsilon$ is a seminatural transformation. Thus, we have that $\times_\cc$ is semifull 
	if and only if for every $(A, B)$ in $\cc\times\cc$ there exists a morphism $\gamma_{(A,B)}=(\gamma_1,\gamma_2): (A,B)\to (A\times B, A\times B)$ in $\cc\times \cc$ such that $\gamma_{(A,B)}\circ \epsilon_{(A,B)}=\Delta_\cc\times_\cc\id_{(A,B)}$, i.e. such that $(\gamma_1\pi_A,\gamma_2\pi_B)=(\langle \pi_A,\pi_B\rangle , \langle\pi_A,\pi_B \rangle)$. Note that the latter condition is not satisfied in general. For instance, in $\Set$ consider again the binary product $\emptyset\times B =\emptyset$ of the emptyset by a nonempty set $B$. Since $\emptyset$ is the initial object in $\Set$ we have $\pi_\emptyset =\id_\emptyset$. Moreover, we have that $\langle\pi_\emptyset,\pi_B\rangle =\id_{\emptyset\times B}=\id_{\emptyset}$, so if $\times_\Set$ were semifull, then there would exist a morphism $\gamma_{(\emptyset,B)}=(\gamma_1,\gamma_2): (\emptyset,B)\to (\emptyset , \emptyset)$ in $\Set\times \Set$ such that $(\gamma_1\id_\emptyset,\gamma_2\pi_B)=(\id_{\emptyset} , \id_{\emptyset} ):\emptyset\to\emptyset$, but such $\gamma_2$ does not exist as there can be no map from a nonempty set to $\emptyset$.
\end{es}

\begin{es}\label{es:LH}
\textbf{The coidentifier category and $E^e$.}\\
 Given a category $\cc$ and an idempotent natural transformation $e:%
	\id_{\cc}\rightarrow \id_{\cc}$, let $\cc_{e}$ be the \emph{coidentifier category} defined as in \cite[Example 17]{FOPTST99}. Explicitly, $\mathrm{Ob}\left( \cc%
	_{e}\right) =\mathrm{Ob}\left( \cc\right) $ and $\Hom_{%
		\cc_{e}}\left( X,Y\right) =\Hom_{\cc}\left(
	X,Y\right) /\hspace{-2pt}\sim $, where $f\sim g$ if and only if $e_{Y}\circ f=e_{Y}\circ g.
	$ We denote by $\overline{f}$ the class of $f\in \Hom_{\cc%
	}\left( X,Y\right) $ in $\Hom_{\cc_{e}}\left( X,Y\right) $.
	There is a canonical functor $H:\cc\rightarrow \cc_{e}$
	acting as the identity on objects and as the canonical projection on
	morphisms. Consider the semifunctor $L:\cc_e\to\cc$ defined as the identity map on objects and by $[\bar{f}:X\to Y]\mapsto [e_Y\circ f:X\to Y]$ on morphisms. Note that it is really a semifunctor as $L\overline{\id }_X =e_X\circ\id_X=e_X\neq \id _{LX}=\id_X$ and it is well defined as $\bar{f}=\bar{g}$ if and only if $e_Y\circ f =e_Y\circ g$. In \cite[Theorem 3.1]{AB22-II} it is shown that $L,H$ form a semiadjunction $L\dashv_\mathrm{s} H$ with unit $\eta : \id_{\cc_e}\to HL$, $\eta_X =\overline{\id }_X : X\to HLX=X$, and counit $\epsilon : LH\to \id _{\cc}$, $\epsilon_Y:=e_Y : LHY=Y\to Y$. In particular, the identities $\epsilon_{LX}\circ L\eta_X =L\overline{\id}_X$ and $H\epsilon_Y\circ\eta_{HY}=H\id_Y$ hold true for any $X,Y$ in $\cc$. Since for every object $X\in\cc_e$, $HL(X)=X$, and for every morphism $\bar{f}$ in $\cc_e$, $HL\bar{f}=H(e_Y\circ f)=He_Y\circ Hf=\id_{HY}\circ\bar{f}=\bar{f}$, we have $HL=\id_{\cc_e}$, hence $\eta =\id_{\id_{\cc_e}}$. Thus, there exists a seminatural transformation $\nu=\id_{\id_{\cc_e}}:HL\to\id_{\cc_e}$ such that $\nu \circ \eta =\id_{\id_{\cc_e}}$ and $\eta \circ \nu =\id_{\id_{\cc_e}}=HL\id$. Then, by Theorem \ref{Th.Rafael} and Theorem \ref{thm:Raf-nat.full}, $L$ is a separable and naturally semifull semifunctor, whence semifully faithful.\par
	Now, by Proposition \ref{prop:E}, given an idempotent seminatural transformation $e=(e_X)_{X\in\cc}:\id_\cc\to\id_\cc$, we have the canonical semifunctor $E^e:\cc\to\cc$, defined by $X\mapsto X$, $[f:X\to Y]\mapsto f\circ e_X=e_Y\circ f$, for any object $X\in \cc$ and for any morphism $f$ in $\cc$. Note that $E^e=LH$. %, where $L:\cc_e\to\cc$ and $H:\cc\to\cc_e$ are the semifunctor and the canonical quotient functor, respectively, defined as in Example \ref{es:LH}. 
	Indeed, $LH(X)=X$ and $LH(f)=L(\bar{f})=f\circ e_X$. We know that $L$ is semifully faithful and $H$ is naturally full, hence in particular they are naturally semifull semifunctors. Thus, by Proposition \ref{prop:comp} it follows that $E^e$ is naturally semifull, whence semiseparable. 
	By Proposition \ref{prop:idempotent} its associated idempotent $\alpha:\id_{\cc%
	}\rightarrow \id_{\cc}$ such that $E^e\alpha=E^e\id$ is given for any $X$ in $\cc$ by $\alpha_X=\p_{X,X}(E^e\id_X)=\p_{X,X}(e_X):X\to X$, and then we have that $\alpha_{X}=\p_{X,X}(e_X)=\p_{X,X}(E^e e_X\circ e_X)=e_X\circ \p_{X,X}(e_X)=e_X\circ\alpha_X=E^e\alpha_X=E^e\id_X=e_X$.
	Moreover, by Proposition \ref{prop:sep-nat.full}, $E^e$ results to be separable if and only if it is faithful. 
	By Proposition \ref{prop:E^e} we know that $E^e$ is separable if and only if $e_X=\id_X$. % and then by Remark \ref{rmk:E-id} if and only if $E^e$ is the identity functor on $\cc$.
	Thus, $E^e$ is (semifully) faithful if and only if it is the identity functor on $\cc$. 
\end{es}
\begin{es}\label{es:cost} \textbf{The constant semifunctor.}\\ Cf. \cite[Subsection 2.3]{Ho93}. Let $\textbf{1}$ be the category with only one object $1$ and an identity morphism $\id_1$. %Functors $F:1\to\cc$ are determined by their value on $\ast$.
	Any semifunctor $F:\textbf{1}\to\cc$ determines an arrow $F(\id_1):F(1)\to F(1)$ which is idempotent. %, as $F(\id_1)\circ F(\id_1)=F(\id_1\circ\id_1)=F(\id_1)$. 
	Conversely, any idempotent arrow $e:X\to X$ in $\cc$ defines a semifunctor $F^e:\textbf{1}\to\cc$, given by $F^e(1)=X$, $F^e(\id_1)=e$. %Thus, $\Hom_{\Cat_\mathrm{s}}(\textbf{1},\cc)$ results to be isomorphic to the class of idempotent arrows in $\cc$. 
	In particular we have the functor $F^{\id_X}:\textbf{1}\to\cc$ given by $F^{\id_X}(1)=X$, $F^{\id_X}(\id_1)=\id_X$. Hence $F^e=E^e\circ F^{\id_X}$, where $E^e:\cc\to\cc$ is the canonical semifunctor.
	Now, we show that $F^e:\textbf{1}\to\cc$ is a separable semifunctor that is not naturally semifull in general. Note that $\Hom_\textbf{1}(1,1)=\{\id_1\}$. Consider the associated natural transformation
	$\f^{F^e}_{1,1}:\Hom_\textbf{1}(1,1)\to\Hom_{\cc}(F^e(1),F^e(1))$, $\f^{F^e}_{1,1}(\id_1)=F^e(\id_1)=e$, and the map $\p^{F^e}_{1,1}:\Hom_{\cc}(F^e(1 ), F^e(1))\to \Hom_\textbf{1}(1,1)$, given by $\p^{F^e}_{1,1}(f)=\id_1$, for any $f:F^e(1 )\to F^e(1)$ in $\cc$. We have that $\p^{F^e}_{1,1}$ is a natural transformation as for any $f:F^e(1 )\to F^e(1)$ in $\cc$, $\p^{F^e}_{1,1}(F^e\id_1\circ f \circ F^e\id_1)=\p^{F^e}_{1,1}(e\circ f \circ e)=\id_1=\id_1\circ\p^{F^e}_{1,1}(f)\circ\id_1$. Moreover, $(\p^{F^e}_{1,1}\circ \f^{F^e}_{1,1})(\id_1)=\p^{F^e}_{1,1}(e)=\id_1$, hence $F^e$ is separable, and in particular semiseparable. Thus, $F^e$ is naturally semifull if and only if it is semifull. If $F^e$ were semifull, then it would follow that $e=e\circ f\circ e$	for any $f:F^e(1 )\to F^e(1)$ in $\cc$, and this does not happen in general (we will see an instance of this in Example \ref{monoid}). %\sout{On the other hand, if $F^e$ were naturally semifull, then there would exist a natural transformation $\p^{F^e}_{1,1}:\Hom_{\cc}(F^e(1),F^e(1))\to \Hom_\textbf{1}(1,1)$, necessarily given by $\p^{F^e}_{1,1}(f)=\id_1$ for any $f:F^e(1 )\to F^e(1)$, such that $(\f^{F^e}_{1,1}\circ \p^{F^e}_{1,1})(f)=F^e\id_1\circ f\circ F^e\id_1$, for any $f:F^e(1 )\to F^e(1)$ in $\cc$. But it would follow that $e=e\circ f\circ e$ and this is not true in general. }
	%%%%This is also explained by the fact that $F^e$ is not even semifull (and hence not naturally semifull by Lemma \ref{lem:semifull}) as $\Hom_1(\ast,\ast)=\{\id_\ast\}$, and so given $f:F^e(\ast )\to F^e(\ast)$ in $\cc$ such that $f\neq e$, a morphism $g\in 1$ with $F^e(g)=f$ does not exist. Moreover, $F^e$ is not semifully faithful.
	%For instance, let $\cc$ be the category with objects $\mathrm{Ob}(\cc)=\{1\}$ and arrows $\mathrm{Arr}(\cc)=\{\id_1,e_1, f\}$, where $e_1\neq \id_1$ is idempotent. Consider the semifunctor $F:\textbf{1}\to\cc$ given by $F(\id_1)=e_1$. Then, we have $F\id_1\circ f\circ F\id_1=e_1\circ f\circ e_1$ and $\f_{1,1}\p_{1,1}(f)=\f_{1,1}(\id_1)=e_1$.
	More generally, given two categories $\cc$ and $\dd$ and a fixed idempotent arrow $e_D:D\to D$ in $\dd$ we can consider the \emph{constant semifunctor} $K:\cc\to\dd$ given by $K(C)=D$, $K(f)=e_D$, for every object $C\in\cc$ and for every morphism $f:C\to C'$ in $\cc$. It is clearly not faithful and not even semifull. In fact, for any morphism $g:K(C)=D\to K(C')=D$ in $\dd$ we have that $K\id_{C'}\circ g\circ K\id_C =e_D\circ g\circ e_D$. If $K$ were semifull then there would exist a morphism $f:C\to C'$ such that $e_D=K(f)=e_D\circ g\circ e_D$, which is not true in general.
\end{es}
\begin{es}
	%The following is a variant of the previous example. 
	Let $F:\cc\to\dd$ be a semifunctor and let $e=(e_X)_{X\in\cc}:\id_\cc\to\id_\cc$ be an idempotent natural transformation different from $\id_{\id_\cc}$.
	Consider $S_{F,e}:\cc\to\dd$, $X\mapsto FX$, assigning to any morphism $f:X\to Y$ in $\cc$ the morphism $F(e_Y\circ f):FX\to FY$ in $\dd$. Since for any $f:X\to Y$, $g:Y\to Z$ in $\cc$, we have that $S_{F,e}(g\circ f)=F(e_Z\circ g\circ f)=F(e_Z\circ e_Z\circ g\circ f)=F(e_Z\circ g\circ e_Y\circ  f)=F(e_Z\circ g)\circ F(e_Y\circ f)=S_{F,e}(g)\circ S_{F,e}(f)$, and $S_{F,e}(\Id_X)=F(e_X\circ\id_X)=F(e_X)$, then $S_{F,e}$ is a semifunctor. Note that $S_{F,e}=F\circ L\circ H=F\circ E^e$, where $E^e$ is the canonical semifunctor and $L:\cc_e\to\cc$, $H:\cc\to\cc_e$ are given as in Example \ref{es:LH}. Moreover, $S_{F,e}$ is not faithful as $S_{F,e}(e_X)=F(e_X\circ e_X)=F(e_X)=F(e_X\circ\id_X)=S_{F,e}(\id_X)$ but $e_X\neq \id_X$ for some $	X\in\cc$. As a consequence, $S_{F,e}$ is not separable. If we assume that the given semifunctor $F$ is naturally semifull, then by Proposition \ref{prop:comp} (i) $S_{F,e}$ results to be naturally semifull, since the canonical semifunctor $E^e$ is naturally semifull as seen in Example \ref{es:LH}. 
\end{es}

\begin{es}\label{es:morph-ring}\textbf{Semifunctors associated with a morphism of rings.}\\
	Let $R,S$ be unital rings and let $R\text{-}\mathrm{Mod}$, $S\text{-}\mathrm{Mod}$ be the categories of left $R$-modules and left $S$-modules, respectively. Consider a morphism of rings $\varphi :R\to S$, that induces
	\begin{itemize}
		\item the restriction of scalars functor $\varphi_* : S\text{-}\mathrm{Mod}\rightarrow R\text{-}\mathrm{Mod}$;
		\item the extension of scalars functor $\varphi^*:= S\otimes_{R}(-):R\text{-}\mathrm{Mod}\rightarrow S\text{-}\mathrm{Mod}$.
	\end{itemize}
	These functors form an adjunction $\varphi^*\dashv\varphi_*$ with unit $\eta$ and counit $\epsilon$ defined by
	$$\eta_M = \varphi\otimes_R M : M\to S\otimes_R M,\, m\mapsto 1_{S}\otimes_R m\quad\text{and}\quad \epsilon_N: S\otimes_R N\to N,\, s\otimes_R n\mapsto sn,$$
	for every $M\in R\text{-}\mathrm{Mod}$ and $N\in S\text{-}\mathrm{Mod}$, respectively. We recall that 
	\begin{itemize} 
	\item $\varphi_*$ is separable if and only if $S/R$ is separable, i.e. the multiplication $m_S:S\otimes_R S\to S$, $s\otimes_R s'\mapsto ss'$ splits as an $S$-bimodule map, see \cite[Proposition 1.3]{NVV89};
	\item $\varphi_*$ is naturally full if and only if it is full, see \cite[Proposition 3.1 (1)]{AMCM06};%, if and only if it is fully faithful (in fact it is always faithful being a forgetful functor) if and only if $\varphi$ is an epimorphism in the category of rings, cf. \cite[Proposition XI.1.2]{Ste75};
	\item $\varphi^*$ is separable if and only if $\varphi$ is a split-mono as an $R$-bimodule map, i.e. if there is $\pi\in {}_{R}\Hom_{R}(S,R)$ such that $\pi\circ \varphi=\id$, see \cite[Proposition 1.3]{NVV89}; 
	\item $\varphi^*$ is naturally full if and only if  $\varphi$ is a split-epi as an $R$-bimodule map, i.e. if there is $\pi\in {}_{R}\Hom_{R}(S,R)$ such that $ \varphi\circ \pi=\id$, see \cite[Proposition 3.1 (2)]{AMCM06}.
	\end{itemize}
	Given an idempotent (semi)natural transformation $e=(e_X)_{X\in R\text{-}\mathrm{Mod}}:\id_{R\text{-}\mathrm{Mod}}\to\id_{R\text{-}\mathrm{Mod}}$, by composing $\varphi^*$ with the canonical semifunctor $E^e:R\text{-}\mathrm{Mod}\to R\text{-}\mathrm{Mod}$, we get the semifunctor 
	$$\varphi^*_e:=\varphi^*\circ E^e: R\text{-}\mathrm{Mod}\rightarrow S\text{-}\mathrm{Mod},\quad M\mapsto S\otimes_R M,\quad f\mapsto S\otimes_R fe_M$$
	for any $f:M\to M'$ in $R\text{-}\mathrm{Mod}$. Consider the semifunctor $$\varphi_{*}^e:=E^e\circ\varphi_*:S\text{-}\mathrm{Mod}\rightarrow R\text{-}\mathrm{Mod},\quad N\mapsto \varphi_*(N), \quad g\mapsto e_{\varphi_*(N')}\circ\varphi_*(g),$$ for any $g:N\to N'$ in $S\text{-}\mathrm{Mod}$. By Corollary \ref{cor:F'G'} we know that $\varphi^*_e$ and $\varphi_{*}^e$
	form a semiadjunction $\varphi^*_e\dashv_\mathrm{s}\varphi_{*}^e:S\text{-}\mathrm{Mod}\rightarrow R\text{-}\mathrm{Mod}$, with unit $\eta^e:=E^e\eta _{E^e}\circ e$ and counit $\epsilon^e:=\epsilon\circ \varphi^*e_{\varphi_*}$. By Theorem \ref{Th.Rafael}, $\varphi_*^e$ is separable if and only if $\epsilon^e$ is a natural split-epi, i.e. there exists a seminatural transformation $\gamma : \Id_{S\text{-}\mathrm{Mod}}\rightarrow \varphi^*_e\varphi_*^e$ such that $\epsilon^e\circ\gamma = \Id_{\id_{S\text{-}\mathrm{Mod}}}$, while $\varphi^*_e$ is separable if and only if $\eta^e$ is a natural split-mono, i.e. there exists a seminatural transformation $\nu : \varphi_*^e\varphi^*_e\rightarrow \Id_{R\text{-}\mathrm{Mod}}$ such that
		$\nu\circ\eta^e = \Id_{\id_{R\text{-}\mathrm{Mod}}}$. In particular, if $\varphi_*^e$ is separable, then by Lemma \ref{lem:comp-sep} (ii) $\varphi_*$ is separable. If $\varphi^*_e$ is separable, then by Remark \ref{rmk:F'G'sep} it results to be $\varphi^*_e=\varphi^*$.\par 
	In Proposition \ref{prop:ringmorph-natsemifull} we study the natural semifullness of $\varphi_*^e$ and $\varphi^*_e$, for which the following lemma will be useful.
	\begin{lem}\label{lem:eSlin}
		Let $\varphi:R\to S$ be a morphism of rings and let $e=(e_X)_{X\in R\text{-}\mathrm{Mod}}:\id_{R\text{-}\mathrm{Mod}}\to\id_{R\text{-}\mathrm{Mod}}$ be an idempotent (semi)natural transformation. Consider the semifunctors $\varphi^*_e$, $\varphi_*^e$ as above. Then,
		\begin{itemize} 
		\item[(1)] $1_S=e_{\varphi_*(S)}(\varphi(e_R(1_R)))$ holds true if, and only if, $\varphi (r)=e_{\varphi_*(S)}(\varphi(e_R(r)))$ holds true for every $r\in R$ if, and only if, 
		\begin{equation}\label{eq:RRsemisplitepi}
		r_S^{-1}\circ\varphi=\varphi_{*}^e\varphi^*_e\id_R\circ r_S^{-1}\circ\varphi
		\end{equation}
	holds true, where $r_S:S\otimes_R R\to S$ is the canonical isomorphism $s\otimes_R r\mapsto s\varphi(r)$.
		\item[(2)] $e_{\varphi_*(S)}$ is a left $S$-module morphism if, and only if, $e_{\varphi_*(N)}$ is a left $S$-module morphism for every $N\in S\text{-}\mathrm{Mod}$. The latter condition means that there is an idempotent natural transformation $\alpha:\id_{S\text{-}\mathrm{Mod}}\to\id_{S\text{-}\mathrm{Mod}}$ such that $\varphi_*\alpha=e\varphi_*$. In this case, let $E^\alpha:S\text{-}\mathrm{Mod}\to S\text{-}\mathrm{Mod}$ be the canonical semifunctor attached to $\alpha$. Then, $E^\alpha\circ\varphi^*=\varphi^*\circ E^e$ and $\varphi_*\circ E^\alpha=E^e\circ\varphi_*$.
		\end{itemize}
	\end{lem}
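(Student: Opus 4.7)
For part (1), I would introduce the idempotent $\epsilon:=e_R(1_R)\in R$; since $e_R$ is left $R$-linear we have $e_R(r)=r\epsilon$ for every $r\in R$, and idempotency of $e_R$ gives $\epsilon^2=\epsilon$. The ring map $\varphi:R\to\varphi_*(S)$ is left $R$-linear, so naturality of $e$ yields $e_{\varphi_*(S)}(\varphi(r))=\varphi(r\epsilon)$, and in particular $e_{\varphi_*(S)}(\varphi(e_R(r)))=\varphi(r\epsilon^2)=\varphi(r)\varphi(\epsilon)$. The second condition thus reduces to $\varphi(r)=\varphi(r)\varphi(\epsilon)$ for every $r\in R$, while the first reduces to $\varphi(\epsilon)=1_S$; these two statements are manifestly equivalent. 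For the equivalence with \eqref{eq:RRsemisplitepi}, the crucial observation is that $r_S$ is $S$-linear and hence $R$-linear, so naturality of $e$ gives $r_S\circ e_{\varphi_*(S\otimes_R R)}=e_{\varphi_*(S)}\circ r_S$. Unwinding the definitions, $\varphi_*^e\varphi^*_e\id_R$ is the composite $\varphi_*(S\otimes_R e_R)\circ e_{\varphi_*(S\otimes_R R)}$, which sends the pure tensor $1_S\otimes_R r=r_S^{-1}\varphi(r)$ to $1_S\otimes_R r\epsilon$ (by naturality of $e$ against the $R$-linear inclusion $R\to S\otimes_R R$, $r\mapsto 1_S\otimes_R r$); applying $r_S$ to both sides of \eqref{eq:RRsemisplitepi} then turns it into $\varphi(r)=\varphi(r)\varphi(\epsilon)$, i.e., the second condition.

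For part (2), the implication $\Leftarrow$ is trivial. For the converse, I would set $\alpha:=e_{\varphi_*(S)}(1_S)\in S$: left $S$-linearity of $e_{\varphi_*(S)}$ forces $e_{\varphi_*(S)}(s)=s\alpha$ for every $s\in S$. For any $N\in S\text{-}\mathrm{Mod}$ and any $n\in N$, the left multiplication $\rho_n:S\to N$, $s\mapsto sn$, is $S$-linear and hence $R$-linear, so naturality of $e$ gives $e_{\varphi_*(N)}(sn)=e_{\varphi_*(S)}(s)\cdot n=s\alpha n$; taking $s=1_S$ shows that $e_{\varphi_*(N)}$ is left multiplication by $\alpha$, which is manifestly $S$-linear. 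Defining $\alpha_N:N\to N$ as left multiplication by $\alpha$ then yields a natural transformation $\alpha:\id_{S\text{-}\mathrm{Mod}}\to\id_{S\text{-}\mathrm{Mod}}$ (naturality against $S$-linear maps is automatic), idempotent (test at $N=S$, $n=1_S$, so that $\alpha^2=\alpha$), and such that $\varphi_*\alpha=e\varphi_*$ by construction.

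Finally, $\varphi_*\circ E^\alpha=E^e\circ\varphi_*$ is immediate from the definitions of $E^\alpha$ and $E^e$: on any morphism $g:N\to N'$ in $S\text{-}\mathrm{Mod}$ both sides compute $\varphi_*(g)\circ e_{\varphi_*(N)}$. For $E^\alpha\circ\varphi^*=\varphi^*\circ E^e$, I would identify $\alpha_{\varphi^*(M)}$ with $\varphi^*(e_M)=S\otimes_R e_M$: naturality of $e$ against the $R$-linear unit $\eta_M:m\mapsto 1_S\otimes_R m$ gives $\alpha_{\varphi^*(M)}(1_S\otimes_R m)=1_S\otimes_R e_M(m)$, and $S$-linearity (just established) extends this to $\alpha_{\varphi^*(M)}(s\otimes_R m)=s\otimes_R e_M(m)$; composing with $\varphi^*(f)$ for an arbitrary $f:M\to M'$ in $R\text{-}\mathrm{Mod}$ then yields the claimed equality of semifunctors. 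The main obstacle is precisely this last identification, where the newly established $S$-linearity of $e_{\varphi_*(-)}$ and the original naturality of $e$ over $R$-linear maps must be used in tandem; the rest of the argument is systematic bookkeeping.
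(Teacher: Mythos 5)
Your proof is correct and follows essentially the same route as the paper: part (1) via left $R$-linearity/naturality of $e$ reducing everything to the element $e_R(1_R)$, and part (2) via naturality of $e$ against the $S$-linear maps $s\mapsto sn$ and against the unit $m\mapsto 1_S\otimes_R m$, extended by the newly established $S$-linearity. The only cosmetic difference is that you realize $\alpha$ concretely as left multiplication by the central idempotent $e_{\varphi_*(S)}(1_S)\in S$, whereas the paper lifts $e_{\varphi_*(N)}$ along the faithful functor $\varphi_*$ without naming the element; the two descriptions yield the same natural transformation.
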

\begin{proof}
(1). Assume that $1_S=e_{\varphi_*(S)}(\varphi(e_R(1_R)))$. Then, since $e_R$, $e_{\varphi_*(S)}$ are left $R$-module morphisms, we have that $e_{\varphi_*(S)}(\varphi(e_R(r)))=e_{\varphi_*(S)}(\varphi(re_R(1_R)))=e_{\varphi_*(S)}(\varphi(r)\varphi(e_R(1_R)))=\varphi(r)e_{\varphi_*(S)}(\varphi(e_R(1_R)))=\varphi(r)1_S=\varphi(r)$, for every $r\in R$. The converse implication is trivially satisfied. Now we show that the latter condition is also equivalent to \eqref{eq:RRsemisplitepi}. Indeed, we have that $r_S\varphi_{*}^e\varphi^*_e\id_R r_S^{-1}\varphi(r)=r_S\varphi_*^e(S\otimes_R e_R)(\varphi(r)\otimes_R 1_R)=r_Se_{\varphi_*(S\otimes_RR)}\varphi_*(S\otimes_Re_R)(1_S\otimes_R r)=e_{\varphi_*(S)}r_S(1_S\otimes_R e_R(r))=e_{\varphi_*(S)}(\varphi(e_R(r)))$, for every $r\in R$.\\	
(2). If $e_{\varphi_*(N)}$ is a left $S$-module morphism for every $N\in S\text{-}\mathrm{Mod}$, then clearly $e_{\varphi_*(S)}$ is a left $S$-module morphism. On the other hand, assume that $e_{\varphi_*(S)}$ is a left $S$-module morphism. Consider a left $S$-module $N$ and the left $S$-module morphism $f_n:S\to N$, $s\mapsto sn$, for $n\in N$. By naturality of $e$, we have that $e_{\varphi_*(N)}(sn)=(e_{\varphi_*(N)}\circ \varphi_*(f_n))(s)=(\varphi_*(f_n)\circ e_{\varphi_*(S)})(s)=e_{\varphi_*(S)}(s)n$. Since $e_{\varphi_*(S)}$ is left $S$-linear, we get $e_{\varphi_*(S)}(s)n=se_{\varphi_*(S)}(1_S)n=se_{\varphi_*(N)}(n)$, so $e_{\varphi_*(N)}$ is a left $S$-module morphism for every $N\in S\text{-}\mathrm{Mod}$, which means that there is $\alpha_N:N\to N$ in $S\text{-}\mathrm{Mod}$ such that $\varphi_*(\alpha_N)= e_{\varphi_*(N)}$. For any left $S$-module morphism $f:N\to N'$ we have that $\varphi_*(\alpha_{N'}\circ f)=\varphi_*(\alpha_{N'})\circ\varphi_*(f)=e_{\varphi_*(N')}\circ \varphi_*(f)=\varphi_*(f)\circ e_{\varphi_*(N)}=\varphi_*(f)\circ\varphi_*(\alpha_N)=\varphi_*(f\circ\alpha_N)$, hence $\alpha_{N'}\circ f=f\circ\alpha_N$ as $\varphi_*$ is faithful. Moreover, for any $N\in S\text{-}\mathrm{Mod}$, $\varphi_*(\alpha_N\circ\alpha_N)=\varphi_*(\alpha_N)\circ\varphi_*(\alpha_N)=e_{\varphi_*(N)}\circ e_{\varphi_*(N)}=e_{\varphi_*(N)}=\varphi_*(\alpha_N)$, thus $\alpha_N^2=\alpha_N$, so we obtain an idempotent natural transformation $\alpha:\id_{S\text{-}\mathrm{Mod}}\to\id_{S\text{-}\mathrm{Mod}}$. Consider the canonical semifunctor $E^\alpha:S\text{-}\mathrm{Mod}\to S\text{-}\mathrm{Mod}$ attached to $\alpha$. We show that $E^\alpha\circ\varphi^*=\varphi^*\circ E^e$ and $\varphi_*\circ E^\alpha =E^e\circ\varphi_*$. In fact, the semifunctor $E^\alpha\circ\varphi^*: R\text{-}\mathrm{Mod}\to S\text{-}\mathrm{Mod}$ maps $M\mapsto S\otimes_RM$, $[f:M\to M'] \mapsto \alpha_{S\otimes_R M'}\circ (S\otimes_R f)= (S\otimes_R f)\circ \alpha_{S\otimes_RM}$. By naturality of $e$, we have that $e_{\varphi_*(S\otimes_R M)}(1_S\otimes_R m)=1_S\otimes_R e_M(m)$. Since $e_{\varphi_*(S\otimes_RM)}$ is a left $S$-module morphism, we get that $e_{\varphi_*(S\otimes_RM)}(s\otimes_R m)=s e_{\varphi_*(S\otimes_RM)}(1_S\otimes_Rm)=s(1_S\otimes_R e_M(m))=s\otimes_R e_M(m)=\varphi_*(S\otimes_R e_M)(s\otimes_R m)$, so $e_{\varphi_*(S\otimes_RM)}=\varphi_*(S\otimes_R e_M)$, i.e. $\varphi_*(\alpha_{S\otimes_RM})=\varphi_*(S\otimes_R e_M)$, i.e. $\alpha_{S\otimes_RM}=S\otimes_R e_M$ as $\varphi_*$ is faithful. Thus, $E^\alpha\circ\varphi^*=\varphi^*\circ E^e$. The semifunctor $\varphi_*\circ E^\alpha : S\text{-}\mathrm{Mod}\to R\text{-}\mathrm{Mod}$ maps $N\mapsto \varphi_*(N)$, $[f:N\to N']\mapsto \varphi_*(f\circ\alpha_N)=\varphi_*(f)\circ\varphi_*(\alpha_N)$. Note that $\varphi_*(f)\circ\varphi_*(\alpha_N)=\varphi_*(f)\circ e_{\varphi_*(N)}=e_{\varphi_*(N')}\circ \varphi_*(f)$, hence $\varphi_*\circ E^\alpha =E^e\circ\varphi_*$.
\end{proof}
	\begin{prop}\label{prop:ringmorph-natsemifull}
		Let $\varphi:R\to S$ be a morphism of rings and let $e=(e_X)_{X\in R\text{-}\mathrm{Mod}}:\id_{R\text{-}\mathrm{Mod}}\to\id_{R\text{-}\mathrm{Mod}}$ be an idempotent (semi)natural transformation. \begin{itemize}
			\item[(1)] If the semifunctor $\varphi_*^e$ is semifull and $e_{\varphi_*(S)}$ is a left $S$-module morphism, then $\varphi_*^e$ is naturally semifull. 
			\item[(2)] If the semifunctor $\varphi^*_e$ is naturally semifull, then there is $\psi$ in ${}_R\Hom_R(S,R)$ such that 
			\begin{equation}\label{eq:varphi-semisplitepi}
				\varphi\circ \psi=r_S\circ \varphi_*^e\varphi^*_e\Id_R\circ r_S^{-1},
			\end{equation}
			%where $r_S:S\otimes_R R\to S$ is the canonical isomorphism $s\otimes_R r\mapsto s\varphi(r)$, 
			i.e. such that $r_S^{-1}\circ\varphi\circ \psi\circ r_S=\varphi_*^e\varphi^*_e\id_R, $ so $r_S^{-1}\circ\varphi:R\to \varphi_*^e\varphi^*_eR$ is an $R$-semisplit-epi (cf. Section \ref{sect:semisplit-morph}) as an $R$-bimodule map; if in addition \eqref{eq:RRsemisplitepi}
%			\begin{equation}\label{eq:varphi-semisplitepi2}
%			1_S=e_{\varphi_*(S)}(\varphi(e_R(1_R)))
%			\end{equation}
		holds true, then $r_S^{-1}\circ\varphi$ is an $(R,R)$-semisplit-epi.
		
		On the other hand, assume that $e_{\varphi_*(S)}$ is a left $S$-module morphism. If there is $\psi$ in ${}_R\Hom_R(S,R)$ such that \eqref{eq:varphi-semisplitepi} holds true and $1_S=e_{\varphi_*(S)}(\varphi(e_R(1_R)))$ is satisfied, i.e. $r_S^{-1}\circ\varphi$ is an $(R,R)$-semisplit-epi as an $R$-bimodule map through $\psi\circ r_S$, then $\varphi^*_e$ is naturally semifull.			
		\end{itemize}%i.e. $r_S^{-1}\circ\varphi$ is an $R$-semisplit-epi as an $R$-bimodule map through $\psi\circ r_S$,
	\end{prop}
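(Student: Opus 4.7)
The plan is to prove the three claims separately by building the relevant natural transformations directly and invoking the Rafael-type criterion of Theorem~\ref{thm:Raf-nat.full}(1) for part~(2). Throughout I would use Lemma~\ref{lem:eSlin}(2), which under the $S$-linearity hypothesis on $e_{\varphi_*(S)}$ provides a unique idempotent natural transformation $\alpha$ on $S\text{-}\mathrm{Mod}$ with $\varphi_*\alpha=e\varphi_*$, and forces $\varphi(e_R(1_R))$ to lie in $Z(S)$.

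For part~(1), I would define $\p^{\varphi_*^e}_{N,N'}(f)$ to be the unique $S$-linear lift of the $R$-linear map $e_{\varphi_*(N')}\circ f\circ e_{\varphi_*(N)}$ through the faithful functor $\varphi_*$. Existence of such a lift is supplied by the semifullness of $\varphi_*^e$ (which gives an $S$-linear $g:N\to N'$ with $\varphi_*(\alpha_{N'}\circ g)=e_{\varphi_*(N')}\circ f\circ e_{\varphi_*(N)}$), and uniqueness by faithfulness of $\varphi_*$. Naturality of $\p^{\varphi_*^e}$ in $(N,N')$ can then be checked after applying $\varphi_*$, where it reduces to naturality of $e$ on $R\text{-}\mathrm{Mod}$, and the naturally semifull identity $\varphi_*^e(\p^{\varphi_*^e}_{N,N'}(f))=\varphi_*^e\id_{N'}\circ f\circ\varphi_*^e\id_N$ is immediate from the idempotence of $e$.

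For the forward direction of part~(2), I would apply Theorem~\ref{thm:Raf-nat.full}(1) to extract a seminatural transformation $\nu:\varphi_*^e\varphi^*_e\to\Id_{R\text{-}\mathrm{Mod}}$ with $\eta^e\circ\nu=\varphi_*^e\varphi^*_e\id$, and set $\psi:=e_R\circ\nu_R\circ r_S^{-1}:S\to R$. Left $R$-linearity is automatic; the crux is right $R$-linearity, which I would obtain by applying the seminaturality of $\nu$ to the left $R$-linear endomorphism ``right multiplication by $r$'' of $R$, and then exploiting the identity $\varphi(e_R(1_R))s\otimes_R e_R(r)=\varphi(e_R(1_R))s\otimes_R r$ in $S\otimes_R R$, which holds because $\varphi(e_R(1_R))\in Z(S)$. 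The required identity $\varphi\circ\psi=r_S\circ\varphi_*^e\varphi^*_e\id_R\circ r_S^{-1}$ then drops out of applying $r_S$ to $\eta^e_R\circ\nu_R(s\otimes 1_R)=\varphi_*^e\varphi^*_e\id_R(s\otimes 1_R)$, which shows that $r_S^{-1}\circ\varphi$ is an $R$-semisplit-epi of $R$-bimodules through $\psi\circ r_S$. Under the extra hypothesis $1_S=e_{\varphi_*(S)}(\varphi(e_R(1_R)))$, the remaining semisplit condition $\varphi_*^e\varphi^*_e\id_R\circ r_S^{-1}\circ\varphi=r_S^{-1}\circ\varphi$ is precisely the equivalent form \eqref{eq:RRsemisplitepi} of Lemma~\ref{lem:eSlin}(1), upgrading the property to an $(R,R)$-semisplit-epi.

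For the converse of part~(2), I would produce $\nu$ explicitly by $\nu_C:S\otimes_R C\to C$, $s\otimes c\mapsto\psi(s)\cdot e_C(c)$. Well-definedness uses $R$-bilinearity of $\psi$; left $R$-linearity and naturality in $C$ follow from left $R$-linearity of $\psi$ and naturality of $e$; seminaturality $\nu_C\circ\varphi_*^e\varphi^*_e\id_C=\nu_C$ reduces to the centrality of $e_R(1_R)$; and the main identity $\eta^e_C\circ\nu_C=\varphi_*^e\varphi^*_e\id_C$ reduces, after sliding $\psi(s)e_R(1_R)\in R$ across the tensor, to the hypothesis $\varphi\circ\psi=r_S\circ\varphi_*^e\varphi^*_e\id_R\circ r_S^{-1}$ evaluated at $s$. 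Theorem~\ref{thm:Raf-nat.full}(1) then yields the natural semifullness of $\varphi^*_e$. The main obstacle is the right $R$-linearity of $\psi$ in the forward direction of part~(2): it requires both the correct $e_R$ prefactor in the definition of $\psi$ and the subtle tensor-product manipulation in $S\otimes_R R$ made available by the centrality of $\varphi(e_R(1_R))$.
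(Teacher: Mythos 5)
Your part (1) and the converse half of part (2) are sound. In part (1) you take a more direct route than the paper: you build $\p^{\varphi_*^e}$ explicitly as the unique $\varphi_*$-lift of $e_{\varphi_*(N')}\circ f\circ e_{\varphi_*(N)}$, whereas the paper extracts a splitting $\gamma^e_N$ of $\epsilon^e_N$ from Proposition \ref{prop:char-faithful-semifull}, corrects it to $\overline{\gamma}^e_N=\gamma^e_N\circ\alpha_N$ to restore naturality, and concludes via Theorem \ref{thm:Raf-nat.full}(2). Both work; yours avoids the Rafael-type detour at the cost of verifying naturality of $\p^{\varphi_*^e}$ by hand (which, as you say, reduces to naturality of $e$ after applying the faithful $\varphi_*$).

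The forward direction of part (2) has a genuine gap. You announce that you will use Lemma \ref{lem:eSlin}(2) ``throughout'', and in particular that $\varphi(e_R(1_R))\in Z(S)$; your proof of the right $R$-linearity of $\psi$ hinges on the identity $\varphi(e_R(1_R))s\otimes_R e_R(r)=\varphi(e_R(1_R))s\otimes_R r$, which indeed requires that centrality. But the forward implication of (2) assumes only that $\varphi^*_e$ is naturally semifull: the left $S$-linearity of $e_{\varphi_*(S)}$ is imposed only in the converse half. Since naturality of $e$ already forces $e_{\varphi_*(S)}(s)=\varphi(e_R(1_R))\,s$ for all $s$, that $S$-linearity is \emph{equivalent} to $\varphi(e_R(1_R))\in Z(S)$, so you cannot get the centrality for free; as written, your argument proves a weaker statement. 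The paper's proof of right $R$-linearity avoids this entirely: with $\psi=e_R\circ\nu^e_R\circ r_S^{-1}$, it applies naturality of $\nu^e$ to $f_r\colon r'\mapsto r'r$, rewrites $S\otimes_R f_re_R$ as $S\otimes_R e_Rf_r$ by naturality of $e$, recognizes the resulting factor $e_{\varphi_*(S\otimes_RR)}\circ\varphi_*(S\otimes_Re_R)$ as $\varphi_*^e\varphi^*_e(e_R)$, and applies naturality of $\nu^e$ a second time to pull it out as an $e_R$ on the left, which is absorbed by the $e_R$ prefactor already present in $\psi$. You would need to replace your centrality argument by such a purely naturality-based computation for the forward direction to go through under the stated hypotheses.
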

	\begin{proof}%= \gamma^e_N\circ\id_N\circ\epsilon_N=\gamma^e_N\circ\epsilon_N
		(1). Assume that $\varphi_*^e$ is semifull and $e_{\varphi_*(S)}$ is a morphism of left $S$-modules, i.e $e_{\varphi_*(N)}$ is a morphism of left $S$-modules for any $N\in S\text{-}\mathrm{Mod}$ by Lemma \ref{lem:eSlin} (2). By Proposition \ref{prop:char-faithful-semifull}, for every $N\in S\text{-}\mathrm{Mod}$, $\epsilon^e_N$ is an $N$-semisplit-mono, i.e. there exists $\gamma^e_N : N\rightarrow \varphi^*_e\varphi_*^e N=S\otimes_R\varphi_*(N)$ in $S\text{-}\mathrm{Mod}$ such that $\gamma^e_N\circ\epsilon^e_N = \varphi^*_e\varphi_*^e\id_N$. Then, $\gamma^e_N\circ\epsilon_N\circ (S\otimes_R e_{\varphi_*(N)}) =\gamma^e_N\circ\epsilon^e_N =\varphi^*_e\varphi_*^e\id_N =S\otimes_R (e_{\varphi_*(N)}\circ e_{\varphi_*(N)}\circ\varphi_*(\id_N))=S\otimes_Re_{\varphi_*(N)}$. %, so $\gamma^e_N\circ\epsilon_N= \varphi^*_e\varphi_*^e\id_N =(S\otimes_R e_{\varphi_*(N)}\varphi_*(\id_N)) = S\otimes_Re_{\varphi_*(N)}$. 
		Thus, for every $n\in N$, we have that $\gamma^e_N(e_{\varphi_*(N)}(n))=(\gamma^e_N\circ\epsilon_N)(1_S\otimes_Re_{\varphi_*(N)}(n))=(S\otimes_Re_{\varphi_*(N)})(1_S\otimes_Rn)=1_S\otimes_Re_{\varphi_*(N)}(n)$, so $\gamma^e_N(e_{\varphi_*(N)}(n))=1_S\otimes_Re_{\varphi_*(N)}(n)$, for every $n\in N$. %, obtaining that $\gamma^e_N:N\to \varphi^*_e\varphi_*^e N$ is given by $n\mapsto 1_S\otimes_Re_{\varphi_*(N)}(n)$. 
			\begin{comment}
			Note that, since $e_{\varphi_*(N)}$ is a morphism of left $S$-modules, then there is $\alpha_N:N\to N$ in $S\text{-}\mathrm{Mod}$ such that $\varphi_*(\alpha_N)= e_{\varphi_*(N)}$. For any left $S$-module morphism $f:N\to N'$ we have that $\varphi_*(\alpha_{N'}\circ f)=\varphi_*(\alpha_{N'})\circ\varphi_*(f)=e_{\varphi_*(N')}\circ \varphi_*(f)=\varphi_*(f)\circ e_{\varphi_*(N)}=\varphi_*(f)\circ\varphi_*(\alpha_N)=\varphi_*(f\circ\alpha_N)$, hence $\alpha_{N'}\circ f=f\circ\alpha_N$ as $\varphi_*$ is faithful. Moreover, for any $N\in S\text{-}\mathrm{Mod}$, $\varphi_*(\alpha_N\circ\alpha_N)=\varphi_*(\alpha_N)\circ\varphi_*(\alpha_N)=e_{\varphi_*(N)}\circ e_{\varphi_*(N)}=e_{\varphi_*(N)}=\varphi_*(\alpha_N)$, thus $\alpha_N^2=\alpha_N$, so we obtain an idempotent natural transformation $\alpha:\id_{S\text{-}\mathrm{Mod}}\to\id_{S\text{-}\mathrm{Mod}}$. 
			\end{comment}
		By Lemma \ref{lem:eSlin} (2) there is an idempotent natural transformation $\alpha:\id_{S\text{-}\mathrm{Mod}}\to\id_{S\text{-}\mathrm{Mod}}$ such that $\varphi_*\alpha=e\varphi_*$. So, we have that $\gamma^e_N(\alpha_N(n))=\gamma^e_N(\varphi_*(\alpha_N)(n))=\gamma^e_N(e_{\varphi_*(N)}(n))=1_S\otimes_Re_{\varphi_*(N)}(n)=1_S\otimes_R\varphi_*(\alpha_{N})(n)=1_S\otimes_R\alpha_{N}(n)$, for every $n\in N$. Now, although $\gamma^e_N$ is not natural a priori, still we can show that $\overline{\gamma}^e:\id_{S\text{-}\mathrm{Mod}}\to\varphi^*_e\varphi_*^e$, defined for every $N\in S\text{-}\mathrm{Mod}$ by $\overline{\gamma}^e_N:=\gamma^e_N\alpha_N :N\to \varphi^*_e\varphi_*^eN$, $n\mapsto 1_S\otimes_R \alpha_{N}(n)$, is a natural transformation. In fact, for any left $S$-module morphism $f:N\to N'$ we have that $(\overline{\gamma}^e_{N'}\circ f)(n)=1_S\otimes_R\alpha_{N'}(f(n))=1_S\otimes_R f(\alpha_N(n))=(S\otimes_R\varphi_*(f)\varphi_*(\alpha_N))(1_S\otimes_R\alpha_N(n))=(1_S\otimes_R\varphi_*(f)e_{\varphi_*(N)})(1_S\otimes_R\alpha_N(n))=(\varphi^*_e\varphi_*^e(f)\circ\overline{\gamma}^e_N)(n)$, for every $n\in N$. Moreover, $(\overline{\gamma}^e_N\circ\epsilon^e_N)(s\otimes_Rn)=(\gamma^e_N\circ\alpha_N\circ\epsilon_N)(s\otimes_Re_{\varphi_*(N)}(n))=(\gamma^e_N\circ\alpha_N\circ\epsilon_N)(s\otimes_R\varphi_*(\alpha_N)(n))=\gamma^e_N(\alpha_N(s\alpha_N(n)))=s\gamma^e_N(\alpha_N(n))=s\otimes_R\alpha_N(n)=s\otimes_Re_{\varphi_*(N)}(n)=\varphi^*_e\varphi_*^e\id_N(s\otimes_Rn)$, for every $s\in S$, $n\in N$, so $\varphi_*^e$ is naturally semifull by Theorem \ref{thm:Raf-nat.full} (2).
			%
%			Old proof. 
%		    We show that $\overline{\gamma}^e:\id_{S\text{-}\mathrm{Mod}}\to\varphi^*_e\varphi_*^e$, defined for every $N\in S\text{-}\mathrm{Mod}$ by $\overline{\gamma}^e_N:=\gamma^e_Ne_{\varphi_*(N)}:N\to \varphi^*_e\varphi_*^eN$, $n\mapsto 1_S\otimes_R e_{\varphi_*(N)}(n)$, is a natural transformation. Indeed, $\overline{\gamma}^e_N$ is a morphism of left $S$-modules as so are $\gamma^e_N$ and $e_{\varphi_*(N)}$, and for any left $S$-module morphism $f:N\to N'$ we have that
%			$(\overline{\gamma}^e_{N'}\circ f)(n)=1_S\otimes_Re_{\varphi_*(N') }(\varphi_*(f)(n))=1_S\otimes_R\varphi_*(f)(e_{\varphi_*(N)}(n))=(S\otimes_R\varphi_*(f)e_{\varphi_*(N)})(1_S\otimes_R e_{\varphi_*(N)}(n))=(\varphi^*_e\varphi_*^e(f)\circ\overline{\gamma}^e_N)(n)$. Moreover, $(\overline{\gamma}^e_N\circ\epsilon^e_N)(s\otimes_Rn)=\gamma^e_Ne_{\varphi_*(N)}\epsilon_N(s\otimes_Re_{\varphi_*(N)}(n))=\gamma^e_Ne_{\varphi_*(N)}(se_{\varphi_*(N)}(n))=s\gamma^e_Ne_{\varphi_*(N)}(n)=s\otimes_Re_{\varphi_*(N)}(n)=\varphi^*_e\varphi_*^e\id_N(s\otimes_Rn)$, for every $s\in S$, $n\in N$, so $\varphi_*^e$ is naturally semifull.
		   
		(2). Assume that $\varphi^*_e$ is naturally semifull. Then, by Theorem \ref{thm:Raf-nat.full} (1), there exists a seminatural transformation $\nu^e : \varphi_*^e\varphi^*_e\rightarrow \Id_{ R\text{-}\mathrm{Mod}}$ such that $\eta^e\circ\nu^e =\varphi_*^e\varphi^*_e\Id $. Consider the map $\psi$ in ${}_R\Hom_R(S,R)$ given by $\psi(s)=(e_R\circ\nu^e_R\circ r_S^{-1})(s)=(e_R\circ\nu^e_R)(s\otimes_R1_R)$, where $r_S:S\otimes_R R\to S$ is the canonical isomorphism $s\otimes_R r\mapsto s\varphi(r)$. The right $R$-linearity of $\psi$ follows from the naturality of $\nu^e$ and $e$. Indeed, consider the left $R$-module map $f_r:R\to R$, $r'\mapsto r'r$. For any $s\in S$, $r\in R$, we have that 
		\[\begin{split}
			\psi(s)r&=e_R(\nu^e_R(s\otimes_R 1_R))r=f_r (e_R(\nu^e_R(s\otimes_R 1_R)))=e_R(f_r(\nu^e_R(s\otimes_R1_R)))\\
			&=(e_R\nu^e_R\varphi_*^e\varphi^*_e(f_r))(s\otimes_R1_R)=(e_R\nu^e_R e_{\varphi_*(S\otimes_RR)}\varphi_*(S\otimes_R f_re_R))(s\otimes_R1_R)\\
			&=(e_R\nu^e_R e_{\varphi_*(S\otimes_RR)}\varphi_*(S\otimes_R e_Rf_r))(s\otimes_R1_R)\\&=(e_R\nu^e_R e_{\varphi_*(S\otimes_RR)}\varphi_*(S\otimes_R e_R)\varphi_*(S\otimes_R f_r))(s\otimes_R1_R)\\
			&=(e_R\nu^e_R\varphi_*^e\varphi^*_e(e_R)\varphi_*(S\otimes_R f_r))(s\otimes_R1_R)=(e_Re_R\nu^e_R\varphi_*(S\otimes_R f_r))(s\otimes_R1_R)\\
			&=(e_R\nu^e_R \varphi_*(S\otimes_Rf_r))(s\otimes_R1_R)=e_R(\nu^e_R(s\otimes_R r))=e_R(\nu^e_R(sr\otimes_R1_R))=\psi(sr).
		\end{split}\]	
		Then, for every $s\in S$, we have that
		\[
		\begin{split}
			(\varphi\circ\psi)(s)&=(r_S\circ\eta_R\circ e_R\circ\nu^e_R)(s\otimes_R1_R)=(r_S\circ\eta^e_R\circ \nu^e_R)(s\otimes_R1_R)\\&=(r_S\circ\varphi_*^e\varphi^*_e\Id_R)(s\otimes_R1_R)=(r_S\circ\varphi_*^e\varphi^*_e\Id_R\circ r_S^{-1})(s),
			%\\
			%&=r_SE^e\varphi_*\varphi^*E^e\Id_R(s\otimes_R1_R)=r_Se_{\varphi_*(S\otimes_RR)}(s\otimes_R e_R(1_R))\\
			%&=e_{\varphi_*(S)}r_S(s\otimes_R e_R(1_R))=e_{\varphi_*(S)}(s\varphi(e_R(1_R)))
		\end{split}	
		\]
		thus $r_S^{-1}\circ\varphi\circ \psi\circ r_S=\varphi_*^e\varphi^*_e\Id_R$, so $r_S^{-1}\circ\varphi$ is an $R$-semisplit-epi. Moreover, if \eqref{eq:RRsemisplitepi} is satisfied, then $r_S^{-1}\circ\varphi$ is an $(R,R)$-semisplit-epi. 
	
		Conversely,	assume that $1_S=e_{\varphi_*(S)}(\varphi(e_R(1_R)))$ (which is equivalent to \eqref{eq:RRsemisplitepi} by Lemma \ref{lem:eSlin} (1)) and that there is $\psi$ in ${}_R\Hom_R(S,R)$ such that $\varphi\circ \psi=r_S\circ\varphi_*^e\varphi^*_e\Id_R\circ r_S^{-1}$, i.e. $r_S^{-1}\circ\varphi$ is an $(R,R)$-semisplit-epi as an $R$-bimodule map through $\psi\circ r_S$. %i.e. $r_S^{-1}\varphi$ is an $R$-semisplit-epi as an $R$-bimodule map through $\psi r_S$, and assume that $1_S=e_{\varphi_*(S)}(\varphi(e_R(1_R)))$.	%$\varphi (r)=e_{\varphi_*(S)}(\varphi(e_R(r)))$ for every $r\in R$, i.e. $r_S^{-1}\varphi$ is an $(R,R)$-semisplit-epi as an $R$-bimodule map through $\psi r_S$. In particular, we have $1_S=\varphi (1_R)=e_S(\varphi(e_R(1_R)))$. 
		Define for every $M$ in $R\text{-}\mathrm{Mod}$, $\nu^e_M:\varphi_*^e\varphi^*_e M=\varphi_*(S\otimes_RM)\to M$ by $\nu^e_M(s\otimes_R m)=\psi(s)e_M(m)$, for any $m\in M$ and $s\in S$. Note that for any $M\in R\text{-}\mathrm{Mod}$, $\nu^e_M$ is a morphism of left $R$-modules as so is $\psi$. We get a natural transformation $\nu^e:\varphi_*^e\varphi^*_e\to\id_{R\text{-}\mathrm{Mod}}$ as for any morphism $f:M\to M'$ in $R\text{-}\mathrm{Mod}$ we have that 
		\[\begin{split}
			(f\circ\nu^e_M)(s\otimes_Rm)&=f(\psi(s)e_M(m))=\psi(s)f(e_M(m))=\psi(s)f(e_M(e_M(m)))\\
			&=\psi(s)e_{M'}(f(e_M(m)))=\psi(s)e_{M'}(e_{M'}(f(e_M(m))))\\
			&=e_{M'}(\psi(s)e_{M'}(f(e_M(m))))=e_{M'}(\nu^e_{M'}(s\otimes_R f(e_M(m))))\\
			&=(\nu^e_{M'}\circ e_{\varphi_*(S\otimes_RM')}\circ (S\otimes_Rfe_M))(s\otimes_R m)\\&=(\nu^e_{M'}\circ \varphi_*^e\varphi^*_e f)(s\otimes_R m),
		\end{split}\] %r_SE^e\varphi_*\varphi^*E^e\Id_R(s\otimes_R1_R)
	for every $m\in M$, $s\in S$. Note that $\varphi\psi(s)=r_S\varphi_*^e\varphi^*_e\Id_R(s\otimes_R1_R)=r_Se_{\varphi_*(S\otimes_RR)}(s\otimes_R e_R(1_R)) =e_{\varphi_*(S)}r_S(s\otimes_R e_R(1_R))=e_{\varphi_*(S)}(s\varphi(e_R(1_R)))$. If $e_{\varphi_*(S)}$ is a left $S$-module morphism, we have that $\varphi\psi(s)=se_{\varphi_*(S)}(\varphi(e_R(1_R)))=s1_S=s$, so $\varphi\circ \psi=\id_S$ in this case. Then, 
		\[
		\begin{split}
			\eta^e_M\nu^e_M(s\otimes_Rm)&=\eta^e_M(\psi(s)e_M(m))= \psi(s)\eta^e_M(e_M(m))=\psi(s)\eta_M(e_M(e_M(m)))\\%\psi(s)e_{\varphi_*(S\otimes_RM)}\eta^e_M(m)
			&=\psi(s)e_{\varphi_*(S\otimes_RM)}(\eta_M(e_M(m)))=\psi(s)e_{\varphi_*(S\otimes_RM)}(1_S\otimes_Re_M(m))\\
			&=e_{\varphi_*(S\otimes_RM)}(\psi(s)1_S\otimes_Re_M(m))=e_{\varphi_*(S\otimes_RM)}(\varphi\psi(s)\otimes_Re_M(m))\\
			%&=e_{\varphi_*(S\otimes_RM)}(e_{\varphi_*(S)}(s\varphi(e_R(1_R)))\otimes_Re_M(m))\\
			%&=e_{\varphi_*(S\otimes_RM)}(se_{\varphi_*(S)}(\varphi(e_R(1_R)))\otimes_Re_M(m))\\
			&=e_{\varphi_*(S\otimes_RM)}(s\otimes_Re_M(m))=\varphi_*^e\varphi^*_e\Id_M(s\otimes_R m),
		\end{split}
		\]
		%$\varphi\phi(s)=r_S\eta_Re_R\nu^e_R(s\otimes_R1_R)=r_S\eta^e_R\nu^e_R(s\otimes_R1_R)=r_S\varphi_*^e\varphi^*_e\Id_R(s\otimes_R1_R)=r_SE^e\varphi_*\varphi^*E^e\Id_R(s\otimes_R1_R)=r_Se_{\varphi_*(S\otimes_RR)}(s\otimes_R e_R(1_R))=e_{\varphi_*(S)}r_S(s\otimes_R e_R(1_R))=e_{\varphi_*(S)}(s\varphi(e_R(1_R))) $
		for every $m\in M$, $s\in S$, so $\varphi^*_e$ is naturally semifull. Alternatively, we observe that, since $\varphi\circ\psi=\id_S$, by \cite[Proposition 3.1 (2)]{AMCM06} $\varphi^*$ is naturally full, so since $E^e$ is naturally semifull (cf. Example \ref{es:LH}), $\varphi^*_e$ results to be naturally semifull also by Proposition \ref{prop:comp} (i).
		\qedhere
	\end{proof}
\end{es}

\begin{es}\label{monoid}\textbf{Semifunctor on a monoid.}\\
	Let $(M,\cdot_M)$ be a monoid. It can be viewed as a category with a single object, denoted by $*$ (see \cite[Example 1.2.6.d]{Bor94}). Arrows $a:*\to *$ are the elements of the monoid, which are closed under composition and the identity element $1_M$ is the identity arrow $1_M:*\to *$. A monoid homomorphism $f:M\to N$ is a functor, as it preserves compositions of arrows and the identity arrow; it sends the unique object $*$ of $M$ into the unique object $\star$ of $N$. A natural transformation $\alpha=(\alpha_*): f\to g$ between functors $f,g:M\to N$ is an arrow $\alpha_*:f(*)\to g(*)$ in the category $N$ (i.e. an element of the monoid $N$) such that, for all elements $a$ of $M$, $g(a)\cdot_N\alpha_* =\alpha_*\cdot_N f(a)$, where $\cdot_N$ is the composition law of $N$.\par
	Consider the direct product $M\times M$ of $M$ by itself with componentwise multiplication $(a,b)\cdot_{M\times M}(a',b')=(a\cdot_M a',b\cdot_M b')$, which we also denote by $(a,b)(a',b')=(aa',bb')$ for shortness. Let $e\neq 1_M$ be an idempotent element of $M$, i.e. $e^2=e$. Consider the map $f_e=(e,-):M\to M\times M$, $f_e(b)=(e,b)$, which is a semigroup homomorphism. Indeed, for any $b,c\in M$, $f_e(bc)=(e, bc)=(ee, bc)=(e, b)(e,c)=f_e(b)f_e(c)$, but it does not preserve the unit, as $f_e(1_M)=(e,1_M)\neq (1_M,1_M) =1_{M\times M}$. 
	Moreover, if we view $M\times M$ as the product category with a single object $\star$ and morphisms given by the pairs $(m,n):\star\to \star$, where $m,n:*\to *$ are elements of $M$, and whose composition is that induced by the multiplication of $M\times M$, then $f_e:M\to M\times M$, $f_e(*)=\star$, $f_e(b)=(e,b)$ results to be a semifunctor as it preserves compositions, but not the identity arrow $1_M:*\to *$. Consider the associated natural transformation
	$\f^{f_e}_{*,*}:\Hom_M(*,*)\to\Hom_{M\times M}(\star,\star)$, $\f^{f_e}_{*,*}(m)=f_e(m)=(e,m)$, and the map $\p^{f_e}_{\star,\star}:\Hom_{M\times M}(\star, \star)\to \Hom_M(*,*)$, $\p^{f_e}_{\star,\star}((m,n))=n$, which is a natural transformation as for any $m,n,b, c\in M$, $\p^{f_e}_{\star,\star}(f_e(b)\cdot_{M\times M} (m,n)\cdot_{M\times M} f_e(c))=\p^{f_e}_{\star,\star}((e,b)(m,n)(e,c))=\p^{f_e}_{\star,\star}((eme,bnc))=bnc=b\cdot_M\p^{f_e}_{\star,\star}((m,n))\cdot_M c$. Then, for any $m\in M$, we have $(\p^{f_e}_{\star,\star}\circ\f^{f_e}_{*,*})(m)=\p^{f_e}_{\star,\star}((e,m))=m$, hence $f_e$ is separable. Now, we show that the semifunctor $f_e$ is not in general semifull, and hence not even naturally semifull. If $f_e$ were semifull, then for any $(m,n)$ in $M\times M$ there would exist an element $a\in M$ such that $(e,a)=(e,1_M)(m,n)(e,1_M)=(eme,n)$. But in general it is not true that $(e,a)=(eme,n)$. For instance, consider the commutative monoid $\{x,1,e\}$ with the following multiplication $\cdot$ laws
	\begin{displaymath}
		x\cdot x=e,\quad x\cdot 1=x,\quad x\cdot e=x,\quad 1\cdot 1=1,\quad 1\cdot e=e,\quad e\cdot e=e.
	\end{displaymath}
	Fix the idempotent element $e$. Then $e\cdot x\cdot e=e\cdot (x\cdot e)=e\cdot x=x$ does not result to be equal to $e$. Thus, $f_e$ is not semifull.
	% If $f_e$ were naturally semifull, then there would exist a natural transformation  $\p^{f_e}_{\star,\star}:\Hom_{M\times M}(\star, \star)\to \Hom_M(*,*)$ such that for all $(m,n)$ in $M\times M$, $f_e( \p^{f_e}_{\star,\star}(m,n))=(e,n')$, for some $n'\in M$, and $f_e( \p^{f_e}_{\star,\star}(m,n))=f_e(1_M)\cdot_{M\times M} (m,n)\cdot_{M\times M} f_e(1_{M})=(e,1_{M})(m,n)(e,1_M)=(eme,n)$. But in general it is not true that $(e,n')=(eme,n)$. For instance, consider the commutative monoid $\{x,1,e\}$ with the following multiplication $\cdot$ laws
	%	\begin{displaymath}
		%	x\cdot x=e,\quad x\cdot 1=x,\quad x\cdot e=x,\quad 1\cdot 1=1,\quad 1\cdot e=e,\quad e\cdot e=e.
		%	\end{displaymath}
	%	Fix the idempotent element $e$. Then $e\cdot x\cdot e=e\cdot (x\cdot e)=e\cdot x=x$ does not result to be equal to $e$. Thus, it follows that the semifunctor $f_e$ cannot be naturally semifull, and so in general the semifunctor $f_e$ is not semifully faithful. \bl{Note that the same example shows also that $f_e$ is not even semifull.}
\end{es}
\begin{es}\label{es:ring}\textbf{Semifunctor on a ring.}\\
	A category $\cc$ is \emph{preadditive} (also called an \emph{Ab-category}) if for any pair of objects $X,Y\in\cc$, the hom-set $\Hom_\cc(X,Y)$ is an additive abelian group such that the composition map $\Hom_\cc(Y,Z)\times\Hom_\cc(X,Y)\to\Hom_\cc(X,Z)$ is bilinear. %, that is, for any $f,f'\in\Hom_\cc(X,Y)$, $g,g'\in \Hom_\cc(Y,Z)$, we have $$(g+g')\circ (f+f')=g\circ f+g\circ f'+ g'\circ f+ g'\circ f'.$$
	Let $A$ be a unital ring. Then, $A$ is a preadditive category (denoted by the same symbol $A$) with a single object $*$ and $\Hom_A(*,*)=A$. The composition of morphisms in $A$ is the multiplication of elements of $A$. The group structure of $\Hom_A(*,*)$ is that of the underlying additive group $A$. %On the other hand, a preadditive category $\cc$ with a single objects is a ring. In fact, if $X$ is an object in a preadditive category $\cc$, then the set $\Hom_\cc(X,X)$ is a ring. By \cite[Exercise 5 (b)]{Po73}, if $\cc$ is (pre)additive, then also $\cc^\natural$ is (pre)additive. 
	Given unital rings $R$, $S$, any homomorphism $R\to S$ of rings which possibly does not preserve the unit is a semifunctor which sends the single object of $R$ into the single object of $S$. For instance, consider a morphism of rings $g:R\to S$ and let $z\neq 1_S\in S$ be an idempotent such that $z\in S^{R}=\{u\in S\text{ }\vert\text{ } ug(r)=g(r)u \text{ for every } r\in R\}$. Then, any morphism $f:R\to S$ given by $f(r)=g(r)z$ is a semifunctor.
	
	As particular cases, we have the following.
	\begin{itemize}
		\item If $g=\id_R$, where $R$ is a ring with unit $1_R$, consider an idempotent element $z\neq 0_R, 1_R$ in the center $Z(R)=\{r'\in R\text{ }\vert\text{ } r'r=rr' \text{ for every } r\in R\}$ of $R$. Then, let $f:R\to R$ be the map given by $f(x)=xz$, for $x\in R$. 
		It is a non-unital endomorphism of rings,
		\begin{invisible}
			Indeed, $f(x+y)=z(x+y)=zx+zy=f(x)+f(y)$ and $f(xy)=zxy=zzxy=zxzy=f(x)f(y)$, but $f(1_R)=1_Rz=z\neq 1_R$.
		\end{invisible}	thus $f$ defines a semifunctor $f:R\to R$ which is not faithful as $f(z)=z=f(1_R)$, but $z\neq 1_R$; for any $x\in R$ we have that $f(1_R)xf(1_R)=zxz=xzz=xz=f(x)$, hence $f$ is semifull. Further, let $*$ be the single object of $R$ and let $\f^{f}_{*,*}:\Hom_R(*,*)\to\Hom_{R}(*,*)$, $\f^{f}_{*,*}(x)=f(x)=xz$, for $x\in R$, be the natural transformation associated with $f$. Consider the map $\p^{f}_{*,*}=\f^{f}_{*,*}:\Hom_{R}(*, *)\to \Hom_R(*,*)$, $\p^{f}_{*,*}(x)=xz$, for every $x$ in $R$. %It is a natural transformation as, for any $r,r',\bar{r}\in R$, we have $\p^{f}_{*,*}(f(r')r f(\bar{r}))=\p^{f}_{*,*}(r'zr\bar{r}z)=r'zr\bar{r}zz=r'rzzz\bar{r}=r'(rz)\bar{r}=r'\p^{f}_{*,*}(r)\bar{r}$. 
		For every $x\in R$, we have $(\f^{f}_{*,*}\circ\p^{f}_{*,*})(x)=\f^{f}_{*,*}(xz)=xzz=zxz=f(1_R)xf(1_R)$, hence $f$ is naturally semifull. As an instance, consider as ring the product $R\times S$ of unital rings $R,S$ and the idempotent element $z=(0_R,1_S)\in Z(R\times S)$. Then, $f:R\times S\to R\times S$, $(x,y)\mapsto (x,y)z=(0_R,y)$ is a semifunctor which is naturally semifull but not faithful. \begin{invisible} Indeed, $f$ is not faithful as for $x\neq x'$ we have that $f((x,y))=(0_R,y)=f((x',y))$. It is a semifunctor as $f((1_R,1_S))=(0_R,1_S)\neq (1_R,1_S)$ and it is semifull as for any $(x,y)\in R\times S$ we have that $f((1_R, 1_S))(x,y)f((1_R,1_S))=(0_R, 1_S)(x,y)(0_R,1_S)=(0_R,y)=f((x,y))$. Consider the map $\p^{f}_{*,*}=\f^{f}_{*,*}:\Hom_{R\times S}(*, *)\to \Hom_{R\times S}(*,*)$, $\p^{f}_{*,*}((x,y))=(x,y)z=(0_R,y)$, for every $(x,y)$ in $R\times S$. We have that $(\f^{f}_{*,*}\circ\p^{f}_{*,*})((x, y))=\f^{f}_{*,*}((x,y)z)=(x,y)zz=(x,y)z=(0_R,y)=(0_R,1_S)(x,y)(0_R,1_S)=f((1_R, 1_S))(x,y)f((1_R,1_S))$, for every $(x,y)$ in $R\times S$.\end{invisible}
		\item Let $R$ be a ring with unit $1_R$ and consider the ring $\mathrm{M}_n(R)$ of square matrices of order $n\in \mathbb{N}$ with coefficients in $R$. Consider the canonical inclusion $g:R\to \mathrm{M}_n(R)$, $r\mapsto r\mathrm{I}_n$, where $\mathrm{I}_n$ is the identity matrix in $\mathrm{M}_n(R)$. Let $f:R\to \mathrm{M}_n(R)$ be the map given, for any $m\in R$, by 
		\begin{equation*}
			m\mapsto m\mathrm{E}_{ii},
		\end{equation*}
		where $\mathrm{E}_{ii}=(\delta_{ia}\delta_{ib})_{ab}$ is the matrix unit in which the entry $ii$, with $i\in \{ 1, \dots,n\}$, is the unique nonzero entry.
		%\begin{pmatrix}
		%	0  &\dots &0& 0\\
		%	\vdots & \ddots &\vdots & \vdots\\
		%	0 & \dots & 0 & 0\\
		%	0 & \dots & 0 & m
		%\end{pmatrix}
		It is a homomorphism of rings which does not preserve the unit, since $f(1_R)=\mathrm{E}_{ii}\neq 1_{\mathrm{M}_n(R)}=\mathrm{I}_n$. It defines a semifully faithful semifunctor $f:R\to \mathrm{M}_n(R)$. Indeed, it is clear that $f$ is faithful, and it is semifull as, given a matrix $A=(a_{ij})$ in $\mathrm{M}_n(R)$, we have that
		%\begin{pmatrix}
		%	a_{11}  &\dots & a_{1n}\\
		%	\vdots & \ddots & \vdots\\
		%	a_{n1} & \dots & a_{nn}
		%\end{pmatrix}
		\begin{equation*}
			f(1_R)A f(1_R)=\mathrm{E}_{ii}A\mathrm{E}_{ii}=a_{ii}\mathrm{E}_{ii}=f(a_{ii}).
			%\begin{pmatrix}
			%0  &\dots & 0\\
			%\vdots & \ddots & \vdots\\
			%0 & \dots & 1
			%\end{pmatrix}A\begin{pmatrix}
			%0  &\dots & 0\\
			%\vdots & \ddots & \vdots\\
			%0 & \dots & 1
			%\end{pmatrix}=\begin{pmatrix}
			%0  &\dots & 0\\
			%\vdots & \ddots & \vdots\\
			%0 & \dots & a_{nn}
			%\end{pmatrix}
		\end{equation*}
		%Thus, $f$ is semifully faithful.
	\end{itemize}
\end{es}

\noindent \textbf{Acknowledgements.}
The author would like to thank her supervisor Alessandro Ardizzoni for giving her the opportunity to work on this topic and for providing many meaningful comments and helpful insights during the preparation of this paper. This paper was written while the author was member of the
``National Group for Algebraic and Geometric Structures and their
Applications'' (GNSAGA-INdAM) and she was partially supported by MUR
within the National Research Project PRIN 2017 ``\emph{Categories, Algebras: Ring-Theoretical and Homological Approaches} (CARTHA)''.

\end{document}